\documentclass[11pt]{amsart}
\usepackage{amssymb,amsthm,amsmath,amstext}
\usepackage{mathrsfs}  % allows nice script font for sheaves
\usepackage{xcolor}
\usepackage{multirow}
\usepackage{enumitem}
\usepackage{cases}
\usepackage{graphicx}
\usepackage{bm}
\usepackage{mathtools}
\usepackage{tikz}
\usepackage{tikz-cd}
\tikzset{every picture/.style={line width=0.75pt}} %set default line width to 0.75pt  
\usetikzlibrary{patterns}
\usepackage{caption}
\usepackage{subcaption}
\usepackage{mathdots}
\usepackage{yhmath}
\usepackage{cancel}
\usepackage{color}
\usepackage{siunitx}
\usepackage{array}
\usepackage{tabularx}
\usepackage{extarrows}
\usepackage{booktabs}
\usetikzlibrary{fadings}
\usetikzlibrary{patterns}
\usetikzlibrary{shadows.blur}
\usetikzlibrary{shapes}
\usepackage{setspace}

\usepackage{float}
\usepackage[left=1.0in,top=0.95in,right=1.0in,bottom=0.95in]{geometry}

\usepackage[all]{xy}

\newcommand{\Z}{\mathbb{Z}}

\newcommand{\PP}{\mathbb{P}}

\newcommand{\M}{\mathcal{M}}

\def\OO{\mathcal{O}}
\DeclareMathOperator{\inv}{\operatorname{inv}}

\DeclareMathOperator{\Pic}{\operatorname{Pic}}
\DeclareMathOperator{\codim}{codim}

\usepackage{hyperref}
\hypersetup{pdftitle={Hurwitz-BN-Loci},pdfauthor={Andrei Bu and Richard Haburcak}} 
\hypersetup{colorlinks=true,linkcolor=blue,anchorcolor=blue,citecolor=blue}

\usepackage[nameinlink]{cleveref}

\newtheorem{theorem}{Theorem}[section]
\newtheorem{setup}{Setup}[section]
\newtheorem{lemma}[theorem]{Lemma}
\newtheorem{prop}[theorem]{Proposition}
\newtheorem{cor}[theorem]{Corollary}

\newtheorem*{notation}{Notation}

\newtheorem{theoremintro}{Theorem}

\newtheorem{claim}{Claim}

\theoremstyle{definition}
\newtheorem{defn}[theorem]{Definition}

\theoremstyle{definition}
\newtheorem{remark}[theorem]{Remark}
\newtheorem{example}[theorem]{Example}

\newcommand{\cHTot}{\mathcal{H}^{\mathrm{tot}}_{g,k}}
\newcommand{\hhTot}{\overline{\mathcal{H}}^{\mathrm{tot}}_{g,k}}
\def\elhur{[\pi\colon C\rightarrow \mathbb{P}^1, p]}
\DeclareMathOperator{\rank}{rank}

\DeclareMathOperator{\Id}{Id}
\DeclareMathOperator{\pr}{pr}

\newcommand{\duckicon}{%
	\includegraphics[width=0.50cm, trim=200 170 230 180, clip]{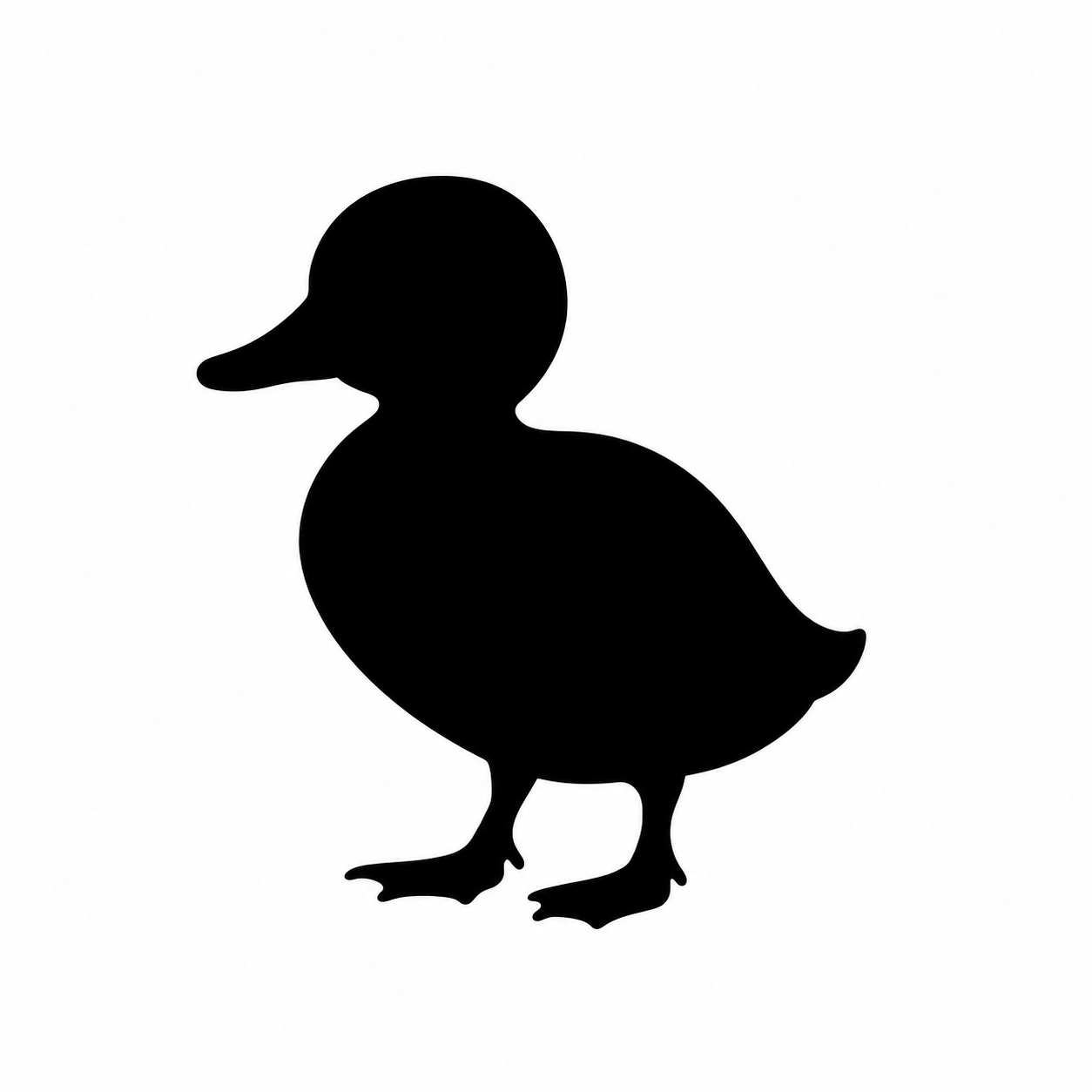}%
}

\newcommand{\duckiconTwo}{%
	\includegraphics[
	width=0.72cm,
	trim=120 100 120 100,
	clip
	]{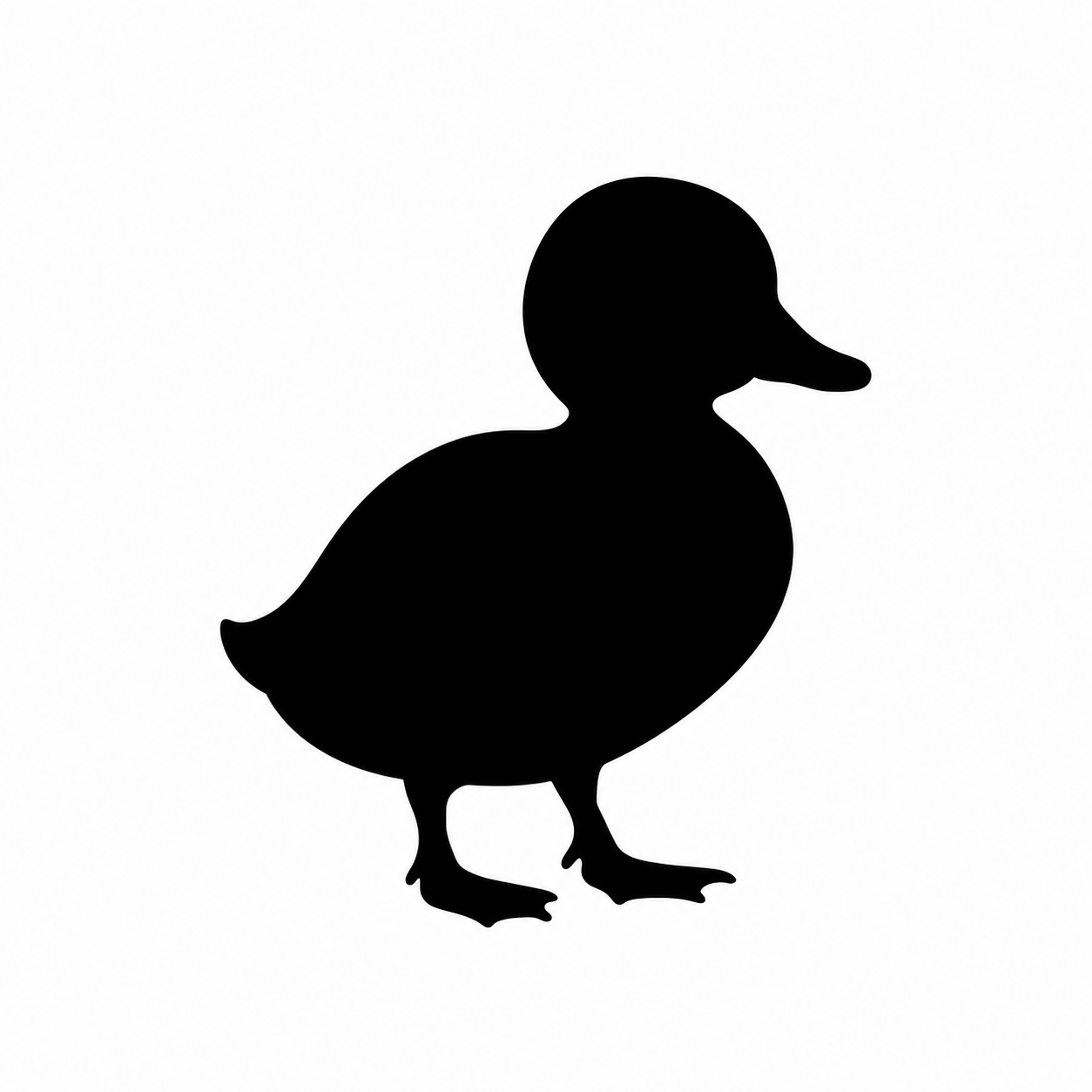}%
}

\title{Pointed Hurwitz--Brill--Noether theory} 

\author{Andrei Bud}
\address{Bud, Institut Mathématique de Toulouse; Université de Toulouse; CNRS\\
	118 route de Narbonne, 31062 Toulouse, France}
\email{andreibud95@protonmail.com}

\author{Richard Haburcak}
\address{Haburcak, Department of Mathematics\\%
	The Ohio State University\\%
	100 Math Tower\\%
	Columbus, OH 43210}
\email{haburcak.1@osu.edu}

\begin{document}
	\thispagestyle{empty}
	\vspace*{-.5cm}
	\begin{abstract} 		
		We study the geometry of Hurwitz--Brill--Noether spaces with vanishing conditions at a totally ramified point. Degeneration to a curve with an elliptic tail simplifies the combinatorics of Larson, Larson--Larson--Vogt, and Aggarwal, giving inductive proofs of Aggarwal's results in the one-point setting, and yields an efficient recursion for the algebraic Euler characteristics. Finally, when the expected dimension is negative, we give a numerical criterion ensuring the universal Hurwitz--Brill--Noether locus admits a component of expected relative dimension, generically finite over its image in the Hurwitz space, analogous to the existing results of Pflueger and Teixidor i Bigas for classical Brill--Noether loci.
	\end{abstract}
	\maketitle
	
	\section*{Introduction}
	
	Brill--Noether theory studies the linear series on algebraic curves. For a general smooth curve $C$ of genus $g$, the Brill--Noether--Petri theorem describes the geometry of the loci \[W^r_d(C) = \{L\in\Pic(C) ~\mid~ h^0(C,L)\geq r+1\}\] in terms of the Brill--Noether number $\rho(g,r,d)=g-(r+1)(g-d+r)$. Namely, when $\rho(g,r,d)<0$, then $W^r_d(C)$ is empty, and when $\rho(g,r,d)\geq 0$, then $W^r_d(C)$ has the expected dimension $\rho(g,r,d)$. Furthermore, when $\rho(g,r,d)>0$ the Brill-Noether locus $W^r_d(C)$ is irreducible and smooth away from $W^{r+1}_d(C)$, see~\cite{GriffithsHarris1980,Gieseker1982,FultonLazarsfeld1981}. A refined Brill--Noether theory seeks an analogous description for curves that are special in moduli, in particular for curves with unexpected maps to projective space, for example for curves with a low degree map to $\PP^1$. For such curves with a degree $k$ map $\pi:C\to \PP^1$, prescribing the splitting type of 
	$$\pi_{\ast}L=\OO_{\PP^1}(\Vec{e})\coloneqq \OO_{\PP^1}(e_0)\oplus\cdots \oplus \OO_{\PP^1}(e_{k-1})$$ 
	gives the Hurwitz--Brill--Noether loci $W^{\vec{e}}(C)$, and has been useful in understanding the Brill--Noether theory of curves of a fixed gonality~\cite{Larson2021,Larson2Vogt,cook-powell_jensen,CookPowellJensen2022Tropical}. When $\pi:C\to \PP^1$ is totally ramified at a single point $p$, prescribing the vanishing of sections at $p$ gives a further refinement. More generally, in~\cite{Pflueger2026Transmission}, Pflueger considered the refinement given by prescribing the order of vanishing at two points, $p,q\in C$, developing the theory of transmission loci, where vanishing is prescribed by an element of the affine symmetric group, which is further developed in~\cite{Agg26} where analogues of classical Brill--Noether results are proven for transmission loci. In this paper, we give an inductive treatment for the case of one point of ramification, giving new proofs of the results in~\cite{Agg26} in the case of pointed Hurwitz--Brill--Noether loci $W^{\vec{e}}(\pi,p)$, which enables us to recursively compute the Euler characteristic of $W^{\vec{e}}(\pi,p)$ and find components of the expected codimension over the Hurwitz space.
	
	Let $\mathcal{H}^{\mathrm{tot}}_{g,k}$ denote the Hurwitz space parameterizing covers $[\pi:C\to \PP^1,p]$ of degree $k$ with $C$ smooth of genus $g$ and $\pi$ totally ramified at $p$. For an ordered vector $\vec{e}=(e_0,\dots,e_{k-1})\in\Z^{k}_{\geq0}$, let $\mathbf{a} (\vec{e})$ be the integer sequence $\{0,k,\dots,ke_0,1+e_1,\dots,1+ke_1,\dots,(k-1)+(k-1)e_{k-1}\}$, with elements in increasing order. The \emph{pointed Hurwitz--Brill--Noether locus} $W^{\vec{e}}(\pi,p)$ parameterizes line bundles of degree $d=g+|\vec{e}|+k-1$  with $r=|\vec{e}|+k-1$ sections with vanishing sequence at $p$ given exactly by $\mathbf{a}(\vec{e})$. Note that unlike for ordinary splitting loci, $\vec{e}$ is an ordered vector, as prescribing $\mathbf{a}(\vec{e})$ is equivalent to prescribing the splitting types of \[\pi_{\ast}L ,~ \pi_{\ast}L(-p),~\dots,~ \pi_{\ast}L(-(k-1)p).\] Our first result recovers the one-point case of~\cite{Agg26} showing that $W^{\vec{e}}(\pi,p)$ has the expected dimension.
	
	\begin{theoremintro}[{Theorem~\ref{trm: dimension}}]\label[theorem]{thm_dimension_intro}
		Let $[\pi\colon C\to\PP^1,p]$ be a general element of $\mathcal{H}^{\mathrm{tot}}_{g,k}$ and let $\vec{e}\in\Z_{\geq0}^k$. Then $W^{\vec{e}}(\pi,p)$ is nonempty if and only if \[g-u(\vec{e})-\inv(\vec{e})\geq 0.\] When nonempty, it has dimension $g-u(\vec {e})-\inv(\vec{e})$. Where $u(\vec{e})=\sum_{i\neq j} \max\{0,e_i-e_j-1\}$, and $\inv\vec{e}=\#\{(i,j) ~\mid~ 0\leq i<j\leq k-1, e_i<e_j\}$.
	\end{theoremintro}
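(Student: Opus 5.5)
The argument has two halves: a lower bound on dimension coming from a determinantal description, and an upper bound (with existence) proved by induction on $g$ via degeneration to a curve with an elliptic tail attached at the totally ramified point, as the abstract indicates.

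\emph{Lower bound.} Over the relative Picard variety of the family of pointed covers, the condition defining $W^{\vec e}(\pi,p)$ imposes splitting types on the $k$ pushforwards
\[
\pi_*L,\ \pi_*L(-p),\ \dots,\ \pi_*L(-(k-1)p).
\]
These are rank $k$ bundles on $\PP^1$ linked by elementary modifications at $\pi(p)$. I would express the locus as a degeneracy locus of a flag-type datum: a splitting locus for $\pi_*L$ together with a Schubert condition on the successive modifications. Its codimension in $\Pic^d(C)$ should be at most $u(\vec e)+\inv(\vec e)$. Here $u(\vec e)$ is the usual splitting-locus codimension $h^1(\mathcal{E}nd(\OO(\vec e)))$ after sorting, and $\inv(\vec e)$ is the length of the permutation recording which summand loses a section at each step. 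Thus every component has dimension at least $g-u-\inv$.

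\emph{Upper bound and nonemptiness.} I would degenerate $[\pi,p]$ to a cover of a curve $C_0\cup_q E$. Here $C_0$ is a general genus $g-1$ curve with a cover totally ramified at $q$, and $E$ is an elliptic curve carrying a degree $k$ cover totally ramified at $q$ and at a point $p'$ (the limit of $p$). This is an admissible cover whose target is two copies of $\PP^1$ glued. Using limit linear series (compact type), a limit of $L$ consists of an aspect on $C_0$ with vanishing at $q$ of type $\vec e'$, and an aspect on $E$ relating the vanishing sequences at $q$ and $p'$. Since $p'-q$ is $k$-torsion-like on $E$ (both points are totally ramified), the sections on $E$ are essentially $\pi^*$ of sections on $\PP^1$ twisted by multiples of $q$ and $p'$. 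I would classify the possible transitions $\vec e'\to\vec e$ explicitly. I expect the result to be: either $\vec e'=\vec e$ with one parameter of freedom in $\Pic(E)$ lost, or $\vec e'$ obtained from $\vec e$ by a simple transposition or shift $e_i\mapsto e_i\pm1$ combined with cyclically rotating the index (since $L\mapsto L(-p)$ rotates the summands). The $E$-aspect then contributes dimension $0$ or $1$, exactly compensating the change in $u+\inv$. Given this, the induction shows every limit lies in a family of dimension at most $g-u-\inv$. Upper semicontinuity of fiber dimension then gives the upper bound for the general fiber, and nonemptiness when the bound is nonnegative follows by smoothing a limit series whose dimension count is exact. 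Conversely, if $g-u-\inv<0$, the recursion ends at genus $0$ with an empty locus, because on $\PP^1$ only vectors with $u=\inv=0$ are realized.

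The main obstacle will be the elliptic-tail combinatorics: computing exactly which pairs of vanishing data at $(q,p')$ are achievable on $E$ and in what dimension, and checking that $u+\inv$ drops by exactly the right amount in each case. I would handle it by writing line bundles on $E$ as $\OO(aq+bp')$ twisted by degree-zero bundles. Sections exist with the required vanishing only when the twist is trivial or a specific torsion class, and this is what separates the dimension-$0$ and dimension-$1$ cases. I would then verify the inequality $u(\vec e')+\inv(\vec e')\ge u(\vec e)+\inv(\vec e)-1$ by a direct case analysis on the transposition. Smoothability requires the limit linear series to be refined and dimensionally proper; the lower bound from the first step supplies exactly this.
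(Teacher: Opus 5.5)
Your architecture is the paper's: flag-type degeneracy loci for the lower bound (essentially the paper's first proof of Proposition~\ref{prop: lowerbound}), elliptic-tail induction for the upper bound, and regeneration for existence. However, two steps do not go through as written.

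\textbf{Upper bound.} Compatibility at the node only gives inequalities $a^X_i(q)+a^E_{r-i}(q)\geq d$. Moreover, a limit of series with vanishing exactly $\mathbf{a}(\vec e)$ at $p$ only has vanishing \emph{at least} $\mathbf{a}(\vec e)$ there. So the elliptic-tail analysis does not show that the $X$-aspect has type $\vec e$ or $t_j(\vec e)$. It shows that $L_X(-q)$ carries a $g^r_{d-1}$ whose vanishing at $q$ dominates $\mathbf{a}(\vec e)$ entrywise (when $L_E$ is not special, and $E$ contributes one dimension). Or it dominates $\mathbf{a}(t_j(\vec e))$ (when $L_E\cong\OO_E(jp+(d-j)q)$, and $E$ contributes nothing). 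Such $L_X(-q)$ lie in the union of all $W^{\vec f}(\pi_X,q)$ with $\mathbf a(\vec f)$ dominating. Your induction hypothesis bounds each piece only by $g-1-u(\vec f)-\inv(\vec f)$.

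To close the induction you need monotonicity: $\mathbf a(\vec e)\leq\mathbf a(\vec f)$ entrywise implies $u(\vec e)+\inv(\vec e)\leq u(\vec f)+\inv(\vec f)$. This is a separate, nontrivial combinatorial statement; the paper proves it by a minimal-counterexample argument in Proposition~\ref{prop: vanishing and loci comparison}. Your ``case analysis on the transposition'' only gives the identity $u(t_j\vec e)+\inv(t_j\vec e)=u(\vec e)+\inv(\vec e)-1$ of Proposition~\ref{prop: ramification game}, which is the other ingredient. Alternatively, you could induct on the closed loci of line bundles carrying a $g^r$ with vanishing at least $\mathbf a(\vec e)$, starting from genus $0$. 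The node step then respects domination because $x\mapsto x-[x\equiv j \bmod k]$ is nondecreasing. But that is a different statement, and you would have to set it up explicitly.

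You must also choose $p'-q$ of exact order $k$ in $\Pic^0(E)$. If its order is a proper divisor $m$ of $k$, a single special $L_E$ relaxes the vanishing in $k/m$ residue classes at once. Then $u+\inv$ can drop by $k/m$ while $E$ contributes dimension $0$, and the bound fails.

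\textbf{Existence.} Eisenbud--Harris smoothing of a refined, dimensionally proper limit series measures dimension against the adjusted Brill--Noether number $g-\sum_i(a_i-i)$. That is $g$ minus the displacement of \emph{every} vanishing order, whereas $u+\inv$ counts only the bottom entry of each column. For $\vec e=(3,0,\ldots,0)$ these give $g-3(k-1)$ versus $g-2(k-1)$. So the limits you want to smooth generally lie in families of dimension $g-u-\inv>\rho$ and are not dimensionally proper. Your first-step lower bound lives on smooth covers, where $\pi_*L$ is a bundle on a single $\PP^1$, and says nothing over the nodal admissible cover.

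What you need is a relative space over the smoothing base $B$, central fiber included, all of whose components have dimension at least $\dim\Pic(\mathcal C/B)-u(\vec e)-\inv(\vec e)$. The paper gets this by imposing the flag conditions on Larson--Larson--Vogt nested limit linear series in families (Theorems~\ref{theorem: nested regeneration} and~\ref{theorem: HBN regeneration}). Special-fiber components of dimension exactly $g-u-\inv$ then cannot be components of the total space, so they lie in the closure of the generic fiber. Nonemptiness follows by induction, since some $t_j$ is nontrivial whenever $u+\inv>0$.
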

	
	We note that the assumption that the entries of $\vec{e}$ are non-negative is simply a normalization, indeed, as $\pi^{\ast}(\OO_{\PP^1}(1))=\OO_C(kp)$, twisting by sufficiently large multiple of $kp$ will make all entries non-negative. When $\vec{e}$ is weakly decreasing, we have $\inv(\vec{e})=0$ and $W^{\vec{e}}(\pi,p)$ corresponds to the classical splitting locus $W^{\vec{e}}(C)$. Thus we also recover the dimension theorem of~\cite{Larson2Vogt}.
	
	We provide two proofs of this result. For the first proof, we consider the locus of $\textrm{ord}(\Vec{e})$-nested linear series, which has of dimension $g-u(\Vec{e})$, see~\cite{Larson2Vogt}. Then we see that the additional inversions measure the extra codimension imposed by the marked point via flag degeneracy conditions. This proof has the advantage of being elegant, but relies on the the newly introduced concept of nested linear series. The second proof relies on classical flag degeneracy arguments, and on restrictions to open subloci where the ramification orders are exactly as predicted. This proof has the disadvantage of being numerically less elegant, but the advantage of using nothing more than results from~\cite{basiclimitlinear}.

	\subsection*{Inductive approach and Brill--Noether analogues}
	
	Already due to~\cite{Agg26}, there is a pointed Hurwitz--Brill--Noether refinement of the results in~\cite{Larson2Vogt}. For a general element $[\pi\colon C\rightarrow \mathbb{P}^1, p]$ of $\cHTot$, we show that the locus $\overline{W}^{\Vec{e}}(\pi,p)$ is
	
	\begin{enumerate}
		\item nonempty of pure dimension $g-u(\Vec{e})-\inv(\Vec{e})$ when $g-u(\Vec{e})-\inv(\Vec{e})\geq 0$, and is empty otherwise, cf.~\cite{Agg26} and Theorem~\ref{trm: dimension};
		\item normal and Cohen--Macaulay, and is smooth away from a locus of codimension at least two, see~\cite{Agg26} and Theorem~\ref{theorem: reduced and CM};
		\item irreducible when $g-u(\Vec{e}) -\inv(\Vec{e})\geq 1$, see~\cite{Agg26};
		
		\item finite and reduced when
		$g-u(\Vec{e}) - \inv(\Vec{e})=0$, and there is an efficient algorithm for computing the cardinality, see~\cite{Agg26} and Theorem~\ref{thm: hol-Euler-char}.
	\end{enumerate}
	
	Aggarwal's result follow via degeneration to a chain of elliptic curves, while the current paper only consider the degeneration to a curve of genus $g-1$ glued to an elliptic tail.

	\subsection*{Euler characteristics}
	
	The scheme-theoretic description of the elliptic-tail degeneration gives a recursion for the algebraic Euler characteristic. For a general cover of genus $g=n+u(\vec{e})+\inv(\vec{e})$, write
	\[\chi_n(\vec{e})=\chi(\overline W^{\vec{e}}(\pi,p),
	\OO_{\overline{W}^{\vec{e}}(\pi,p)}).
	\] In particular, for our one-pointed Hurwitz--Brill--Noether loci $W^{\vec{e}}(\pi,p)$, the possibilities on the elliptic tail are described by elementary shifts $t_i$ of the vector $\vec{e}$, permuting entries that are not decreasing, see Definition~\ref{def: t_i}, and $I(\vec{e})=\{i ~\mid~ t_i(\vec{e})\neq \vec{e}\}$, giving a neat recursion.
	
	\begin{theoremintro}[Theorem~\ref{thm: hol-Euler-char}]\label[theorem]{thm_hol_Euler_char_intro}
		The algebraic Euler characteristics satisfy the recursion \[\chi_n(\vec{e})=\sum_{i\in I(\vec{e})}\chi_n(t_i(\vec{e}))-I(\vec{e})\cdot \chi_{n-1}(\vec{e}).\]
	\end{theoremintro}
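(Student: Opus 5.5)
We will prove the recursion by a one-parameter degeneration of a general $[\pi\colon C\to\PP^1,p]\in\cHTot$ of genus $g=n+u(\vec{e})+\inv(\vec{e})$ to a cover $C_0=Y\cup_q E$. Here $[Y\to\PP^1,p']$ is a general totally ramified cover of genus $g-1$, and $E$ is an elliptic tail attached at the totally ramified point $q$. The marked point $p$ lies on $E$, and the map on $E$ is totally ramified at both $p$ and $q$; this is possible because $k(p-q)\sim 0$ on $E$ for a general choice of $p-q$ of order exactly $k$. We then take the relative (limit) pointed Hurwitz--Brill--Noether scheme $\overline{W}^{\vec{e}}$ over the base of a smoothing family, realized as a degeneracy-locus/flag subscheme of the relative compactified Picard scheme. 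By Theorem~\ref{trm: dimension} and Theorem~\ref{theorem: reduced and CM}, the general fibre is Cohen--Macaulay of the expected dimension $n$. If we check that the total space is also Cohen--Macaulay of dimension $n+1$, and that the central fibre has dimension $n$, the family is flat. Since the algebraic Euler characteristic of the structure sheaf is locally constant in flat proper families, $\chi_n(\vec{e})=\chi(\mathcal{O}_{W_0})$, where $W_0$ is the scheme-theoretic central fibre.

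The next step is to describe $W_0$. A limit linear series on $C_0$ is determined by its aspect on $Y$ with vanishing at $q$, and its aspect on $E$ with vanishing at $p$ and $q$. On $E$, the vanishing at $p$ is prescribed by $\mathbf{a}(\vec{e})$, and compatibility of vanishing orders forces the sequence at $q$ to be one of the following. Either it is the one coming from an elementary shift $t_i(\vec{e})$ with $i\in I(\vec{e})$, where the line bundle on $E$ is forced to be $\OO_E(\text{multiples of }p,q)$ and the $E$-aspect is a single point. Or it is the ``untwisted'' one, where the $E$-aspect has a $1$-dimensional choice. Dimension counting with $\inv(t_i\vec{e})=\inv(\vec{e})-1$ and $u$ unchanged shows that each $t_i$ gives a component $Z_i\cong \overline{W}^{t_i(\vec{e})}(Y\to\PP^1,q)$ of dimension $n$ on the genus-$(g-1)$ curve. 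Thus $\chi(\OO_{Z_i})=\chi_n(t_i(\vec{e}))$ by induction on the genus. Next, we identify the pairwise intersections $Z_i\cap Z_j$ and the higher intersections. I expect each $Z_i$ to meet a common codimension-one stratum $D$ isomorphic to $\overline{W}^{\vec{e}}$ of $Y$ at $q$, of dimension $n-1$ with Euler characteristic $\chi_{n-1}(\vec{e})$. I also expect the $Z_i$ to meet only along $D$ in a way that makes $W_0$ reduced and equal to $\bigcup_i Z_i$ glued along copies of $D$.

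Inclusion--exclusion for the structure sheaf, via the exact sequences $0\to\OO_{Z_1\cup\cdots\cup Z_m}\to\bigoplus\OO_{Z_i}\to\cdots$ (equivalently, a seminormal gluing of $|I|$ branches along $D$), then gives
\[\chi(\OO_{W_0})=\sum_{i\in I(\vec{e})}\chi_n(t_i(\vec{e}))-|I(\vec{e})|\,\chi_{n-1}(\vec{e}).\]
The coefficient $|I(\vec{e})|$ rather than $|I(\vec{e})|-1$ should reflect that the total space is a union of $|I|$ branches each meeting $D$ transversally, plus the correction from the $E$-aspect parameterized by $E\smallsetminus\{\text{pts}\}$. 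Concretely, the untwisted stratum is a $D$-bundle over the $1$-dimensional $E$-aspect, which has $\chi=0$ but contributes a further $-\chi_{n-1}$ through its gluing.

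The main obstacle is the scheme-theoretic analysis of $W_0$: proving that it is reduced, that its components and their intersections are exactly as described, and that the local structure along $D$ is that of $|I(\vec{e})|$ smooth branches glued transversally, so that the Euler characteristic computation is correct and not just a cycle-level statement. To attack this, I would first write local equations of the relative flag degeneracy locus near a point of $D$ in the style of Eisenbud--Harris basic limit linear series \cite{basiclimitlinear}, reducing to an explicit model $\{x_i x_j=0,\ i\neq j\}\times D$ or $\{\prod x_i = t\}$. I would then read off the flatness and the gluing from that model.
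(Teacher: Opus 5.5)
Your overall route is the paper's: an elliptic-tail degeneration, constancy of $\chi(\OO)$ in a flat proper family, and inclusion--exclusion on the special fibre. You also correctly find the two kinds of limits. The gap is in how you assemble them.

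\textbf{The intersection pattern is wrong.} The twisted pieces $Z_i=\bigl(\OO_Y(q)\otimes\overline W^{t_i(\vec e)}(Y,q)\bigr)\times\{\OO_E(i(p-q))\}$ sit over the points $\OO_E(i(p-q))\in\Pic^0(E)$. These points are pairwise distinct because $p-q$ has \emph{exact} order $k$; total ramification alone only needs $k(p-q)\sim 0$. Hence $Z_i\cap Z_j=\emptyset$ for $i\neq j$. Consequently:
\begin{itemize}
\item there is no common stratum $D$ through which all the $Z_i$ pass;
\item every point of the special fibre lies on at most one $Z_i$;
\item your local models $\{x_ix_j=0\}\times D$ and $\{\prod x_i=t\}$ describe a singularity that never occurs.
\end{itemize}
Taken literally, your fan picture gives $\sum_i\chi_n(t_i(\vec e))-(|I(\vec e)|-1)\chi_{n-1}(\vec e)$. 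You then attribute the missing $-\chi_{n-1}(\vec e)$ to the untwisted stratum ``through its gluing'' without saying what it is glued to. So the coefficient $|I(\vec e)|$ is asserted, not derived. A smaller slip: $t_0$ can change $u$ and even increase $\inv$. Only $u+\inv$ drops by one (Proposition~\ref{prop: ramification game}).

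\textbf{The correct configuration.} It is a star centred at the untwisted component $U=\bigl(\OO_Y(q)\otimes\overline W^{\vec e}(Y,q)\bigr)\times\Pic^0(E)$, taken over all of $\Pic^0(E)$ rather than over $E$ minus points. $U$ is proper and $\chi(\OO_U)=\chi_{n-1}(\vec e)\cdot\chi(\OO_{\Pic^0(E)})=0$. Each $Z_i$ meets $U$ scheme-theoretically in its own copy $\bigl(\OO_Y(q)\otimes\overline W^{\vec e}(Y,q)\bigr)\times\{\OO_E(i(p-q))\}$. This uses the scheme-theoretic inclusion $\overline W^{\vec e}\subseteq\overline W^{t_i(\vec e)}$, which follows from Proposition~\ref{prop: containments} together with reducedness. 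Now adjoin the disjoint $Z_i$ to $U$ one at a time using $\chi(A\cup B)=\chi(A)+\chi(B)-\chi(A\cap B)$. This needs no transversality or seminormality, only the scheme-theoretic identifications. Each step contributes $\chi_n(t_i(\vec e))-\chi_{n-1}(\vec e)$, which gives exactly the recursion.

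\textbf{How the paper handles the step you flag as the main obstacle.} It does not use local Eisenbud--Harris equations or Cohen--Macaulayness of the total space.
\begin{itemize}
\item It takes the schematic closure of the generic locus, which is automatically flat over the DVR.
\item It shows that the rank conditions cut out exactly the reduced union $U\cup\bigcup_i Z_i$ on the special fibre. The tools are the nodal determinantal decomposition (Lemma~\ref{lemma: nodal rank decomposition}), distributivity of intersections over unions (Lemma~\ref{lemma:schematic-intersection}), and induction on $g$.
\item By set-theoretic regeneration (Theorem~\ref{theorem: HBN regeneration}), the closure's special fibre lies in this reduced scheme with the same support, so it equals it (Proposition~\ref{prop: scheme-theoretic regeneration}).
\end{itemize}
If you want to keep your local-equation approach, it has to be rebuilt around this two-component, $U$-centred geometry.
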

	
	In dimension zero, this specializes to a recursion for cardinalities. Aggarwal already computes these cardinalities more generally by counting reduced words of affine permutations \cite[Theorem 1.1(2)]{Agg26}; our recursion gives an alternative computation for the one-point loci. In positive dimension, it computes the algebraic Euler characteristic, and hence the arithmetic genus when the locus is a curve. This addresses an enumerative direction also raised in~\cite{Pflueger2026Transmission}. In particular, this formula provides a very efficient algorithm for computing algebraic Euler characteristics (and implicitly genera when $g-u(\Vec{e}) - \inv(\Vec{e}) =1$), see related conjectures in~\cite{Pflueger2026Transmission}. As an example, for 
	\[ \Vec{e} = (28,28,18,18,7,2) \ \textrm{and} \ g = 181 \]
	we have $u(\Vec{e}) = 180$ and $\inv(\Vec{e}) = 0$ so the Hurwitz--Brill--Noether locus is a smooth curve, of genus
	\[ 12354645379354833080765059657446062822690008113927811502151177705426049 \approx 1.2 \times 10^{70}.\]
	In Section~\ref{sec: Euler-char}, we give a concrete formula for cardinality and genus for maximal Hurwitz--Brill--Noether loci in gonality $4$, and moreover, reduce the problem of computing cardinalities, and more generally algebraic Euler characteristics to an elementary problem of counting duckling walks, and observing duckling relative positions.
	
	\subsection*{Components of expected dimension}
	
	Lastly, when $k\geq g+1$, the Hurwitz space $\cHTot$ dominates $\M_{g,1}$. The fact that the forgetful map is dominant allows us to realize the pullbacks of Brill--Noether loci 
	\[ \M^r_{g,d} \coloneqq \{C \ \mid \ W^r_d(C) \neq \emptyset\}  \]
	as Hurwitz--Brill--Noether loci in $\cHTot$, see Proposition~\ref{prop: BN and HBN}. The existence of components of expected dimension is established for Brill--Noether loci of codimension 
	\[ 1 \leq -\rho(g,r,d) \leq g-2,\]
	see~\cite{Pfl23} and~\cite{Teixidor-BN}. This allow us to prove a similar result for the universal pointed Hurwitz--Brill--Noether spaces
	\[
	\mathcal W^{\Vec e}_g
	\coloneqq
	\left\{
	[\pi:C\longrightarrow\mathbb P^1,p,L]
	\ \middle|\
	[\pi,p]\in\mathcal H^{\mathrm{tot}}_{g,k},
	\quad L\in W^{\Vec e}(\pi,p)
	\right\}.
	\]
	We obtain the following result, which is also Theorem~\ref{thrm:low-codimension-existence}.
	
	\begin{theoremintro}[Theorem~\ref{thrm:low-codimension-existence}]
		Let $g \geq 1$ and let $\Vec e=(e_0,\ldots,e_{k-1})\in\mathbb Z_{\geq0}^k$ satisfy
		\[
		1\leq c \coloneqq u(\Vec e)+\operatorname{inv}(\Vec e)-g\leq k-3.
		\]
		Write $b_0(\Vec{e})<b_1(\Vec{e})<\cdots<b_{k-1}(\Vec{e})$ for the increasing rearrangement of the integers $ke_j+j$, for $0\leq j \leq k-1$.
		Assume moreover that $b_{k-2}-b_c>k$. Then $\mathcal W_g^{\Vec e}$ has an irreducible component of
		dimension $2g+k-3 +(g-u(\Vec{e})-\inv(\Vec{e}))$, generically finite over its image in
		$\mathcal H^{\mathrm{tot}}_{g,k}$.
	\end{theoremintro}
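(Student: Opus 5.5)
We will prove this by degeneration to a curve with an elliptic tail, combined with a dimension count from below. Let $\rho:=g-u(\vec e)-\inv(\vec e)=-c<0$. The lower bound is standard. Locally over $\mathcal H^{\mathrm{tot}}_{g,k}$, which has dimension $2g+k-3$ (it is $2g-2+2k-(k-1)$ branch points for a totally ramified fibre, minus $\dim\mathrm{Aut}\,\PP^1=3$, up to the bookkeeping of the marked point), $\mathcal W^{\vec e}_g$ is cut out from a relative Picard variety by a flag/splitting degeneracy locus. By the construction underlying Theorem~\ref{trm: dimension}, this locus has codimension $u(\vec e)+\inv(\vec e)$. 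Hence every irreducible component of $\mathcal W^{\vec e}_g$ has dimension at least $2g+k-3+\rho$.

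It therefore suffices to produce a single point $[\pi,p,L]$ at which the fibre of $\mathcal W^{\vec e}_g\to\mathcal H^{\mathrm{tot}}_{g,k}$ is zero-dimensional and the image is locally of codimension exactly $c$. Equivalently, we need a component whose image has codimension $\le c$ and which is generically finite over it. For this we induct on $g$, using the limit-linear-series description from the second proof of Theorem~\ref{trm: dimension}. We degenerate $[\pi,p]$ to an admissible cover over $C'\cup_q E$, where $C'$ has genus $g-1$ and $E$ is an elliptic tail containing $p$. On $E$ the refined vanishing conditions at $p$ and $q$ force $L_E$ to correspond to one of the elementary shifts $t_i(\vec e)$ of Definition~\ref{def: t_i}. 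Moreover, requiring that the point $q$ be special relative to $p$ (i.e.\ $\OO_E(k(p-q))$ has a prescribed torsion order) imposes exactly one condition on the moduli of $E$.

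The strategy is to choose a shift $t_i$ for which $u(t_i\vec e)+\inv(t_i\vec e)=u(\vec e)+\inv(\vec e)-1$, then let the elliptic tail absorb one unit of excess codimension. This step reduces $(g,c)$ to $(g-1,c)$ with the special torsion condition, or alternatively reduces to $(g-1,c-1)$ while spending one modulus of $E$. We iterate until $c=0$, where Theorem~\ref{trm: dimension} gives a finite nonempty fibre over a general cover. At each step the smoothing argument, via Eisenbud--Harris dimensional transversality from~\cite{basiclimitlinear}, shows that the limit fibre is finite and that the limit locus in the boundary has codimension $\le c$. Since all components have dimension at least $2g+k-3+\rho$, the component containing the smoothed points has exactly the expected dimension and is generically finite.

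The main obstacle is the combinatorial step. We must guarantee that at every stage of the induction some elementary shift decreases $u+\inv$ by exactly one while preserving the hypothesis needed to continue. This is where the numerical conditions $c\le k-3$ and $b_{k-2}-b_c>k$ must enter. Our first attempt would be to track the increasing rearrangement $b_0<\dots<b_{k-1}$ of the values $ke_j+j$. The gap $b_{k-2}-b_c>k$ should ensure that there are at least $c$ indices at which $t_i$ swaps two adjacent entries differing by more than one, each reducing $u$ by one without creating inversions. These swaps can be spent on $c$ elliptic tails with torsion conditions, in the spirit of Pflueger's~\cite{Pfl23} argument for classical Brill--Noether loci. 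The bound $c\le k-3$ keeps enough free branch points for the torsion conditions to be independent. If this direct bookkeeping fails, the fallback is Proposition~\ref{prop: BN and HBN}: for $k\ge g+1$, reduce to the classical results of~\cite{Pfl23,Teixidor-BN}, and treat small $k$ by attaching rational tails to increase the degree.
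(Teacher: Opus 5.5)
Your framework is the paper's: every component has dimension at least $2g+k-3-c$, you induct on $g$ through an elliptic-tail degeneration, and you smooth a boundary locus that is finite over the base. The gap is that your induction cannot terminate, because in this degeneration $c$ never changes. In Setup~\ref{setup: elliptic tail} every nontrivial shift satisfies $u(t_i\vec e)+\inv(t_i\vec e)=u(\vec e)+\inv(\vec e)-1$ while the genus drops by one, so $c=u+\inv-g$ is invariant. There is no branch $(g,c)\to(g-1,c-1)$. Your mechanism for one does not exist here. Since $\pi_E$ is totally ramified at both $p$ and $q$, the divisors $kp$ and $kq$ are fibres of $\pi_E$, so $\OO_E(k(p-q))\cong\OO_E$ automatically. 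The exact order of $p-q$ is a discrete invariant (the paper takes it to be $k$), not a codimension-one condition costing a modulus of $E$. Hence you never reach $c=0$, and Theorem~\ref{trm: dimension} cannot serve as the base. The descent must stop at a case whose expected dimension is still $-c<0$, and there an independent existence result is needed. Read with $c$ fixed, your inductive step is essentially the paper's. Also, your ``equivalently, image of codimension $\le c$'' is backwards: given the lower bound, that is automatic for a generically finite component, and what must be shown is codimension $\geq c$.

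That base case is the missing idea. First twist by $\OO_C(p)$, replacing $\vec e$ by $\sigma^{-1}\vec e$. This shifts every $ke_j+j$ by one, preserves $u+\inv$ and the gap, and arranges $|\vec e|\equiv 1-c \pmod k$. Then descend, keeping $c$ fixed, to genus $c+2$ and the vector $\vec f=(s+2,s+2,s+1,\ldots,s+1,s,\ldots,s)$, with $k-c-3$ entries $s+1$ and $c+1$ entries $s$. Up to a twist, $W^{\vec f}$ is the hyperelliptic $g^1_2$, and $\rho(c+2,1,2)=-c$. Because $k\geq c+3$, $\mathcal{H}^{\mathrm{tot}}_{c+2,k}$ dominates $\mathcal{M}_{c+2,1}$, and the pullback of the hyperelliptic locus (codimension $g-2=c$) gives the required component by Proposition~\ref{prop: BN and HBN}. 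This is where $c\leq k-3$ enters, not in independence of torsion conditions.

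The hypothesis $b_{k-2}-b_c>k$ is what makes the descent land on $\vec f$ rather than on a vector with no known existence result. A shift changes this gap by at most two, and the gap can never equal $k$. So at most one nontrivial shift can break the gap condition, and only when the gap is exactly $k+1$. Apply gap-preserving shifts for as long as possible. At the stopping vector the breaking shift is the only nontrivial one. This forces some $\sigma^j$ of the stopping vector to have the shape of $\vec f$, and the congruence on $|\vec e|$ forces $j=0$, giving exactly $\vec f$ after $g-c-2$ steps. Your reading of the gap condition, as producing $c$ swaps to spend on $c$ torsioned tails, does not correspond to an argument. Your fallback to \cite{Pfl23,Teixidor-BN} needs $k\geq g+1$, which the statement does not assume, and ``rational tails to increase the degree'' is not a substitute.
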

	
	While technical, the numerical conditions in the theorem are very mild, and only require that $\vec{e}$ should not have too many of the numbers $ke_j+j$ concentrated in a length $k$ interval below the second largest (which is $b_{k-2}(\vec{e})$). For example, this is satisfied for all vectors $\vec{e}$ satisfying $e_1 > e_2+1$.

	\subsection*{Outline}
	
	In Section~\ref{sec: pHBN and combinatorics}, we provide the definition of the pointed Hurwitz-Brill-Noether loci, along with the combinatorial framework. We prove the dimension theorem in Section~\ref{sec: dimension results}, and establish scheme-theoretic regeneration and discuss the geometric consequences, with the results of~\cite{Agg26}, in Section~\ref{sec: scheme-theoretic regeneration}. In Section~\ref{sec: Euler-char}, we derive the Euler-characteristic recursion and its enumerative applications. Finally, Section~\ref{sec: expected codimension} treats components of expected codimension over the Hurwitz space $\cHTot$.
	
	\subsection*{Acknowledgments}
	The authors would like to thank Martin Ulrisch for his insightful comments at the beginning of the project, as well as Daksh Aggarwal, Gavril Farkas, Andres Rojas, and Isabel Vogt for helpful conversations. During part of preparation of this article, A.B. was supported by DFG through the Collaborative Research Centre TRR 326 Geometry and Arithmetic of Uniformized Structures, project number 444845124. A.B. is currently supported by ERC Starting
	Grant SpiCE, no.101164820. This research was partially conducted during the time that R.H. was supported by the National Science Foundation under Grant No. DMS-2231565, and under Grant No. DMS-2424556 while the author was in residence at the Institute for Computational and Experimental Research in Mathematics in Providence, RI, during the Computations on K3 Surfaces and Related Varieties semester program.

	\subsection*{AI disclosure} During the preparation of this paper, ChatGPT was used to improve the exposition, assist with bibliography, and referee statements and proofs. The proof and statement of Proposition~\ref{prop:low-codimension-existence} was simplified with assistance from ChatGPT 5.6 and 6, and subsequently simplified and rewritten by the authors. The authors take full responsibility for the mathematical content, references, and final text.

	\begin{notation}
		For notational convenience, if $\vec{e}\in\Z^k$ is a vector, we write $|\vec{e}|\coloneqq \sum_{i=0}^{k-1}e_i$.
	\end{notation}
	
	\section{Hurwitz--Brill--Noether space and vanishing orders} \label{sec: pHBN and combinatorics}
	In this section we introduce pointed Hurwitz–Brill–Noether loci in terms of the vanishing orders of line bundles at the totally ramified point. We then prove that we have an equivalent definition in terms of the splitting types of 
	\[ \pi_*L, \ \pi_*(L-p), \ \ldots \ \pi_*(L-(k-1)\cdot p). \] 
	The goal of this section will then be to provide the combinatorics necessary in order to compute the dimension of pointed Hurwitz--Brill--Noether loci. 
	
	Let $r\geq 1$, $d = g+r$ and let 
	\[ \textbf{a} = (0\leq a_0 < a_1 < \cdots < a_r\leq d)\]
	be a sequence of vanishing orders. For a pointed curve $[C,p]\in \mathcal{M}_{g,1}$ we are interested in understanding the space 
	\[ W^r_{d,\textbf{a}}(C,p) \coloneqq \left\{ L\in \textrm{Pic}^d(C) \ | \ h^0(C,L(-a_i\cdot p)) \geq r+1-i \ \text{ for } \ 0\leq i \leq r \right\}.\] 
	In particular, we ask what is the dimension of this space when $C$ admits a degree $k$ map $\pi \colon C\rightarrow \mathbb{P}^1$ totally ramified at $p$. 
	
	In order to compute this dimension, we need to understand what happens when we degenerate the curve $C$ to a singular curve with two components: one of genus $g-1$ and one elliptic. The dimension count will then follow from a simple induction argument. 
	
	Let now $\elhur$ be an element of $\cHTot$. Our first goal is to understand what are the possible vanishing orders at $p$ of a line bundle $L$ on $C$. This is encapsulated in the following definition.
	
	\begin{defn}
		A sequence of vanishing orders $\textbf{a} = (a_0, a_1, \ldots, a_r)$ is called $k$-admissible if it satisfies the following property: 
		\begin{itemize}
			\item If $a_i \geq k$ is one of the entries of the vanishing sequence, then $a_i-k$ is also an entry. 
		\end{itemize}
	\end{defn}
	
	%The motivation for this definition is that Hurwitz--Brill--Noether loci with $k$-admissible ramification orders at a point naturally appear as degeneration of Hurwitz--Brill--Noether loci. 
	
	There is an immediate way to obtain $k$-admissible ramification sequences out of the datum of a vector $\Vec{e}$ with non-negative entries. 
	
	\begin{defn} \label{def: sequence and pHBN}
		Let $\Vec{e} = (e_0,e_1, \ldots, e_{k-1})\in\Z^k_{\geq 0}$. We define its associated $k$-admissible vanishing sequence $\textbf{a}(\Vec{e})$ to be the one with entries
		\[ \{0, k, \ldots, e_0\cdot k \} \cup \{1, k+1, \ldots, e_1\cdot k +1\} \cup \cdots \cup \{k-1, 2k-1,\ldots, e_{k-1}\cdot k+k-1\} \]
		in increasing order. 
		
		For $\elhur \in \cHTot$ we define the pointed Hurwitz--Brill--Noether space $W^{\Vec{e}}(\pi,p)$ to be the space $W^{r,\circ}_{g+r, \textbf{a}(\Vec{e})}(C,p)$, parametrizing degree $g+r$ line bundles on $C$ with vanishing orders at $p$ exactly the entries of $\textbf{a}(\Vec{e})$. 
	\end{defn}

	\begin{example}\label{ex:example_54010}
		We give an example of the vanishing orders for $k=5$ and $\Vec{e} = (5,4,0,1,0)$. By definition, we have \[\textbf{a}(5,4,0,1,0)=(0,1,2,3,4,5,6,8,10,11,15,16,20,21,25).\] The vanishing orders in $\textbf{a}(\Vec{e})$ can be more easily recovered by drawing columns of $e_i +1$ boxes, and filling the boxes consecutively, skipping when there is no box. Thus $\textbf{a}(5,4,0,1,0)$ is recovered from the numbers in Figure~\ref{fig:54010_vanishing}, while the dashed squares represent the gaps in $\textbf{a}(\Vec{e})$.
		\begin{figure}[H]
			\centering
			\begin{tikzpicture}
				% Draw vertical lines
				\foreach \x in {0,...,5}
				\draw[gray] (\x,0) -- (\x,6);
				
				% Draw horizontal lines
				\foreach \y in {0,...,6}
				\draw[gray] (0,\y) -- (5,\y);
				
				\draw[very thick] (0,0) -- (1,0);
				\draw[very thick] (1,1) -- (1,0);
				\draw[very thick] (1,1) -- (2,1);
				\draw[very thick] (2,1) -- (2,5);
				\draw[very thick] (2,5) -- (3,5);
				\draw[very thick] (3,5) -- (3,4);
				\draw[very thick] (3,4) -- (4,4);
				\draw[very thick] (4,4) -- (4,5);
				\draw[very thick] (4,5) -- (5,5);
				
				\node at (0.5,5.5) {\Large 0};
				\node at (1.5,5.5) {\Large 1};
				\node at (2.5,5.5) {\Large 2};
				\node at (3.5,5.5) {\Large 3};
				\node at (4.5,5.5) {\Large 4};
				\node at (0.5,4.5) {\Large 5};
				\node at (1.5,4.5) {\Large 6};
				\node at (3.5,4.5) {\Large 8};
				\node at (0.5,3.5) {\Large 10};
				\node at (1.5,3.5) {\Large 11};
				\node at (0.5,2.5) {\Large 15};
				\node at (1.5,2.5) {\Large 16};
				\node at (0.5,1.5) {\Large 20};
				\node at (1.5,1.5) {\Large 21};
				\node at (0.5,0.5) {\Large 25};
				
				\fill[pattern=north east lines, pattern color=gray] (2,4) rectangle (3,5);
				\fill[pattern=north east lines, pattern color=gray] (4,4) rectangle (5,5);
				\fill[pattern=north east lines, pattern color=gray] (2,3) rectangle (3,4);
				\fill[pattern=north east lines, pattern color=gray] (3,3) rectangle (4,4);
				\fill[pattern=north east lines, pattern color=gray] (4,3) rectangle (5,4);
				\fill[pattern=north east lines, pattern color=gray] (2,1) rectangle (3,2);
				\fill[pattern=north east lines, pattern color=gray] (3,1) rectangle (4,2);
				\fill[pattern=north east lines, pattern color=gray] (4,1) rectangle (5,2);
				\fill[pattern=north east lines, pattern color=gray] (2,2) rectangle (3,3);
				\fill[pattern=north east lines, pattern color=gray] (3,2) rectangle (4,3);
				\fill[pattern=north east lines, pattern color=gray] (4,2) rectangle (5,3);

			\end{tikzpicture}
			\caption{Vanishing orders at $p$ for $L\in W^{(5,4,0,1,0)}(\pi,p)$.}
			\label{fig:54010_vanishing}
		\end{figure}
	\end{example}
	
	We aim to show that the splitting types of line bundles in the loci $W^{\vec{e}}(\pi,p)$ are determined by the ordered tuple $\vec{e}$.
	
	\begin{remark} \label{rmk: different from LLV}
		Notice that $W^{\Vec{e}}(\pi,p)$ is dependent of the order of the elements in $\Vec{e}$. When we have 
		\[e_0\geq e_1 \geq \cdots \geq e_{k-1}, \]
		the space $W^{\Vec{e}}(\pi,p)$ is an open subset of the Hurwitz--Brill--Noether space studied in~\cite{Larson2Vogt}, see Corollary~\ref{cor: LLV-dim2}.
	\end{remark}

	We need the following result about morphisms of vector bundles on $\mathbb{P}^1$. 
	
	\begin{lemma} \label{lemma: VB injectivity}
		Let $\Vec{e},\Vec{f}\in\Z^k$ with $|\vec{e}|-1=|\vec{f}|$ and suppose that
		%\[ \sum_{i=0}^{k-1} e_i -1  =  \sum_{i=0}^{k-1} f_i  \ \textrm{and}\]
		there exists an injective morphism $\mathcal{O}(\Vec{f})\hookrightarrow \mathcal{O}(\Vec{e})$ of vector bundles on $\mathbb{P}^1$. 
		Then $\Vec{f}$ is obtained from $\Vec{e}$ by subtracting $1$ from one of its entries.
	\end{lemma}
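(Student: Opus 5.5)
The plan is to identify the cokernel of the injection and then normalize the resulting surjection by an automorphism of $\mathcal{O}(\Vec{e})$. Throughout, "obtained from $\Vec{e}$" is understood up to reordering the entries, since $\mathcal{O}(\Vec{f})$ only determines $\Vec{f}$ as an unordered splitting type.

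First, identify the cokernel. Let $\iota\colon\mathcal{O}(\Vec{f})\hookrightarrow\mathcal{O}(\Vec{e})$ be the injection and set $Q=\operatorname{coker}\iota$. Both bundles have rank $k$ and $\iota$ is injective as a map of sheaves. Hence $\iota$ is generically an isomorphism and $Q$ is a torsion sheaf. Additivity of degree gives $\operatorname{length}(Q)=\deg\mathcal{O}(\Vec{e})-\deg\mathcal{O}(\Vec{f})=|\Vec{e}|-|\Vec{f}|=1$. A torsion sheaf of length one on $\mathbb{P}^1$ is a skyscraper $k(x)$ at a single point $x$. So $\mathcal{O}(\Vec{f})\cong\ker(\lambda)$ for a surjection $\lambda\colon\mathcal{O}(\Vec{e})\to k(x)$.

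Second, describe and normalize $\lambda$. The map $\lambda$ factors through the fiber $\mathcal{O}(\Vec{e})|_x\cong k^k$. After trivializing each summand near $x$, it is given by a nonzero vector $(\lambda_0,\dots,\lambda_{k-1})$. Among the indices with $\lambda_i\neq0$, choose $j$ with $e_j$ \emph{maximal}. For every other $i$ with $\lambda_i\neq0$ we have $e_j-e_i\geq0$, so we can pick a section $s_i\in H^0(\mathcal{O}(e_j-e_i))$ with $s_i(x)=-\lambda_i/\lambda_j$. Let $A$ be the identity on $\mathcal{O}(\Vec{e})$ plus the maps $s_i\colon\mathcal{O}(e_i)\to\mathcal{O}(e_j)$ into the $j$-th summand. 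Then $A$ is unipotent and hence an automorphism. By construction, $\lambda\circ A$ kills every summand except the $j$-th and is nonzero on that one.

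Third, read off the kernel. Since $\lambda\circ A$ is nonzero only on the $j$-th summand, its kernel is
\[\bigoplus_{i\neq j}\mathcal{O}(e_i)\oplus\mathcal{O}(e_j)(-x)=\bigoplus_{i\neq j}\mathcal{O}(e_i)\oplus\mathcal{O}(e_j-1).\]
Since $A$ maps $\ker(\lambda\circ A)$ isomorphically onto $\ker\lambda$, we get $\mathcal{O}(\Vec{f})\cong\mathcal{O}(\Vec{e}-\epsilon_j)$. By Grothendieck's uniqueness of splitting types, $\Vec{f}$ is $\Vec{e}$ with $1$ subtracted from the entry $e_j$.

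The only delicate point is the direction of the coordinate change. One must choose $j$ with $e_j$ maximal, not minimal, among the indices where $\lambda$ is nonzero. Otherwise the required sections of $\mathcal{O}(e_j-e_i)$ need not exist.
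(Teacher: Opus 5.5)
Your proof is correct, but it takes a different route from the paper.

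\textbf{The paper's route.} The paper never looks at the cokernel. After twisting so that all entries are at least $1$, it uses only that $\operatorname{Hom}(\mathcal{O}(n),\mathcal{O}(m))=0$ for $n>m$. Hence the injection restricts to injections $\mathcal{O}(\Vec{f}_{\geq n})\hookrightarrow\mathcal{O}(\Vec{e}_{\geq n})$ for every $n$, which gives the rank inequalities $\#\{i : f_i\geq n\}\leq\#\{i : e_i\geq n\}$. Summing these over $n$ recovers $|\Vec{f}|$ and $|\Vec{e}|$, which differ by one. So exactly one inequality is strict, and only by one, and this pins down $\Vec{f}$ as a multiset.

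\textbf{Your route.} You identify the cokernel as a length-one skyscraper at a point $x$. You then normalize the surjection $\mathcal{O}(\Vec{e})\to\mathcal{O}_x$ by a unipotent automorphism. In effect, you exhibit $\mathcal{O}(\Vec{f})$ as an elementary modification of $\mathcal{O}(\Vec{e})$ at $x$, and Grothendieck uniqueness finishes the argument.

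\textbf{What each buys.} The paper's argument is purely numerical and needs no choice of point, trivialization or automorphism. Its intermediate inequalities hold for an injection between equal-rank split bundles with any degree drop: they say the decreasingly sorted $\Vec{f}$ is entrywise at most the sorted $\Vec{e}$. Your argument is more structural and gives more information. It says which summand drops: the one of largest degree among those on which $\lambda$ is nonzero at $x$. It also describes $\ker\lambda$ explicitly as a subsheaf, not just its isomorphism class, and it applies verbatim to every colength-one subsheaf of a split bundle.

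Your remark that the conclusion holds only up to reordering matches how the paper uses the lemma, namely for the unordered splitting types of $\pi_*(L(-ip))$.
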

	\begin{proof} By twisting the morphism $\mathcal{O}(\Vec{f})\rightarrow \mathcal{O}(\Vec{e})$, we can assume that all entries of $\Vec{e}$ and $\Vec{f}$ are greater or equal to $1$. Denote by $N$ the largest entry of $\Vec{e}$. 
		
		Recall that for $n>m$, $\operatorname{Hom}(\OO_{\PP^1}(n),\OO_{\PP^1}(m))=0$. 
		%We will use the following obvious result: If $n > m$, the only morphism $\mathcal{O}(n)\rightarrow \mathcal{O}(m)$ is the zero morphism. 
		In particular, this implies that for every natural number $n$ we have an injective morphism 
		\[ \mathcal{O}(\Vec{f}_{\geq n}) \rightarrow \mathcal{O}(\Vec{e}_{\geq n})  \]
		In particular, we have the inequality of cardinalities 
		\[ \#\{i \ | \ f_i\geq n\} \leq \#\{i \ | \ e_i\geq n\} \ \textrm{for every} \ n\geq 1. \]
		We denote 
		\[ b_n \coloneqq \#\{ i \ | \ e_i = n\} \ \textrm{and} \  c_n \coloneqq \#\{ i \ | \ f_i = n\}. \]
		The condition 
		\[  \sum_{i=0}^{k-1} e_i -1  =  \sum_{i=0}^{k-1} f_i\]
		can be rewritten as 
		\[ \sum_{n=1}^N n\cdot b_n - 1 = \sum_{n=1}^N n\cdot c_n \]
		or equivalently as 
		\[ \sum_{n=1}^N \sum_{j=n}^N b_j -1 = \sum_{n=1}^N \sum_{j=n}^N c_j.  \]
		But for any $1\leq n \leq N$ we have 
		\[ \sum_{j=n}^N c_j \leq \sum_{j=n}^N b_j. \]
		In particular, all the inequalities but one are in fact equalities. This implies that $\Vec{f}$ has the form stated in the hypothesis. 
	\end{proof}
	
	Using Lemma~\ref{lemma: VB injectivity}, we can study pointed Brill--Noether conditions on $[\pi\colon C \rightarrow \mathbb{P}^1, p] \in \cHTot$. An immediate corollary is that for any line bundle $L$ on $C$, the vanishing orders at $p$ are $k$-admissible.
	
	\begin{prop} \label{prop: ramification behaviour}
		Let $[\pi\colon C\rightarrow \mathbb{P}^1,p]$ be an element of $\cHTot$ and let $L$ be a line bundle on $C$. If $a\geq k$ is a vanishing order of $H^0(C,L)$ at $p$, then so is $a-k$.
	\end{prop}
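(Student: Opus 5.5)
The idea is to translate vanishing orders at $p$ into splitting types on $\PP^1$, and then apply Lemma~\ref{lemma: VB injectivity} to the inclusion $L(-(m+1)p)\subset L(-mp)$. First I record the local setup. Since $\pi$ is totally ramified at $p$, we have $\pi^{-1}(\pi(p)) = kp$ scheme-theoretically, so $\pi^*\OO_{\PP^1}(1)\cong \OO_C(kp)$. Let $t$ be a section of $\OO_{\PP^1}(1)$ vanishing exactly at $\pi(p)$. Then $s=\pi^*t$ vanishes to order exactly $k$ at $p$ and nowhere else.

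The direct approach: let $\sigma\in H^0(C,L)$ vanish to order exactly $a\geq k$ at $p$. Then $\sigma$ lies in $H^0(C,L(-kp))$, so $\sigma/s$ is a section of $L(-kp)\otimes\OO_C(kp)^{-1}\cdots$. That is the wrong direction, since dividing by $s$ lands in $L(-kp)$, not $L$. The goal is instead that some section of $L$ vanishes to order exactly $a-k$. So I will argue with dimensions, via splitting types, rather than by explicit division.

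Set $V_m = H^0(C,L(-mp))$. Then $a$ is a vanishing order exactly when $\dim V_a > \dim V_{a+1}$. By the projection formula,
\[ \pi_*(L(-(m+k)p)) = \pi_*(L(-mp))\otimes\OO_{\PP^1}(-1). \]
Hence if $\pi_*L(-mp)\cong\OO(\vec{f}^{(m)})$, then $\vec{f}^{(m+k)}=\vec{f}^{(m)}-(1,\dots,1)$. Now $\pi_*L(-(m+1)p)\hookrightarrow\pi_*L(-mp)$ is an injection of rank-$k$ bundles whose cokernel is the skyscraper $\pi_*(L(-mp)|_p)$ of length one. So degrees drop by exactly $1$, and Lemma~\ref{lemma: VB injectivity} applies: $\vec{f}^{(m+1)}$ is obtained from $\vec{f}^{(m)}$ by decreasing a single entry by $1$. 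Write $j(m)$ for the value $f$ of the entry that drops, so it goes from $f$ to $f-1$. Taking $h^0$ on $\PP^1$, we get $\dim V_m-\dim V_{m+1}=h^0(\OO(f))-h^0(\OO(f-1))$, which equals $1$ if $f\geq 0$ and $0$ otherwise. Thus $m$ is a vanishing order if and only if the entry decreased at step $m$ had value $\geq 0$.

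The key step is to show that the entry decreased at step $m+k$ is the one decreased at step $m$, shifted down by one. Over the $k$ steps from $m$ to $m+k$, every entry decreases by exactly $1$ in total, because $\vec{f}^{(m+k)}=\vec{f}^{(m)}-\mathbf{1}$. So each multiset value-class is hit once per period. I would make this precise by ordering the decrements. At each step the decremented value is the value of one entry of the current multiset, and after $k$ steps the multiset has shifted down uniformly. So the sequence of decremented values $v_m, v_{m+1},\dots$ satisfies $v_{m+k}=v_m-1$ as multisets over each window. In fact, periodicity of $\vec{f}^{(\cdot)}$ up to the shift gives $\vec{f}^{(m+k+1)}-\vec{f}^{(m+k)}=\vec{f}^{(m+1)}-\vec{f}^{(m)}$ as changes of sorted vectors, so $v_{m+k}=v_m-1$ exactly. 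Then if $a\geq k$ is a vanishing order, $v_a\geq 0$, hence $v_{a-k}=v_a+1\geq 1\geq0$, so $a-k$ is a vanishing order.

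The main obstacle is making the step identification rigorous, given that splitting types are only multisets. I would handle it by comparing sorted vectors: the difference of consecutive sorted splitting types determines the decremented value uniquely, and the uniform shift by $\mathbf{1}$ commutes with sorting.
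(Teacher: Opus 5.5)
Your proof is correct and is essentially the paper's: both apply Lemma~\ref{lemma: VB injectivity} to $\pi_*L(-(m+1)p)\hookrightarrow\pi_*L(-mp)$ and use $\pi_*L(-(m+k)p)\cong\pi_*L(-mp)\otimes\OO_{\PP^1}(-1)$. The paper writes $a=ik+r$ and compares the single entry that drops between $\pi_*L(-rp)$ and $\pi_*L(-(r+1)p)$ with the thresholds $i$ and $i-1$; this is your identity $v_{m+k}=v_m-1$ without the multiset bookkeeping.

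The direct route you discarded also works. Take a rational function $x$ on $\PP^1$ with a single simple pole at $\pi(p)$. Then $\pi^*x$ has polar divisor exactly $kp$, so $\sigma\cdot\pi^*x$ is a regular section of $L$ vanishing to order exactly $a-k$ at $p$. The paper itself uses this multiplication trick later, in the proof of Lemma~\ref{lemma: nested description}.
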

	\begin{proof}
		We denote by $\Vec{e}_0, \Vec{e}_1,\ldots, \Vec{e}_k$ the vectors satisfying 
		\[ \pi_*(L(-ip)) \cong \mathcal{O}(\Vec{e}_i) \ \textrm{for every} \ 0\leq i \leq k.\]
		Notice that $\Vec{e}_i$ is equal to $\Vec{e}_{i+1}$ except for a unique position on which the entry in $\Vec{e}_i$ is bigger by exactly $1$, see Lemma~\ref{lemma: VB injectivity}.
		
		We write $a = ik +r$ for some $0\leq r \leq k-1$. The fact that $a$ is a vanishing order of $H^0(C,L)$ implies that 
		\[h^0(C, L (- ap)) = h^0(C, L(-(a+1)p)) +1\]
		and hence , via pushforward and the push-pull formula,
		\[ h^0(\mathbb{P}^1, \mathcal{O}(\Vec{e}_r)\otimes \mathcal{O}(-i)) = h^0(\mathbb{P}^1, \mathcal{O}(\Vec{e}_{r+1})\otimes \mathcal{O}(-i))+1.\]
		Because $\Vec{e}_r$ and $\Vec{e}_{r+1}$ differ by exactly $1$ on a unique position, the condition above is equivalent to saying that the entry of $\Vec{e}_r$ on which they differ is greater or equal to $i$. 
		
		Completely analogously, $a-k$ is a vanishing orders if and only if the entry of $\Vec{e}_r$ on which it differs from $\Vec{e}_{r+1}$ is greater or equal to $i-1$. The statement reduces to ``if a number is greater or equal to $i$ then it is greater or equal to $i-1$". The conclusion immediately follows. 
	\end{proof}
	
	\begin{example}
		Given $L\in W^{\vec{e}}(\pi,p)$, the vanishing orders and global sections of $L(-ip)$ are easily determined. We give an example, continuing Example~\ref{ex:example_54010}. The vanishing orders of $L(-p)$ are simply obtained from $\textbf{a}(\vec{e})$, subtracting $1$ and eliminating any negative entries. This can be visualized using the fillings of squares as in Figure~\ref{fig:54010_-ip_vanishing}.
		
		\begin{figure}[H]
			\centering
			\begin{subfigure}{0.2\textwidth}
				\begin{tikzpicture}[scale = 0.64]
					% Draw vertical lines
					\foreach \x in {0,...,5}
					\draw[gray] (\x,0) -- (\x,6);
					
					% Draw horizontal lines
					\foreach \y in {0,...,6}
					\draw[gray] (0,\y) -- (5,\y);
					
					\draw[very thick] (0,0) -- (1,0);
					\draw[very thick] (1,1) -- (1,0);
					\draw[very thick] (1,1) -- (2,1);
					\draw[very thick] (2,1) -- (2,5);
					\draw[very thick] (2,5) -- (3,5);
					\draw[very thick] (3,5) -- (3,4);
					\draw[very thick] (3,4) -- (4,4);
					\draw[very thick] (4,4) -- (4,5);
					\draw[very thick] (4,5) -- (5,5);
					
					\node at (0.5,5.5) {\Large 0};
					\node at (1.5,5.5) {\Large 1};
					\node at (2.5,5.5) {\Large 2};
					\node at (3.5,5.5) {\Large 3};
					\node at (4.5,5.5) {\Large 4};
					\node at (0.5,4.5) {\Large 5};
					\node at (1.5,4.5) {\Large 6};
					\node at (3.5,4.5) {\Large 8};
					\node at (0.5,3.5) {\Large 10};
					\node at (1.5,3.5) {\Large 11};
					\node at (0.5,2.5) {\Large 15};
					\node at (1.5,2.5) {\Large 16};
					\node at (0.5,1.5) {\Large 20};
					\node at (1.5,1.5) {\Large 21};
					\node at (0.5,0.5) {\Large 25};
					
					\fill[pattern=north east lines, pattern color=gray] (2,4) rectangle (3,5);
					\fill[pattern=north east lines, pattern color=gray] (4,4) rectangle (5,5);
					\fill[pattern=north east lines, pattern color=gray] (2,3) rectangle (3,4);
					\fill[pattern=north east lines, pattern color=gray] (3,3) rectangle (4,4);
					\fill[pattern=north east lines, pattern color=gray] (4,3) rectangle (5,4);
					\fill[pattern=north east lines, pattern color=gray] (2,1) rectangle (3,2);
					\fill[pattern=north east lines, pattern color=gray] (3,1) rectangle (4,2);
					\fill[pattern=north east lines, pattern color=gray] (4,1) rectangle (5,2);
					\fill[pattern=north east lines, pattern color=gray] (2,2) rectangle (3,3);
					\fill[pattern=north east lines, pattern color=gray] (3,2) rectangle (4,3);
					\fill[pattern=north east lines, pattern color=gray] (4,2) rectangle (5,3);
					\fill[pattern=north east lines, pattern color=red] (0,5) rectangle (1,6);

				\end{tikzpicture}\caption*{Remove top left box}
			\end{subfigure} 
			\hfill
			\begin{subfigure}{0.2\textwidth}
				\begin{tikzpicture}[scale = 0.64]
					% Draw vertical lines
					\foreach \x in {0,...,5}
					\draw[gray] (\x,0) -- (\x,6);
					
					% Draw horizontal lines
					\foreach \y in {0,...,6}
					\draw[gray] (0,\y) -- (5,\y);
					
					\draw[very thick] (0,1) -- (1,1);
					%\draw[very thick] (1,1) -- (1,0);
					\draw[very thick] (1,1) -- (2,1);
					\draw[very thick] (2,1) -- (2,5);
					\draw[very thick] (2,5) -- (3,5);
					\draw[very thick] (3,5) -- (3,4);
					\draw[very thick] (3,4) -- (4,4);
					\draw[very thick] (4,4) -- (4,5);
					\draw[very thick] (4,5) -- (5,5);
					
					\fill[pattern=north east lines, pattern color=gray] (2,4) rectangle (3,5);
					\fill[pattern=north east lines, pattern color=gray] (4,4) rectangle (5,5);
					\fill[pattern=north east lines, pattern color=gray] (2,3) rectangle (3,4);
					\fill[pattern=north east lines, pattern color=gray] (3,3) rectangle (4,4);
					\fill[pattern=north east lines, pattern color=gray] (4,3) rectangle (5,4);
					\fill[pattern=north east lines, pattern color=gray] (2,1) rectangle (3,2);
					\fill[pattern=north east lines, pattern color=gray] (3,1) rectangle (4,2);
					\fill[pattern=north east lines, pattern color=gray] (4,1) rectangle (5,2);
					\fill[pattern=north east lines, pattern color=gray] (2,2) rectangle (3,3);
					\fill[pattern=north east lines, pattern color=gray] (3,2) rectangle (4,3);
					\fill[pattern=north east lines, pattern color=gray] (4,2) rectangle (5,3);
					
				\end{tikzpicture} \caption*{Align first column}
			\end{subfigure} 
			\hfill 
			\begin{subfigure}{0.2\textwidth}
				\begin{tikzpicture}[scale = 0.64]
					% Draw vertical lines
					\foreach \x in {0,...,5}
					\draw[gray] (\x,0) -- (\x,6);
					
					% Draw horizontal lines
					\foreach \y in {0,...,6}
					\draw[gray] (0,\y) -- (5,\y);
					
					\draw[very thick] (0,1) -- (1,1);
					%\draw[very thick] (1,1) -- (1,0);
					%\draw[very thick] (1,1) -- (2,1);
					\draw[very thick] (1,1) -- (1,5);
					\draw[very thick] (1,5) -- (2,5);
					\draw[very thick] (2,5) -- (2,4);
					\draw[very thick] (2,4) -- (3,4);
					\draw[very thick] (3,4) -- (3,5);
					\draw[very thick] (3,5) -- (4,5);
					\draw[very thick] (4,1) -- (5,1);
					\draw[very thick] (4,1) -- (4,5);
					
					\fill[pattern=north east lines, pattern color=gray] (1,1) rectangle (4,4);
					\fill[pattern=north east lines, pattern color=gray] (1,4) rectangle (2,5);
					\fill[pattern=north east lines, pattern color=gray] (3,4) rectangle (4,5);
					
				\end{tikzpicture} 
				\caption*{Slide to left, cyclically}
				
			\end{subfigure}
			\hfill 
			\begin{subfigure}{0.2\textwidth}
				\begin{tikzpicture}[scale = 0.64]
					% Draw vertical lines
					\foreach \x in {0,...,5}
					\draw[gray] (\x,0) -- (\x,6);
					
					% Draw horizontal lines
					\foreach \y in {0,...,6}
					\draw[gray] (0,\y) -- (5,\y);
					
					\node at (0.5,5.5) {\Large 0};
					\node at (1.5,5.5) {\Large 1};
					\node at (2.5,5.5) {\Large 2};
					\node at (3.5,5.5) {\Large 3};
					\node at (4.5,5.5) {\Large 4};
					\node at (0.5,4.5) {\Large 5};
					\node at (2.5,4.5) {\Large 7};
					\node at (4.5,4.5) {\Large 9};
					\node at (0.5,3.5) {\Large 10};
					\node at (4.5,3.5) {\Large 14};
					\node at (0.5,2.5) {\Large 15};
					\node at (4.5,2.5) {\Large 19};
					\node at (0.5,1.5) {\Large 20};
					\node at (4.5,1.5) {\Large 24};
					
					\draw[very thick] (0,1) -- (1,1);
					%\draw[very thick] (1,1) -- (1,0);
					%\draw[very thick] (1,1) -- (2,1);
					\draw[very thick] (1,1) -- (1,5);
					\draw[very thick] (1,5) -- (2,5);
					\draw[very thick] (2,5) -- (2,4);
					\draw[very thick] (2,4) -- (3,4);
					\draw[very thick] (3,4) -- (3,5);
					\draw[very thick] (3,5) -- (4,5);
					\draw[very thick] (4,1) -- (5,1);
					\draw[very thick] (4,1) -- (4,5);
					
					\fill[pattern=north east lines, pattern color=gray] (1,1) rectangle (4,4);
					\fill[pattern=north east lines, pattern color=gray] (1,4) rectangle (2,5);
					\fill[pattern=north east lines, pattern color=gray] (3,4) rectangle (4,5);
					
				\end{tikzpicture} 
				\caption*{Renumber boxes}
				
			\end{subfigure}
			\caption{Vanishing orders of $L(-p)$ for $L\in W^{(5,4,0,1,0)}(\pi,p)$.}
			\label{fig:54010_-ip_vanishing}
		\end{figure}
		
		Thus, for $L\in W^{(5,4,0,1,0)}(\pi,p)$, the vanishing orders for for $L(-p)$ are \[(0,1,2,3,4,5,7,9,10,14,15,19,20,24),\]
		which is just the vanishing sequence corresponding to the vector $(4,0,1,0,4)$.
	\end{example}
	\
	To keep track of the vanishing orders, we consider the following definition.
	
	\begin{defn} \label{def: sigma}
		Given $\vec{e}=(e_0,\dots,e_{k-1})\in \Z^k$, we define the map \[\sigma: \Z^k \to \Z^k,\ \vec{e}=(e_0,e_1,e_2,\dots,e_{k-1})\mapsto (e_1, e_2, \dots, e_{k-1}, e_{0} -1).\] We define $\sigma^i : \Z^k \to \Z^k$ as the $i^{th}$ composition of $\sigma$. For example, \[\sigma^2(\vec{e})=(e_2,e_3,\dots,e_{k-1},e_0 -1,e_{1}-1), \text{ and } \sigma^{-1}(\vec{e})=(e_{k-1}+1,e_0,e_1,\dots,e_{k-2}).\] 
	\end{defn}
	
	\begin{remark} \label{rmk: Hilbert}
		Let $L\in\Pic(C)$. Since $\OO_C(kp)\cong\pi^*\OO_{\PP^1}(1)$, the projection formula gives
		\[
		h^0\bigl(\PP^1,\pi_*(L(-ip))\otimes\OO_{\PP^1}(n)\bigr)
		=
		h^0\bigl(C,L((nk-i)p)\bigr)
		\]
		for all $i,n\in\Z$. Hence the Hilbert function
		\[
		H_i(n)\coloneqq h^0\bigl(\PP^1,\pi_*(L(-ip))\otimes\OO_{\PP^1}(n)\bigr)
		\]
		determines the splitting type of $\pi_*(L(-ip))$. More precisely, the multiplicity of $\OO_{\PP^1}(j)$ as a summand of $\pi_*(L(-ip))$ is
		\[
		H_i(-j)-2H_i(-j-1)+H_i(-j-2);
		\]
		see \cite[Section~2]{Larson2021} and \cite[Lemma~5.6]{Eisenbud_Schreyer_2008}.
		
		In particular, the vanishing orders at $p$ determine the splitting types of $L(-ip)$ for all $i\in\Z$. Conversely, the splitting types of $L(-ip)$ for all $i\in\Z$ determine the vanishing orders of $L$ at $p$.
	\end{remark}
	
	Consequently for $L\in W^{\vec{e}}(\pi,p)$, the splitting types of $L(-ip)$ are determined completely by the ordered tuple $\vec{e}$, by keeping track of the vanishing orders at $p$.
	
	\begin{prop}\label{prop: intersection description}
		For $\vec{e}\in \Z^k_{\geq 0}$, we have
		\[
		W^{\vec{e}}(\pi,p)
		=
		\left\{
		L\in \Pic(C)
		\ \middle|\
		\pi_{\ast}(L(-ip))\cong \OO_{\PP^1}(\sigma^i(\vec{e}))
		\ \textrm{for every } 0\leq i\leq k-1
		\right\}.
		\]
		Equivalently, the condition on the right-hand side holds for every $i\in\Z$.
	\end{prop}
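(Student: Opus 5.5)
The plan is to pass between vanishing orders at $p$ and splitting types using the Hilbert-function dictionary of Remark~\ref{rmk: Hilbert}. Since both sides of the claimed equality are conditions on the Hilbert functions $H_i(n)=h^0(C,L((nk-i)p))$, it suffices to prove one computation: if $L$ has vanishing sequence exactly $\mathbf{a}(\vec e)$ at $p$, then $\pi_*(L(-ip))\cong\OO_{\PP^1}(\sigma^i(\vec e))$ for all $i$. The converse then follows because the splitting types of all $L(-ip)$ determine the vanishing orders, as in Remark~\ref{rmk: Hilbert}.

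First I treat $i=0$. Write $m=nk+r$ with $0\le r\le k-1$. For $m\ge 0$, $h^0(C,L(-mp))$ is the number of entries of $\mathbf{a}(\vec e)$ that are at least $m$. By construction, the entries congruent to $r$ mod $k$ are $r,r+k,\dots,r+ke_r$, so this count is $\sum_j \#\{t\le e_j : tk+j\ge m\}$. On the other hand, $h^0(\OO(\vec e)(-n))=\sum_j\max(0,e_j-n+1)$. For $n\ge 0$ I check termwise that $\#\{t\in[0,e_j] : tk+j\ge nk\}=\max(0,e_j-n+1)$, using $0\le j\le k-1$. For $n<0$ I need $h^0(L(|n|kp))=\sum_j(e_j+|n|+1)$. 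This holds by Riemann--Roch once $L$ is nonspecial, and $L$ is nonspecial because $h^0(L)=r+1=d-g+1$. Hence $H_0$ agrees with the Hilbert function of $\OO(\vec e)$, and Remark~\ref{rmk: Hilbert} gives $\pi_*L\cong\OO(\vec e)$.

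Next I reduce general $i$ to $i=0$. It suffices to show that $L(-p)$ has vanishing sequence $\mathbf{a}(\sigma(\vec e))$, allowing negative entries in the sense that twisting by $kp$ shifts $\vec e$ by $1$. Then induction, together with $\sigma^k(\vec e)=\vec e-(1,\dots,1)$ and $L(-kp)=L\otimes\pi^*\OO(-1)$, gives every $i\in\Z$. The vanishing orders of $L(-p)$ are the entries of $\mathbf{a}(\vec e)$ minus one, with $-1$ dropped. The residue class $j\ge1$ becomes class $j-1$ with the same length $e_j+1$. Class $0$, namely $\{0,\dots,ke_0\}$, becomes $\{k-1,\dots,ke_0-1\}$, which is class $k-1$ of length $e_0$. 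This is exactly $\mathbf{a}(e_1,\dots,e_{k-1},e_0-1)$, matching Example~\ref{ex:example_54010}.

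The main obstacle is bookkeeping when $e_0=0$: there $\sigma(\vec e)$ has an entry $-1$, and $\mathbf{a}$ was only defined for nonnegative vectors. I would handle this by working throughout with twists $L(Nkp)$ for large $N$, which shifts every entry by $N$. Alternatively, I can phrase the $i=0$ computation purely through $h^0(L(-mp))$ for all $m\in\Z$, which makes sense for any integer vector. I would also check that "exactly" the vanishing orders is what the splitting type encodes, not merely a bound. This holds because the Hilbert function $m\mapsto h^0(L(-mp))$ recovers the full vanishing sequence.
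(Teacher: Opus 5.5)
Your proposal is correct and follows essentially the same route as the paper. Both use the Hilbert-function dictionary of Remark~\ref{rmk: Hilbert}, the observation (as in Example~\ref{ex:example_54010}) that $L(-p)$ has the vanishing orders of $\sigma(\vec{e})$, iteration together with $\pi_*(L(nkp))\cong\pi_*L\otimes\OO_{\PP^1}(n)$, and the converse via the fact that the splitting types determine the vanishing orders. You additionally make explicit two points the paper leaves implicit, and handle both correctly: the direct verification that $\pi_*L\cong\OO_{\PP^1}(\vec{e})$, using nonspecialty of $L$ for the negative twists, and the bookkeeping for negative entries of $\sigma^i(\vec{e})$ when some $e_j=0$.
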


	\begin{proof} 
		The inclusion $\supseteq$ is clear: If $L \in W^{\vec{e}}(\pi,p)$ it follows that (cf. Example~\ref{ex:example_54010}) 
		\[ \pi_{\ast}(L(-p))\cong \OO_{\PP^1}(\sigma(\vec{e})). \]
		Repeating the argument, we obtain the first inclusion. 
		
		Using Remark~\ref{rmk: Hilbert}, the vanishing orders of $L$ are completely determined by the vector bundles 
		\[ \pi_*L, \ \pi_*(L-p), \ \ldots \ \pi_*(L-(k-1)\cdot p). \]
		In particular, the inclusion above tells us that the conditions  
		\[ \pi_{\ast}(L(-ip))\cong \OO_{\PP^1}(\sigma^i(\vec{e}))
		\ \textrm{for every } 0\leq i\leq k-1\]
		determine the same vanishing orders for $L$ as the condition $L \in W^{\vec{e}}(\pi,p)$. This implies the reverse inclusion. To see that it is enough to consider the splitting types for $0\leq i\leq k-1$ suffices, note that the push-pull formula implies 
		$$\pi_{\ast}(L(nkp))=(\pi_{\ast}(L))\otimes\OO_{\PP^1}(n)$$ 
		for any $n$ and any line bundle $L$ on $C$. 
	\end{proof}
	
	We study degenerations of pointed Hurwitz--Brill--Noether loci
	over the admissible cover compactification $\hhTot$ of $\cHTot$,
	focusing on the following setting.
	
	\begin{setup}[Elliptic-tail degeneration]\label{setup: elliptic tail}
		Fix an admissible cover
		\[
		\pi_0\colon X\cup_q E
		\longrightarrow P_X\cup_{\overline q}P_E,
		\]
		where $X$ and $E$ are smooth curves of genera $g-1$ and $1$,
		respectively, and $P_X\cong P_E\cong\PP^1$.
		The restrictions
		\[
		\pi_X\colon X\to P_X,
		\qquad
		\pi_E\colon E\to P_E
		\]
		have degree $k$ and are totally ramified at $q$.
		We fix a marked point $p\in E\setminus\{q\}$ at which
		$\pi_E$ is also totally ramified, and assume that
		$p-q\in\Pic^0(E)$ has order exactly $k$. We further
		assume that
		\[
		[\pi_X\colon X\longrightarrow P_X,q]
		\in\mathcal{H}^{\mathrm{tot}}_{g-1,k}
		\]
		is general. 
	\end{setup}
	
	To describe the possible vanishing orders at the node $q$
	in Setup~\ref{setup: elliptic tail}, we introduce the following
	transformations.
	
	\begin{defn} \label{def: t_i} For any vector $\Vec{e} = (e_0,\ldots, e_{k-1})$ and any $0\leq i \leq k-1$, we define a vector $t_{j}(\Vec{e})$ using the following rules. 
		\begin{enumerate}
			\item For $1\leq j \leq k-1$
			\[
			t_j(\Vec{e}) \coloneqq \left\{
			\begin{array}{l}
				(e_0, e_1,\ldots, e_{j-2}, e_j, e_{j-1}, e_{j+1}, \ldots, e_{k-1}), \ \textrm{if} \ e_j > e_{j-1}  \\[0.2cm]
				\Vec{e}, \ \textrm{otherwise}.
			\end{array} \right.
			\]
			\item For $j=0$
			\[
			t_0(\Vec{e}) \coloneqq \left\{
			\begin{array}{l}
				(e_{k-1}+1, e_1,\ldots, e_{k-2}, e_0-1), \ \textrm{if} \ e_0 > e_{k-1}+1  \\[0.2cm]
				\Vec{e}, \ \textrm{otherwise}.
			\end{array} \right.
			\]
		\end{enumerate}
		
	\end{defn}

	\begin{lemma} \label{lemma: ramifications} Let $[\pi_0\colon X\cup_q E
		\rightarrow P_X\cup_{\overline q}P_E, p] \in \hhTot$ as described in Setup~\ref{setup: elliptic tail}, $\Vec{e}  \in\Z^k_{\geq0}$, $r = k+ |\vec{e}| - 1$, $d = g+r$, and
		\[\textbf{a}(\Vec{e}) = (0\leq a_0 < a_1 < \cdots < a_r \leq d)  \] 
		the $k$-admissible vanishing sequence associated to $\Vec{e}$. Denote by $\left\{l_X \coloneqq (L_X, V_X), l_E \coloneqq (L_E, V_E)\right\}$ the two aspects of a limit linear $g^r_d$ with ramification orders $\textbf{a}(\Vec{e})$ at $p$. Then, the associated pair of line bundles $\{L_X, L_E\} \in \textrm{Pic}^d(X) \times \textrm{Pic}^d(E)$ is contained in the union of the loci 
		\[ \left( \OO_X(q)\otimes W^r_{r+g-1,\textbf{a}(\Vec{e})}(X,q)\right)  \times \textrm{Pic}^d(E)  \text{ and} \]
		
		\[ \left( \OO_X(q)\otimes W^r_{r+g-1,\textbf{a}(t_j(\Vec{e}))}(X,q)\right) \times\{\OO_E(jp+(d-j)q)\}, \]
		for $0\leq j \leq k-1$ such that $t_j(\Vec{e}) \neq \Vec{e}$.  
		
	\end{lemma}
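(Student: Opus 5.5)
The plan is to compare the two aspects through the usual compatibility at the node and to use the torsion condition on $p-q$ to control the elliptic aspect. Write $\textbf{b}=(b_0<\dots<b_r)$ for the vanishing sequence of $l_E$ at $q$ and $\textbf{c}=(c_0<\dots<c_r)$ for that of $l_X$ at $q$. The refined limit linear series compatibility from~\cite{basiclimitlinear} gives $c_{i}\geq d-b_{r-i}$.

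On $E$ I will use the standard bound for a $g^r_d$ with vanishing sequences $\textbf{a}(\vec e)$ at $p$ and $\textbf{b}$ at $q$: for every $i$ we have $a_i+b_{r-i}\leq d$. Equality forces a section with divisor $a_ip+(d-a_i)q$, so $L_E\cong\OO_E(a_ip+(d-a_i)q)$. Since $p-q$ has order exactly $k$, this isomorphism class depends only on $a_i \bmod k$. Hence all indices $i$ where equality holds have $a_i$ in a single residue class $j$ modulo $k$, and in that case $L_E\cong\OO_E(jp+(d-j)q)$. Consequently, for every $i$, $c_i\geq a_i+1$ if $i$ is not an equality index, and $c_i\geq a_i$ if it is. Passing to $L_X(-q)$, which has degree $d-1=r+g-1$, the vanishing sequence at $q$ is bounded below by $\textbf{a}(\vec e)$ with some subset $S$ of the entries in residue class $j$ lowered by one.

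If $S=\emptyset$, this says $L_X\in\OO_X(q)\otimes W^r_{r+g-1,\textbf{a}(\vec e)}(X,q)$ with $L_E$ arbitrary, which is the first locus. If $S\neq\emptyset$, I will pin down $S$ combinatorially. Lowering $j+mk$ gives $(j-1)+mk$; for $j=0$ this is $(k-1)+(m-1)k$. By Proposition~\ref{prop: ramification behaviour} applied to $[\pi_X,q]$, the actual vanishing sequence of $L_X(-q)$ at $q$ is $k$-admissible. Strict monotonicity of the lowered sequence forces each lowered entry not to collide with an existing entry of class $j-1$, so $m>e_{j-1}$ (respectively $m-1>e_{k-1}$, i.e.\ $m\geq e_{k-1}+2$, for $j=0$). $k$-admissibility forces the remaining class-$j$ entries and the enlarged class-$(j-1)$ entries to each be initial segments $\{\text{res},\text{res}+k,\dots\}$. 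Together these force $S=\{m : e_{j-1}<m\leq e_j\}$ (with the obvious modification for $j=0$). This is nonempty exactly when $e_j>e_{j-1}$ (respectively $e_0>e_{k-1}+1$), and the resulting sequence is precisely $\textbf{a}(t_j(\vec e))$, giving the second family of loci.

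The main obstacle is that the compatibility only gives lower bounds $c_i\geq\cdots$, whereas admissibility is a statement about the \emph{actual} vanishing sequence $\textbf{c}'$ of $L_X(-q)$, which may be strictly larger. So the argument above must be run on $\textbf{c}'$ rather than on the bound. I would handle this by showing that any $k$-admissible sequence $\textbf{c}'$ dominating entrywise the lowered sequence (with lowered set $S$ contained in class $j$) automatically dominates either $\textbf{a}(\vec e)$ or $\textbf{a}(t_j(\vec e))$. Since $W^r_{d,\textbf{a}}$ is defined by the closed conditions $h^0(L(-a_ip))\geq r+1-i$, domination gives the desired inclusion. To prove the domination, I would compare, for each $n$, the counts $\#\{i : c'_i\geq n\}$ against the counts for $\textbf{a}(\vec e)$ and $\textbf{a}(t_j(\vec e))$, using that an admissible sequence is determined by its column heights as in Example~\ref{ex:example_54010}. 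This reduces to a monotonicity statement about column heights under the swap $t_j$, which I expect to be elementary but fiddly.
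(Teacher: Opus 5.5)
Your first half is the paper's argument. On $E$ you get $a_i+b_{r-i}\le d$, and equality forces $L_E\cong\OO_E(a_ip+(d-a_i)q)$. Since $p-q$ has exact order $k$, all equality indices lie in one residue class $j$. Hence $c_i\ge a_i+1$ off the equality set $S$ and $c_i\ge a_i$ on it.

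The second half has a genuine gap. Your attempt to pin down $S$ has three problems:
\begin{itemize}
\item It imposes ``strict monotonicity'' on a sequence of lower bounds, which need not be increasing.
\item It invokes $k$-admissibility for $V_X$, but Proposition~\ref{prop: ramification behaviour} is about complete series and $V_X$ need not be complete.
\item Its conclusion is false. $S$ can be a proper subset of $\{m : e_{j-1}<m\le e_j\}$. For $j=0$ the index of $a_0=0$ can lie in $S$, meaning $c_0=0$, $b_r=d$ and $L_E\cong\OO_E(dq)$. This is exactly case~(3) of Definition~\ref{def: nested description}. It also happens when $t_0(\Vec e)=\Vec e$, already for $k=2$, $\Vec e=(0,0)$, with $V_E$ containing the section of divisor $dq$. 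So your dichotomy ``$S\neq\emptyset$ gives the second family'' misclassifies limit series.
\end{itemize}
The domination statement you fall back on is true, but you leave it unproven and route it through admissibility, which is not what makes it work.

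The missing idea is that only distinctness of vanishing orders is needed. Because $\mathbf c$ is strictly increasing and $S$ lies in class $j$, you have $c_i\ge\max\{a_i+1-\delta^{a_i,j},\,c_{i-1}+1\}$. Let $(a'_i)$ be the smallest strictly increasing sequence of non-negative integers lying above $(a_i-\delta^{a_i,j})_i$. Then $c_i-1\ge a'_i$ for $i\ge1$, and also for $i=0$ when $j\neq0$. You can compute $(a'_i)$ directly:
\begin{itemize}
\item For $j\ge1$, an entry $nk+j$ drops to $nk+j-1$ exactly when $nk+j-1\notin\mathbf a(\Vec e)$, i.e.\ when $n>e_{j-1}$.
\item For $j=0$, the entry $nk$ drops exactly when $n\ge e_{k-1}+2$, and $a_0=0$ stays $0$.
\end{itemize}
The result is $\mathbf a(t_j(\Vec e))$. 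When $t_j$ is trivial this equals $\mathbf a(\Vec e)$, and the pair lands in the first locus.

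You must also treat the case $c_0=0$. Then $V_X(-q)$ has dimension only $r$, so ``passing to $L_X(-q)$'' does not produce a $g^r_{d-1}$. The paper supplies the extra section by Riemann--Roch on $X$: $\deg L_X=g+r$ on a curve of genus $g-1$ gives $h^0(X,L_X)\ge r+2$, equivalently $h^0(X,L_X(-q))\ge r+1$. This gives the condition at the entry $a'_0=0$. The space $V_X(-q)$ then gives the remaining conditions $h^0\bigl(X,L_X(-q)(-a'_iq)\bigr)\ge r+1-i$ for $i\ge1$.
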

	\begin{proof}
		We distinguish two possible cases.
		
		\underline{Case~1:} If $L_E$ is not a line bundle of the form $\mathcal{O}_E(jp+ (d-j)q)$ for some $0\leq j \leq k-1$, then the ramification orders of the $E$-aspect at $q$ are smaller or equal to \[ (d-1-a_r, \ldots, d-1-a_0). \] This implies, using the definition of limit linear series, that the vanishing orders of $l_X$ at the point $q$ are greater or equal to \[ 1+\textbf{a}(\Vec{e}) \coloneqq (a_0+1, \ldots, a_r+1).\] It follows that $(L_X(-q), V_X(-q))$ determines a $g^r_{d-1}$ on $X$ with vanishing orders greater or equal to $\textbf{a}(\Vec{e})$ at the point $q$.

		\underline{Case~2:} If $L_E$ is of the form $\mathcal{O}_E(jp+ (d-j)q)$ for some $0\leq j\leq k-1$, then the vanishing orders of the $E$-aspect at $q$ are smaller or equal to 
		\[ (d-1-a_r + \delta^{a_r, j}, \ldots, d-1-a_0+ \delta^{a_0, j}), \]
		where $\delta^{x,j} = 1$ if $x\equiv j \pmod{k}$ and is $0$ otherwise. 
		
		In particular, the vanishing orders of $l_X$ at the point $q$ are greater or equal to
		\[  (a_0+1-\delta^{a_0, j}, \ldots, a_r+1-\delta^{a_r, j}).\] Since $h^0(X,L_X) \geq r+2$, the curve $X$ admits a $g^{r+1}_d$ with vanishing orders at $q$ greater or equal to \[  (0, a_0+1-\delta^{a_0, j}, \ldots, a_r+1-\delta^{a_r, j}).\] Tensoring with $\mathcal{O}_X(-q)$ we obtain that $X$ admits a $g^r_{d-1}$ with vanishing orders at $q$ greater or equal to \[  (a_0-\delta^{a_0, j}, \ldots, a_r-\delta^{a_r, j}).\] But the $r+1$ ramification orders must be distinct non-negative integers. This implies that $X$ admits a $g^r_{d-1}$ with ramification orders at $q$ greater or equal to $\textbf{a}'$, where $\textbf{a}'$ is the smallest vectors with distinct non-negative entries greater or equal to \[  (a_0-\delta^{a_0, j}, \ldots, a_r-\delta^{a_r, j})\] on all positions.
		
		This sequence $\textbf{a}' = (a_0',\ldots, a_r')$ of ramification orders is easily computed from $\textbf{a}(\Vec{e})$ by the following rules 
		\begin{equation*} 
			a_0' = \max\{0, a_0-\delta^{a_0, j} \},
		\end{equation*}
		while for $i>0$ we have 
		\begin{equation*}
			a_i'  =
			\begin{cases} 
				a_i-1, &\ \textrm{if} \ a_{i-1}\neq a_i-1 \ \textrm{and} \ \delta^{a_i, j} = 1 \\ 
				a_i, &\ \textrm{otherwise}.
			\end{cases}
		\end{equation*}
		
		Id est, a vanishing order $ n\cdot k + j$ is changed to $ n\cdot k + j-1$ if and only if  $n\cdot k + j-1$ is not already a vanishing order. But $n\cdot k + j-1$ is not a vanishing order if and only if $n\cdot k + j-1 > e_{j-1}\cdot k + j-1$. Describing $\textbf{a}'$ as the vanishing sequence of a vector, we see this corresponds to interchanging $e_j$ and $e_{j-1}$ if $j>0$ and $e_j > e_{j-1}$, or to changing the first entry to $e_{k-1}+1$ and the last one to $e_0-1$, if $j = 0$. 
		
		In total, $\textbf{a}' = \textbf{a}(t_j(\Vec{e}))$. In particular, the pair $(L_X, L_E)$ associated to a limit $g^r_d$ with vanishing orders $\textbf{a}(\Vec{e})$ must be in one of the loci described in the hypothesis. 
	\end{proof}
	
	Let now $\Vec{e}$ and $\Vec{f}$ be two length $k$ non-negative vectors and let $\elhur \in \cHTot$. We can associate to them the pointed Hurwitz--Brill--Noether spaces $W^{\Vec{e}}(\pi,p)$ and  $W^{\Vec{f}}(\pi,p)$, as appearing in Definition~\ref{def: sequence and pHBN}. We would like these loci to behave well with respect to inclusions: namely if $\textbf{a}(\Vec{e}) \leq \textbf{a}(\Vec{f})$ entry-wise we want the inclusion 
	\[ \overline{W}^{\Vec{f}}(\pi, p) \subseteq  \overline{W}^{\Vec{e}}(\pi, p).  \]
	
	To compare two pointed Hurwitz--Brill--Noether spaces, we first define a partial order $\succeq$ on vectors.
	
	\begin{defn} \label{def: order} Let $\Vec{e}$ and $\Vec{f}$ be two vectors satisfying $|\vec{e}|=|\vec{f}|$. We define a partial order on them via the rule
		\[ \Vec{f} \succeq \Vec{e} \ \iff \ \sum_{i=0}^n f_i \geq \sum_{i=0}^n e_i \ \textrm{for every} \ 0\leq n \leq k-1. \]
		This definition will be useful when working with vectors having ordered entries. As such, we define $\textrm{ord}(\Vec{e})$ to be the vector with same entries as $\Vec{e}$ in decreasing order. 
	\end{defn}
	
	Then, we can relate the entry-wise inequality   $\textbf{a}(\Vec{e}) \leq \textbf{a}(\Vec{f})$ with the ordering defined above.
	
	\begin{lemma} \label{lemma: order of vectors and vanishing}
		Let $\Vec{e},~\vec{f}\in\Z^k_{\geq 0}$ such that $|\vec{e}|=|\vec{f}|$
		%\[ \sum_{i=0}^{k-1} e_i = \sum_{i=0}^{k-1}f_i\]
		and $\textbf{a}(\Vec{e}) \leq \textbf{a}(\Vec{f})$ entry-wise. Then we have 
		\[ \mathrm{ord}(\Vec{f}) \succeq \mathrm{ord}(\Vec{e}).  \]
	\end{lemma}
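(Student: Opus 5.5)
The plan is to translate the entrywise inequality $\textbf{a}(\Vec{e})\leq\textbf{a}(\Vec{f})$ into counting inequalities, evaluate these only at thresholds that are multiples of $k$, and then recover the partial sums of $\mathrm{ord}(\Vec{e})$, $\mathrm{ord}(\Vec{f})$ by a min-formula.

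\emph{Step 1: entrywise order as counting inequalities.} Since $|\Vec{e}|=|\Vec{f}|$, both $\textbf{a}(\Vec{e})$ and $\textbf{a}(\Vec{f})$ have the same length $r+1$ with $r=|\Vec{e}|+k-1$. Both are strictly increasing. For two increasing sequences of equal length, $\textbf{a}(\Vec{e})\leq\textbf{a}(\Vec{f})$ entrywise is equivalent to
\[ \#\{i \mid a_i(\Vec{f})\geq N\}\;\geq\;\#\{i \mid a_i(\Vec{e})\geq N\}\quad\text{for every } N\in\Z. \]
We only need the forward implication, which is immediate: if $a_i(\Vec{e})\geq N$ then $a_i(\Vec{f})\geq a_i(\Vec{e})\geq N$, so $i$ is counted on both sides.

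\emph{Step 2: evaluate at $N=mk$.} Take $N=mk$ with $m\geq 1$. The entries of $\textbf{a}(\Vec{e})$ in residue class $j$ are $j, j+k,\dots, e_jk+j$. For $0\leq j\leq k-1$, the entries $nk+j$ that are $\geq mk$ are exactly those with $n\geq m$. Hence their number is $\max\{0,e_j-m+1\}$. Writing $t=m-1\geq 0$ and $(x)_+=\max\{0,x\}$, Step 1 gives
\[ \sum_{j=0}^{k-1}(f_j-t)_+\;\geq\;\sum_{j=0}^{k-1}(e_j-t)_+\qquad\text{for all integers } t\geq 0. \]
The case $t=-1$ is the equality $|\Vec{f}|+k=|\Vec{e}|+k$, which holds by assumption.

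\emph{Step 3: recover partial sums.} Let $x=\mathrm{ord}(\Vec{x})$ be the decreasing rearrangement of a vector with nonnegative integer entries, written $x_0\geq x_1\geq\cdots\geq x_{k-1}$. Then for each $0\leq n\leq k-1$,
\[ \sum_{i=0}^n x_i=\min_{t\in\Z_{\geq0}}\Bigl((n+1)t+\sum_{j}(x_j-t)_+\Bigr). \]
To see this, note that for every $t$,
\[ \sum_{i\leq n}x_i\leq\sum_{i\leq n}\bigl(t+(x_i-t)_+\bigr)\leq (n+1)t+\sum_j (x_j-t)_+ , \]
so the left side is at most the minimum. Conversely, at $t=x_n$ both inequalities are equalities, because $(x_j-x_n)_+=0$ for every $j>n$.

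\emph{Step 4: conclude.} Fix $n$ and set $t=\mathrm{ord}(\Vec{f})_n\geq 0$. Using Step 3 for $\Vec{f}$ at this $t$, then Step 2, then Step 3 for $\Vec{e}$:
\[ \sum_{i\leq n}\mathrm{ord}(\Vec{f})_i=(n+1)t+\sum_j(f_j-t)_+\geq (n+1)t+\sum_j(e_j-t)_+\geq\sum_{i\leq n}\mathrm{ord}(\Vec{e})_i. \]
This holds for every $n$, and the total sums agree, so $\mathrm{ord}(\Vec{f})\succeq\mathrm{ord}(\Vec{e})$.

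The only delicate point is Step 2: one must check that the threshold $N=mk$ cuts every residue class $j$ at exactly $n\geq m$, so that the count comes out as $(e_j-(m-1))_+$ uniformly in $j$. This works because $0\leq j<k$; it is the reason to use only multiples of $k$ as thresholds.
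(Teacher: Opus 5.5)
Your proof is correct. Its first half coincides with the paper's. The paper counts gaps of $\mathbf{a}(\Vec{e})$ and $\mathbf{a}(\Vec{f})$ below $(n+2)k$ and gets $(n+1)A_0+nA_1+\cdots+A_n\le (n+1)B_0+nB_1+\cdots+B_n$, where $A_i,B_i$ count entries equal to $i$. This is $\sum_j(n+1-e_j)_+\le\sum_j(n+1-f_j)_+$. Since $(x-t)_+=(x-t)+(t-x)_+$ and $|\Vec{e}|=|\Vec{f}|$, it is exactly your Step~2 inequality at $t=n+1$, read from the lower tail instead of the upper tail.

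Where you differ is in passing from these threshold inequalities to majorization. The paper bounds the sum of the smallest $x$ entries, taking $x$ block by block in $[B_0+\cdots+B_{n-1}+1,\,B_0+\cdots+B_n]$. It estimates the $\Vec{e}$-side with a greedy count (the auxiliary quantities $m_j$) and closes with $m_j\le A_j$ and the gap inequality. You instead use the identity $\sum_{i\le n}x_i=\min_{t\ge 0}\bigl((n+1)t+\sum_j(x_j-t)_+\bigr)$, evaluated at $t=\mathrm{ord}(\Vec{f})_n$, which finishes the argument in one line.

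Your route is shorter and avoids the bookkeeping; the paper's final displayed inequality even drops the $B_{n-1}$ term, a harmless index slip. It also makes visible that the lemma is just the standard characterization of majorization by the functions $x\mapsto(x-t)_+$, applied at thresholds that are multiples of $k$. The paper's argument proves that same fact by hand.
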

	\begin{proof} Since $|\vec{e}|=|\vec{f}|$, our conclusion is equivalent to showing that the sum of the last $x$ entries of $\textrm{ord}(\Vec{f})$ is smaller or equal to the sum of the last $x$ entries of $\textrm{ord}(\Vec{e})$, for every $1\leq x \leq k$.
		
		Denote by $A_i$ and $B_i$ the number of entries of $\Vec{e}$ and $\Vec{f}$ equal to $i$. 
		Our first observation is that $A_0 \leq B_0$ since these values count the gaps in $\textbf{a}(\Vec{e})_{< 2k}$ and $\textbf{a}(\Vec{f})_{< 2k}$, respectively, and the entry-wise inequality $\textbf{a}(\Vec{e}) \leq \textbf{a}(\Vec{f})$ implies there are more gaps in $\textbf{a}(\Vec{f})_{< 2k}$. We now consider a number $1\leq x \leq B_0$ and sum the last $x$ elements of $\textrm{ord}(\Vec{f})$ and $\textrm{ord}(\Vec{e})$, respectively. 
		
		Because $x \leq B_0$, the sum of the smallest $x$ entries in $\textrm{ord}(\Vec{f})$ is $0$. For $\textrm{ord}(\Vec{e})$, the sum of its smallest $x$ entries is at least 
		\[ 0\cdot \min\{A_0, x\} + 1\cdot (x-\min\{A_0, x\}). \]
		It is clear that $x-\min\{A_0, x\} \geq 0$. \vspace{2em}
		\\ 
		\noindent
		\begin{minipage}{0.25\textwidth}
			\begin{tikzpicture}
				% Draw vertical lines
				\foreach \x in {0,...,5}
				\draw[gray] (\x,0) -- (\x,7);
				
				% Draw horizontal lines
				\foreach \y in {0,...,7}
				\draw[gray] (0,\y) -- (5,\y);
				
				\draw[very thick] (0,1) -- (1,1);
				\draw[very thick] (1,2) -- (1,1);
				\draw[very thick] (1,2) -- (2,2);
				\draw[very thick] (2,2) -- (2,6);
				\draw[very thick] (2,6) -- (3,6);
				\draw[very thick] (3,6) -- (3,5);
				\draw[very thick] (3,5) -- (4,5);
				\draw[very thick] (4,5) -- (4,6);
				\draw[very thick] (4,6) -- (5,6);
				
				\node at (0.5,6.5) {\Large 0};
				\node at (1.5,6.5) {\Large 1};
				\node at (2.5,6.5) {\Large 2};
				\node at (3.5,6.5) {\Large 3};
				\node at (4.5,6.5) {\Large 4};
				\node at (0.5,5.5) {\Large 5};
				\node at (1.5,5.5) {\Large 6};
				\node at (3.5,5.5) {\Large 8};
				\node at (0.5,4.5) {\Large 10};
				\node at (1.5,4.5) {\Large 11};
				\node at (0.5,3.5) {\Large 15};
				\node at (1.5,3.5) {\Large 16};
				\node at (0.5,2.5) {\Large 20};
				\node at (1.5,2.5) {\Large 21};
				\node at (0.5,1.5) {\Large 25};
				
				\fill[pattern=north east lines, pattern color=gray] (2,5) rectangle (3,6);
				\fill[pattern=north east lines, pattern color=gray] (4,5) rectangle (5,6);
				\fill[pattern=north east lines, pattern color=gray] (2,4) rectangle (3,5);
				\fill[pattern=north east lines, pattern color=gray] (3,4) rectangle (4,5);
				\fill[pattern=north east lines, pattern color=gray] (4,4) rectangle (5,5);
				
			\end{tikzpicture}
		\end{minipage}
		\hfill
		\begin{minipage}{0.6\textwidth} 
			This is an example for $k=5$ and $\Vec{e} = (5,4,0,1,0)$. The vanishing orders in $\textbf{a}(\Vec{e})$ are the numbers appearing in the table, while the dashed squares represent the gaps in $\textbf{a}(\Vec{e})_{< 3k}$, i.e. the numbers smaller than $3k$ not appearing in $\textbf{a}(\Vec{e})$. \\ \ \\
			The number of gaps is computed as $2A_0 + A_1$ = $2\cdot 2 +1$. 
		\end{minipage} 
		\ \\
		
		Now we prove this more generally. By looking at the number of gaps in $\textbf{a}(\Vec{e})_{< (n+2)k}$ and $\textbf{a}(\Vec{f})_{< (n+2)k}$, and comparing the two numbers, we obtain the inequality 
		\[ (n+1)A_0 + nA_1 + \cdots + A_n \leq (n+1)B_0 + nB_1 + \cdots + B_n.   \]
		
		Let $1 \leq x \leq B_n$ and look at the sum of the last $B_0 + B_1 + \cdots + B_{n-1} + x$ entries of $\textrm{ord}(\Vec{f})$ and $\textrm{ord}(\Vec{e})$, respectively. The sum of the last $B_0 + B_1 + \cdots + B_{n-1} + x$ entries in $\textrm{ord}(\Vec{f})$ is equal to 
		\[  B_1 + \cdots + (n-1)B_{n-1} + nx.\]
		On the other  hand, for $\textrm{ord}(\Vec{e})$, the sum is greater or equal to the value 
		\[ m_1 + 2 m_2 + \cdots + (n-1)m_{n-1} + n(B_0 + \cdots +B_{n-1}+x - A_0 - m_1-\cdots - m_{n-1}), \]
		where $m_1  = \max\{0, \min\{A_1, B_0 + \cdots + B_{n-1}+ x - A_0\} \}$ and $m_j$ is defined inductively by 
		\[ m_j = \max\{0,\min\{A_j, B_0 + \cdots + B_{n-1}+ x - A_0 - \sum_{i=1}^{j-1}m_i\}\}. \]
		In order to conclude $\textrm{ord}(\Vec{f}) \succeq \textrm{ord}(\Vec{e})$, it suffices to prove that $B_1 + \cdots + (n-1)B_{n-1} + nx$ is the smallest of the two sums. After simplifications, this is equivalent to proving
		\[ \sum_{i=1}^{n-1} (n-i+1)B_{i-1} \geq nA_0 + \sum_{i=1}^{n-1} (n-i) m_i. \]
		Using the inequality $ m_i \leq A_i$, as well as 
		\[nB_0 + (n-1)B_1 + \cdots + B_{n-1} \geq n A_0 + (n-1)A_1 + \cdots + A_{n-1},\]
		we deduce our conclusion. Hence, the sum of the last $x$ entries of $\textrm{ord}(\Vec{f})$ is smaller or equal to the sum of the last $x$ entries of $\textrm{ord}(\Vec{e})$, for every \[ x \in [1, B_0]\cup [B_0+1, B_0+B_1]\cup [B_0+B_1+1, B_0+B_1+B_2]\cup\cdots = \{1,2,\ldots, k\}.\]
		This is exactly what we wanted to prove.
	\end{proof}
	
	Let $\Vec{e} = (e_0, \ldots, e_{k-1})$ be a vector of non-negative entries. In order to compute $\dim W^{\Vec{e}}(\pi,p)$, we first need to introduce some notation.
	
	\begin{defn} \label{def: functions} 
		We define the functions
		\[ u(\Vec{e}) \coloneqq \sum_{e_i<e_j} (e_j-e_i-1) \ \textrm{and}\]
		\[ \textrm{inv}(\Vec{e}) \coloneqq \#\{(i,j) \ \mid\ 0\leq i < j\leq k-1 \ \textrm{and} \ e_i <e_j\}. \] 
	\end{defn}
	Notice that our definition differs from the standard way the number of inversions is defined. This is because $\dim W^{\Vec{e}}(\pi,p)$ depends on how far the vector is from being decreasing.
	
	By degenerating to the boundary, computing the upper-bound for the dimension will reduce to high-school level combinatorics. We first present the result in the form in which it was proposed to the final competition of the International Mathematical Summer Camp $2025$, and then translate it back into the language of Brill--Noether Theory.
	\begin{prop} \label{prop: ramification game}
		Let $\Vec{e} = (e_0,e_1,\ldots, e_{k-1})$ be a vector of non-negative integers and let $N$ be a positive integer satisfying $N > \max\left\{e_i \ | \ 0\leq i \leq k-1 \right\}$. We consider a $N\times k$ board with its cells numbered left to right, top to bottom with the numbers $0, 1,\ldots, kN-1$ in increasing order. For each $0\leq i \leq k-1$, we add a coin in each of the uppermost $e_i+1$ squares of column $i$. 
		
		A move consists in choosing a column, and trying to move all the coins in that column to the squares numbered one less, i.e. 
		\begin{itemize}
			\item a coin in that column is moved to the position numbered one less if that position is free; 
			\item a coin in that column is left in place if the position numbered one less is already occupied by a coin. 
		\end{itemize}
		Then we always need to perform exactly 
		\[  u(\Vec{e}) + \mathrm{inv}(\Vec{e}) \]
		nontrivial moves in order to arrange the coins on the squares numbered $0,1, \ldots, k-1 + \sum_{i=0}^{k-1}e_i$.
	\end{prop}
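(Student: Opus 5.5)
The plan is to translate a move into the operators $t_j$ of Definition~\ref{def: t_i}. We then show that $s(\Vec{e})\coloneqq u(\Vec{e})+\mathrm{inv}(\Vec{e})$ is a nonnegative potential that drops by exactly one under every nontrivial move and vanishes exactly at the target configuration.

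\emph{Step 1: moves are the $t_j$.} A configuration always has the form ``the top $f_i+1$ cells of column $i$ are occupied'' for some vector $\Vec{f}$. At the start $\Vec{f}=\Vec{e}$, and Step 1 shows by induction that each move preserves this form. Cell $nk+i$ is row $n$, column $i$.
\begin{itemize}
\item For a column $j\geq 1$, a coin in row $n$ can move to cell $nk+j-1$ exactly when $n>f_{j-1}$. Thus if $f_j\leq f_{j-1}$ the move is trivial. Otherwise the coins in rows $f_{j-1}+1,\dots,f_j$ shift one column left. Column $j-1$ then has $f_j+1$ coins and column $j$ has $f_{j-1}+1$, which is precisely the vector $t_j(\Vec{f})$.
\item For $j=0$, the coin in row $n\geq1$ moves to cell $(n-1)k+k-1$, which is free iff $n-1>f_{k-1}$. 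The coin at cell $0$ never moves. The move is therefore nontrivial iff $f_0>f_{k-1}+1$, and the result is $(f_{k-1}+1,f_1,\dots,f_{k-2},f_0-1)=t_0(\Vec{f})$.
\end{itemize}
The hypothesis $N>\max e_i$ guarantees every configuration fits on the board. Moves preserve the number of coins $|\Vec{f}|+k$, and the maximum entry never increases. Finally, the coins occupy exactly $0,1,\dots,k-1+|\Vec{e}|$ iff $\Vec{f}$ is weakly decreasing with $f_0\leq f_{k-1}+1$. Equivalently, every $t_j$ fixes $\Vec{f}$, i.e.\ no nontrivial move is available.

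\emph{Step 2: the potential.} A weakly decreasing vector has $\mathrm{inv}=0$, and $u=0$ iff all entries lie within $1$ of each other. Hence $s(\Vec{f})=0$ iff $\Vec{f}$ is terminal in the sense of Step 1. It then remains to show $s(t_j(\Vec{f}))=s(\Vec{f})-1$ whenever $t_j(\Vec{f})\neq\Vec{f}$.

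For $j\geq 1$ this is immediate. Swapping adjacent entries with $f_{j-1}<f_j$ leaves the multiset, hence $u$, unchanged. It also removes exactly the one pair $(j-1,j)$ from the inversion count.

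\emph{Step 3: the case $j=0$, the main obstacle.} Put $a=f_0$ and $b=f_{k-1}$ with $a\geq b+2$. The pair of positions $(0,k-1)$ contributes $a-b-1$ to $u$ and $0$ to $\mathrm{inv}$ before the move. After the move it contributes $\max(0,a-b-3)$ to $u$, and to $\mathrm{inv}$ it contributes $1$ if $a\geq b+3$ and $0$ if $a=b+2$. In both cases the net change is $-1$.

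The main work is to show that each middle entry $x=f_m$, $0<m<k-1$, contributes zero net change. Before the move its contribution is
\[
\bigl[\text{$u$-terms of } (a,x),(x,b)\bigr] + [a<x] + [x<b],
\]
and after the move it is
\[
\bigl[\text{$u$-terms of } (b+1,x),(x,a-1)\bigr] + [b+1<x] + [x<a-1].
\]
I would verify this by splitting into the cases $x<b$, $x=b$, $x=b+1$, $b+1<x<a-1$, $x=a-1$, $x=a$, and $x>a$. Each is a short calculation with $\max(0,|y-x|-1)$.

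\emph{Step 4: conclusion.} Since $s\geq 0$ and every nontrivial move lowers $s$ by exactly one, any sequence of nontrivial moves has length at most $s(\Vec{e})$, so the process terminates. While $s>0$ some nontrivial move is available, and the target configuration is reached exactly when $s=0$. Therefore reaching the target always takes exactly $u(\Vec{e})+\mathrm{inv}(\Vec{e})$ nontrivial moves.
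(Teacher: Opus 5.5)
Your proposal is correct and follows essentially the same route as the paper: identify each nontrivial move with a nontrivial $t_j$, show that $u+\mathrm{inv}$ drops by exactly one (immediately for $j\geq 1$, and for $t_0$ by accounting separately for the pair $(0,k-1)$ and for the pairs involving a middle entry), and note that $u+\mathrm{inv}$ vanishes exactly at the target configuration. The differences are only in bookkeeping. You verify the $t_0$ case entry by entry, while the paper writes the changes in $u$ and $\mathrm{inv}$ as counts over the middle entries and adds them. Your Step~1 also makes explicit two points the paper leaves implicit: that moves preserve the column-prefix shape, and that the process terminates.
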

	\begin{proof}
		The proof is a simple induction on the value $\ u(\Vec{e}) + \mathrm{inv}(\Vec{e})$. This value is equal to $0$ if and only if the coins are on the squares numbered $0,1, \ldots, k-1 + \sum_{i=0}^{k-1}e_i$. This proves the case when the value is $0$. We will show that $\ u(\Vec{e}) + \mathrm{inv}(\Vec{e})$ is a semi-invariant decreasing by exactly one after each move.
		
		When $u(\Vec{e}) + \mathrm{inv}(\Vec{e}) > 0$, there are two possible cases: 
		
		\underline{Case~1:} If we choose the column $i$ for $1\leq i \leq k-1$, then the new configuration of coins is the one generated by the vector 
		\[ t_i(\Vec{e}) \coloneqq (e_0, e_1, \ldots, e_{i-2}, e_i, e_{i-1}, \ldots, e_{k-1}).\]
		We have $u(\Vec{e}) = u(t_i(\Vec{e}))$ while the number of inversions decreases by exactly one. The induction hypothesis concludes the proposition in this case. 
		
		\underline{Case~2:} We choose column $0$ and we try to move its coins to column $k-1$. In this case, the new configuration is the one generated by the vector 
		\[ t_0(\Vec{e}) \coloneqq (e_{k-1}+1, e_1, e_2, \ldots, e_{k-2}, e_0-1) \]
		and we immediately check the equality
		\begin{align*}
			u(\Vec{e}) - u(t_0(\Vec{e})) = &\#\left\{ 1\leq j \leq k-2 \ \mid \ e_j+1 < e_0 \right\}  + \#\left\{ 1\leq j \leq k-2 \ \mid \ e_j-1 > e_{k-1} \right\} - \cdots \\
			&-\#\left\{ 1\leq j \leq k-2 \ \mid \ e_j > e_0 \right\} - \#\left\{ 1\leq j \leq k-2 \ \mid \ e_j < e_{k-1}\right\} + \cdots \\
			& + e_0-e_{k-1} - 1- \mathrm{max}\left\{0, e_0-e_{k-1} -3\right\}.
		\end{align*}
		To see this, we look at the contribution to $u(\Vec{e})$ of pairs $(e_0, e_j)$ and $(e_{k-1},e_j)$, and describe how it changes when we both decrease $e_0$ and increase $e_{k-1}$ by $1$. 
		
		On the other hand, a similar argument implies 
		\begin{align*}
			\mathrm{inv}(\Vec{e}) - \mathrm{inv}(t_0(\Vec{e})) =
			&\#\left\{ 1\leq j \leq k-2 \ \mid \ e_j > e_0 \right\} + \#\left\{ 1\leq j \leq k-2 \ \mid \ e_j < e_{k-1}\right\} - \cdots \\
			&-\#\left\{ 1\leq j \leq k-2 \ \mid \ e_j < e_0-1 \right\}  - \#\left\{ 1\leq j \leq k-2 \ \mid \ e_j > e_{k-1} +1\right\} - \cdots \\
			& - \begin{cases}
				1 \ \mathrm{if} \ e_0-e_{k-1} \geq 3 \\
				0 \ \mathrm{if} \ e_0-e_{k-1} = 2.
			\end{cases}
		\end{align*}
		
		Adding the two equalities together, we obtain 
		\[ u(t_0(\Vec{e})) + \mathrm{inv}(t_0(\Vec{e})) = u(\Vec{e}) + \mathrm{inv}(\Vec{e}) - 1.\]
		This means that each possible move reduces this semi-invariant by exactly $1$, and hence we need exactly $u(\Vec{e}) +\mathrm{inv}(\Vec{e})$ moves until the coins are on the squares numbered 
		\[0,1, \ldots, k-1 + |\vec{e}|\]%\sum_{i=0}^{k-1}e_i,\]
		i.e. until the coins are in the unique position where no further moves are possible.
	\end{proof}

	There is another way in which we can interpret the semi-invariant $u(\Vec{e})+ \textrm{inv}(\Vec{e})$. For this purpose, we define the displacement number of a coin as the number written in the cell it occupies minus its position in the ordered list of coins.

	\begin{remark} \label{remark: southernomst description}
		Let $\Vec{e} = (e_0,e_1,\ldots, e_{k-1})$ be a vector of positive integers and consider the same squares occupied with coins as in Proposition~\ref{prop: ramification game}. Then the value $u(\Vec{e})+ \textrm{inv}(\Vec{e})$ can be alternatively obtained by summing up the displacement number for the southernmost coin in each column. We clarify what we mean by this with the following example:
		\ \\ 
		\noindent
		\begin{figure}[H]
			\begin{subfigure}{0.33\textwidth}
				\begin{tikzpicture}[scale = 0.8]
					% Draw vertical lines
					\foreach \x in {0,...,5}
					\draw[gray] (\x,0) -- (\x,7);
					
					% Draw horizontal lines
					\foreach \y in {0,...,7}
					\draw[gray] (0,\y) -- (5,\y);
					
					\draw[very thick] (0,1) -- (1,1);
					\draw[very thick] (1,2) -- (1,1);
					\draw[very thick] (1,2) -- (2,2);
					\draw[very thick] (2,2) -- (2,6);
					\draw[very thick] (2,6) -- (3,6);
					\draw[very thick] (3,6) -- (3,5);
					\draw[very thick] (3,5) -- (4,5);
					\draw[very thick] (4,5) -- (4,6);
					\draw[very thick] (4,6) -- (5,6);
					
					\node at (0.5,6.5) {\Large 0};
					\node at (1.5,6.5) {\Large 1};
					\node at (2.5,6.5) {\Large 2};
					\node at (3.5,6.5) {\Large 3};
					\node at (4.5,6.5) {\Large 4};
					\node at (0.5,5.5) {\Large 5};
					\node at (1.5,5.5) {\Large 6};
					\node at (3.5,5.5) {\Large 8};
					\node at (0.5,4.5) {\Large 10};
					\node at (1.5,4.5) {\Large 11};
					\node at (0.5,3.5) {\Large 15};
					\node at (1.5,3.5) {\Large 16};
					\node at (0.5,2.5) {\Large 20};
					\node at (1.5,2.5) {\Large 21};
					\node at (0.5,1.5) {\Large 25};
					
				\end{tikzpicture} \caption*{Initial position}
			\end{subfigure} 
			\centering
			\begin{subfigure}{0.33\textwidth}
				\begin{tikzpicture}[scale = 0.8]
					% Draw vertical lines
					\foreach \x in {0,...,5}
					\draw[gray] (\x,0) -- (\x,7);
					
					% Draw horizontal lines
					\foreach \y in {0,...,7}
					\draw[gray] (0,\y) -- (5,\y);
					
					\draw[very thick] (0,1) -- (1,1);
					\draw[very thick] (1,2) -- (1,1);
					\draw[very thick] (1,2) -- (2,2);
					\draw[very thick] (2,2) -- (2,6);
					\draw[very thick] (2,6) -- (3,6);
					\draw[very thick] (3,6) -- (3,5);
					\draw[very thick] (3,5) -- (4,5);
					\draw[very thick] (4,5) -- (4,6);
					\draw[very thick] (4,6) -- (5,6);
					
					\node at (0.5,6.5) {\Large 0};
					\node at (1.5,6.5) {\Large 1};
					\node at (2.5,6.5) {\Large 2};
					\node at (3.5,6.5) {\Large 3};
					\node at (4.5,6.5) {\Large 4};
					\node at (0.5,5.5) {\Large 5};
					\node at (1.5,5.5) {\Large 6};
					\node at (3.5,5.5) {\Large 7};
					\node at (0.5,4.5) {\Large 8};
					\node at (1.5,4.5) {\Large 9};
					\node at (0.5,3.5) {\Large 10};
					\node at (1.5,3.5) {\Large 11};
					\node at (0.5,2.5) {\Large 12};
					\node at (1.5,2.5) {\Large 13};
					\node at (0.5,1.5) {\Large 14};
					
				\end{tikzpicture} \caption*{Final position}
			\end{subfigure} 
			\hfill 
			\begin{subfigure}{0.30\textwidth}
				\begin{tikzpicture}[scale = 0.8]
					% Draw vertical lines
					\foreach \x in {0,...,5}
					\draw[gray] (\x,0) -- (\x,7);
					
					% Draw horizontal lines
					\foreach \y in {0,...,7}
					\draw[gray] (0,\y) -- (5,\y);
					
					\draw[very thick] (0,1) -- (1,1);
					\draw[very thick] (1,2) -- (1,1);
					\draw[very thick] (1,2) -- (2,2);
					\draw[very thick] (2,2) -- (2,6);
					\draw[very thick] (2,6) -- (3,6);
					\draw[very thick] (3,6) -- (3,5);
					\draw[very thick] (3,5) -- (4,5);
					\draw[very thick] (4,5) -- (4,6);
					\draw[very thick] (4,6) -- (5,6);
					
					\node at (0.5,6.5) {\Large 0};
					\node at (1.5,6.5) {\Large 0};
					\node at (2.5,6.5) {\Large 0};
					\node at (3.5,6.5) {\Large 0};
					\node at (4.5,6.5) {\Large 0};
					\node at (0.5,5.5) {\Large 0};
					\node at (1.5,5.5) {\Large 0};
					\node at (3.5,5.5) {\Large 1};
					\node at (0.5,4.5) {\Large 2};
					\node at (1.5,4.5) {\Large 2};
					\node at (0.5,3.5) {\Large 5};
					\node at (1.5,3.5) {\Large 5};
					\node at (0.5,2.5) {\Large 8};
					\node at (1.5,2.5) {\Large 8};
					\node at (0.5,1.5) {\Large 11};
					
				\end{tikzpicture} \caption*{Displacement number}
			\end{subfigure}
		\end{figure}
		\ \\
		For $\Vec{e} = (5,4,0,1,0)$ we have $u(\Vec{e}) = 19$, $\textrm{inv}(\Vec{e}) = 1$ and 
		\[ u(\Vec{e}) + \textrm{inv}(\Vec{e}) = 11 + 8 + 0 +1 +0. \]
		The proof of this statement is identical to the proof of Proposition~\ref{prop: ramification game}: this semi-invariant decreases by exactly one after each move. 
	\end{remark}
	
	One of our goals is to prove that, if we have the entry-wise inequality $\textbf{a}(\Vec{e}) \leq \textbf{a}(\Vec{f})$, we have the inclusion of pointed Hurwitz--Brill--Noether loci $W^{\Vec{f}}(\pi,p) \subseteq \overline{W}^{\Vec{e}}(\pi,p) $. For our dimension result in Theorem~\ref{trm: dimension}, we will need just an inequality of expected dimensions, derived from the next proposition. 
	
	\begin{prop} \label{prop: vanishing and loci comparison}
		Let $\Vec{e},\Vec{f}\in\Z^k_{\geq 0}$ with $|\vec{e}|=|\vec{f}|$ 
		%\[ \sum_{i=0}^{k-1} e_i = \sum_{i=0}^{k-1}f_i\]
		and $\textbf{a}(\Vec{e}) \leq \textbf{a}(\Vec{f})$ entry-wise. Then we have the inequality
		\[ u(\Vec{e}) + \textrm{inv}(\Vec{e}) \leq  u(\Vec{f}) + \textrm{inv}(\Vec{f}).  \]
	\end{prop}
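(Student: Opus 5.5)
The plan is to use the move game of Proposition~\ref{prop: ramification game} and prove the inequality by induction on $u(\Vec{e})+\mathrm{inv}(\Vec{e})$. Write $\mathbf{a}\mapsto \mathbf{a}^{(j)}$ for the operation on vanishing sequences computed in the proof of Lemma~\ref{lemma: ramifications}. This operation sends $\mathbf{a}$ to the smallest strictly increasing sequence of non-negative integers that dominates $(a_0-\delta^{a_0,j},\ldots,a_r-\delta^{a_r,j})$ entrywise. That proof shows $\mathbf{a}(\Vec{e})^{(j)}=\mathbf{a}(t_j(\Vec{e}))$. This holds also when $t_j(\Vec{e})=\Vec{e}$, where the rule leaves every entry unchanged. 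So in coin language, $\mathbf{a}\mapsto\mathbf{a}^{(j)}$ is exactly the move on column $j$.

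\emph{Step 1: monotonicity.} The key lemma is that if $\mathbf{a}\le\mathbf{b}$ entrywise, then $\mathbf{a}^{(j)}\le\mathbf{b}^{(j)}$ entrywise.
First, the map $x\mapsto x-\delta^{x,j}$ is weakly increasing. If $x<y$ then $x\le y-1\le y-\delta^{y,j}$, and if $x=y$ there is nothing to prove. So $\mathbf{a}-\delta\le\mathbf{b}-\delta$ entrywise.
Second, the operator ``smallest strictly increasing non-negative sequence dominating a given sequence'' is monotone. It is given explicitly by $c'_0=\max(0,c_0)$ and $c'_i=\max(c_i,c'_{i-1}+1)$, and each step of this recursion is monotone.
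Combining the two gives the claim.

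\emph{Step 2: induction.} Fix $\Vec{e},\Vec{f}$ with $|\Vec{e}|=|\Vec{f}|$ and $\mathbf{a}(\Vec{e})\le\mathbf{a}(\Vec{f})$, and induct on $u(\Vec{e})+\mathrm{inv}(\Vec{e})$.

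In the base case this value is $0$, and the required inequality holds trivially because the right-hand side is non-negative.

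Otherwise the coins of $\Vec{e}$ are not packed, so by Proposition~\ref{prop: ramification game} there is a column $j$ with $t_j(\Vec{e})\neq\Vec{e}$, and the move on it lowers the semi-invariant by exactly $1$. By Step~1, $\mathbf{a}(t_j(\Vec{e}))\le \mathbf{a}(t_j(\Vec{f}))$. Both $t_j(\Vec{e})$ and $t_j(\Vec{f})$ remain in $\Z^k_{\ge0}$ with equal sums, since moves preserve the number of coins. The induction hypothesis then gives
\[u(\Vec{e})+\mathrm{inv}(\Vec{e})-1\le u(t_j(\Vec{f}))+\mathrm{inv}(t_j(\Vec{f})).\]
The move on $\Vec{f}$ is either trivial, leaving the value unchanged, or it lowers the value by exactly $1$, again by Proposition~\ref{prop: ramification game}. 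In both cases $u(t_j(\Vec{f}))+\mathrm{inv}(t_j(\Vec{f}))\le u(\Vec{f})+\mathrm{inv}(\Vec{f})-1$ or $\le u(\Vec{f})+\mathrm{inv}(\Vec{f})$. Since an integer quantity that is at least the left-hand side is at most $u(\Vec{f})+\mathrm{inv}(\Vec{f})$, we obtain $u(\Vec{e})+\mathrm{inv}(\Vec{e})\le u(\Vec{f})+\mathrm{inv}(\Vec{f})$, which closes the induction.

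\emph{Main obstacle.} I expect the delicate point to be Step~1, together with the identification of $\mathbf{a}(\Vec{e})^{(j)}$ with $\mathbf{a}(t_j(\Vec{e}))$ for arbitrary vectors. This must also cover the trivial case and the case $j=0$, where a coin wraps from column $0$ to column $k-1$. For the identification I will rely on the explicit rule in Lemma~\ref{lemma: ramifications}: an entry $nk+j$ drops by one exactly when $nk+j-1$ is not already an entry. For $j=0$ with $n=0$, the entry $0$ cannot decrease, which is consistent with taking $\max(0,\cdot)$. Once this is in place, the remainder is bookkeeping.
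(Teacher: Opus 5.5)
Your Step~1 is correct, including the identification $\mathbf{a}(\Vec e)^{(j)}=\mathbf{a}(t_j(\Vec e))$ in the trivial cases and for $j=0$. The gap is the last inference of Step~2.

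\textbf{The gap.} The induction hypothesis gives $u(\Vec e)+\mathrm{inv}(\Vec e)-1\le u(t_j(\Vec f))+\mathrm{inv}(t_j(\Vec f))$. When $t_j(\Vec f)=\Vec f$, the right-hand side equals $u(\Vec f)+\mathrm{inv}(\Vec f)$. So you only obtain $u(\Vec e)+\mathrm{inv}(\Vec e)\le u(\Vec f)+\mathrm{inv}(\Vec f)+1$. Your closing sentence silently replaces $u(\Vec e)+\mathrm{inv}(\Vec e)-1$ by $u(\Vec e)+\mathrm{inv}(\Vec e)$, which is a non sequitur.

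This case really occurs, and no better choice of $j$ avoids it. Take $k=2$, $\Vec e=(2,0)$, $\Vec f=(0,2)$. Then $\mathbf{a}(\Vec e)=(0,1,2,4)\le(0,1,3,5)=\mathbf{a}(\Vec f)$. The only nontrivial move on $\Vec e$ is $t_0$, and $t_0(\Vec f)=\Vec f$. Your argument therefore yields only $1\le 3$, not $u(\Vec e)+\mathrm{inv}(\Vec e)=1\le 2=u(\Vec f)+\mathrm{inv}(\Vec f)$.

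\textbf{The fix.} Reverse the roles: induct on $u(\Vec f)+\mathrm{inv}(\Vec f)$ and choose $j$ with $t_j(\Vec f)\neq\Vec f$.
\begin{itemize}
\item Your Step~1 gives $\mathbf{a}(t_j(\Vec e))\le\mathbf{a}(t_j(\Vec f))$.
\item The induction hypothesis then gives $u(t_j(\Vec e))+\mathrm{inv}(t_j(\Vec e))\le u(\Vec f)+\mathrm{inv}(\Vec f)-1$.
\item A move lowers $u+\mathrm{inv}$ of $\Vec e$ by at most one, whether or not it is trivial on $\Vec e$, so the desired inequality follows.
\item In the base case, $\mathbf{a}(\Vec f)=(0,1,\ldots,r)$ forces $\mathbf{a}(\Vec e)=(0,1,\ldots,r)$, hence $u(\Vec e)+\mathrm{inv}(\Vec e)=0$.
\end{itemize}

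\textbf{Comparison with the paper.} This is in substance what the paper does: it takes a counterexample with $u(\Vec f)+\mathrm{inv}(\Vec f)$ minimal and applies a nontrivial move $t_i$ to $\Vec f$. But the paper verifies by hand that the entrywise inequality is preserved. It first normalizes to $f_0=0$ via $\sigma$, using an outer induction on $|\Vec f|$, then takes $i$ to be the first nonzero entry of $\Vec f$ (Claims~1--3). Your monotonicity lemma holds uniformly for every $j$ and replaces all of that. Once the induction is run on the $\Vec f$ side, your route is shorter and cleaner than the paper's.
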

	
	\begin{proof}
		We will induct on the value $s = |\vec{e}|=|\vec{f}|$. When $s=0$, we must have $\Vec{e} = \Vec{f} = (0,\ldots, 0)$ and there is nothing to be proven. 
		
		Assume $s$ is the first value of the sum for which the proposition is false, and let $\Vec{e},\Vec{f}\in\Z^k_{\geq 0}$ be some vectors satisfying 
		\[ s=  |\vec{e}|=|\vec{f}|, \ \textbf{a}(\Vec{e}) \leq \textbf{a}(\Vec{f}) \ \textrm{entry-wise}, \ \textrm{and} \ u(\Vec{e})+ \textrm{inv}(\Vec{e}) > u(\Vec{f})+ \textrm{inv}(\Vec{f}). \]
		Moreover, we choose such vectors $\Vec{e}, \Vec{f}$ with $u(\Vec{f})+ \textrm{inv}(\Vec{f})$ minimal. 
		
		\begin{claim}\label{claim_1_vec_inequal}
			$f_0 = 0$.
		\end{claim} 
		\begin{proof}[Proof of Claim~\ref{claim_1_vec_inequal}]
			\renewcommand{\qedsymbol}{$\lozenge$}
			
			If $f_0\neq 0$, it follows that the first $k+1$ entries of $\textbf{a}(\Vec{f})$ are $(0,1,\ldots, k)$. The entry-wise inequality $\textbf{a}(\Vec{e}) \leq \textbf{a}(\Vec{f})$ implies that the first $k+1$ entries of $\textbf{a}(\Vec{e})$ are $(0,1,\ldots, k)$ and hence $e_0 \neq 0$. 
			
			Taking $\sigma(\Vec{e}) = (e_1, \ldots, e_{k-1}, e_0-1)$ and $\sigma(\Vec{f}) = (f_1, \ldots, f_{k-1}, f_0-1)$ we obtain $\textbf{a}(\sigma(\Vec{e})) \leq \textbf{a}(\sigma(\Vec{f}))$. Indeed, the sequence $\textbf{a}(\sigma(\Vec{e}))$ is obtained from $\textbf{a}(\Vec{e})$ by removing $0$ and reducing all other entries by $1$, so the entry-wise inequality is trivially preserved. 
			
			The minimality of $s$ now implies that 
			\[ u(\sigma(\Vec{e})) + \textrm{inv}(\sigma(\Vec{e})) \leq  u(\sigma(\Vec{f})) + \textrm{inv}(\sigma(\Vec{f})).  \]
			
			But the invariant $u + \textrm{inv}$ amounts for the sum of displacements for the southernmost entries in each column, see Remark~\ref{remark: southernomst description}. The process of removing $0$ and decreasing all entries by $1$ does not change any displacement number.
			\[ \Rightarrow u(\Vec{e}) + \textrm{inv}(\Vec{e}) = u(\sigma(\Vec{e})) + \textrm{inv}(\sigma(\Vec{e})) \leq  u(\sigma(\Vec{f})) + \textrm{inv}(\sigma(\Vec{f})) = u(\Vec{f}) + \textrm{inv}(\Vec{f}),  \]
			contradicting the choice of $\Vec{e}$ and $\Vec{f}$.
		\end{proof}

		Hence we must have $f_0 = 0$ and let $f_i$ be the first non-zero entry of $\Vec{f}$. Let $m \coloneqq \min\{e_i, f_i\}$ and consider the numbers $i, k+i, \ldots, m\cdot k + i$. By the choice of $m$, these numbers appear in both $\textbf{a}(\Vec{e})$ and $\textbf{a}(\Vec{f})$.
		
		For each $0 \leq x \leq m$, let $N(x,\Vec{e})$ be the position of $i + x\cdot k$ in $\textbf{a}(\Vec{e})$ and similarly $N(x,\Vec{f})$ the position of $i + x\cdot k$ in $\textbf{a}(\Vec{f})$.

		\begin{claim}\label{claim_2_vec_inequal}
			For each $0 \leq x \leq m$, we have $N(x,\Vec{e}) \geq N(x,\Vec{f})$. 
		\end{claim}
		\begin{proof}[Proof of Claim~\ref{claim_2_vec_inequal}]
			\renewcommand{\qedsymbol}{$\lozenge$}
			
			If $N(x,\Vec{e}) < N(x,\Vec{f})$, then the entry of $\textbf{a}(\Vec{e})$ in position $N(x,\Vec{f})$ is greater than or equal to
			\[
			i + x\cdot k + N(x,\Vec{f}) - N(x,\Vec{e}).
			\]
			In this case, the entry-wise inequality $\textbf{a}(\Vec{e}) \leq \textbf{a}(\Vec{f})$ fails at entry $N(x,\Vec{f})$. This contradiction proves the claim.
		\end{proof}

		Let $\Delta(x) \coloneqq N(x, \Vec{e}) - N(x,\Vec{f})$.
		
		\begin{claim}\label{claim_3_vec_inequal}
			There exists $1\leq x \leq m$ such that $\Delta(x) = 0$. 
		\end{claim}
		\begin{proof}
			\renewcommand{\qedsymbol}{$\lozenge$}
			
			If $\Delta(x) \neq 0$ for each $1\leq x \leq m$, then each of the numbers $ i + x\cdot k$ in $\textbf{a}(\Vec{f})$ is compared with a number in $\textbf{a}(\Vec{e})$ which is strictly lower. 
			
			Moreover, since for $m < x \leq f_i$ the number $i+x\cdot k$ does not appear in $\textbf{a}(\Vec{e})$, it follows that the number $i+x\cdot  k$ in $\textbf{a}(\Vec{f})$ is compared with a number in $\textbf{a}(\Vec{e})$ that is strictly lower. 
			
			Hence, reducing all entries $i+x\cdot k$ for $1\leq x \leq f_i$ in $\textbf{a}(\Vec{f})$ by $1$, the entry-wise inequality remains valid. In the notation of Definition~\ref{def: t_i}, this means $\textbf{a}(\Vec{e}) \leq \textbf{a}(t_i(\Vec{f}))$ entry-wise. 
			
			But we know 
			\[ u(t_i(\Vec{f})) + \textrm{inv}(t_i(\Vec{f})) = u(\Vec{f}) + \textrm{inv}(\Vec{f}) - 1\]
			and the assumption on minimality of $u(\Vec{f}) + \textrm{inv}(\Vec{f})$ implies 
			\[ u(\Vec{e})+ \textrm{inv}(\Vec{e}) \leq u(\Vec{f})+ \textrm{inv}(\Vec{f}) - 1. \]
			This contradicts the assumption, and hence the claim follows.
		\end{proof}

		Hence there exists values of $x$ between $1$ and $m$ such that $\Delta(x) = 0$. For such an $x$, both sequences $\textbf{a}(\Vec{e})$ and $\textbf{a}(\Vec{f})$ have the number $i + x\cdot k$ written on entry $N(x, \Vec{e}) = N(x,\Vec{f})$. 
		
		We look at the entry $N(x,\Vec{f}) - 1$ in each of the two sequences. Because $f_{i-1} = 0$, the number of $\textbf{a}(\Vec{f})$ on position $N(x,\Vec{f})-1$ is strictly smaller than $i-1 +x\cdot k$. This implies that the entry of $\textbf{a}(\Vec{e})$ on position $N(x,\Vec{f})-1$ is strictly smaller than $i-1+x\cdot k$. 
		
		This immediately implies the entry-wise inequality 
		\[ \textbf{a}(t_i(\Vec{e})) \leq \textbf{a}(t_i(\Vec{f})): \]
		When $\Delta(x) \neq 0$ or $x > m$, the same argument as for Claim~\ref{claim_3_vec_inequal} works. When $\Delta(x) = 0$, switching the entries $e_{i-1}$ and $e_i$ (as well as $f_{i-1}$ and $f_i$) just reduces the number  on position $N(x,\Vec{e}) = N(x,\Vec{f})$ by $1$ in both sequences. 
		
		The minimality of $u(\Vec{f}) + \textrm{inv}(\Vec{f})$ implies 
		\[u(t_i(\Vec{e})) + \textrm{inv}(t_i(\Vec{e})) = u(\Vec{e})+\textrm{inv}(\Vec{e}) - 1 \leq u(t_i(\Vec{f})) + \textrm{inv}(t_i(\Vec{f})) = u(\Vec{f})+\textrm{inv}(\Vec{f}) - 1. \]
		This contradicts the assumption on $\Vec{e}$ and $\Vec{f}$. In particular, we obtain that there do not exist non-negative vectors $\Vec{e}$ and $\Vec{f}$ satisfying 
		\[ |\vec{e}|=|\vec{f}|, \ \textbf{a}(\Vec{e}) \leq \textbf{a}(\Vec{f}) \ \textrm{entry-wise} \ \textrm{and} \ u(\Vec{e}) + \textrm{inv}(\Vec{e}) > u(\Vec{f}) + \textrm{inv}(\Vec{f}). \]
		
		Moreover, the same inductive method implies that if $u(\Vec{e}) + \textrm{inv}(\Vec{e}) = u(\Vec{f}) + \textrm{inv}(\Vec{f})$ then $\Vec{e} = \Vec{f}$.
	\end{proof}

	\section{Dimension results} \label{sec: dimension results}
	
	The goal of this section is to compute the dimension of pointed Hurwitz--Brill--Noether loci $W^{\vec{e}}(\pi,p)$ for a generic element $\elhur \in \cHTot$. The main result of this section will be that 
	\[ \dim W^{\vec{e}}(\pi,p) = g -u(\Vec{e})-\inv(\Vec{e})\]
	whenever this value is non-negative, and the locus is empty otherwise. 
	
	The upper bound $\dim W^{\vec{e}}(\pi,p)  \leq g -u(\Vec{e})-\inv(\Vec{e})$ is obtained via a degeneration of the pointed Hurwitz--Brill--Noether locus to the boundary of $\hhTot$ and description of the limit linear series appearing in this way, given by Lemma~\ref{lemma: ramifications}. Proposition~\ref{prop: vanishing and loci comparison} together with an induction argument will then imply our inequality. 
	
	To obtain a lower-bound on the dimension, we realize $W^{\Vec{e}}(\pi, p)$ as an open set of a flag degeneracy locus. We will realize $W^{\Vec{e}}(\pi, p)$ as such a locus in two different ways. The first, and simpler proof, follows techniques developed in~\cite{Larson2Vogt} and uses $\vec{e}$-nested linear series. The second proof uses Proposition~\ref{prop: ramification behaviour} to control the ramification orders and realizes the pointed Hurwitz--Brill--Noether locus as an intersection of flag degeneracy loci. This second proof is independent of~\cite{Larson2Vogt}.

	\begin{prop} \label{prop: upperbound}
		Let $[\pi\colon C \rightarrow \mathbb{P}^1,p]$ be a generic element of $\cHTot$, $\Vec{e} = (e_0, \ldots, e_{k-1})\in\Z^k_{\ge 0}$, $r = k+ |\vec{e}| - 1$, and $d = g+r$. 
		Then the locus of $g^r_d$'s on $C$ having ramification orders exactly $\textbf{a}(\Vec{e})$ at $p$ has dimension at most $g- u(\Vec{e})- \mathrm{inv}(\Vec{e})$, that is
		\[ \dim W^{\Vec{e}}(\pi,p) \leq g- u(\Vec{e}) - \textrm{inv}(\Vec{e}). \]
		In particular, when the expected dimension is negative, the pointed Hurwitz--Brill--Noether locus is the empty set. 
		%Moreover, for a generic such $g^r_d$, the vanishing orders at $p$ are exactly given by $\textbf{a}(\Vec{e})$. 
	\end{prop}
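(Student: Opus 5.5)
We argue by induction on the genus $g$, degenerating a general $[\pi\colon C\to\PP^1,p]\in\cHTot$ to the elliptic-tail cover $[\pi_0\colon X\cup_q E\to P_X\cup_{\overline q}P_E,p]$ of Setup~\ref{setup: elliptic tail}. It suffices to prove the slightly stronger statement that, for $[\pi,p]$ general, the locus of $g^r_d$'s on $C$ with vanishing orders \emph{at least} $\textbf{a}(\Vec{e})$ at $p$ has every component of dimension at most $g-u(\Vec{e})-\inv(\Vec{e})$. Here the locus must be empty when this number is negative.

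The plan has two parts.
\begin{enumerate}
\item We pass to this stronger statement because it specializes well. Any sequence $\textbf{a}\geq\textbf{a}(\Vec{e})$ that is realized on $C$ is $k$-admissible by Proposition~\ref{prop: ramification behaviour}. After the usual normalization, we may also keep $|\cdot|$ fixed: lowering $r$ by discarding sections only helps. Hence $\textbf{a}=\textbf{a}(\Vec{f})$ for some $\Vec{f}$ with $\textbf{a}(\Vec{e})\leq\textbf{a}(\Vec{f})$. Proposition~\ref{prop: vanishing and loci comparison} then gives $u(\Vec{f})+\inv(\Vec{f})\geq u(\Vec{e})+\inv(\Vec{e})$, so the bound for exact vanishing $\textbf{a}(\Vec{f})$ implies the bound for the closure.
\item The base case $g=0$ is immediate. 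Then $C\cong\PP^1$ and $\pi$ is totally ramified at $p$. A line bundle $\OO(d)$ with $r=d$ has vanishing sequence $(0,1,\dots,d)$, which is $\textbf{a}(\Vec{e})$ only when $u+\inv=0$.
\end{enumerate}

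For the inductive step, consider a family of covers in $\hhTot$ over a one-parameter base specializing to $\pi_0$. Take the relative scheme of limit linear series $g^r_d$ with vanishing at least $\textbf{a}(\Vec{e})$ at the section $p$, in the sense of Eisenbud--Harris~\cite{basiclimitlinear}. By upper semicontinuity of fiber dimension on the closed central fiber, it suffices to bound the dimension of limit linear series on $X\cup_q E$ with these vanishing conditions. Actually it is cleaner to bound the image in $\Pic^d(X)\times\Pic^d(E)$ together with the fibers of the map to it, using the refined (dimensionally proper) limit series. Lemma~\ref{lemma: ramifications} splits this image into two kinds of loci.
\begin{itemize}
\item The first is $\OO_X(q)\otimes W^r_{r+g-1,\textbf{a}(\Vec{e})}(X,q)\times\Pic^d(E)$. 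By induction, since $[\pi_X,q]$ is general in $\mathcal H^{\mathrm{tot}}_{g-1,k}$, it has dimension at most $(g-1-u(\Vec{e})-\inv(\Vec{e}))+1$.
\item The others are $\OO_X(q)\otimes W^r_{r+g-1,\textbf{a}(t_j(\Vec{e}))}(X,q)\times\{\mathrm{pt}\}$. By the computation in Proposition~\ref{prop: ramification game}, $u+\inv$ drops by exactly one under $t_j$. Induction therefore gives dimension at most $g-1-(u(\Vec{e})+\inv(\Vec{e})-1)=g-u(\Vec{e})-\inv(\Vec{e})$.
\end{itemize}
Both loci have dimension at most $g-u(\Vec{e})-\inv(\Vec{e})$, and negative numbers give emptiness. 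Here again we use the ``at least'' version on $X$, supplied by the first part of the plan applied in genus $g-1$.

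The main obstacle is ensuring that the aspects themselves contribute no extra dimension. The fibers over a pair $(L_X,L_E)$ must be finite, or at least must not add to the count. On $X$ the space of sections with the forced vanishing at $q$ is the full $H^0$ by a dimension count; the count $h^0=r+1$ generically holds by the inductive exact statement. On $E$, $p-q$ has order exactly $k$, so the $E$-aspect with vanishing $\textbf{a}$ at $p$ and complementary vanishing at $q$ is essentially unique. This is the standard rigidity argument on elliptic tails. A second point is that the limit of the family need not be refined. We handle this by the usual Eisenbud--Harris dimension argument: the space of crude limits has every component of dimension at least the expected one, and it is the refined open part that we bound. Alternatively, since all inequalities above are inequalities of upper bounds on $\Pic$, we only need the image bound plus finiteness of fibers. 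Finally, $\dim W^{\Vec e}(\pi,p)\leq$ the dimension of the central fiber follows, and emptiness for negative expected dimension is the special case.
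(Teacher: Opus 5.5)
Your skeleton is the paper's proof: induction on $g$ (your base case $g=0$ is fine, even cleaner than the paper's $g=1$), the elliptic-tail degeneration, the two cases of Lemma~\ref{lemma: ramifications}, the drop of $u+\inv$ by one under $t_j$, and Proposition~\ref{prop: vanishing and loci comparison} to pass from exact to ``at least'' vanishing on $X$.

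\textbf{The gap.} It lies in the step you yourself call the main obstacle. You reduce to bounding the space of limit $g^r_d$'s on $X\cup_q E$, and control it as ``image in $\Pic^d(X)\times\Pic^d(E)$ plus finite fibers''. This does not work: the fibers are not finite, and that space can genuinely be too large.

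On the deeper strata, where $L_X(-q)$ has vanishing at $q$ strictly larger than $\mathbf a(\vec e)$, the crude compatibility $a^X_i(q)+a^E_{r-i}(q)\geq d$ imposes weaker conditions on the $E$-aspect. The $E$-aspect then moves in a family whose dimension is the total excess vanishing. This can beat the increase of $u+\inv$, which only records the southernmost displacements (Remark~\ref{remark: southernomst description}).

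For instance, take $k=2$, $\vec e=(1,1)$ (target bound $g$) and $g\geq 3$. Let $L_X(-q)\in W^{(0,2)}(\pi_X,q)$, a $(g-3)$-dimensional locus, let $V_X=H^0(L_X(-q))$, which has vanishing $(1,2,4,6)$ at $q$, and let $L_E$ be general. The compatible $E$-aspects form a $3$-dimensional family: the excess is $3$, against an increase of only $2$ in $u+\inv$. So these crude limits fill a $(g+1)$-dimensional locus.

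Neither of your patches helps. The fact that components of the crude space have at least the expected dimension is a lower bound and says nothing about an upper bound. And since limits need not be refined, bounding only the refined part is not enough.

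\textbf{The fix.} Avoid the space of linear series altogether. On the exact locus $V=H^0(C,L)$, so $W^{\vec e}$ sits in $\Pic^d$. Take its closure in $\Pic^{(d,0)}(\mathcal C/B)$, which is proper because the central fiber has compact type. This closure is flat over $B$, so its special fiber has the same dimension as the general fiber. After base change, each of its points comes from the pair of line bundles underlying a crude limit $g^r_d$ with vanishing at least $\mathbf a(\vec e)$ at $p$.

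Lemma~\ref{lemma: ramifications} uses only the crude inequality, so your image bound alone finishes the argument, with no fiber control needed. This is how the paper's bound on pairs $(L_X,L_E)$ should be read.

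\textbf{A smaller point in part (1).} If $h^0(L)>r+1$, ``discarding sections'' loses $k$-admissibility, since Proposition~\ref{prop: ramification behaviour} concerns complete series. Instead, twist by $\OO_C(mkp)$ with $m\gg0$. Then $h^1=0$, and the complete vanishing sequence is $\mathbf a(\vec f)$ with $\vec f\geq 0$ dominating $\mathbf a(\vec e+(m,\dots,m))$. Moreover $|\vec f|=|\vec e|+mk$ is forced by $\chi(\pi_*L)=\chi(L)$.
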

	\begin{proof}
		We induct on the genus. When $C$ has genus $1$, this is immediately verified. 
		
		For genus $g\geq 2$, we look at an element $[\pi_0\colon X\cup_q E
		\rightarrow P_X\cup_{\overline q}P_E, p]$ as in Setup~\ref{setup: elliptic tail}. This is an element appearing in the admissible cover compactification of the totally ramified Hurwitz space $\hhTot$, see~\cite{AbramovichCV}. 
		
		We describe the limit $g^r_d$'s with ramification orders $\textbf{a}(\Vec{e})$ on the the pointed nodal curve $[X\cup_qE,p]$. This is the content of Lemma~\ref{lemma: ramifications}. Let $(L_X, L_E) \in \mathrm{Pic}^d(X)\times \mathrm{Pic}^d(E)$ be the line bundles associated to such a limit linear series $(l_X, l_E)$. Then  
		\begin{enumerate}
			\item If $L_E$ is not a line bundle of the form $\mathcal{O}_E(i\cdot p + (d-i)\cdot q)$, then the vanishing orders of $l_X$ at the point $q$ are greater than or equal to $1+\textbf{a}(\Vec{e})$. As in Lemma~\ref{lemma: ramifications}, $l_X(-q)$ determines a $g^r_{d-1}$ on $X$ with ramification orders at least $\textbf{a}(\Vec{e})$ at $q$. Using Proposition~\ref{prop: vanishing and loci comparison}, we obtain that $(L_X, L_E)$ is contained in the locus 
			\[\left(\mathcal{O}_X(q)\otimes W_{d-1, \textbf{a}(\Vec{e})}^r(X,q)\right) \times \mathrm{Pic}^d(E). \]
			Using induction and Proposition~\ref{prop: vanishing and loci comparison}, we find that all components of $W_{d-1, \textbf{a}(\Vec{e})}^r(X,q)$ have dimension at most $g-1-u(\Vec{e})- \textrm{inv}(\Vec{e})$. Hence, each component of 
			\[\left(\mathcal{O}_X(q)\otimes W_{d-1, \textbf{a}(\Vec{e})}^r(X,q)\right) \times \mathrm{Pic}^d(E) \]
			has dimension at most $g-1-u(\Vec{e})-\textrm{inv}(\Vec{e}) +1$, as required. 
			
			\item If $L_E \cong \mathcal{O}_E(i\cdot p + (d-i)\cdot q)$, it follows from Lemma~\ref{lemma: ramifications} that $L_X(-q)$ is contained in the locus $W^r_{d-1, \mathbf{a}(t_i(\Vec{e}))}(X,q)$. 
			
			In particular, the locus of such pairs $(L_X, L_E)$ is contained in 
			\[ \text{\Large (}\OO_X(q) \otimes W^r_{d-1,\mathbf{a}(t_i(\Vec{e}))}(X,q) \text{\Large)} \times \{ \mathcal{O}_E(i\cdot p + (d-i)\cdot q)\}. \]
		\end{enumerate}
		If we order the ramification orders in columns modulo $k$ as in Proposition~\ref{prop: ramification game}, we see that $\mathbf{a}(t_i(\Vec{e}))$ is obtained from $\textbf{a}(\Vec{e})$ by moving all ramifications in column $i \pmod{k}$ one place to the left when this is possible. This implies $u(t_i(\Vec{e})) + \mathrm{inv}(t_i(\Vec{e})) = u(\Vec{e}) + \mathrm{inv}(\Vec{e}) -1$, cf. Proposition~\ref{prop: ramification game}. In particular, using Proposition~\ref{prop: vanishing and loci comparison} and the induction hypothesis, the locus of limit $g^r_d$'s with ramification orders at least $\textbf{a}(\Vec{e})$ at $p$ has dimension at most $g-u(\Vec{e})-\textrm{inv}(\Vec{e})$, whence
		\[\dim W^r_{d,\textbf{a}(\Vec{e})}(C,p) \leq g-u(\Vec{e})-\textrm{inv}(\Vec{e}). \qedhere \]
	\end{proof}
	
	Our next goal is to prove the converse inequality.
	
	\begin{prop} \label{prop: lowerbound}
		Let $[\pi\colon C \rightarrow \mathbb{P}^1,p]$ be an element of $\cHTot$, $\Vec{e} = (e_0, \ldots, e_{k-1})\in\Z^k_{\geq 0}$, $r = k + |\vec{e}| - 1$, and $d = g+r$. 
		Then the locus of $g^r_d$'s on $C$ having ramification orders exactly $\textbf{a}(\Vec{e})$ at $p$ is either empty or has dimension at least
		\[ g- u(\Vec{e})- \mathrm{inv}(\Vec{e}),\]
		that is $\dim W^{\Vec{e}}(\pi,p) \geq g- u(\Vec{e})- \mathrm{inv}(\Vec{e}) $.
		%Moreover, for a generic such $g^r_d$, the vanishing orders at $p$ are exactly given by $\textbf{a}(\Vec{e})$. 
	\end{prop}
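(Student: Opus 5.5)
We realize $W^{\Vec{e}}(\pi,p)$ as an open subset of a degeneracy locus in $\Pic^d(C)$ cut out by a number of rank conditions that we can count. Concretely, we follow the second strategy announced at the start of the section: via Proposition~\ref{prop: intersection description} and Remark~\ref{rmk: Hilbert}, membership in $W^{\Vec{e}}(\pi,p)$ is equivalent to prescribing $h^0(C,L(-ap))$ for all $a$. By Proposition~\ref{prop: ramification behaviour}, any $L$ has $k$-admissible vanishing sequence at $p$, so such sequences are automatically of the form $\textbf{a}(\Vec{f})$ for some $\Vec{f}$. Hence the closed locus
\[
Z\coloneqq\left\{L\in\Pic^d(C) \ \middle| \ h^0(C,L(-a_ip))\geq r+1-i \ \text{for } 0\leq i\leq r\right\}
\]
contains $W^{\Vec{e}}(\pi,p)$ as the open subset where all inequalities are equalities and no extra vanishing occurs.

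We write each condition as a degeneracy condition. Choose an effective divisor $D$ of large degree disjoint from $p$, and fix $N\gg 0$. On $\Pic^d(C)$ we have the rank-$(d+\deg D-g+1)$ bundle $\mathcal{E}=\nu_*\mathcal{P}(D)$, with $\mathcal{P}$ a Poincaré bundle and $\nu$ the projection to $\Pic^d(C)$. Inside $\mathcal{E}$ sit two flags:
\begin{itemize}
\item the flag of subbundles $\mathcal{E}(-ap)$, for $0\leq a\leq N$, given by sections vanishing to order $a$ at $p$;
\item the subbundle of sections vanishing on $D$, namely $H^0(L)$ realized as the kernel of $\mathcal{E}\to\mathcal{E}|_D$.
\end{itemize}
The condition $h^0(L(-a_ip))\geq r+1-i$ then reads
\[
\dim\bigl(\ker(\mathcal{E}\to\mathcal{E}|_D)\cap\mathcal{E}(-a_ip)\bigr)\geq r+1-i.
\]
This is a flag-versus-subspace Schubert-type condition. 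Its expected codimension is the classical $\sum_i\bigl(a_i-i\bigr)+(r+1)(g-d+r)$, exactly as in Eisenbud--Harris~\cite{basiclimitlinear}. Since $d=g+r$, this equals $\sum_i(a_i-i)$. By the standard lower bound for degeneracy loci (Fulton's theorem, via the determinantal description used in~\cite{basiclimitlinear}), every nonempty component of $Z$ has dimension at least $g-\sum_i(a_i-i)$.

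The naive count $\sum_i(a_i-i)$ is far too big, because the conditions for $a_i\geq k$ are partially implied by the others. By Proposition~\ref{prop: ramification behaviour}, a section vanishing to order $a$ gives one vanishing to order $a-k$ by dividing by $\pi^*$ of a section of $\OO(1)$. Equivalently, $\mathcal{E}(-(a+k)p)\supset t\cdot(\mathcal{E}(-ap))$ after twisting, where $t$ is the pullback of a coordinate vanishing at $\pi(p)$. The key step is therefore to \emph{replace $\Pic^d(C)$ and its single flag by the $\pi_*$-picture on $\PP^1$}. We parametrize $L$ via the $k$ pushforwards $\pi_*L(-jp)$, for $0\leq j\leq k-1$, which are related by the elementary modifications of Lemma~\ref{lemma: VB injectivity}. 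We then impose only the independent conditions: at each step $j$, the modification of $\OO(\sigma^j(\Vec{e}))$ must occur at a summand of a specified degree. The splitting conditions cost $u(\ord\,\sigma^j(\Vec{e}))$-type codimension, which sums to $u(\Vec{e})$. Choosing which summand the modification hits gives flag conditions on the fibre at $\pi(p)$, whose Schubert codimension is the number of larger summands skipped, i.e.\ $\inv(\Vec{e})$. This matches the coin game of Proposition~\ref{prop: ramification game}, which gives a consistency check: each move corresponds to one independent condition.

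The main obstacle is justifying that the reduced system of conditions still defines a degeneracy locus, so that the lower bound applies. Concretely, I would write, at a single point of the base, the pairs $(\pi_*L,\text{flag})$ as sections of a relative flag variety over the moduli of splitting loci on $\PP^1$. The splitting loci have codimension $u$ by Larson's theorem~\cite{Larson2021}. The Schubert cell has codimension $\inv$, and upper semicontinuity of fibre dimension gives the bound $\dim\geq g-u(\Vec{e})-\inv(\Vec{e})$. The alternative, matching the first proof, is to realize $W^{\Vec{e}}(\pi,p)$ inside the $\ord(\Vec{e})$-nested linear series space of~\cite{Larson2Vogt}, of dimension $g-u(\Vec{e})$. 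There $p$ imposes $\inv(\Vec{e})$ flag degeneracy conditions on a vector bundle over it, and the lower bound is immediate from the codimension bound for flag degeneracy loci.
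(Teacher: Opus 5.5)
Your proposal does not actually prove the bound. The reduction to the closed locus $Z$ is fine but, as you note, useless: the Eisenbud--Harris count $\sum_i(a_i-i)$ is the displacement of all coins, far larger than $u(\vec e)+\inv(\vec e)$. The whole proposition is the step you defer as ``the main obstacle'', namely exhibiting $W^{\vec e}(\pi,p)$ as an open piece of a locus of expected codimension $u(\vec e)+\inv(\vec e)$. What you put in its place fails in four ways.

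First, your count of $\inv(\vec e)$ is wrong as stated. Charging each elementary modification $\OO(\sigma^j\vec e)\supset\OO(\sigma^{j+1}\vec e)$ the number of larger summands it skips gives $\inv(\vec e)+\#\{i<j : e_i\geq e_j+2\}$, not $\inv(\vec e)$. For $k=2$ and $\vec e=(2,0)$, the second modification $\OO(1)\oplus\OO\supset\OO(1)\oplus\OO(-1)$ skips $\OO(1)$ yet costs nothing. The $k$ modifications must compose to $\pi_*L\otimes\OO(-1)$, so they are not independent.

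Second, the phrase ``$u(\mathrm{ord}\,\sigma^j\vec e)$-type codimension which sums to $u(\vec e)$'' has no content. Only the splitting type of $\pi_*L$ should be charged.

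Third, upper semicontinuity of fibre dimension gives no lower bound for a locus cut out by conditions. You need an expected-codimension statement: a flag degeneracy locus, or the pullback of a codimension-$c$ locus from a smooth target. Since the proposition is for every $[\pi,p]$, the bound $\geq g-u(\vec e)$ on the splitting stratum must come from such an argument (e.g.\ \cite[Theorem~6.5]{Larson2Vogt}, as the paper uses), not from Larson's dimension theorem for general covers.

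Fourth, your last sentence only names the paper's first proof. There $\inv(\vec e)$ appears because layer $j$ imposes rank conditions on $V_j/\mathrm{Im}_j(V_{j-1})\to\mathcal F_{j,i}/\varphi_{j,i}(\mathrm{Im}_j(V_{j-1}))$, after restricting to opens where these are bundles, and so costs $\inv(M_j)$. Imposing the vanishing orders on each $V_j$ directly would overcount.

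The missing idea, in your own language, is to treat the $k$ modifications as a single complete flag. The fibre $(\pi_*L)_{\pi(p)}\cong H^0(L|_{kp})$ carries the flag $F_\bullet$ of jets vanishing to order $\geq j$ at $p$. On the stratum where $\pi_*L\cong\OO(\mathrm{ord}\,\vec e)$ it also carries the degree filtration $G_m=\operatorname{im}\bigl(H^0((\pi_*L)(-m))\to(\pi_*L)_{\pi(p)}\bigr)$. Both are flags of subbundles there, and $h^0\bigl(L(-(mk+j)p)\bigr)=h^0\bigl((\pi_*L)(-m-1)\bigr)+\dim(G_m\cap F_j)$. Hence $\mathbf a(L)=\mathbf a(\vec e)$ says exactly that $F_\bullet$ lies in the Schubert cell relative to $G_\bullet$ indexed by the word $\vec e$. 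That cell has codimension $\inv(\vec e)$, the generic position being the decreasing word, and the bound follows. This is essentially the paper's first proof, with nested linear series replaced by the splitting stratum.

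The paper's second proof avoids $\pi_*$ altogether. Let $b_0<\cdots<b_{k-1}$ be the essential orders $ke_i+i$. The proof imposes only the $k$ conditions at these orders, from $b_{k-1}$ down. At each step it restricts to the open set where all larger vanishing orders are exactly as prescribed. There the image of $\mathcal E$ in the relevant jet bundle is a vector bundle, so the next condition is a genuine corank-one degeneracy locus of codimension $b_t-r+\#\{a\in\mathbf a(\vec e) : a>b_t\}$. Proposition~\ref{prop: ramification behaviour} supplies the non-essential vanishing orders, and these codimensions sum to $u(\vec e)+\inv(\vec e)$.
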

	
	We will provide two proofs of this result. Prior to the first proof, we recall some useful definitions of nested linear series from \cite[Section~6]{Larson2Vogt} and set some notation we will use. Our definitions differ very slightly as we retain the ordering of $\vec{e}$.

	Given $\vec{e}\in\Z^k$, let $d_1>\cdots>d_s\in \Z$ be the distinct entries of $\vec{e}$, and let \[M_j\coloneqq \{i\in [0,k-1] ~\mid~ e_i=d_j\}\] record the positions of $\vec{e}$ where $d_j$ appears, let $k_j=|M_j|$ be the multiplicity of $d_j$ in $\vec{e}$, and denote the entries of $M_j$ by \[M_j=\{m_{j,1}<m_{j,2}<\cdots<m_{j,k_{j}}\}.\] 
	
	Let $\pi \colon C\to\PP^1$ be a degree-$k$ cover and, for a line bundle $L$ on
	$C$, write
	\[
	L(t)\coloneqq L\otimes \pi^*\OO_{\PP^1}(t).
	\]
	As observed in \cite[Section~6]{Larson2Vogt}, the
	condition that $\pi_*L$ have splitting type $\vec{e}$ or a specialization
	thereof can be expressed through a collection of subspaces of
	$H^0(C,L(-d_j))$ that are compatible under multiplication by sections on
	$\PP^1$.
	
	\begin{defn}
		An \emph{$\vec{e}$-nested linear series} on $C$ is a tuple $(L,V_1,\ldots,V_s)$, where $L$ is a line bundle on $C$ and
		\[
		V_j\subseteq H^0(C,L(-d_j)),
		\qquad
		\dim V_j
		=
		h^0\bigl(\PP^1,\OO_{\PP^1}(\vec{e})(-d_j)\bigr)
		\]
		for every $1\leq j\leq s$, such that, for every $2\leq j\leq s$, the
		image of the natural multiplication map
		\[
		V_{j-1}\otimes
		H^0\bigl(\PP^1,\OO_{\PP^1}(d_{j-1}-d_j)\bigr)
		\longrightarrow
		H^0(C,L(-d_j))
		\]
		is contained in $V_j$. For any pair $1\leq i<j\leq s$, we denote by $\operatorname{Im}_j(V_i)\subseteq V_j$ the image of the multiplication map
		\[
		V_i\otimes
		H^0\bigl(\PP^1,\OO_{\PP^1}(d_i-d_j)\bigr)
		\longrightarrow
		H^0(C,L(-d_j)).
		\]
		By convention, we set $V_0=\{0\}$.
	\end{defn}
	
	This definition motivates Proposition~\ref{prop: ramification behaviour}. If $\elhur \in \cHTot$, $i\leq j$ and $V_i$ has a vanishing order $a$ at $p$, then there are sections vanishing with order $a,a+k,\dots,a+k(d_{i}-d_{j})$ at $p$ in $\mathrm{Im}_j(V_{i})\subset V_j$.
	
	\begin{defn}[{\cite[Section~6]{Larson2Vogt}}]
		Let $\elhur \in \cHTot$. The \emph{new vanishing orders at layer $j$}
		are the vanishing orders of $V_j$ at $p$ that do not occur among
		the vanishing orders of $\operatorname{Im}_j(V_{j-1})$ at $p$.
		By convention, all vanishing orders at the first layer are new. We say an $\vec{e}$-nested linear series is \emph{non-colliding at $p$}
		if, for every $1\leq j\leq s$, the new vanishing orders at layer
		$j$ are pairwise incongruent modulo $k$ and are incongruent modulo
		$k$ to every vanishing order occurring at any lower layer $V_i$,
		where $1\leq i<j$.
		
		We will denote by $G^{\Vec{e}, \textrm{nc}}(\pi, p)$ to be the space of non-colliding $\Vec{e}$-nested linear series.
	\end{defn}
	
	\begin{proof}[{Proof~1 of Proposition~\ref{prop: lowerbound}}]\label{proof_1_of_dim}
		
		Let $(L,V)$ be an element of $G^{r, \circ}_{d, \textbf{a}(\Vec{e})}(C,p)$. Our first observation is that $V = H^0(C,L)$. Indeed, if $V\subsetneq H^0(C,L)$ then the sections of $L$ have at least $r+2$ distinct vanishing orders, with $0,1,\ldots, k-1$ being $k$ of them. Then, for any positive integer $i$ we have 
		\[ h^0(C, L(ik\cdot p)) \geq ik + r+2, \]
		since its sections have at least that many distinct vanishing orders at $p$, see Proposition~\ref{prop: ramification behaviour}. On the other hand, for $i$ big enough, the Riemann-Roch Theorem implies 
		\[ h^0(C, L(ik\cdot p)) = ik+r+1,\] 
		a contradiction.
		
		%Next, we denote by $d_1 > d_2 >\cdots > d_s$ the distinct entries of $\Vec{e}$ and denote by $M_1, M_2, \ldots, M_s$ the positions in the vector $\Vec{e}$ where each $d_i$ appears. For each such set, we denote its elements as 
		%\[ M_j = \left\{ a_{j,1} < a_{j,2} < \cdots < a_{j,k_j}\right\}.\]
		
		Using the notation above and that $(L,V)$ is complete, we construct an injective map 
		\[ G^{r, \circ}_{d, \textbf{a}(\Vec{e})}(C,p) \rightarrow G^{\textrm{ord}(\Vec{e}), \textrm{nc}}(\pi\colon C\rightarrow \mathbb{P}^1,p) \]
		into the space of non-colliding $\textrm{ord}(\Vec{e})$-nested linear series, sending an element $(L,V)$ to the non-colliding $\textrm{ord}(\Vec{e})$-nested linear series $(L,V_1, V_2, \ldots, V_s)$ where $V_j = H^0(C, L(-d_j))$ for each $1\leq j \leq s$. Indeed, the natural maps 
		\[ V_{j-1} \otimes H^0(\mathbb{P}^1, \mathcal{O}_{\PP^1}(d_{j-1}-d_j)) \rightarrow V_j, \] show that $(L,V_1,\dots,V_s)$ is an $\mathrm{ord}(\vec{e})$-nested linear series, and by the definition of $V_i$ the new vanishing orders of $V_j$ not appearing as vanishing orders of $\mathrm{Im}_j(V_{j'})$ for $j'<j$ are exactly $M_j$, which are distinct modulo $k$, whereby $(L,V_1,\dots, V_s)$ is non-colliding.

		We consider the open subspace of $G^{\textrm{ord}(\Vec{e}), \textrm{nc}}(\pi,p)$ where the line bundles have splitting type $\Vec{e}$, so that we have $V_j = H^0(C, L(-d_j))$ for every $1 \leq j \leq s$. We will show the linear series with vanishing specified by $\textbf{a}(\vec{e})$ form a sublocus of codimension at most $\textrm{inv}(\Vec{e})$. We construct this locus in steps, for each $d_j$, and begin with $j=1$.
		
		The condition that $\textbf{a}(\Vec{e})_{\geq d_1k}$ are vanishing orders of $(L,V)$ at $p$ is equivalent to saying that the entries of $M_1$ are the vanishing orders of $V_1 = H^0(C, L(-d_1))$ at $p$. This is equivalent  to the inequalities 
		\[ h^0(C, L(-d_1k\cdot p-m_{1,i}\cdot p)) \geq 1+k_1-i \ \textrm{for each} \ 1\leq i \leq k_1 \]
		\[ \Leftrightarrow \textrm{rank}(V_1 \rightarrow H^0(C, L(-d_1k\cdot p)\vert_{m_{1,i}\cdot p})) \leq i-1 \ \textrm{for each} \ 1\leq i\leq k_1.\]
		
		The locus of $\textrm{ord}(\Vec{e})$-nested linear series satisfying these conditions can be interpreted as a flag degeneracy locus as follows: Let $\mathcal{L}$ be a Poincar\'e line bundle on the space $C\times G^{\textrm{ord}(\Vec{e}),\textrm{nc}}(\pi,p)$ and let $P = \{ p\}\times G^{\textrm{ord}(\Vec{e}),\textrm{nc}}(\pi,p)$. For each $1\leq j \leq s$ and $1\leq i \leq k_j$ we consider the short exact sequence of line bundles 
		\[ 0\rightarrow \mathcal{L}(-(d_jk+m_{j,i})\cdot P) \rightarrow \mathcal{L}(-d_jk\cdot P)  \rightarrow \mathcal{L}(-d_jk\cdot P) / \mathcal{L}(-(d_jk+m_{j,i})\cdot P) \rightarrow 0 \]
		and take the rank $m_{j,i}$ vector bundle
		\[ \mathcal{F}_{j,i} \coloneqq \pi_{2*}(\mathcal{L}(-d_jk\cdot P) / \mathcal{L}(-(d_jk+m_{j,i})\cdot P)), \]
		defined as a pushforward to $G^{\textrm{ord}(\Vec{e}),\textrm{nc}}(\pi,p)$.

		Let $\mathcal{V}_1$ denote the universal vector bundle on $G^{\mathrm{ord}(\vec e),\mathrm{nc}}(\pi,p))$, whose fiber over $(L,V_1,\ldots,V_s)$ is $V_1$. We have a flag of vector bundle morphisms 
		\[ \mathcal{V}_1 \rightarrow \mathcal{F}_{1,k_1}\twoheadrightarrow\mathcal{F}_{1,k_1-1}\twoheadrightarrow \cdots \twoheadrightarrow \mathcal{F}_{1,1}\]
		and the condition that the entries of $\textbf{a}(\Vec{e})_{\geq d_1k}$ are vanishing orders is equivalent to 
		\[ \textrm{rank}(\mathcal{V}_1 \rightarrow \mathcal{F}_{1,i}) \leq i-1 \ \textrm{for each} \ 1\leq i \leq k_1.  \]
		
		The sublocus of $G^{\textrm{ord}(\Vec{e}),\textrm{nc}}(\pi,p)$ satisfying these conditions is a flag degeneracy locus and its codimension is at most 
		\[ \sum_{i=1}^{k_1}(m_{1,i}-i+1) = \#\{ (m,n) \in M_1 \times (M_2\cup\cdots \cup M_s) \ | \  n < m\}. \]
		
		We take $G_1^{\textrm{ord}(\Vec{e}), \textrm{nc}}(\pi,p)$ to be the locus where $H^0(C,L)$ has the entries $\textbf{a}(\Vec{e})_{\geq d_1k}$ as vanishing orders. This is an open subspace of this flag degeneracy locus. We inductively define similar loci at each level $j$. We consider the locus $G^{\textrm{ord}(\Vec{e}), \textrm{nc}}_{j-1}(\pi,p)$ where the vanishing orders of $H^0(C,L)$ at $p$ that are at least $d_{j-1}k$ are exactly $\textbf{a}(\Vec{e})_{\geq d_{j-1}k}$. Inside this locus, we consider the sublocus where the vanishing orders that are at least $d_jk$ are given by $\textbf{a}(\Vec{e})_{\geq d_jk}$.
		
		In particular, for all $\textrm{ord}(\Vec{e})$-nested linear series $(L,V_1, V_2,\ldots, V_s)$ in this sublocus, the new vanishing orders of $V_j$ not appearing in $\textrm{Im}_j(V_{j-1})$ are the elements of $M_j$. Id est, the locus $G_j^{\textrm{ord}(\Vec{e}), \textrm{nc}}(\pi,p)$ is defined by the conditions 
		\[ \dim V_j(-m_{j,i}p) / \textrm{Im}_j(V_{j-1})(-m_{j,i}p) \geq k_j + 1 -i \ \textrm{for all} \ 1\leq i \leq k_j. \]
		We look at the morphisms 
		
		\[
		\begin{tikzcd}
			0\arrow{r}{} & \textrm{Im}_j(V_{j-1})(-m_{j,i}p) \arrow{r}{}   \arrow[d, hookrightarrow]  & \textrm{Im}_j(V_{j-1})  \arrow[d, hookrightarrow]  \arrow{r}{} & H^0(C, L(-d_jk\cdot p)\vert_{m_{j,i}\cdot p}) \arrow[d, equal]\\
			0\arrow{r}{} &H^0(C, L(-(d_jk+m_{j,i})\cdot p)) \arrow{r}{}& V_j \arrow{r}{\varphi_{j,i}} & H^0(C, L(-d_jk\cdot p)\vert_{m_{j,i}\cdot p}) 
		\end{tikzcd}
		\] 
		
		The conditions 
		\[ \dim V_j(-m_{j,i}p) / \textrm{Im}_j(V_{j-1})(-m_{j,i}p) \geq k_j + 1 -i \ \textrm{for all} \ 1\leq i \leq k_j \]
		above are equivalent, using Snake Lemma, to the maps 
		\[ V_j/\textrm{Im}_j(V_{j-1}) \rightarrow H^0(C, L(-d_jk\cdot p)\vert_{m_{j,i}\cdot p}) / \varphi_{j,i}(\textrm{Im}_j(V_{j-1}))\]
		having rank less or equal to $i-1$. 
		
		Letting $\mathcal{V}_j$ be the universal vector bundle on $G^{\mathrm{ord}(\vec e),\mathrm{nc}}(\pi,p))$, whose fiber over $(L,V_1,\ldots,V_s)$ is $V_j$, the morphism of vector spaces above comes from the morphism \[ \mathcal{V}_j/\mathrm{Im}_{j}(\mathcal{V}_{j-1}) \xrightarrow{\overline{\varphi}_{j,i}} \mathcal{F}_{j,i}/\varphi_{j,i}(\mathrm{Im}_{j}(\mathcal{V}_{j-1})) \] of vector bundles on $G^{\textrm{ord}(\Vec{e}), \textrm{nc}}_{j-1}(\pi,p)$.
		
		The two vector bundles have ranks $k_j$ and $m_{j,i} - \#\{ m\in M_1\cup M_2\cup \cdots \cup M_{j-1} \mid m < m_{j,i}\}$, respectively.
		
		These morphisms fit into a flag 
		\[ \mathcal{V}_j/\mathrm{Im}_{j}(\mathcal{V}_{j-1}) \xrightarrow{\overline{\varphi}_{j,{k_j}}} \mathcal{F}_{j,k_j}/\varphi_{j,k_j}(\mathrm{Im}_{j}(\mathcal{V}_{j-1})) \twoheadrightarrow \cdots \twoheadrightarrow \mathcal{F}_{j,1}/\varphi_{j,1}(\mathrm{Im}_{j}(\mathcal{V}_{j-1})). \]
		
		In particular, the sublocus of $G^{\textrm{ord}(\Vec{e}),\textrm{nc}}_{j-1}(\pi,p)$ with vanishing orders of $H^0(C,L)$ at $p$ given by $\textbf{a}(\Vec{e})_{\geq d_jk}$ is a flag degeneracy locus of codimension at most
		\[\sum_{i=1}^{k_j} (m_{j,i} - \#\{ m\in M_1\cup M_2\cup \cdots \cup M_{j-1} \mid m < m_{j,i}\} - i +1). \] 
		This value is equal to 
		\[ \textrm{inv}(M_j) = \#\{(m,n) \in M_j\times(M_{j+1}\cup\cdots\cup M_s) \mid n<m\}.  \]
		
		Consequently, the sublocus $G^{\textrm{ord}(\Vec{e}),\textrm{nc}}_{j}(\pi,p)$ of $G^{\textrm{ord}(\Vec{e}),\textrm{nc}}_{j-1}(\pi,p)$ is open in this flag degeneracy locus. 
		\[ \Rightarrow \textrm{codim}(G^{\textrm{ord}(\Vec{e}),\textrm{nc}}_{j}(\pi,p),  G^{\textrm{ord}(\Vec{e}),\textrm{nc}}_{j-1}(\pi,p)) \leq \textrm{inv}(M_j).\] 
		As a consequence 
		\[ \textrm{codim}(G^{\textrm{ord}(\Vec{e}),\textrm{nc}}_{s-1}(\pi,p),  G^{\textrm{ord}(\Vec{e}),\textrm{nc}}(\pi,p)) \leq \sum_{j=1}^{s-1}\textrm{inv}(M_j) = \textrm{inv}(\Vec{e}).\]
		
		But $G^{\textrm{ord}(\Vec{e}),\textrm{nc}}_{s-1}(\pi,p)$ is the image of $G^{r, \circ}_{d, \textbf{a}(\Vec{e})}(C,p)$ into $G^{\textrm{ord}(\Vec{e}),\textrm{nc}}(\pi,p)$ and hence the conclusion follows from $\textrm{dim}(G^{\textrm{ord}(\Vec{e}),\textrm{nc}}(\pi,p)) \geq g - u(\Vec{e})$, see \cite[Theorem~6.5]{Larson2Vogt}. 
	\end{proof}
	
	In fact, Proposition~\ref{prop: lowerbound} can be proven independently of the results in~\cite{Larson2Vogt}. As a consequence, our methods give a new proof of the main result of~\cite{Larson2Vogt} about the dimension of Hurwitz--Brill--Noether loci. 
	
	\begin{proof}[{Proof~2 of Proposition~\ref{prop: lowerbound}}]\label{proof_2_of_dim}
		Let $B(\Vec{e})$ be the set of essential vanishing orders, defined as 
		\[ B(\Vec{e}) \coloneqq \{ ke_0, ke_1 + 1, \ldots, ke_{k-1} + k-1\}, \]
		i.e. the southernmost entry in each column, in view of Remark~\ref{remark: southernomst description}. We order its elements in increasing order
		\[ B(\Vec{e}) = \{ b_0 < b_1 < \cdots < b_{k-1}\} = \{ke_{i_{0}} + i_{0} < ke_{i_{1}} + i_{1} < \cdots < ke_{i_{k-1}} + i_{k-1} \}.\] 
		and also denote by $\textbf{a}(\Vec{e})_{>b_i}$ the subset of $\textbf{a}(\Vec{e})$ containing just the elements strictly bigger than $b_i$. Moreover, we denote by $c_i = \min \textbf{a}(\Vec{e})_{>b_i}$ the smallest element of the set, for each $0 \leq i \leq k-2$. 
		
		Our goal is to realize the pointed Hurwitz--Brill--Noether locus $W^{\Vec{e}}(\pi,p) = W^{r,\circ}_{g+r, \textbf{a}(\Vec{e})}(C,p)$ as a degeneracy locus. For this, we will consider the conditions for the essential vanishing orders
		\[ h^0(C,L-(e_i\cdot k+i) \cdot p) \geq 1 + \sum_{e_j \geq e_i} (e_j-e_i) + \#\{j \ | \ j > i \ \textrm{and} \ e_j \geq e_i\} \ \textrm{for each} \ 0\leq i \leq k-1\]
		in an inductive manner, and emphasize that all other vanishing conditions are automatically satisfied because of Proposition~\ref{prop: ramification behaviour}.
		
		\ \\ 
		\noindent
		\begin{minipage}{0.25\textwidth}
			\begin{tikzpicture}
				% Draw vertical lines
				\foreach \x in {0,...,4}
				\draw[gray] (\x,0) -- (\x,6);
				
				% Draw horizontal lines
				\foreach \y in {0,...,6}
				\draw[gray] (0,\y) -- (4,\y);
				
				\draw[very thick] (0,3) -- (1,3);
				\draw[very thick] (1,3) -- (1,1);
				\draw[very thick] (1,1) -- (2,1);
				\draw[very thick] (2,1) -- (2,4);
				\draw[very thick] (2,4) -- (3,4);
				\draw[very thick] (3,4) -- (3,5);
				\draw[very thick] (3,5) -- (4,5);
				
				\node at (0.5,5.5) {\Large 0};
				\node at (1.5,5.5) {\Large 1};
				\node at (2.5,5.5) {\Large 2};
				\node at (2.5,4.5) {\Large 6};
				\node at (3.5,5.5) {\Large 3};
				\node at (0.5,4.5) {\Large 4};
				\node at (1.5,4.5) {\Large 5};
				\node at (0.5,3.5) {\Large 8};
				\node at (1.5,3.5) {\Large 9};
				\node at (1.5,2.5) {\Large 13};
				\node at (1.5,1.5) {\Large 17};
				
			\end{tikzpicture}
		\end{minipage}
		\hfill
		\begin{minipage}{0.6\textwidth} 
			This is an example for $k=4$ and $\Vec{e} = (2,4,1,0)$. We have $B(\Vec{e}) = \{3,6,8,17\}$. For $i= 0$, the term $\sum_{e_j \geq e_i} (e_j-e_i) = 2$ counts the vanishing orders on higher rows than $8 = 4\cdot e_0 + 0$. The term 
			\[\sum_{e_j \geq e_i} (e_j-e_i) + \#\{j \ | \ j > i \ \textrm{and} \ e_j \geq e_i\} = 3 \]
			counts the vanishing orders strictly greater than $8$. In particular, the vanishing condition described in this example is \[ h^0(C, L-8p) \geq 1 + 2 +1 = 4. \]
		\end{minipage} 
		\ \\
		
		We start with the condition 
		\[ h^0(C,L(-b_{k-1}\cdot p)) \geq 1 \]
		which we want to realise as a degeneracy locus on $\mathrm{Pic}^{g+r}(C)$. 
		
		Let $m > 0$ big enough so that $L\otimes \mathcal{O}_C(m \cdot p)$ is non-special for every $L \in \mathrm{Pic}^{g+r}(C)$, and consider the short exact sequence 
		\[ 0 \rightarrow L\otimes \mathcal{O}_C(-b_{k-1}\cdot p) \rightarrow L\otimes  \mathcal{O}_C(m\cdot p) \rightarrow L\otimes  \mathcal{O}_C(m\cdot p)\vert_{(b_{k-1} +m)\cdot p}\rightarrow 0.\]
		Looking at the exact sequence obtained by taking global sections, the condition $h^0(C,L-b_{k-1}\cdot p) \geq 1$ is equivalent to the map 
		\[ H^0(L\otimes \mathcal{O}_C(m\cdot p)) \rightarrow H^0(C, L\otimes  \mathcal{O}_C(m\cdot p)\vert_{(b_{k-1} +m)\cdot p})\]
		having rank at most $m+r$. 
		
		Consider the product space $C\times \mathrm{Pic}^{g+r}(C)$, and its projection maps $\pi_1$ and $\pi_2$ to the first and second component. Let $\mathcal{L}$ be a Poincar\'e line bundle on $C\times \mathrm{Pic}^{g+r}(C)$, $D \coloneqq m\cdot\{p\}\times \mathrm{Pic}^{g+r}(C)$ and let 
		\[P_{i} \coloneqq (b_i+m)\cdot\{p\}\times \mathrm{Pic}^{g+r}(C)\] 
		for each $0\leq i \leq k-1$. 
		
		The locus in $\mathrm{Pic}^{g+r}(C)$ of line bundles satisfying $h^0(C,L-b_{k-1}\cdot p) \geq 1$ is the locus where the morphism of vector bundles 
		\[ \mathcal{E} \coloneqq \pi_{2*}\mathcal{L}(D) \rightarrow \pi_{2*}(\mathcal{L}(D)\otimes \mathcal{O}_{P_{k-1}}) \]
		has rank at most $m+r$. The two vector bundles have rank $r+1+m$ and $b_{k-1}+m$ respectively, and hence this degeneracy locus is either empty or has codimension at most $(b_{k-1}-r)$ in $\mathrm{Pic}^{g+r}(C)$. 
		
		We denote by $X_{k-1}$ the locus of such line bundles $L$ for which we have $h^0(C,L-b_{k-1}\cdot p) = 1$ and $h^0(C,L-(b_{k-1}+1)\cdot p) = 0$. Because this is an open condition, it follows that $X_{k-1}$ is an open subset of the degeneracy locus above, and hence (if non-empty) has codimension at most $(b_{k-1}-r)$ in $\mathrm{Pic}^{g+r}(C)$.
		
		Using Proposition~\ref{prop: ramification behaviour}, it follows that all elements of $\textbf{a}(\Vec{e})_{>b_{k-2}}$ are vanishing orders of the line bundles $L$ contained in $X_{k-1}$. Let $i_{k-2}$ be the index such that $b_{k-2} = ke_{i_{k-2}} + i_{k-2}$. We have that 
		
		\[ h^0(C, L-c_{k-2} \cdot p) \geq  \sum_{j \geq k-2} (e_{i_j}-e_{i_{k-2}}) + \#\{j \ | \ j > i_{k-2} \ \textrm{and} \ e_j \geq e_{i_{k-2}}\} \]
		and we denote by $Y_{k-1}$ the open sublocus of $X_{k-1}$ where we have equality. This is the sublocus of line bundles $L$ whose ramification orders greater or equal to $c_{k-2}$ are exactly those appearing in $\textbf{a}(\Vec{e})_{>b_{k-2}}$. 
		
		We look at the sublocus of $Y_{k-1}$ of line bundles respecting the condition 
		\[ h^0(C, L-b_{k-2} \cdot p) \geq  1 + \sum_{j \geq k-2} (e_{i_j}-e_{i_{k-2}}) + \#\{j \ | \ j > i_{k-2} \ \textrm{and} \ e_j \geq e_{i_{k-2}}\} \]
		and we want to estimate its codimension. 
		
		Let
		\[ Q_i \coloneqq  (c_i+m)\cdot\{p\}\times \mathrm{Pic}^{g+r}(C) \]
		for each $0\leq i \leq k-2$ and consider the morphisms of vector bundles 
		\[ \mathcal{E} \rightarrow \pi_{2*}(\mathcal{L}(D)\otimes \mathcal{O}_{Q_{k-2}}) \rightarrow  \pi_{2*}(\mathcal{L}(D)\otimes \mathcal{O}_{P_{k-2}}) \]
		which we restrict to $Y_{k-1}$. The sublocus of $Y_{k-1}$ satisfying the required condition is the locus of line bundles $L$ where the composition of the morphisms above has rank at most 
		\[r+m-\sum_{j \geq k-2} (e_{i_j}-e_{i_{k-2}})- \#\{j \ | \ j > i_{k-2} \ \textrm{and} \ e_j \geq e_{i_{k-2}}\}.\]
		
		Because we have 
		\[ h^0(C, L-c_{k-2} \cdot p) =  \sum_{j \geq k-2} (e_{i_j}-e_{i_{k-2}}) + \#\{j \ | \ j > i_{k-2} \ \textrm{and} \ e_j \geq e_{i_{k-2}}\} \]
		on $Y_{k-1}$, it follows that the morphism
		\[ \varphi_{k-2}\colon \mathcal{E} \rightarrow \pi_{2*}(\mathcal{L}(D)\otimes \mathcal{O}_{Q_{k-2}}) \]
		has rank $r+1+m -\sum_{j \geq k-2} (e_{i_j}-e_{i_{k-2}}) - \#\{j \ | \ j > i_{k-2} \ \textrm{and} \ e_j \geq e_{i_{k-2}}\}$ at each point $L$ of $Y_{k-1}$. In particular, this implies that $\mathrm{Im}(\varphi_{k-2})$ is a vector bundle on $Y_{k-1}$ of rank 
		
		\[r+1+m -\sum_{j \geq k-2} (e_{i_j}-e_{i_{k-2}}) - \#\{j \ | \ j > i_{k-2} \ \textrm{and} \ e_j \geq e_{i_{k-2}}\}.\]
		
		We can view our required sublocus of $Y_{k-1}$ as the locus where the morphism 
		\[ \mathrm{Im}(\varphi_{k-2}) \rightarrow \pi_{2*}(\mathcal{L}(D)\otimes \mathcal{O}_{P_{k-2}}) \]
		has rank at most 
		\[r+m -\sum_{j \geq k-2} (e_{i_j}-e_{i_{k-2}}) - \#\{j \ | \ j > i_{k-2} \ \textrm{and} \ e_j \geq e_{i_{k-2}}\}.\]
		
		The vector bundles $\mathrm{Im}(\varphi_{k-2})$ and $\pi_{2*}(\mathcal{L}(D)\otimes \mathcal{O}_{P_{k-2}})$ have ranks
		\[r+m +1 -\sum_{j \geq k-2} (e_{i_j}-e_{i_{k-2}}) - \#\{j \ | \ j > i_{k-2} \ \textrm{and} \ e_j \geq e_{i_{k-2}}\}\]
		and $b_{k-2} + m$, respectively. Hence this degeneracy locus is either empty or has codimension in $Y_{k-1}$ at most 
		\[ b_{k-2} -r + \sum_{j \geq k-2} (e_{i_j}-e_{i_{k-2}}) + \#\{j \ | \ j > i_{k-2} \ \textrm{and} \ e_j \geq e_{i_{k-2}}\}. \]
		
		We denote by $X_{k-2}$ the open sublocus where we have the equalities
		\[ h^0(L-b_{k-2} \cdot p) =  1 + \sum_{j \geq k-2} (e_{i_j}-e_{i_{k-2}}) + \#\{j \ | \ j > i_{k-2} \ \textrm{and} \ e_j \geq e_{i_{k-2}}\} \]
		and 
		\[ h^0(L-(b_{k-2}+1) \cdot p) =  \sum_{j \geq k-2} (e_{i_j}-e_{i_{k-2}}) + \#\{j \ | \ j > i_{k-2} \ \textrm{and} \ e_j \geq e_{i_{k-2}}\} \]
		
		We now define $Y_{k-2}$ to be the open sublocus of $X_{k-2}$ parametrizing line bundles whose ramification order greater or equal to $c_{k-3}$ are exactly those in $\textbf{a}(\Vec{e})_{>b_{k-3}}$.
		
		Inductively, we obtain a series of inclusions 
		\[ Y_{0} \subseteq Y_{1} \subseteq \cdots \subseteq Y_{k-1} \subseteq \mathrm{Pic}^{g+r}(C),\]
		with $Y_t$ the locus of line bundles whose ramification orders at $p$ greater or equal to $c_{t-1}$ are exactly those in $\textbf{a}(\Vec{e})_{>b_{t-1}}$. The space $Y_0$ is the sublocus of $\mathrm{Pic}^{g+r}(C)$ consisting of $g^r_{g+r}$ having ramification orders exactly $\mathbf{a}(\Vec{e})$ at $p$, that is $Y_0 = W^{\Vec{e}}(\pi,p)$.
		
		The codimension estimate that we get using our inductive argument is 
		\[ \mathrm{codim}(Y_{t-1}; Y_t) \leq b_{t-1}-r+\sum_{j \geq t-1} (e_{i_j}-e_{i_{t-1}}) +  \#\{j \ | \ j > i_{t-1} \ \textrm{and} \ e_j \geq e_{i_{t-1}}\}.\]
		Adding up all the codimension we get that 
		\begin{align*}
			\mathrm{codim}(Y_0; \mathrm{Pic}^{g+r}(C)) &\leq \sum_{i=0}^{k-1}b_i - kr + \sum_{e_i\geq e_j}(e_i-e_j) + \#\{(i,j) \ | \ j > i \ \textrm{and} \ e_j \geq e_{i}\} \\
			&\leq k \sum_{i=0}^{k-1}e_i +  \sum_{i=0}^{k-1} i - kr + \sum_{e_i\geq e_j}(e_i-e_j) + \#\{(i,j) \ | \ j > i \ \textrm{and} \ e_j \geq e_{i}\} \\
			&\leq k(r+1-k) + \sum_{i=0}^{k-1} i - kr + \sum_{e_i\geq e_j}(e_i-e_j) + \#\{(i,j) \ | \ j > i \ \textrm{and} \ e_j \geq e_{i}\} \\
			&\leq -\frac{k(k-1)}{2} + \sum_{e_i\geq e_j}(e_i-e_j) + \#\{(i,j) \ | \ j > i \ \textrm{and} \ e_j \geq e_{i}\}.
		\end{align*}
		
		But we immediately check the equalities 
		\[  \sum_{e_i\geq e_j}(e_i-e_j) -\frac{k(k-1)}{2} = u(\Vec{e}) - \#\{ (i,j) \ | \ i < j \ \textrm{and} \ e_i = e_j\} \]
		and 
		\[ \#\{(i,j) \ | \ j > i \ \textrm{and} \ e_j \geq e_{i}\} = \mathrm{inv}(\Vec{e}) +  \#\{ (i,j) \ | \ i < j \ \textrm{and} \ e_i = e_j\}. \]
		
		Putting the two equalities together we obtain that $Y_0$ is either empty or has dimension greater or equal to $g - u(\Vec{e}) - \mathrm{inv}(\Vec{e})$.
	\end{proof}
	
	Putting together Proposition~\ref{prop: upperbound} and Proposition~\ref{prop: lowerbound} we obtain a pointed version of the main result in~\cite{Larson2Vogt}.
	
	\begin{theorem} \label{trm: dimension}
		Let $[\pi\colon C \rightarrow \mathbb{P}^1,p]$ be a generic element of $\cHTot$ and let $\Vec{e}\in\Z^k_{\geq0}$. 
		Then the pointed Hurwitz--Brill--Noether locus $W^{\Vec{e}}(\pi,p)$, if non-empty, has dimension 
		\[ \dim W^{\Vec{e}}(\pi,p) = g- u(\Vec{e})- \mathrm{inv}(\Vec{e})\]
		%Moreover, for a generic such $g^r_d$, the vanishing orders at $p$ are exactly given by $\textbf{a}(\Vec{e})$. 
	\end{theorem}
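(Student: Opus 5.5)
The plan is to combine the two bounds already established, so the argument is short. Fix a generic $[\pi\colon C\to\PP^1,p]\in\cHTot$ and $\vec e\in\Z^k_{\geq0}$, and set $r=k+|\vec e|-1$ and $d=g+r$. By Definition~\ref{def: sequence and pHBN}, $W^{\vec e}(\pi,p)$ is exactly the locus $W^{r,\circ}_{d,\mathbf a(\vec e)}(C,p)$ of degree $d$ line bundles whose vanishing sequence at $p$ is exactly $\mathbf a(\vec e)$. In the first step of Proof~1 of Proposition~\ref{prop: lowerbound} it was shown that for such $L$ the series is complete, $V=H^0(C,L)$. Hence the locus of $g^r_d$'s with ramification exactly $\mathbf a(\vec e)$ at $p$ is identified with $W^{\vec e}(\pi,p)$ itself, and no extra Grassmannian fibre is introduced. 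This lets me apply the two propositions, which are phrased in terms of $g^r_d$'s, directly to $W^{\vec e}(\pi,p)$.

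For the upper bound I invoke Proposition~\ref{prop: upperbound}. It applies because the cover is generic, and it gives
\[
\dim W^{\vec e}(\pi,p)\leq g-u(\vec e)-\inv(\vec e),
\]
with emptiness when this number is negative. The content is the elliptic-tail degeneration of Setup~\ref{setup: elliptic tail}, Lemma~\ref{lemma: ramifications}, and the monotonicity in Proposition~\ref{prop: vanishing and loci comparison}. One point needs care here. The upper bound is obtained on the central fibre of a family over $\hhTot$ and transported to a general fibre by upper semicontinuity of fibre dimension. I would state this explicitly: consider the relative locus of limit linear series over a one-parameter family in $\hhTot$ specializing to the curve in Setup~\ref{setup: elliptic tail}. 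Since limit linear series are proper over the base (\cite{basiclimitlinear}), every component of the general fibre has closure meeting the central fibre, so its dimension is bounded by the central-fibre bound.

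For the lower bound I invoke Proposition~\ref{prop: lowerbound}, which holds for every element of $\cHTot$. It says that every component of $W^{\vec e}(\pi,p)$, if nonempty, has dimension at least $g-u(\vec e)-\inv(\vec e)$, because $W^{\vec e}(\pi,p)$ is an open subset of an iterated flag degeneracy locus in $\Pic^{d}(C)$. Combining the two inequalities, every component of a nonempty $W^{\vec e}(\pi,p)$ has dimension exactly $g-u(\vec e)-\inv(\vec e)$, which proves the theorem.

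The main obstacle is a bookkeeping issue rather than a new idea: the two bounds must concern the same space. The upper bound controls $g^r_d$'s with ramification at least $\mathbf a(\vec e)$, which a priori is a larger space, while the lower bound controls the locus with vanishing exactly $\mathbf a(\vec e)$. The former contains the latter, so the inequalities go in the right direction, and the completeness $V=H^0(C,L)$ removes any discrepancy between line bundles and linear series. I would spell out both points briefly in the proof.
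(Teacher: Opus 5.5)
Your proposal is correct and follows the paper's own argument: the paper proves Theorem~\ref{trm: dimension} simply by combining the upper bound of Proposition~\ref{prop: upperbound} (elliptic-tail degeneration) with the lower bound of Proposition~\ref{prop: lowerbound} (flag degeneracy loci). Your extra remarks, namely completeness $V=H^0(C,L)$ and transporting the central-fibre bound to a general fibre by properness and semicontinuity, only spell out points the paper leaves implicit.
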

	
	In fact, whenever $g - u(\Vec{e})- \inv(\Vec{e}) \geq 0$, the pointed Hurwitz--Brill--Noether locus is non-empty. This will be a direct consequence of the statement in genus $1$, together with the regeneration argument in Theorem~\ref{theorem: HBN regeneration}.
	
	\begin{remark}
		Let $[\pi,p]$ be a general element of $\cHTot$. We consider a maximal Hurwitz--Brill--Noether locus as in~\cite{Larson2021} whose associated vector has the form 
		\[ \Vec{e} = (\underbrace{a+1, \ldots, a+1}_{k_1 \ \textrm{times}}, \underbrace{a,\ldots a}_{k_2 \ \textrm{times}}, \underbrace{1, \ldots, 1}_{k_3 \ \textrm{times}}, \underbrace{0,\ldots, 0}_{k_4 \ \textrm{times}}), \]
		where $a\geq 2$, the integers $k_i$ are nonnegative, and $k_1+k_2+k_3+k_4=k$. Then the second proof of Proposition~\ref{prop: lowerbound} realizes the maximal Hurwitz--Brill--Noether locus as a pointed Brill--Noether locus of expected dimension. 
	\end{remark}
	
	\begin{proof}
		Consider the rank degeneracy conditions 
		\begin{gather*}
			h^0(C, L - (ak + k +i)p) \geq k_1 - i \ \textrm{for all} \ 0 \leq i \leq k_1-1 \ \textrm{and}\\
			h^0(C, L - (ak + k_1 +i)p) \geq k_1 + k_2 - i \ \textrm{for all} \ 0 \leq i \leq k_2-1.
		\end{gather*}
		According to the second proof of Proposition~\ref{prop: lowerbound}, these are all the non-trivial rank conditions we need to cut out $W^{\Vec{e}}(\pi,p)$. The expected dimension is the same, since the rank conditions are the same. 
		
		Hence, if we denote 
		$$\textbf{a} \coloneqq (0,1,\ldots, r-k_1-k_2, ak+k_1, \ldots, ak +k_1 + k_2-1, ak+k, \ldots, ak+k+k_1-1),$$
		we obtain that $W^{\Vec{e}}(\pi,p)$ is a component of expected dimension of $W^r_{g+r, \textbf{a}}(C,p)$. 
		
		\begin{figure}[H]
			\begin{tikzpicture}[x=0.72cm,y=-0.72cm,
				every node/.style={inner sep=0pt},
				order/.style={font=\normalsize}]
				
				% Common grids; shading marks the three nonzero southernmost displacements.
				\foreach \offset in {0,8} {
					\begin{scope}[shift={(\offset,0)}]
						\fill[black!12] (1,4) rectangle (3,5);
						\fill[black!12] (0,5) rectangle (1,6);
						\foreach \x in {0,...,5}
						\draw[gray!65,thin] (\x,0) -- (\x,6);
						\foreach \y in {0,...,6}
						\draw[gray!65,thin] (0,\y) -- (5,\y);
					\end{scope}
				}
				
				% Hurwitz--Brill--Noether vanishing orders.
				\begin{scope}
					\node at (2.5,-1.05) {Hurwitz--Brill--Noether};
					\node at (2.5,-0.45) {$\Vec{e}=(5,4,4,0,0)$};
					\foreach \n in {0,1,2,3,4,5,6,7,10,11,12,15,16,17,20,21,22,25} {
						\pgfmathtruncatemacro{\col}{mod(\n,5)}
						\pgfmathtruncatemacro{\row}{floor(\n/5)}
						\node[order] at ({\col+0.5},{\row+0.5}) {$\n$};
					}
				\end{scope}
				
				% Pointed Brill--Noether: only a_15=21, a_16=22, a_17=25 are displaced.
				% The gaps below must remain empty; this is not a column-prefix board.
				\begin{scope}[shift={(8,0)}]
					\node at (2.5,-1.05) {Pointed Brill--Noether};
					\node at (2.5,-0.45) {$\mathbf{a}=(0,\ldots,14,21,22,25)$};
					\foreach \n in {0,1,2,3,4,5,6,7,8,9,10,11,12,13,14,21,22,25} {
						\pgfmathtruncatemacro{\col}{mod(\n,5)}
						\pgfmathtruncatemacro{\row}{floor(\n/5)}
						\node[order] at ({\col+0.5},{\row+0.5}) {$\n$};
					}
				\end{scope}
			\end{tikzpicture} \caption{Expected dimension: $g-20$}
		\end{figure}
		In both cases, the expected codimension is the same. In one case, it is computed as the sum of displacements of southernmost squares, while in the second case it is computed as the sum of all displacement. The terms are the same (in our example: $6 + 6 + 8$), and the expected dimension of the two loci is the same. 
	\end{proof}
	Consider the vector 
	\[
	\Vec{f}
	\coloneqq
	(e_0,\ldots,e_{i-1},e_{i+1},e_i,\ldots,e_{k-1})
	\]
	so that $t_{i+1}(\Vec{f}) = \Vec{e}$. The argument in the first proof of Proposition~\ref{prop: lowerbound} can be adapted to show that the irreducible components of
	\[ W^{\Vec{e}}(\pi,p) \cup W^{\Vec{f}}(\pi,p)\]
	have dimension at least $g-u(\Vec{e})- \inv(\Vec{e})$. However, Theorem~\ref{trm: dimension} implies that 
	\[  \dim W^{\Vec{f}}(\pi,p) = g - 1 - u(\Vec{e})- \inv(\Vec{e}), \]
	and hence $ W^{\Vec{f}}(\pi,p) \subseteq  \overline{W}^{\Vec{e}}(\pi,p)$.
	
	\begin{prop} \label{prop: containments}
		Let $\elhur$ be a generic element of $\cHTot$. We take $0\leq i \leq k-2$ be such that $e_i>e_{i+1}$, and consider
		\[
		\Vec{f}
		\coloneqq
		(e_0,\ldots,e_{i-1},e_{i+1},e_i,\ldots,e_{k-1}).
		\]
		Then we have the inclusion
		\[
		W^{\Vec{f}}(\pi,p)
		\subseteq
		\overline{W}^{\Vec{e}}(\pi,p).
		\]
		
		The same holds if $e_{k-1}+1>e_0$ and 
		\[\vec{f}=(e_{k-1}+1,e_1,e_2\dots,e_{k-2},e_0-1).\]
	\end{prop}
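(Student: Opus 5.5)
The plan follows the remark immediately preceding the statement. Write $\textbf{a}=\textbf{a}(\Vec{e})$ and $\textbf{a}'=\textbf{a}(\Vec{f})$.

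\emph{Step 1 (combinatorics).} Here $t_{i+1}(\Vec{f})=\Vec{e}$, and in the second case $t_0(\Vec{f})=\Vec{e}$. Let $j$ denote the relevant index. By the description in the proof of Lemma~\ref{lemma: ramifications}, $\textbf{a}(\Vec{e})$ is obtained from $\textbf{a}'$ by lowering by one each entry of column $j$ whose predecessor is a gap. Hence $\textbf{a}\leq\textbf{a}'$ entry-wise, and the two sequences differ only in column $j$. By Proposition~\ref{prop: ramification game},
\[ u(\Vec{f})+\inv(\Vec{f})=u(\Vec{e})+\inv(\Vec{e})+1. \]
By Theorem~\ref{trm: dimension}, $\dim W^{\Vec{f}}(\pi,p)=g-1-u(\Vec{e})-\inv(\Vec{e})$.

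\emph{Step 2 (a degeneracy locus containing both).} I would adapt Proof~1 (or Proof~2) of Proposition~\ref{prop: lowerbound} by dropping the one rank condition that separates $\Vec{e}$ from $\Vec{f}$. Concretely, work in Proof~2's framework: impose the essential vanishing conditions for the columns other than $j$, and impose only the weaker condition at column $j$, namely the condition for $\textbf{a}(\Vec{e})$, which $\textbf{a}'$ also satisfies. This gives a locally closed subset $Z\subseteq\mathrm{Pic}^{g+r}(C)$ containing $W^{\Vec{e}}(\pi,p)\cup W^{\Vec{f}}(\pi,p)$. $Z$ is cut out, over an open set where the lower-order vanishing is already fixed, as a sequence of flag degeneracy loci. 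Running the same codimension count shows every component of $Z$ has dimension at least $g-u(\Vec{e})-\inv(\Vec{e})$.

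The key check is that the points of $W^{\Vec{f}}$ lie in the open strata $Y_t$ used in that count. The strata must be defined by ``vanishing orders in a range are those of $\textbf{a}$ or $\textbf{a}'$''. Because $\textbf{a}$ and $\textbf{a}'$ differ only in column $j$, the Proposition~\ref{prop: ramification behaviour} argument making intermediate ranks constant still works with this two-option definition. For the single modified step, I would use the rank bound of the $\textbf{a}(\Vec{e})$ condition, so the codimension estimate is unchanged.

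\emph{Step 3 (conclusion).} Let $L\in W^{\Vec{f}}(\pi,p)$, and let $Z_0$ be a component of $Z$ through $L$. Then $\dim Z_0\geq g-u(\Vec{e})-\inv(\Vec{e})>\dim W^{\Vec{f}}(\pi,p)$. By construction, $Z\subseteq W^{\Vec{e}}\cup W^{\Vec{f}}\cup(\text{loci with vanishing strictly larger than }\textbf{a}')$. By Proposition~\ref{prop: vanishing and loci comparison} and Theorem~\ref{trm: dimension}, each such larger locus $W^{\Vec{h}}$ has $u+\inv$ strictly bigger, so dimension at most $\dim W^{\Vec{f}}$. Finitely many such loci occur, so $Z_0$ cannot be covered by them and $W^{\Vec{f}}$. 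Hence a dense open of $Z_0$ lies in $W^{\Vec{e}}$, and $L\in\overline{W}^{\Vec{e}}(\pi,p)$.

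The main obstacle is Step 2: writing the defining rank conditions of $Z$ uniformly, so that the expected-codimension count is valid at points of $W^{\Vec{f}}$ and not merely on the open part where vanishing is exactly $\textbf{a}$. I would first try Proof~1's nested-series version. There, the swap at column $j$ only alters one flag condition inside a single layer, or two adjacent layers when the entries differ. The codimension there is $\inv$-type and drops by exactly one on relaxing the order, which fits the count.
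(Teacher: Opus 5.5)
Your overall strategy matches the paper's: build a locus containing $W^{\Vec e}(\pi,p)\cup W^{\Vec f}(\pi,p)$ all of whose components have dimension at least $g-u(\Vec e)-\inv(\Vec e)$, then use that $W^{\Vec f}(\pi,p)$ is one dimension smaller. Step~1 and the concluding logic of Step~3 are fine. The gap is in Step~2.

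Step~2 rests on the premise that $\mathbf a(\Vec e)$ and $\mathbf a(\Vec f)$ differ ``only in column $j$'', so that one relaxed condition suffices. At the level of essential (southernmost) vanishing orders, which is what the degeneracy count sees, they differ in two places. In the swap case, $\Vec e$ has $\beta_1=ke_i+i$ in column $i$ and $\beta_2=ke_{i+1}+i+1$ in column $i+1$. By contrast, $\Vec f$ has $\beta_1+1$ in column $i+1$ and $\beta_2-1$ in column $i$.

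Relaxing exactness at $\beta_1$ to ``$\beta_1$ or $\beta_1+1$'' is harmless. You are also right that the intermediate ranks stay constant, since every threshold lies in $\mathbf a(\Vec e)$, where the two counting functions agree. The problem is the branch where $\beta_1+1$ is the vanishing order. There, the $\Vec e$-condition at $\beta_2$ is automatic, since it follows from $\beta_1+1$ via Proposition~\ref{prop: ramification behaviour}. But the new essential order $\beta_2-1$ of $\Vec f$ is in general forced neither by an imposed rank condition nor by Proposition~\ref{prop: ramification behaviour}, because that branch has no higher vanishing order in column $i$. So ``vanishing exactly $\mathbf a(\Vec f)$'' is a closed, not an open, condition inside the preceding degeneracy locus. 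Your two-option strata are therefore not open there, and the dimension lower bound never reaches the points of $W^{\Vec f}(\pi,p)$.

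Concretely, take $k=3$, $\Vec e=(0,3,1)$, $\Vec f=(0,1,3)$, with $u+\inv$ equal to $5$ and $6$. The relaxed $\Vec e$-conditions ($11$ instead of $10$, and $h^0(L-5p)\geq 3$) also hold on $W^{(1,0,3)}(\pi,p)$. That locus has vanishing sequence $(0,1,2,3,5,8,11)$ and dimension $g-5$. Inside the branch where $11$ is a vanishing order, the $\Vec f$-points are cut out from these by the further closed condition $h^0(L-4p)\geq 4$. So your count says nothing at them.

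What is missing is the paper's second modification, made in the nested framework of Proof~1, where $\mathrm{ord}(\Vec f)=\mathrm{ord}(\Vec e)$. Besides relaxing exactness at $d_{j_1}k+m_{j_1,i_1}=ke_i+i$, one changes the flag condition for $(j_2,i_2)$. Its target is jets of length $m_{j_2,i_2}=i+1$ at $d_{j_2}k\cdot p$ modulo $\varphi_{j_2,i_2}(\mathrm{Im}_{j_2}(V_{j_2-1}))$. One replaces it by the same condition with jets of length $m_{j_1,i_1}=i$. That is, one asks for new sections at layer $j_2$ vanishing to order at least $\beta_2-1$ instead of $\beta_2$. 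This modified condition has four properties:
\begin{itemize}
\item Both branches satisfy it: the old section coming from $\beta_1$ (resp.\ $\beta_1+1$) occupies $\beta_2-1$ (resp.\ $\beta_2$), and the new section occupies the other.
\item It kills branches such as $W^{(1,0,3)}$ above.
\item Its target rank $i-\#\{m\in M_1\cup\cdots\cup M_{j_2-1}\mid m<i\}$ is independent of the branch.
\item Because $i\in M_{j_1}$ is an old residue, this rank equals the original one, so every codimension bound is unchanged.
\end{itemize}
Your closing remark about Proof~1 points this way, but this specific replacement and its rank check are the actual content of the step.

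Finally, in the $t_0$ case $\mathrm{ord}(\Vec f)\neq\mathrm{ord}(\Vec e)$, so the nested framework does not contain both loci. The paper instead twists by $\OO_C(-p)$: $\sigma(\Vec f)$ is $\sigma(\Vec e)$ with its last two entries swapped, which reduces to the first case.
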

	
	\begin{proof}
		There is nothing to prove if $W^{\Vec{f}}(\pi,p)$ is empty. Otherwise,
		Theorem~\ref{trm: dimension} gives
		\[
		\dim W^{\Vec{f}}(\pi,p)
		=
		g-u(\Vec{e})-\inv(\Vec{e})-1.
		\]
		It therefore suffices to show that every irreducible component of
		$W^{\Vec{e}}(\pi,p)\cup W^{\Vec{f}}(\pi,p)$ has dimension at least
		$g-u(\Vec{e})-\inv(\Vec{e})$. It will then follow that $W^{\Vec{f}}(\pi,p)$ is not a component, but contained in the closure of $W^{\Vec{e}}(\pi,p)$.
		
		We use the construction in the first proof of
		Proposition~\ref{prop: lowerbound}. We will realize $W^{\Vec{e}}(\pi,p)\cup W^{\Vec{f}}(\pi,p)$ as an intersection of flag degeneracy loci in the open locus
		of $\mathrm{ord}(\Vec{e})$-nested linear series with exact splitting
		type $\mathrm{ord}(\Vec{e})$.
		
		Let $(j_1,i_1)$ and $(j_2,i_2)$ be the pairs satisfying
		\[
		m_{j_1,i_1}=i
		\qquad\text{and}\qquad
		m_{j_2,i_2}=i+1,
		\]
		in particular  $d_{j_1}=e_i>e_{i+1}=d_{j_2}$ and $j_1<j_2$. 
		
		Recall that in the proof of Proposition~\ref{prop: lowerbound}, we imposed the rank conditions
		\[ \operatorname{rk}\left(V_j/\textrm{Im}_j(V_{j-1}) \rightarrow H^0(C, L(-d_jk\cdot p)\vert_{m_{j,i}\cdot p}) / \varphi_{j,i}(\textrm{Im}_j(V_{j-1}))\right) \leq i-1\]
		for $1\leq j\leq s$ and $1\leq i\leq k_j$, organized in such a way that for each $j$ the locus respecting the conditions is a flag degeneracy locus. After imposing each such
		condition, we restricted to the open subset on which the rank is maximal, and the vanishing orders are exactly the $m_{j,i}$, i.e.
		\[
		h^0\left(C,L\bigl(-(d_jk+m_{j,i})\cdot p\bigr)\right)
		=
		1+
		h^0\left(C,L\bigl(-(d_jk+m_{j,i}+1)\cdot p\bigr)\right).
		\]
		
		The imposed vanishing conditions ensure that $  \varphi_{j,i}\left(\mathrm{Im}_j(\mathcal V_{j-1})\right)$ does not jump in dimension and hence
		\[
		\mathcal F_{j,i}/
		\varphi_{j,i}\left(\mathrm{Im}_j(\mathcal V_{j-1})\right)
		\]
		is a vector bundle of rank
		\[
		m_{j,i}
		-
		\#\left\{
		m\in M_1\cup\cdots\cup M_{j-1}
		\ \middle|\
		m<m_{j,i}
		\right\}.
		\]
		Finally, the flag degeneracy conditions, together with the vanishing order conditions cut out exactly the locus $W^{\Vec{e}}(\pi, p)$. 
		
		For our flag degeneracy conditions to cut out  $W^{\Vec{e}}(\pi,p)\cup W^{\Vec{f}}(\pi,p)$, we make the following two modifications.
		
		\begin{enumerate}
			\item The flag degeneracy condition on layer $j_2$ is the same as in the proof of Proposition~\ref{prop: lowerbound}, but we relax the vanishing order condition: we restrict to the open locus on which either
			\[
			d_{j_1}k+m_{j_1,i_1}
			\qquad\text{or}\qquad
			d_{j_1}k+m_{j_1,i_1}+1
			\]
			is a vanishing order; or equivalently
			\[
			h^0\left(
			C,L\bigl(-(d_{j_1}k+m_{j_1,i_1})\cdot p\bigr)
			\right)
			=
			1+
			h^0\left(
			C,L\bigl(-(d_{j_1}k+m_{j_1,i_1}+2)\cdot p\bigr)
			\right).
			\]
			\textbf{Remark:} In the Proof of Proposition~\ref{prop: lowerbound}, we imposed the condition 
			\[
			h^0\left(
			C,L\bigl(-(d_{j_1}k+m_{j_1,i_1})\cdot p\bigr)
			\right)
			=
			1+
			h^0\left(
			C,L\bigl(-(d_{j_1}k+m_{j_1,i_1}+1)\cdot p\bigr)
			\right),
			\]
			so that the vanishing order was exactly $d_{j_1}k+m_{j_1,i_1}$.  
			\item Instead of the condition corresponding to $(j_2,i_2)$ in the proof of Proposition~\ref{prop: lowerbound}
			\[
			\operatorname{rk}\!\left[
			\left(
			V_{j_2}
			\big/
			\mathrm{Im}_{j_2}(V_{j_2-1})
			\right)
			\longrightarrow H^0\left(
			C,
			L(-d_{j_2}k\cdot p)
			\big|_{m_{j_2,i_2}\cdot p}
			\right)
			\big/
			\varphi_{j_2,i_2}\!\left(
			\mathrm{Im}_{j_2}(V_{j_2-1})
			\right)
			\right]
			\leq i_2-1.
			\]
			we impose the condition
			\[
			\operatorname{rk}\!\left[
			\left(
			V_{j_2}
			\big/
			\mathrm{Im}_{j_2}(V_{j_2-1})
			\right)
			\longrightarrow H^0\left(
			C,
			L(-d_{j_2}k\cdot p)
			\big|_{m_{j_1,i_1}\cdot p}
			\right)
			\big/
			\varphi_{j_2,i_2}\!\left(
			\mathrm{Im}_{j_2}(V_{j_2-1})
			\right)\right] \leq i_2-1.
			\]
			We then restrict again to the locus of
			$\operatorname{ord}(\Vec{e})$-nested linear series for which this map
			has maximal rank and both
			\[
			d_{j_2}k+m_{j_1,i_1}
			\qquad\text{and}\qquad
			d_{j_2}k+m_{j_1,i_1}+1
			\]
			are vanishing orders.
		\end{enumerate}
		We look at the vector bundle whose fiber over each $(L,V_1,\ldots, V_s)$ is
		\[ H^0\left(C, L(-d_{j_2}k\cdot p) \big|_{m_{j_1,i_1}\cdot p} \right) \big/ \varphi_{j_2,i_2}\!\left(\mathrm{Im}_{j_2}(V_{j_2-1}) \right).\]
		This has rank $	i-\#\left\{m\in M_1\cup M_2\cup\cdots\cup M_{j_2-1} \ \middle|\ m<i\right\}$, independent on whether $d_{j_1}k + m_{j_1,i_1}$ or $d_{j_1}k + m_{j_1,i_1} +1$ is a vanishing order of $L$. This vector bundle replaces $$\mathcal{F}_{j_2,i_2}/\varphi_{j_2,i_2}(\textrm{Im}_{j_2}(\mathcal{V}_{j_2-1}))$$ 
		from the first proof of Proposition~\ref{prop: lowerbound}, and we know that this has rank $$	i+1 -\#\left\{m\in M_1\cup M_2\cup\cdots\cup M_{j_2-1} \ \middle|\ m<i+1\right\}.$$ 
		Since $i \in M_{j_1}$, these two ranks are the same and hence the modified target has the same rank as the original target in the proof of Proposition~\ref{prop: lowerbound}. 
		
		All flag codimension bounds are therefore unchanged. As in the first proof of Proposition~\ref{prop: lowerbound}, successive flag degeneracy conditions followed by open restrictions show that every irreducible component of the resulting union has dimension at least $g-u(\Vec{e})-\inv(\Vec{e})$. Furthermore, there conditions cut out exactly $W^{\Vec{e}}(\pi,p)\cup W^{\Vec{f}}(\pi,p)$: 
		\begin{itemize}
			\item Except for the two vanishing and rank conditions outlined above, all the other condition stay the same. In particular, all vanishing orders in $\textbf{a}(\Vec{e})$ that are not $i$ or $i+1$ modulo $k$ are vanishing orders of the locus cut out by the successive flag degeneracy conditions. 
			\item The rest of the vanishing orders depend on whether $d_{j_1}k + m_{j_1,i_1}$ or $d_{j_1}k + m_{j_1,i_1} +1$ is a vanishing order of $L$. Depending on these cases, we obtain that the flag degeneracy conditions cut out the loci $W^{\Vec{e}}(\pi,p)$ and $W^{\Vec{f}}(\pi,p)$.
		\end{itemize} 
		Since $W^{\Vec{f}}(\pi,p)$ has dimension $g-1-u(\Vec{e})-\inv(\Vec{e})$, it must be contained in the closure of $W^{\Vec{e}}(\pi,p)$.

		To get the statement when $e_{k-1}+1>e_0$ and \[\vec{f}=(e_{k-1}+1,e_1,\dots,e_{k-2},e_0-1),\] we first note that since \[W^{\vec{e}}(\pi,p)=\{L \mid \pi_{\ast}(L(-ip))=\OO(\sigma^i(\vec{e})) \text{ for $0\leq i \leq k-1$ (eq. for all $i$)}\},\] we have \[W^{\vec{e}}(\pi,p)\otimes\OO(-ip)=W^{\sigma^i(\vec{e})}(\pi,p) \text{ and } \overline{W}^{\vec{e}}(\pi,p)\otimes\OO(-ip)=\overline{W}^{\sigma^i(\vec{e})}(\pi,p),\] as twisting by $\OO(-ip)$ is an isomorphism $\Pic^d\to\Pic^{d-i}$ and commutes with closures. Twisting $L\in W^{\vec{e}}(\pi,p)$ by $\OO(-p)$, the first part of the Proposition now gives \[W^{(e_2\dots,e_{k-2},e_{0}-1,e_{k-1})}\subseteq \overline{W}^{(e_2,\dots,e_{k-2},e_{k-1},e_{0}-1)}\] and twisting up by $\OO(p)$ gives the result.
	\end{proof}

	\subsection{Hurwitz--Brill--Noether loci}
	In fact, Theorem~\ref{trm: dimension} generalizes the dimension result in~\cite{Larson2Vogt}. As a consequence of Theorem~\ref{trm: dimension} and Proposition~\ref{prop: containments} we obtain the following.
	
	\begin{cor} \label{cor: LLV-dim}
		Let $[\pi\colon C \rightarrow \mathbb{P}^1,p]$ be a generic element of $\cHTot$ and let $\Vec{e}\in\Z^k_{\geq0}$. Then the spaces
		\[ W^{\mathrm{ord}(\Vec{e})}(\pi,p) \ \textrm{and} \ \{ L \in \mathrm{Pic}(C) \ \mid \ \pi_*L \cong \OO(\Vec{e})\}, \]
		are birationally equivalent of dimension $g-u(\Vec{e})$.
	\end{cor}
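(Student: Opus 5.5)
The plan is to compare the splitting locus $W^{\Vec{e}}(C):=\{L \mid \pi_*L\cong\OO(\Vec{e})\}$ with the pointed loci $W^{\Vec{f}}(\pi,p)$, where $\Vec{f}$ ranges over the finitely many orderings of the entries of $\Vec{e}$. Since $\pi_*L\cong\OO(\Vec{e})$ depends only on the multiset of entries, we may assume $\Vec{e}=\mathrm{ord}(\Vec{e})$.

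\textbf{Step 1: decomposition.} For any $L$ with $\pi_*L\cong\OO(\Vec{e})$, Proposition~\ref{prop: ramification behaviour} shows that the vanishing sequence of $H^0(C,L)$ at $p$ is $k$-admissible. Hence it has the form $\textbf{a}(\Vec{f})$ for a unique ordered vector $\Vec{f}$: the entry $f_j$ is the number of vanishing orders $\equiv j \pmod k$, minus one. By Proposition~\ref{prop: intersection description}, $\pi_*L\cong\OO(\Vec{f})$, so $\Vec{f}$ is a rearrangement of $\Vec{e}$. We therefore obtain a finite set-theoretic decomposition
\[ \{L \mid \pi_*L\cong\OO(\Vec{e})\}=\bigsqcup_{\mathrm{ord}(\Vec{f})=\Vec{e}} W^{\Vec{f}}(\pi,p). \]

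\textbf{Step 2: dimensions.} By Theorem~\ref{trm: dimension}, each piece has dimension $g-u(\Vec{f})-\inv(\Vec{f})$, where $u(\Vec{f})=u(\Vec{e})$ since $u$ depends only on the multiset. Thus every $\Vec{f}\neq\Vec{e}$ has $\inv(\Vec{f})\geq 1$ and gives a piece of dimension strictly less than $g-u(\Vec{e})$. The piece $W^{\Vec{e}}(\pi,p)$ has dimension exactly $g-u(\Vec{e})$ when nonempty; nonemptiness under $g-u(\Vec{e})\ge0$ is supplied later by regeneration, Theorem~\ref{theorem: HBN regeneration}.

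\textbf{Step 3: density.} It remains to show that $W^{\Vec{e}}(\pi,p)$ is dense in the splitting locus, i.e.\ that no lower-dimensional stratum $W^{\Vec{f}}$ forms its own component. For this I would use Proposition~\ref{prop: containments} repeatedly. Any rearrangement $\Vec{f}$ of a decreasing $\Vec{e}$ reaches $\Vec{e}$ by a sequence of adjacent transpositions, each removing one inversion. At each stage the proposition gives $W^{\Vec{f}}\subseteq\overline{W}^{\Vec{f}'}$, where $\Vec{f}'$ has one fewer inversion. By transitivity of closures, every stratum lies in $\overline{W}^{\Vec{e}}(\pi,p)$.

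Consequently $W^{\Vec{e}}(\pi,p)$ is a dense open subset of the splitting locus. It is open because it is the complement of finitely many closed-in-closure strata, and it is cut out by open vanishing conditions. The two spaces are therefore birational (indeed one is an open subset of the other) and have dimension $g-u(\Vec{e})$.

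\textbf{Main obstacle.} The delicate point is Step 3: Proposition~\ref{prop: containments} must apply with the correct orientation of the swap, namely where $f_i<f_{i+1}$ and the containment goes into the closure of the locus with the decreasing pair. Choosing the bubble-sort swaps so that each step reduces $\inv$ by exactly one handles this. A secondary check is that the argument also covers the case where the splitting locus is empty; this is immediate, since by the decomposition emptiness of $W^{\Vec{e}}(\pi,p)$ forces emptiness of all strata.
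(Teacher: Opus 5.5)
Your proposal is correct and takes essentially the same approach as the paper: you decompose the splitting locus set-theoretically as $\bigsqcup W^{\vec f}(\pi,p)$ over the reorderings $\vec f$ of $\vec e$, then apply Proposition~\ref{prop: containments} along a chain of adjacent swaps to place every stratum in $\overline{W}^{\mathrm{ord}(\vec e)}(\pi,p)$, so both loci share a closure (the paper leaves this iteration implicit, and your bubble-sort argument makes it explicit). One detail in Step 1 deserves a line: each residue class mod $k$ contains a vanishing order (so $f_j\geq 0$) because, with all $e_i\geq 0$, $\pi_*L$ is globally generated, hence $H^0(C,L)\to H^0(L\vert_{kp})$ is surjective and $0,1,\ldots,k-1$ are all vanishing orders.
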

	\begin{proof}
		Using Proposition~\ref{prop: containments} we obtain 
		\[ W^{\Vec{e}}(\pi,p) \subseteq \overline{W}^{\mathrm{ord}(\Vec{e})}(\pi,p)  \]
		for every vector $\Vec{e}$ with non-negative entries. Moreover, we have the set-theoretical equality 
		\[ \{ L \in \mathrm{Pic}(C) \ \mid \ \pi_*L \cong \OO(\Vec{e})\} = \bigsqcup_{\Vec{f} \ \textrm{a reordering of} \ \Vec{e}} W^{\Vec{f}}(\pi,p).\]
		In particular, the spaces
		\[ \{ L \in \mathrm{Pic}(C) \ \mid \ \pi_*L \cong \OO(\Vec{e})\} \ \mathrm{and} \ W^{\mathrm{ord}(\Vec{e})}(\pi,p) \]
		have the same closure in $\textrm{Pic}(C)$ and the conclusion follows. 
	\end{proof}
	
	\begin{remark}
		Notice that our Corollary~\ref{cor: LLV-dim} proves the Hurwitz--Brill--Noether dimension result for a generic map $[\pi\colon C\rightarrow \mathbb{P}^1]$ having a total ramification point $p$, and not for a generic $k\colon 1$ map as in~\cite{Larson2Vogt}. To solve this issue, we can work with a Hurwitz moduli space parametrizing tuples 
		\[ [\pi\colon C\rightarrow \mathbb{P}^1, p_1, p_2,\ldots, p_k] \] where $p_1,\dots,p_k$ are distinct points such that $\pi(p_1) = \pi(p_2) = \cdots = \pi(p_k)$.
		Instead of considering vanishing orders with respect to a point $p$, we consider vanishing orders with respect to the chain of effective divisors
		\[ p_1 < p_1 + p_2 < \cdots <p_1 + p_2 +\cdots +p_k < 2p_1 + p_2 +p_3 + \cdots + p_k < \cdots \]
		as defined in~\cite{Osserman1} and~\cite{Osserman2}. The entire theory works \textit{mutatis mutandis} and we obtain the desired results.
	\end{remark}
	
	\begin{cor} \label{cor: LLV-dim2}
		Let $[\pi\colon C \rightarrow \mathbb{P}^1,p_1,p_2,\cdots, p_k]$ be a generic $k\colon 1$ map together with $k$ distinct points in the same generic fiber, and let $\Vec{e} \in\Z^k_{\geq0}$. Then the locus
		\[ \ W^{\mathrm{ord}(\Vec{e})}(\pi,p_1, p_2,\ldots, p_k) \]
		has dimension $g- u(\Vec{e})$.  
	\end{cor}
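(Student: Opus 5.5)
The plan is to run the whole argument of Section~\ref{sec: dimension results} in the $k$-pointed setting of the preceding remark, replacing the single totally ramified point $p$ by the fiber divisor $p_1+\cdots+p_k$. Vanishing orders at $p$ become vanishing orders along the chain of effective divisors $D_0=0<D_1=p_1<D_2=p_1+p_2<\cdots<D_k=\pi^*(\mathrm{pt})<D_{k+1}=D_k+p_1<\cdots$, in the sense of~\cite{Osserman1,Osserman2}. Since $D_{n+k}=D_n+\pi^*(\mathrm{pt})$ and $\OO_C(\pi^*\mathrm{pt})\cong\pi^*\OO_{\PP^1}(1)$, the key inputs carry over with the same proofs:
\begin{itemize}
\item Proposition~\ref{prop: ramification behaviour} on $k$-admissibility, which rests only on Lemma~\ref{lemma: VB injectivity} and the projection formula applied to $\pi_*(L(-D_i))$;
\item Remark~\ref{rmk: Hilbert};
\item Proposition~\ref{prop: intersection description}.
\end{itemize}
So $W^{\vec e}(\pi,p_1,\ldots,p_k)$ is defined as the locus of $L$ whose vanishing sequence along the chain is $\mathbf a(\vec e)$, equivalently $\pi_*(L(-D_i))\cong\OO(\sigma^i(\vec e))$ for $0\le i\le k-1$.

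For the lower bound I would copy Proof~2 of Proposition~\ref{prop: lowerbound} verbatim. The sheaves $\mathcal L(D)\otimes\OO_{P_i}$ are replaced by restrictions to the relative divisors $D_{b_i+m}\times\mathrm{Pic}^{g+r}(C)$, which still have the right ranks because $\deg D_n=n$. This gives $\dim W^{\vec e}(\pi,p_1,\ldots,p_k)\ge g-u(\vec e)-\inv(\vec e)$ for every component, whenever the locus is nonempty. For $\mathrm{ord}(\vec e)$ we have $\inv=0$, so the bound is $g-u(\vec e)$. Alternatively, one can simply cite \cite[Theorem~6.5]{Larson2Vogt} through the injection into non-colliding nested linear series, as in Proof~1.

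For the upper bound I would degenerate as in Setup~\ref{setup: elliptic tail}. Take an admissible cover $X\cup E\to P_X\cup P_E$, where $X$ and $E$ are glued along the full fiber over the node, or better, use a chain of such elliptic attachments. The points $p_1,\ldots,p_k$ lie in one fiber of $E$, and $\OO_E(p_i)$ is chosen sufficiently general relative to the gluing. The analogue of Lemma~\ref{lemma: ramifications} should then say that a limit linear series with vanishing $\mathbf a(\vec e)$ along the chain either has generic $E$-aspect (vanishing $\mathbf a(\vec e)$ on $X$) or has $E$-aspect among finitely many special line bundles, which forces $\mathbf a(t_j(\vec e))$ on $X$. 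After that, Proposition~\ref{prop: ramification game}, Proposition~\ref{prop: vanishing and loci comparison} and induction on $g$ give $\dim\le g-u(\vec e)-\inv(\vec e)$.

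The cheapest route is probably to avoid redoing the degeneration entirely. The locus in question only depends on the splitting types of $\pi_*(L(-D_i))$, so I would try to specialize $(p_1,\ldots,p_k)$ in the space of $k$-pointed covers to a totally ramified fiber, where the locus becomes $W^{\vec e}(\pi,p)$. Upper semicontinuity of fiber dimension of the relative locus (closed conditions $h^0(L(-D_n))\ge\ast$) would then transfer the bound of Theorem~\ref{trm: dimension}. The catch is that $\cHTot$ is a proper closed stratum, and a generic totally ramified cover need not lie in the closure in the required way. That is the main obstacle. I would attempt it first by noting that $\cHTot$ lies in the closure of the $k$-pointed Hurwitz space after letting the points collide (a limit of fibers is a fiber), and then checking that the degeneracy loci specialize. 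If that fails, I would fall back on the direct elliptic-tail degeneration above, where the hard step is the finite list of special $E$-aspects, i.e.\ identifying which $L_E$ carry an extra section with the jumped vanishing along the chain.
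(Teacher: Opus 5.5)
Your first plan is the paper's own: the paper gives no separate argument, only the preceding remark that the whole one-point theory carries over mutatis mutandis once $np$ is replaced by the chain $D_n$. That theory comprises admissibility, the flag-degeneracy lower bound, the elliptic-tail upper bound and regeneration.

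Your ``cheapest route'' is a genuinely different argument, and it works; the obstacle you flag does not arise. Let $\mathcal B$ parametrize tuples $(\pi,t,p_1,\ldots,p_k)$, where $\pi$ is an arbitrary degree $k$ cover of genus $g$ and $\pi^*t=p_1+\cdots+p_k$.
\begin{itemize}
\item $\mathcal B$ is irreducible: it is finite flat over the irreducible space of pairs $(\pi,t)$, with full $S_k$ monodromy on ordered fibers.
\item Its general point is a general $k$-pointed cover as in the statement.
\item It contains $b_0=(\pi_0,\pi_0(p),p,\ldots,p)$ for $[\pi_0,p]$ general in $\mathcal H^{\mathrm{tot}}_{g,k}$, and there $D_n$ specializes to $np$.
\end{itemize}
Consider the closed locus $\{L \mid h^0(L(-D_n))\geq\#\{a\in\mathbf a(\Vec e)\mid a\geq n\}\ \forall n\geq 0\}$ in the relative $\mathrm{Pic}^d$. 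It is proper over $\mathcal B$, so by upper semicontinuity of fiber dimension the set of $b$ over which it has dimension at most $g-u(\Vec e)-\inv(\Vec e)$ is open. Its fiber over $b_0$ is the determinantal scheme of Proposition~\ref{prop: reduced-intersection-right-dim}, which has pure dimension $g-u(\Vec e)-\inv(\Vec e)$. So the open set is nonempty and contains the generic point of $\mathcal B$. Use this closed scheme here, not Theorem~\ref{trm: dimension} alone, which only controls exact vanishing.

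This reduces the $k$-pointed upper bound to results already proved in the paper, with no new limit linear series on $E$. Redoing the degeneration, as the paper proposes, costs a fresh analysis of the $E$-aspects. It would also transport the scheme-theoretic statements, but the corollary does not need them.

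Two corrections to the rest of your proposal.

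First, your fallback degeneration should not glue $X$ and $E$ along a full unramified fiber. That produces $k$ nodes, a curve not of compact type, and a genus drop of $k$. A single-node admissible cover forces $q$ to be totally ramified, so $X$ lies in $\mathcal H^{\mathrm{tot}}_{g-1,k}$. Then the $X$-side is literally the one-point problem in genus $g-1$, and the special $E$-aspects are $\OO_E(D_j+(d-j)q)$ for $0\leq j\leq k-1$.

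Second, the statement implicitly asserts nonemptiness when $g-u(\Vec e)\geq 0$, which a lower bound valid ``whenever nonempty'' does not provide. The same specialization supplies it. Take a component of the relative exact-vanishing locus through a point of $W^{\Vec e}(\pi_0,p)$, which is nonempty by the paper's regeneration. By the relative version of Proof~2 of Proposition~\ref{prop: lowerbound}, this component has dimension at least $\dim\mathcal B+g-u(\Vec e)-\inv(\Vec e)$. Its fibers near that point have dimension at most $g-u(\Vec e)-\inv(\Vec e)$, so it dominates $\mathcal B$.
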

	
	Consequently, we recover the dimension result of~\cite{Larson2Vogt}.
	
	It is a very natural question to ask, what loci are contained in the closure of $W^{\Vec{e}}(\pi,p)$. We will prove that $W^{\Vec{f}}(\pi,p) \subseteq  \overline{W}^{\Vec{e}}(\pi,p)$ if and only if $\textbf{a}(\Vec{e}) \leq \textbf{a}(\Vec{f})$ entry-wise. We first prove the following intermediary result. 
	
	\begin{prop} \label{prop: closure intersection} Let $[\pi\colon C\rightarrow \mathbb{P}^1]$ a generic element of $\cHTot$ and let $\Vec{e}$ be a vector with $k$ nonnegative entries. Then we have the intersection 
		\[\bigcap_{i=0}^{k-1} \overline{\{L \in \textrm{Pic}(C) \ \mid \ \pi_*(L-(i\cdot p))\cong \OO(\sigma^i\Vec{e}) \} } = \overline{W}^{\Vec{e}}(\pi,p) \cup \Delta, \]
		where $\Delta$ consists of components of dimension strictly smaller than $g-u(\Vec{e})-\textrm{inv}(\Vec{e})$.  
	\end{prop}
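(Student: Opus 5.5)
The plan is to compare both sides via the stratification of $\mathrm{Pic}(C)$ by pointed loci. Set $\mathcal{S}_i \coloneqq \{L \mid \pi_*(L(-ip))\cong \OO(\sigma^i\vec e)\}$. Every line bundle $L$ has $k$-admissible vanishing orders at $p$ by Proposition~\ref{prop: ramification behaviour}, hence lies in exactly one $W^{\vec f}(\pi,p)$ with $|\vec f|=|\vec e|$ (after fixing the degree $d=g+r$). So $\mathrm{Pic}^d(C)=\bigsqcup_{\vec f} W^{\vec f}(\pi,p)$, and by Theorem~\ref{trm: dimension} each piece has dimension $g-u(\vec f)-\inv(\vec f)$ or is empty. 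By Proposition~\ref{prop: intersection description}, $W^{\vec e}(\pi,p)=\bigcap_i \mathcal S_i$, so $\overline W^{\vec e}(\pi,p)\subseteq \bigcap_i \overline{\mathcal S_i}$. This gives the inclusion $\supseteq$ of the statement, with $\Delta$ to be defined as the union of the remaining components.

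For the reverse, I will show that any $W^{\vec f}(\pi,p)$ meeting $\bigcap_i\overline{\mathcal S_i}$ satisfies $\textbf{a}(\vec e)\le \textbf{a}(\vec f)$ entrywise. The key input is upper semicontinuity of $h^0$ along closures. For $L\in\overline{\mathcal S_i}$ and every $n$ we have $h^0(L(nkp-ip))=h^0(\OO(\sigma^i\vec e)(n))$ generically on $\mathcal S_i$, hence $h^0(C,L(nk p-ip))\ge h^0(\PP^1,\OO(\sigma^i\vec e)(n))$ on the closure. Ranging over all $i$ and $n$ covers all twists $L(-mp)$ with $m\ge 0$. Therefore $h^0(L(-mp))\ge h^0(L_0(-mp))$ for all $m$, where $L_0\in W^{\vec e}$. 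Comparing the counting functions $m\mapsto \#\{a\in\textbf{a}\mid a\ge m\}$ then gives exactly $\textbf{a}(\vec e)\le \textbf{a}(\vec f)$ entrywise.

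Proposition~\ref{prop: vanishing and loci comparison} now yields $u(\vec e)+\inv(\vec e)\le u(\vec f)+\inv(\vec f)$, with equality only when $\vec f=\vec e$ (as noted at the end of its proof). Hence every $W^{\vec f}(\pi,p)\cap\bigcap_i\overline{\mathcal S_i}$ with $\vec f\ne\vec e$ has dimension strictly less than $g-u(\vec e)-\inv(\vec e)$. The intersection is a finite union of the sets $W^{\vec f}\cap\bigcap\overline{\mathcal S_i}$, finitely many $\vec f$ since $|\vec f|$ is fixed. Any irreducible component not contained in $\overline W^{\vec e}$ therefore has a dense open in some $W^{\vec f}$ with $\vec f\neq\vec e$, and so has dimension $<g-u(\vec e)-\inv(\vec e)$. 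The union of these components is $\Delta$.

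The main obstacle is the semicontinuity step. One must verify that the isomorphism type of $\pi_*$ of a twist on a closure point dominates in the sense of $h^0$ of all twists. This is standard because pushforward commutes with twisting and $h^0$ is upper semicontinuous in families of line bundles. One must also check that "$h^0(L(-mp))\ge$" for all $m$ is precisely the entrywise inequality $\textbf{a}(\vec e)\le\textbf{a}(\vec f)$. I expect this to be a short counting argument: the $j$-th smallest vanishing order is the largest $m$ with $h^0(L(-mp))\ge r+1-j$.
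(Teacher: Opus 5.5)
This is essentially the paper's argument. The generic point of any extra component has vanishing data dominating $\textbf{a}(\vec e)$, and Proposition~\ref{prop: vanishing and loci comparison} (with its equality case) together with Theorem~\ref{trm: dimension} then forces a strict drop in dimension; you just make the semicontinuity step explicit.

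One imprecision, which the paper shares, is that a line bundle of degree $g+r$ need not lie in any $W^{\vec f}(\pi,p)$ with $\vec f\in\Z^k_{\geq 0}$, for instance if $p$ is a base point or if $h^0(L)>r+1$. So index the strata by ordered $\vec f\in\Z^k$ with $|\vec f|=|\vec e|$, via $L(Nkp)\in W^{\vec f+N\cdot(1,\ldots,1)}(\pi,p)$ for one fixed $N\gg 0$; this also restores the finiteness of the strata. Then run your counting-function comparison, which your semicontinuity step already gives for all twists $m\in\Z$, on $L(Nkp)$ and $L_0(Nkp)$. Since $\textbf{a}(\vec e+N\cdot(1,\ldots,1))=\{0,\ldots,Nk-1\}\cup(Nk+\textbf{a}(\vec e))$ and $u$ and $\inv$ are invariant under adding a constant vector, the rest of your argument goes through unchanged.
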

	
	\begin{proof}
		We know from Proposition~\ref{prop: intersection description} that
		\[ W^{\Vec{e}}(\pi,p) = \bigcap_{i=0}^{k-1} \{L \in \textrm{Pic}(C) \ \mid \ \pi_*(L-(i\cdot p))\cong \OO(\sigma^i\Vec{e}) \}, \]
		and hence $\overline{W}^{\Vec{e}}(\pi,p)$ is contained in the intersection of the closures. Let now $\Delta'$ be another component in the intersection, and let $\textbf{a}(\Vec{f})$ be the vanishing orders at $p$ of a generic element $L$ in $\Delta'$. 
		
		We have that $\textbf{a}(\Vec{e}) \leq \textbf{a}(\Vec{f})$ entry-wise, and since $\Delta' \neq \overline{W}^{\Vec{e}}(\pi,p)$, this inequality is strict for at least one position. Therefore, using Proposition~\ref{prop: vanishing and loci comparison} and Theorem~\ref{trm: dimension}, we obtain 
		\[ \dim(\Delta') \leq \dim(W^{\Vec{f}}(\pi,p)) = g-u(\Vec{f})-\textrm{inv}(\Vec{f}) < g-u(\Vec{e}) - \textrm{inv}(\Vec{e}).\qedhere  \]
	\end{proof}
	
	In fact, we will see in Proposition~\ref{prop: reduced-intersection-right-dim} that $\Delta = \emptyset$. 
	
	\section{Scheme-theoretic regeneration} \label{sec: scheme-theoretic regeneration}
	
	The goal of this section is to prove set-theoretic, and scheme-theoretic regeneration, in the spirit of \cite[Theorem 3.4]{basiclimitlinear} and \cite[Section 6 and Section 7]{Larson2Vogt}. This will require several new definitions along with a description of our set-up: We start by looking at the limit linear series in Lemma~\ref{lemma: ramifications} and describe what $\textrm{ord}(\Vec{e})$-nested limit linear series are compatible with the data, and then proceed to construct $\textrm{ord}(\Vec{e})$-nested linear series in families. 
	
	\subsection{Nested limit linear series}
	To describe $\textrm{ord}(\Vec{e})$-nested limit linear series, we need to describe them componentwise. As we want to treat the case when the curve degenerates to Setup~\ref{setup: elliptic tail}, our goal is to understand $\textrm{ord}(\Vec{e})$-nested linear series appearing as limits in the context of Lemma~\ref{lemma: ramifications}.
	
	\begin{lemma} \label{lemma: nested description}
		Let $\pi\colon X \rightarrow \mathbb{P}^1$ be a $k:1$ cover, totally ramified at $q \in X$. Let $\Vec{e}=(e_0,\ldots,e_{k-1})$ be a length $k$ vector with non-negative entries, satisfying $e_0 > e_{k-1}+1$. Let $L$ be a line bundle in
		\[
		W^{t_0(\Vec{e})}(\pi,q)\otimes \OO_X(q).
		\]
		Then $L$ admits a unique $\mathrm{ord}(\Vec{e})$-nested linear series whose full space of sections $V$ satisfies
		\[
		\dim V(-2q)=\dim V-1.
		\]
	\end{lemma}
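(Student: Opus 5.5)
We would first compute the splitting data of $L$ and then translate the statement into one about subsheaves of $\pi_*L$ on $\PP^1$. Since $L(-q)\in W^{t_0(\Vec e)}(\pi,q)$, Proposition~\ref{prop: intersection description} (applied with $i=-1$, i.e. using $\sigma^{-1}$) gives
\[
\pi_*L\cong \OO_{\PP^1}\bigl(\sigma^{-1}(t_0(\Vec e))\bigr)=\OO_{\PP^1}(e_0,\,e_{k-1}+1,\,e_1,\ldots,e_{k-2}).
\]
In particular $h^0(C,L)=|\Vec e|+k+1=r+2$. The vanishing sequence of $H^0(L)$ at $q$ is obtained from $\mathbf{a}(t_0(\Vec e))$ by adding $1$ to every entry and adjoining $0$. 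Thus both $0$ and $1$ are vanishing orders, and column $1$ of the board of Proposition~\ref{prop: ramification game} has $e_{k-1}+2$ boxes. The splitting type $\mathrm{ord}(\Vec e)$ is obtained from $(e_0,e_{k-1}+1,e_1,\ldots,e_{k-2})$ by lowering the single entry $e_{k-1}+1$ by one. By Lemma~\ref{lemma: VB injectivity}, an $\mathrm{ord}(\Vec e)$-nested linear series on $L$ is therefore the same as a colength-one subsheaf $\mathcal F\subset\pi_*L$ with $\mathcal F\cong\OO(\Vec e)$ up to reordering. We take $V_j=H^0(\mathcal F(-d_j))$, and the full space of sections $V$ is $H^0(\mathcal F)$, of dimension $r+1$. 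We would make this dictionary precise following \cite[Section~6]{Larson2Vogt}: a nested series is determined by the $\OO_{\PP^1}$-submodule generated by its layers, and conversely.

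Next we would read the condition $\dim V(-2q)=\dim V-1$ in these terms. Since $V\subset H^0(L)$ has codimension one and $H^0(L)$ already has vanishing orders $0$ and $1$, the condition says exactly that $V$ keeps the order $0$ and loses the order $1$. Because $\mathcal F$ is an $\OO_{\PP^1}$-module and $t=z^k$ acts on $q$-jets as a shift by $k$, losing order $1$ in $V$ forces $\mathcal F$ to lose the whole column $1,1+k,\ldots,1+k(e_{k-1}+1)$ at its bottom entry. Concretely, the quotient $\pi_*L/\mathcal F$ is a length-one skyscraper supported at $\pi(q)$. The composite $\pi_*L\to L|_{2q}\to \mathbb{C}$, whose kernel is $\mathcal F$, must kill the order-$0$ jets only in the prescribed way. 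We would show that $\mathcal F$ is then forced to be the saturation-type sheaf
\[
\mathcal F=\OO_{\PP^1}\cdot s_0+\pi_*\bigl(L(-2q)\bigr),
\]
where $s_0$ is a section realizing vanishing order $0$. Finally we would check that $\mathcal F\cong\OO(\Vec e)$ up to order: the summand $\OO(e_{k-1}+1)$ corresponding to column $1$ drops to $\OO(e_{k-1})$, and the other columns are unchanged. This gives existence.

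For uniqueness, the plan is to show that any other colength-one subsheaf with splitting type $\mathrm{ord}(\Vec e)$ either contains all order-$1$ jets or misses order $0$. The first possibility contradicts $\dim V(-2q)=\dim V-1$; the second makes $V$ base-point-containing, so $V\neq$ the required full space. The freedom in choosing $\mathcal F$ is a choice of a line in a fiber of $\pi_*L$ at a point of $\PP^1$. For the quotient to lower exactly the entry $e_{k-1}+1$, the kernel must be cut out by a functional that is nonzero only on the $\OO(e_{k-1}+1)$-summand modulo the summands of larger degree. Here the hypothesis $e_0>e_{k-1}+1$ enters: it guarantees that the column-$0$ summand $\OO(e_0)$ has strictly larger degree, so it is forced to lie in $\mathcal F$, and this pins the support of the quotient at $\pi(q)$ and the functional up to scalar.

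We expect this uniqueness step to be the main obstacle. One must rule out functionals supported at points other than $\pi(q)$, and functionals mixing the summands $\OO(e_j)$ with $e_j=e_{k-1}+1$ for $1\le j\le k-2$. The first thing we would try is to compare vanishing orders directly: any such alternative $\mathcal F$ would give $V$ with vanishing sequence not containing $0$, or containing $1$. We would detect this using the $k$-admissibility of Proposition~\ref{prop: ramification behaviour} applied to $H^0(\mathcal F(n))$ for all $n$.
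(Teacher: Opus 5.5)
Your preliminary computations are right: writing $\vec f=(e_0,e_{k-1}+1,e_1,\ldots,e_{k-2})$, you have $\pi_*L\cong\OO_{\PP^1}(\vec f)$, and $H^0(L)$ has vanishing sequence $\mathbf a(\vec f)$ at $q$, which contains $0$ and $1$. However, there is a genuine gap: the proposal stops short of a proof exactly where the content of the lemma lies.

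\textbf{The existence step fails as stated.} You use $\mathcal F=\OO_{\PP^1}\cdot s_0+\pi_*(L(-2q))$ for an unspecified section $s_0$ of order $0$. This sheaf depends on the $1$-jet of $s_0$, and for a general choice it does not have splitting type $\mathrm{ord}(\vec e)$. Concretely, let $k=2$ and $\vec e=(2,0)$, so $\pi_*L\cong\OO(2)g_0\oplus\OO(1)g_1$. Let $x_0,x_1$ be a basis of $H^0(\OO_{\PP^1}(1))$ with $x_0$ vanishing at $\pi(q)$. One can normalize so that $x_0^2g_0$ and $x_0g_1$ vanish to orders $4$ and $3$ at $q$; hence $x_1^2g_0$ and $x_1g_1$ have orders $0$ and $1$. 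Take $s_0'=x_1^2g_0+x_1g_1$. The space $V'=H^0(L(-2q))+\langle s_0'\rangle$ has vanishing orders $\{0,2,3,4\}$ and satisfies $\dim V'(-2q)=\dim V'-1$. Yet the corresponding subsheaf is the kernel of a functional that is nonzero on the $\OO(2)$-summand, so it is $\cong\OO(1)^{\oplus 2}$. Moreover $x_1^2g_0\notin V'$, so $V'$ is not the full space of any $(2,0)$-nested series.

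The same example shows that your proposed uniqueness strategy, comparing vanishing sequences, cannot succeed: vanishing orders at $q$ do not separate the right hyperplane from the wrong ones. Also, $\dim V(-2q)=\dim V-1$ only says that exactly one of $0,1$ is a vanishing order of $V$. The space $V=H^0(L(-q))$ satisfies it, and ``having a base point'' is not by itself a contradiction.

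\textbf{The missing step is to combine two facts you have only separately.} First, since $e_0>e_{k-1}+1$, the layer at level $e_0$ has dimension $h^0(\OO(\vec e)(-e_0))=h^0(L(-e_0))$. So that layer is all of $H^0(L(-e_0))\cong H^0(L(-e_0kq))$, and the nesting puts the \emph{specific} order-$0$ section $\sigma\cdot(x_1/x_0)^{e_0}$ into $V$, where $\sigma$ has order $e_0k$. Second, $V\supseteq H^0(L(-2q))$, because $\dim V(-2q)=h^0(L)-2=h^0(L(-2q))$. Together these give $V=\langle H^0(L(-2q)),\sigma(x_1/x_0)^{e_0}\rangle$. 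This is independent of $\sigma$ because $e_0k+1\notin\mathbf a(\vec f)$, which is again the hypothesis; it also rules out $V=H^0(L(-q))$.

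The lower layers are forced the same way. For $d_i\le e_{k-1}+1$, the layer is a hyperplane in $H^0(L(-d_ikq))$ whose $(x_1/x_0)^{d_i}$-multiples lie in $V$. So it has no section of order $d_ik+1$ and it contains $\sigma(x_1/x_0)^{e_0-d_i}$. Hence it equals $\langle H^0(L(-(d_ik+2)q)),\sigma(x_1/x_0)^{e_0-d_i}\rangle$, and one checks these layers are compatible. This is the paper's proof.

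The paper's argument works with the layers directly, so it avoids your dictionary with colength-one subsheaves. Lemma~\ref{lemma: VB injectivity} does not give that dictionary: it only constrains splitting types of injections. In general the sheaf generated by the layers need not be $\cong\OO(\mathrm{ord}(\vec e))$. For example, with $k=4$ and $\vec e=(4,2,0,1)$, the last layer can be $H^0$ of the degree-$\geq 1$ part $\OO(4)\oplus\OO(2)^{\oplus 2}$ of $\pi_*L$, which generates a rank-$3$ sheaf. In sheaf terms, the point is that $\mathcal F$ must contain both the summands of degree $>e_{k-1}+1$ and $\pi_*(L(-2q))$. Their sum already has colength one, so your worries about other supports and mixed functionals never arise.
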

	
	\begin{proof}
		We consider the vector \[
		\Vec{f}
		=
		(e_0,e_{k-1}+1,e_1,\ldots,e_{k-2}),
		\] 
		and we immediately see
		\[
		W^{t_0(\Vec{e})}(\pi,q)\otimes \OO_X(q)
		=
		W^{\Vec{f}}(\pi,q).
		\]
		
		To determine the vanishing orders of $V$ at $q$ and we first make a few observations.
		
		All vanishing orders of $\textbf{a}(\Vec{e})$ at least $2$ are vanishing orders of $V$: We have
		\[ h^0(X, L) = \sum_{i=0}^{k-1}(f_i+1) = \sum_{i=0}^{k-1}(e_i+1)+1, \]
		hence $\dim V=h^0(C, L)-1$.
		As
		\[
		\dim V(-2q)=\dim V-1,
		\]
		it follows that either $0$ or $1$ is not a vanishing order, and $V(-2q) = H^0(C, L-2q)$, hence all elements of $\textbf{a}(\Vec{f})_{\geq 2}$ are vanishing orders of $V$. 
		
		In particular, the integer $e_0k$ is a vanishing order of $V$. Using this, we obtain that $0$ is a vanishing order of $V$ at $q$: We consider the well-defined map       
		\[
		V(-e_0k\cdot q)\otimes
		H^0\bigl(\mathbb{P}^1,\OO_{\mathbb{P}^1}(e_0)\bigr)
		\rightarrow V,
		\]
		and we will construct a section in the image with vanishing order $0$ at $q$. We choose a section $\sigma\in V$ such that $\operatorname{ord}_q(\sigma)=e_0k$, and view $\sigma$ as a section of
		$L(-e_0k\cdot q)$. Moreover, we choose $s\in H^0\left(\PP^1,\OO_{\PP^1}(1)\right)$, having a pole at the image of $q$ in $\PP^1$. Then  $\sigma\cdot s^{ e_0}$ is an element in the image satisfying $\operatorname{ord}_q\left(\sigma\cdot s^{e_0}\right)=0$.
		
		It follows that $0$ is a vanishing order of $V$ and hence
		\[
		V=
		\left\langle
		H^0\left(C,L(-2q)\right),
		\sigma\cdot s^{ e_0}
		\right\rangle.
		\]
		
		It remains to show that this space is uniquely determined. Indeed, suppose that $\sigma'$ is
		another section satisfying
		\[
		\operatorname{ord}_q(\sigma')=e_0k.
		\]
		Since $k e_0+1$ is not a vanishing order of $V$, there exists $t\in\mathbb C$ such that
		\[
		\operatorname{ord}_q(\sigma-t\sigma')
		\geq e_0k+2.
		\]
		Therefore,
		\[
		(\sigma-t\sigma')\cdot s^{ e_0}
		\in H^0\left(X,L(-2q)\right),
		\]
		so replacing $\sigma$ by $\sigma'$ does not change $V$. 
		
		Finally, the nesting is then given by the conditions 
		\[ V_i = V(-d_ik \cdot q) \coloneqq H^0(X, L-d_ik\cdot q) \cap V \]
		for all $1\leq i \leq s$ satisfying $d_i > e_{k-1}+1$ and 
		\[ V_i = \langle V(-(d_ik+2)\cdot q), \ \sigma \cdot s^{e_0 - d_i}\rangle\]
		for $d_i \leq e_{k-1}+1$.
	\end{proof}
	
	This result extends for all other loci appearing in Lemma~\ref{lemma: ramifications}. In these cases, describing the nested linear series is straight-forward, as the condition $\dim V(-q) = \dim V$ immediately implies $V = H^0(X, L-q)$. 
	
	\begin{lemma}\label{lem: nested description}
		Let $\pi\colon X\to\PP^1$ be a degree $k$ cover totally
		ramified at $q$, and let $\Vec e\in\mathbb Z_{\geq 0}^k$.
		Suppose that
		\[
		L\in W^{\Vec e}(\pi,q)\otimes\OO_X(q) \ \  \textrm{or} \  \ L \in W^{t_i(\Vec e)}(\pi,q) \otimes\OO_X(q), \qquad 1\leq i\leq k-1.
		\]
		Then $L$ admits a unique $\mathrm{ord}(\Vec e)$-nested
		linear series whose space of sections satisfies $V(-q)=V$.
		It is given by
		\[
		V=H^0(X,L(-q)),
		\qquad
		V_j=H^0(X,L(-(d_jk+1)q)).
		\]
		
	\end{lemma}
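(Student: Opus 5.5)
The condition $V(-q)=V$ forces $V\subseteq H^0(X,L(-q))$. The plan is to compare dimensions to get equality, and then read off the nesting.

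\textbf{Step 1: the section space.} Here $V$ is the full space of sections of a $g^r_d$ with $r=k+|\vec e|-1$. If $L\in W^{\vec e}(\pi,q)\otimes\OO_X(q)$, then $L(-q)\in W^{\vec e}(\pi,q)$, so $h^0(X,L(-q))=\sum_i(e_i+1)=r+1$. If instead $L\in W^{t_i(\vec e)}(\pi,q)\otimes\OO_X(q)$ with $1\le i\le k-1$, then $t_i(\vec e)$ is a permutation of $\vec e$, so $|t_i(\vec e)|=|\vec e|$ and again $h^0(X,L(-q))=r+1$. In both cases $V\subseteq H^0(X,L(-q))$ and the dimensions agree, which forces $V=H^0(X,L(-q))$. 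In particular $V$ is uniquely determined.

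\textbf{Step 2: the nesting.} Next I determine the layers $V_j$. The multiset of entries of $\vec e$ and of $t_i(\vec e)$ coincide, so $\pi_*(L(-q))\cong\OO_{\PP^1}(\vec e)$ or $\OO_{\PP^1}(t_i(\vec e))$, both of splitting type $\mathrm{ord}(\vec e)$ up to order. Since $\OO_X(kq)=\pi^*\OO_{\PP^1}(1)$ and $\pi$ is totally ramified at $q$, I use the convention $L(-d_j)\cong L(-d_jkq)$. Then
\[
h^0(X,L(-(d_jk+1)q))=h^0\bigl(\PP^1,\OO_{\PP^1}(\vec e)(-d_j)\bigr)
\]
by the projection formula (Remark~\ref{rmk: Hilbert}). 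This is exactly the dimension required of $V_j$ in an $\mathrm{ord}(\vec e)$-nested linear series whose sections lie in $V$.

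On the other hand, the layer $V_j$ must be a subspace of $V$ twisted down, namely $V_j\subseteq V(-d_jkq)=H^0(X,L(-(d_jk+1)q))$. The dimensions agree, so $V_j$ equals this space. Compatibility under multiplication by $H^0(\PP^1,\OO(d_{j-1}-d_j))$ is automatic, because multiplying by pullbacks of sections maps complete linear systems into complete linear systems. This gives both existence and uniqueness.

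\textbf{Main obstacle.} The step needing the most care is matching the convention for $V_j$ inside $H^0(L(-d_j))$ against the twist by $q$. Concretely, I must check that the condition $V(-q)=V$ propagates to each layer, i.e.\ that $V_j$ must lie in $H^0(L(-(d_jk+1)q))$ rather than merely in $H^0(L(-d_jkq))$. I would argue this by the same reasoning as in Lemma~\ref{lemma: nested description}: layers are determined as $V\cap H^0(L(-d_jkq))$. Since every section of $V$ vanishes at $q$, each layer lands in the $(d_jk+1)q$-twist, and the dimension count then finishes the proof.
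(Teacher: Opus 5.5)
Your Step~1 is correct and is exactly the paper's one-line argument. $V(-q)=V$ gives $V\subseteq H^0(X,L(-q))$, and $h^0(X,L(-q))=r+1$ because $t_i(\vec e)$ is a rearrangement of $\vec e$. Your dimension count $h^0(X,L(-(d_jk+1)q))=h^0(\PP^1,\OO_{\PP^1}(\mathrm{ord}(\vec e))(-d_j))$ and your existence/compatibility check are also fine.

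The gap is the uniqueness of the layers, at exactly the point you flagged, and your proposed resolution does not work. Use the paper's convention $V(-d_jkq)=V\cap H^0(X,L(-d_jkq))$, as in Lemma~\ref{lemma: nested description}. For $d_j\geq 1$ this gives $V\cap H^0(X,L(-d_jkq))=H^0(X,L(-d_jkq))$. Vanishing to order $\geq d_jk$ already includes vanishing at $q$, so the hypothesis $V(-q)=V$ is absorbed. It says nothing about whether a section vanishes at $q$ \emph{as a section of} $L(-d_jkq)$. Hence the equality $V(-d_jkq)=H^0(X,L(-(d_jk+1)q))$ in your Step~2 is false.

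The space you land in is genuinely too large. By Proposition~\ref{prop: intersection description}, $\pi_*L\cong\OO_{\PP^1}(\sigma^{-1}\vec e')$ with $\vec e'\in\{\vec e,t_i(\vec e)\}$. So $h^0(X,L(-d_jkq))$ exceeds the required $\dim V_j$ by one whenever $d_j\leq e'_{k-1}+1$, for example for the smallest layer whenever $d_s\geq 1$. Containment in $V\cap H^0(X,L(-d_jkq))$ plus a dimension count therefore cannot determine $V_j$. This also misreads Lemma~\ref{lemma: nested description}: there the layers equal $V\cap H^0(X,L(-d_ikq))$ only when $d_i>e_{k-1}+1$, and are a proper subspace otherwise.

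What is missing is a section of $\OO_{\PP^1}(d_j)$ that does \emph{not} vanish at $\pi(q)$.
\begin{enumerate}
\item Chain the nesting conditions down to the full space of sections $V$ (the degree-$0$ layer). Since products of sections span $H^0(\PP^1,\OO_{\PP^1}(n))$, this gives $V_j\cdot H^0(\PP^1,\OO_{\PP^1}(d_j))\subseteq V$.
\item Pick $u$ with $u(\pi(q))\neq 0$. Then $\pi^*u$ does not vanish at $q$. So for $\tau\in V_j\subseteq H^0(X,L(-d_jkq))$, the order at $q$ of $\tau\cdot\pi^*u\in H^0(X,L)$ equals the order of $\tau$ as a section of $L(-d_jkq)$.
\item Hence $\tau\cdot\pi^*u\in V=H^0(X,L(-q))$ forces $\tau\in H^0(X,L(-(d_jk+1)q))$.
\end{enumerate}
Only now does your dimension count give $V_j=H^0(X,L(-(d_jk+1)q))$. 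Multiplying by the section vanishing at $\pi(q)$, which is all the intersection $V\cap H^0(X,L(-d_jkq))$ encodes, loses exactly this information.
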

	
	The results above will be used to describe the $\textrm{ord}(\Vec{e})$-nested limit linear series associated to the limit linear series in Lemma~\ref{lemma: ramifications}. We will prove a regeneration formula for the nested linear series in Lemma~\ref{lemma: nested description} and Lemma~\ref{lem: nested description}.
	
	\begin{defn} \label{def: nested description}
		Let $[\pi_0,p]$ as defined in Setup~\ref{setup: elliptic tail}. We denote by $G^{\mathrm{ord}(\Vec e),\mathrm{nc}}(\pi_0,p)$ the locus of refined $\mathrm{ord}(\Vec e)$-nested limit linear series that are non-colliding at $q$ on $X$ and at $p,q$ on $E$,
		and whose aspects
		\[
		l_E=(L_E,V_E),\qquad l_X=(L_X,V_X)
		\]
		satisfy $\mathbf a(V_E,p)=\mathbf a(\Vec e)$ and one of the
		following conditions:
		\begin{enumerate}
			\item $L_X(-q)\in W^{\Vec e}(\pi_X,q)$ and $ \mathbf a(V_X,q)=1+\mathbf a(\Vec e).$

			\item For some $1\leq j\leq k-1$ with
			$t_j(\Vec e)\neq\Vec e$,
			\[
			L_E\simeq\OO_E(jp+(d-j)q),
			\qquad
			L_X(-q)\in W^{t_j(\Vec e)}(\pi_X,q),
			\]
			and $\mathbf a(V_X,q)=1+\mathbf a(t_j(\Vec e))$.

			\item If $t_0(\Vec e)\neq\Vec e$,
			\[
			L_E\simeq\OO_E(dq),
			\qquad
			L_X(-q)\in W^{t_0(\Vec e)}(\pi_X,q),
			\]
			and
			\[
			\mathbf a(V_X,q)=\mathbf a(\Vec f)\setminus\{1\}, \ \ \textrm{where} \ \
			\Vec f=(e_0,e_{k-1}+1,e_1,\ldots,e_{k-2}).
			\]
		\end{enumerate}
		Here $1+\mathbf a$ denotes entrywise addition.
	\end{defn}
	The layers of every series in this locus are uniquely
	determined by its untwisted aspects. On $E$, they are 
	$$V_{E,i}=V_E(-d_i k p),$$ 
	since nesting gives containment and the prescribed vanishing
	sequence at $p$ gives equality of dimensions.
	In cases~\textup{(1)} and~\textup{(2)},
	Lemma~\ref{lem: nested description} gives
	\[
	V_X=H^0(X,L_X(-q)),
	\qquad
	V_{X,i}=H^0(X,L_X(-(d_i k+1)q)).
	\]
	In case~\textup{(3)}, they are the unique spaces constructed
	in Lemma~\ref{lemma: nested description}.
	
	\subsection{Set-theoretic regeneration}
	\begin{setup}\label{setup: smoothing}
		Fix the admissible cover
		\[
		\pi_0\colon X\cup_q E
		\longrightarrow P_X\cup_{\overline q}P_E, \qquad p  \in E
		\]
		described in Setup~\ref{setup: elliptic tail}. By the deformation theory of Hurwitz spaces and admissible
		covers, see~\cite{harris_mumford,AbramovichCV}, there exists a smoothing family
		\[
		\begin{tikzcd}[column sep=large, row sep=large]
			\mathcal C
			\arrow{rr}{\mathfrak \pi}
			\arrow{dr}
			&&
			\mathcal P
			\arrow{dl}
			\\
			&
			B
			\arrow[ul, bend left=18, "p"]
			&
		\end{tikzcd}
		\]
		over $B=\operatorname{Spec}\mathbb C[[t]]$, whose special
		fiber is $(\pi_0,p)$ and whose generic fiber is a degree $k$
		cover of smooth curves, totally ramified at $p$. We choose the smoothing so that $\mathcal C$ is regular
		and $\mathcal P\to B$ is obtained by base change from a
		family with regular total space.
	\end{setup}

	Let $\Vec{e}$ be a length $k$ vector with non-negative entries in decreasing order. In this setting, we can adapt the proof of~\cite[Theorem 6.5]{Larson2Vogt} to obtain a quasi-projective scheme $\mathcal{G}^{\Vec{e}, \textrm{nc}}(\pi,p)$ whose general fiber over $t \neq 0$ is the space of non-colliding $\Vec{e}$-nested linear series and whose special fiber is the space of $\Vec{e}$-nested limit linear series associated to the loci appearing in Definition~\ref{def: nested description}.
	
	\begin{theorem}[Regeneration] \label{theorem: nested regeneration} Let $\Vec{e}$ be a length $k$ non-negative vector and let $\mathcal{C} \rightarrow \mathcal{P} \rightarrow B$ as in Setup~\ref{setup: smoothing}. Then, there exists a quasi-projective $B$-scheme $\mathcal{G}^{\Vec{e},\operatorname{nc}}
		(\mathfrak{\pi},p)\rightarrow B$
		whose general fiber is contained in the space of non-colliding
		$\Vec{e}$-nested linear series with the prescribed
		ramification $\textbf{a}(\Vec{e})$ at $p$, and whose special fiber is $G^{\Vec{e},\operatorname{nc}}(\pi_0,p)$, as described in Definition~\ref{def: nested description}. Moreover, every
		irreducible component $Z$ of $\mathcal{G}^{\Vec{e},\operatorname{nc}}
		(\mathfrak{\pi},p)$ satisfies
		\[
		\dim Z
		\geq
		\dim \operatorname{Pic}(\mathcal{C}/B)-u(\Vec{e})-\textrm{inv}(\Vec{e}).
		\]
	\end{theorem}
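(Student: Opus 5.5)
The plan is to adapt the relative construction of \cite[Theorem~6.5]{Larson2Vogt} (itself modeled on the limit linear series space of Eisenbud--Harris, \cite[Theorem~3.4]{basiclimitlinear}) to the ordered, pointed setting. First we build the relative space of $\mathrm{ord}(\Vec e)$-nested linear series. Over $B$ we take the relative Picard scheme $\operatorname{Pic}(\mathcal C/B)$, with the two twists $L_X$, $L_E$ on the special fibre, and a Poincar\'e bundle $\mathcal L$. On the special fibre, a line bundle on the generic fibre degenerates to the two aspects obtained by twisting with multiples of $X$ and $E$. For each distinct entry $d_j$ of $\Vec e$ we form the relative Grassmannians of subspaces of dimension $h^0(\OO_{\PP^1}(\Vec e)(-d_j))$ inside $\pi_{2*}$ of $\mathcal L(-d_j)(M\cdot p)$ for $M\gg0$. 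Here $\mathcal L(-d_j)$ means $\mathcal L\otimes\mathfrak\pi^*\OO_{\mathcal P}(-d_j)$; $\OO_{\mathcal P}(1)$ exists because $\mathcal P$ is a base change of a regular family. The condition that the subspaces actually consist of sections vanishing on $M\cdot p$ is a determinantal condition. Working this way, every requirement becomes a closed rank condition on a smooth ambient space $\mathcal Y$ over $B$, of relative dimension computable from the Grassmannians. The nesting conditions $\operatorname{Im}_j(V_{j-1})\subseteq V_j$ are likewise determinantal. On the special fibre we impose the Eisenbud--Harris compatibility at $q$ on both aspects, namely complementary vanishing orders summing to at least $d$. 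This is done through the standard trick of carrying both aspects and the maps induced by the twists $\OO(\pm X)$, exactly as in \cite{basiclimitlinear}.

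Second, we impose the ramification at the marked section $p$. We add, on each layer, the flag degeneracy conditions from Proof~1 of Proposition~\ref{prop: lowerbound}, now relatively along the section $p\colon B\to\mathcal C$. These are the rank bounds on the maps $\mathcal V_j/\operatorname{Im}_j(\mathcal V_{j-1})\to\mathcal F_{j,i}/\varphi_{j,i}(\operatorname{Im}_j(\mathcal V_{j-1}))$. After each stage we restrict to the open locus where the vanishing orders are exactly the predicted ones and the non-colliding condition holds. Non-colliding is open, and exactness of the orders is the open condition that keeps the subsequent targets locally free. We define $\mathcal G^{\Vec e,\mathrm{nc}}(\mathfrak\pi,p)$ as the resulting locally closed subscheme, further restricted to the open subset whose special fibre satisfies the conditions of Definition~\ref{def: nested description}.

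The dimension bound then follows by counting. The nested conditions contribute codimension at most the one computed in \cite{Larson2Vogt}, which yields relative dimension $\geq \dim\operatorname{Pic}-u(\Vec e)$ for the non-colliding nested space. The point conditions contribute at most $\sum_j\operatorname{inv}(M_j)=\operatorname{inv}(\Vec e)$, exactly as in Proof~1 of Proposition~\ref{prop: lowerbound}. Since each stage is a degeneracy locus of the expected-codimension type in a $B$-scheme, every component has dimension at least the ambient dimension minus the total bound. This is the standard Krull/Eagon--Northcott argument and holds on the total space, not just fibrewise. On the generic fibre, $V=H^0(L)$ by the Riemann--Roch argument of Proof~1 of Proposition~\ref{prop: lowerbound}, so the generic fibre lies in non-colliding nested series with ramification $\mathbf a(\Vec e)$.

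The main obstacle is identifying the special fibre with $G^{\Vec e,\mathrm{nc}}(\pi_0,p)$ of Definition~\ref{def: nested description}. This requires showing that a refined nested limit series satisfying the point conditions has its $X$-aspect in one of the loci of Lemma~\ref{lemma: ramifications}, and conversely that each such locus lifts uniquely to a nested limit series. For the first part I would use Lemma~\ref{lemma: ramifications} to pin down $L_E$ and $\mathbf a(V_X,q)$. For uniqueness I would use Lemmas~\ref{lemma: nested description} and~\ref{lem: nested description}, together with the observation that the layers on $E$ are forced to be $V_E(-d_ikp)$. The delicate case is case (3), where $V_X\neq H^0(L_X(-q))$. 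There I must check that the unique space built in Lemma~\ref{lemma: nested description} satisfies the non-colliding and compatibility conditions at $q$. I also must check that the local description on the special fibre is cut out by the same determinantal equations rather than by something larger, which could happen if the refined, rather than crude, condition fails on an open set. I would handle this by restricting to the open refined locus from the start.
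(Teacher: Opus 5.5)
Your proposal is correct and follows essentially the same route as the paper. You adapt \cite[Theorem~6.5]{Larson2Vogt} to build the relative space of refined $\mathrm{ord}(\Vec e)$-nested limit series, whose components have dimension at least $\dim\operatorname{Pic}(\mathcal C/B)-u(\Vec e)$, then impose relatively along $p$ the flag degeneracy conditions of Proof~1 of Proposition~\ref{prop: lowerbound} at a codimension cost of at most $\operatorname{inv}(\Vec e)$, and finally identify the special fibre with $G^{\Vec e,\mathrm{nc}}(\pi_0,p)$. Your plan for that last identification, via Lemma~\ref{lemma: ramifications} and Lemmas~\ref{lemma: nested description} and~\ref{lem: nested description}, is more explicit than the paper, which simply asserts that the node and point conditions cut out exactly this locus.
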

	
	\begin{proof}
		The proof of~\cite[Theorem 6.5]{Larson2Vogt} applies almost verbatim to construct a quasi-projective scheme $\mathcal{G}^{\textrm{ord}(\Vec{e}),\operatorname{nc}}
		(\mathfrak{\pi},p)$ parametrizing refined $\textrm{ord}(\Vec{e})$-nested linear series, with ramification orders at the node of the central fiber as described in Definition~\ref{def: nested description}. Each component of this scheme has dimension at least $\operatorname{Pic}(\mathcal{C}/B)-u(\Vec{e})$.
		
		The quasi-projective scheme $\mathcal{G}^{\textrm{ord}(\Vec{e}),\operatorname{nc}}(\mathfrak{\pi},p)$ is obtained as a subscheme of a product of towers of Grassmannians, and it admits universal vector bundles for each of the vector spaces in the $\textrm{ord}(\Vec{e})$-nesting. As such, the same method as in the first proof of Proposition~\ref{prop: lowerbound} can be applied here, and we obtain a subspace  
		\[ \mathcal{G}^{\Vec{e},\operatorname{nc}}
		(\mathfrak{\pi},p) \subseteq \mathcal{G}^{\textrm{ord}(\Vec{e}),\operatorname{nc}}
		(\mathfrak{\pi},p)\]
		for which the ramification orders at $p$ are exactly given by $\textbf{a}(\Vec{e})$. In the central fiber, the vanishing conditions at the node, together with the vanishing conditions at point $p$, cut out exactly the locus $G^{\Vec{e},\operatorname{nc}}(\pi_0,p)$.  
		
		Moreover, the proof of Proposition~\ref{prop: lowerbound} implies that 
		\[ \codim(\mathcal{G}^{\Vec{e},\operatorname{nc}}
		(\mathfrak{\pi},p), \mathcal{G}^{\textrm{ord}(\Vec{e}),\operatorname{nc}}
		(\mathfrak{\pi},p)) \leq \inv(\Vec{e}).\]
		This implies that every irreducible component $Z$ of $\mathcal{G}^{\Vec{e},\operatorname{nc}}
		(\mathfrak{\pi},p)$ satisfies
		\[
		\dim Z
		\geq
		\dim \operatorname{Pic}(\mathcal{C}/B)-u(\Vec{e})-\textrm{inv}(\Vec{e}),
		\]
		which is what we wanted to prove.
	\end{proof}
	
	Let $d = g + |\Vec{e}|+ k -1$ and consider the forgetful map from $\mathcal{G}^{\Vec{e},\operatorname{nc}}(\mathfrak{\pi},p)$  to $\textrm{Pic}^d(\mathcal{C}/B)$. We then obtain a counterpart of Theorem~\ref{theorem: nested regeneration}.
	
	\begin{theorem} \label{theorem: HBN regeneration} Let $\Vec{e}$ be a length $k$ non-negative vector and let $\mathcal{C} \rightarrow \mathcal{P} \rightarrow B$ as in Setup~\ref{setup: smoothing}. Then, there exists a quasi-projective $B$-scheme $\mathcal{W}^{\Vec{e}}
		(\mathfrak{\pi},p)\rightarrow B$
		whose general fiber is the pointed Hurwitz--Brill--Noether locus with prescribed
		ramification exactly $\textbf{a}(\Vec{e})$ at $p$, and whose special fiber is the union of the components in $\textrm{Pic}^d(X) \times \textrm{Pic}^0(E)$: 
		\begin{itemize}
			\item 	\[\left(\mathcal{O}_X(q)\otimes \overline{W}^{\Vec{e}}(\pi_{|X},q)\right)\times \mathrm{Pic}^0(E), \ \textrm{and} \]
			\item \[ \left(\OO_X(q) \otimes \overline{W}^{t_i(\Vec{e})}(\pi_{|X},q) \right) \times \{ \mathcal{O}_E(i\cdot p -i\cdot q)\} \]
			for $0\leq i \leq k-1$ satisfying $t_i(\Vec{e}) \neq \Vec{e}$.
		\end{itemize}
		Moreover, every
		irreducible component $Z$ of $\mathcal{W}^{\Vec{e}}
		(\mathfrak{\pi},p)$ satisfies
		\[
		\dim Z
		\geq
		\dim \operatorname{Pic}(\mathcal{C}/B)-u(\Vec{e})-\textrm{inv}(\Vec{e}).
		\]
	\end{theorem}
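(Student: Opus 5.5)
The plan is to define $\mathcal{W}^{\Vec{e}}(\mathfrak{\pi},p)$ as the scheme-theoretic image of $\mathcal{G}^{\Vec{e},\operatorname{nc}}(\mathfrak{\pi},p)$, from Theorem~\ref{theorem: nested regeneration}, under the forgetful map to $\mathrm{Pic}^d(\mathcal{C}/B)$. On the special fiber we twist so that we land in $\mathrm{Pic}^d(X)\times\mathrm{Pic}^0(E)$, that is, $L_E\mapsto L_E(-dq)$.

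\emph{Generic fiber.} By the first proof of Proposition~\ref{prop: lowerbound}, every $L\in W^{\Vec{e}}(\pi_t,p)$ has complete series $V=H^0(C_t,L)$. It therefore carries a unique non-colliding $\mathrm{ord}(\Vec{e})$-nested structure $V_j=H^0(L(-d_j))$ with vanishing exactly $\mathbf{a}(\Vec{e})$. Hence the forgetful map is injective on the generic fiber, with image $W^{\Vec{e}}(\pi_t,p)$.

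\emph{Special fiber, set-theoretically.} By Definition~\ref{def: nested description}, Lemma~\ref{lemma: nested description} and Lemma~\ref{lem: nested description}, the nested limit structure is uniquely determined by the pair $(L_X,L_E)$. The image is therefore exactly the union of the loci listed in Lemma~\ref{lemma: ramifications}:
\begin{itemize}
\item in case (1), $L_E$ is arbitrary, giving $\mathrm{Pic}^0(E)$ after the twist;
\item in cases (2) and (3), $L_E=\OO_E(ip+(d-i)q)$, which twists to $\OO_E(ip-iq)$, with $i=0$ giving the trivial bundle $\OO_E(dq)\mapsto\OO_E$.
\end{itemize}
On the $X$ side, the image consists of $\OO_X(q)\otimes W^{\Vec e}(\pi_X,q)$, resp.\ $\OO_X(q)\otimes W^{t_i(\Vec e)}(\pi_X,q)$.

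To obtain the closures $\overline W$ rather than the open loci, I would use properness of limit linear series together with Proposition~\ref{prop: containments} and Proposition~\ref{prop: closure intersection}. The boundary strata of the closures have strictly smaller dimension and lie in the closure of the limits of the main components. This holds because the total family is closed: we take the closure of the image of the generic fiber, and its special fiber is contained in the limits described by Lemma~\ref{lemma: ramifications}, now with vanishing at least $\mathbf{a}(\Vec{e})$.

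\emph{Dimension bound.} Since the forgetful map is injective, in particular quasi-finite, each component $Z$ of the image has the dimension of the corresponding component of $\mathcal{G}^{\Vec{e},\operatorname{nc}}$. That component has dimension at least $\dim\mathrm{Pic}(\mathcal{C}/B)-u(\Vec{e})-\inv(\Vec{e})$.

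\emph{Reconciling the two descriptions.} Take the closure in $\mathrm{Pic}^d(\mathcal{C}/B)$. Its special fiber contains the image of the special fiber of $\mathcal{G}$, and by Lemma~\ref{lemma: ramifications} it is contained in the closures of those same loci. Each listed locus has dimension $g-u(\Vec e)-\inv(\Vec e)$ by Theorem~\ref{trm: dimension} on $X$ of genus $g-1$, using $u+\inv$ of $t_i(\Vec e)$ equal to $u(\Vec e)+\inv(\Vec e)-1$ from Proposition~\ref{prop: ramification game}. This matches the lower bound, so every component dominates or is a full component of the special fiber.

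\emph{Main obstacle.} The hard step is showing the special fiber is exactly the closures and nothing smaller or extra. The difficulty is excluding components of the family supported entirely in the special fiber, and ensuring each listed component is actually a limit. The dimension count resolves this: a component lying entirely in the central fiber would have dimension at least $\dim \mathrm{Pic}(\mathcal C/B)-u-\inv = g+1-u-\inv$. It would then exceed the dimension $g-u-\inv$ of the special-fiber loci, which is a contradiction. So every component is flat over $B$, and upper semicontinuity of fiber dimension shows each listed locus of maximal dimension is hit.
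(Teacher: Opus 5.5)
Your proposal is correct and follows the paper's route: the paper also obtains $\mathcal{W}^{\Vec e}(\mathfrak{\pi},p)$ from the forgetful map out of $\mathcal{G}^{\Vec e,\operatorname{nc}}(\mathfrak{\pi},p)$ of Theorem~\ref{theorem: nested regeneration}. It proves that map injective using completeness of the series on smooth fibers and Lemmas~\ref{lemma: nested description} and~\ref{lem: nested description} on the boundary, and inherits the dimension bound. Your extra dimension count, which rules out components supported in the central fiber, and your appeal to Proposition~\ref{prop: closure intersection} to pass to closures make explicit what the paper leaves as ``the conclusion follows''. One correction there: the fact doing the work is that every component dominates, hence is flat over, the DVR $B$, not upper semicontinuity of fiber dimension.
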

	
	\begin{proof}
		Consider the forgetful map 
		\[ \mathcal{G}^{\Vec{e},\operatorname{nc}}
		(\mathfrak{\pi},p) \rightarrow \operatorname{Pic}^d(\mathcal{C}/B),\]
		whose image is an open subspace of $\mathcal{W}^{\Vec{e}}
		(\mathfrak{\pi},p)$. This forgetful map is injective. Indeed, on the smooth locus, a line bundle satisfying 
		\[ h^0(C, L) = \sum_{i=0}^{k-1}(e_i+1)\]
		and having vanishing orders exactly $\textbf{a}(\Vec{e})$ admits a unique $\textrm{ord}(\Vec{e})$-nested linear series, given by 
		\[ V_j = H^0(C, L - d_jk\cdot p ).  \]
		Above the boundary, the forgetful map is injective because of Lemma~\ref{lemma: nested description} and Lemma~\ref{lem: nested description}, and the conclusion follows.
	\end{proof}
	
	\subsection{Scheme theoretic regeneration, Reduceness, and Cohen--Macaulayness} We know from Proposition~\ref{prop: closure intersection} that we have the set-theoretic intersection 
	\[\bigcap_{i=0}^{k-1} \overline{\{L \in \textrm{Pic}(C) \ \mid \ \pi_*(L-(i\cdot p))\cong \OO(\sigma^i\Vec{e}) \} } = \overline{W}^{\Vec{e}}(\pi,p) \cup \Delta, \]
	where $\Delta$ is some scheme of dimension $\dim \Delta < g- u(\Vec{e})- \textrm{inv}(\Vec{e})$. Each set in the intersection in the LHS can be given a scheme structure using the determinantal conditions
	\[\OO_C(i\cdot p) \otimes \overline{W}^{\sigma^i\Vec{e}}(\pi,p) = \{L\in \textrm{Pic}^d(C) \ \mid \ h^0(C, L-(mk+i)p) \geq h^0(\mathbb{P}^1, \OO(\sigma^i\Vec{e})(-m)) \ \text{ for } m\geq 0\}.  \]
	As such, using all rank conditions 
	$$\{L\in \textrm{Pic}^d(C) \ \mid \ h^0(C, L-(mk+i)p) \geq h^0(\mathbb{P}^1, \OO(\sigma^i\Vec{e})(-m)) \ \text{ for } m\geq 0, \ \textrm{and} \ 0\leq i \leq k-1\},$$
	the space $\overline{W}^{\Vec{e}}(\pi,p) \cup \Delta$ inherits a scheme structure. We will prove inductively that this scheme is reduced and of pure dimension $g-u(\Vec e)-\inv(\Vec e)$. In view of Proposition~\ref{prop: closure intersection}, this implies that $\Delta=\emptyset$ and that $\overline W^{\Vec e}(\pi,p)$, endowed with the induced determinantal scheme structure, is reduced. 
	
	The same methods as in \cite[Section 7]{Larson2Vogt} can be applied for the elliptic tail degeneration. Let $C_0=X\cup_q E$, where $X$ and $E$ are smooth curves of genus $g-1$ and $1$, respectively. Fix a degree distribution
	\[
	\Vec{d}=(d_X,d_E),
	\qquad d_X+d_E=d,
	\]
	and consider the determinantal locus
	\[
	W_{\Vec{d}}^r(C_0)
	=
	\left\{
	L\in\Pic^{\Vec{d}}(C_0)
	\ \middle|\
	h^0(C_0,L)\geq r+1
	\right\}.
	\]
	Since $C_0$ is of compact type, we have
	\[
	\Pic^{\Vec{d}}(C_0)
	\simeq
	\Pic^{d_X}(X)\times\Pic^{d_E}(E),
	\]
	and our goal is to describe this rank degeneracy locus on this product. Let \[\pi\colon C_0\times\Pic^{\Vec{d}}(C_0)
	\longrightarrow
	\Pic^{\Vec{d}}(C_0)\] be the projection, and let $\mathcal{L}:=\mathcal{L}_{\Vec{d}}$ be a universal line bundle. Choose a degree $N$ divisor $D$ on $C_0$, contained in the smooth locus, such that
	\[
	D_X:=D\cap X
	\qquad\text{and}\qquad
	D_E:=D\cap E
	\]
	are sufficiently positive, with degrees denoted $N_X$ and $N_E$. In terms of the natural restriction morphism
	\[
	\phi\colon
	\pi_*\mathcal{L}(D)
	\longrightarrow
	\pi_*\bigl(\mathcal{L}(D)|_D\bigr),
	\]
	the locus of interest is
	\[
	W_{\Vec{d}}^r(C_0)
	=
	\left\{
	L\in\Pic^{\Vec{d}}(C_0)
	\ \middle|\
	\dim\ker(\phi|_L)\geq r+1
	\right\}.
	\]
	Its scheme structure is given by the $(n+1)\times(n+1)$ minors of
	$\phi$, where
	\[
	n
	=
	\rank\pi_*\mathcal{L}(D)-(r+1)
	=
	d-g-r+\deg(D).
	\]
	
	We describe the rank loci of $\phi$ in terms of evaluation maps on
	the normalization
	\[
	\nu\colon X\sqcup E\longrightarrow C_0.
	\]
	Let
	\[
	\pi_X\colon X\times\Pic^{d_X}(X)\longrightarrow\Pic^{d_X}(X),
	\qquad
	\pi_E\colon E\times\Pic^{d_E}(E)\longrightarrow\Pic^{d_E}(E)
	\]
	be the projections, and let $\mathcal{L}_X$ and $\mathcal{L}_E$ be
	universal line bundles on the corresponding products, compatible with the universal line bundle $\mathcal{L}$, i.e. for the maps 
	\[ \textrm{pr}_X \colon \Pic^{\Vec{d}}(C_0) \rightarrow \Pic^{d_X}(X) \ \textrm{and} \ \textrm{pr}_E \colon \Pic^{\Vec{d}}(C_0) \rightarrow \Pic^{d_E}(E) \]
	we have 
	\[ \mathcal{L}\vert_{X\times \Pic^{\Vec{d}}(C_0) } = (\textrm{Id}_X\times\textrm{pr}_X)^*\mathcal{L}_X \ \textrm{and} \ \mathcal{L}\vert_{E\times \Pic^{\Vec{d}}(C_0) } = (\textrm{Id}_E\times \textrm{pr}_E)^*\mathcal{L}_E.\]
	
	The normalization sequence gives a commutative diagram
	\begin{center}
		\small
		\begin{tikzcd}[column sep=small]
			&&
			\displaystyle
			(\pr_X)^*(\pi_X)_*\mathcal{L}_X(D_X) \oplus (\pr_E)^*(\pi_E)_*\mathcal{L}_E(D_E)
			
			\arrow[equal]{d}
			\\
			0
			\arrow{r}
			&
			\pi_*\mathcal{L}(D)
			\arrow{r}
			\arrow{d}[swap]{\phi}
			&
			\pi_*(\nu\times\Id)_*(\nu\times\Id)^*\mathcal{L}(D)
			\arrow{r}{\eta}
			\arrow{d}{\psi}
			&
			\pi_*\bigl(\mathcal{L}(D)|_q\bigr)
			\arrow{r}
			&
			0
			\\
			&
			\pi_*\bigl(\mathcal{L}(D)|_D\bigr)
			\arrow{r}{\sim}
			&
			\pi_*(\nu\times\Id)_*
			(\nu\times\Id)^*\mathcal{L}(D)|_{\nu^{-1}(D)}
			\arrow[equal]{d}
			\\
			&&
			\displaystyle
			(\pr_X)^*(\pi_X)_*
			\bigl(\mathcal{L}_X(D_X)|_{D_X}\bigr) \oplus (\pr_E)^*(\pi_E)_*
			\bigl(\mathcal{L}_E(D_E)|_{D_E}\bigr).
		\end{tikzcd}
	\end{center}
	
	Here $\eta(s_X,s_E)=s_X(q)-s_E(q)$,
	and $\psi$ is restriction to $D_X\sqcup D_E$. The top row is exact
	because $D_X$ and $D_E$ are sufficiently positive and thus $h^1(C_0, L(D)) = 0$ for any $L \in \Pic^{\Vec{d}}(C_0)$. As in \cite[Section 7.1]{Larson2Vogt}, since exact sequences of vector bundles split locally, the rank
	loci of $\phi$ are the corresponding rank loci of $\psi\oplus\eta$.
	More precisely,
	\[
	\left\{
	L\in\Pic^{\Vec{d}}(C_0)
	\ \middle|\
	\rank\phi|_L\leq n
	\right\}
	=
	\left\{
	L\in\Pic^{\Vec{d}}(C_0)
	\ \middle|\
	\rank(\psi\oplus\eta)|_L\leq n+1
	\right\}
	\]
	scheme-theoretically. Then we can apply \cite[Lemma 7.1]{Larson2Vogt} and obtain the following result.
	
	\begin{lemma}\label{lemma: nodal rank decomposition}
		Fix a multidegree $\Vec d=(d_X,d_E)$. Under the identification
		\[
		\textrm{Pic}^{\Vec d}(C_0)
		\cong \textrm{Pic}^{d_X}(X)\times\textrm{Pic}^{d_E}(E),
		\]
		the determinantal locus
		\[
		W_{\Vec d}^r(C_0)
		=\{L:h^0(C_0,L)\geq r+1\}
		\]
		satisfies
		\[
		\begin{aligned}
			W_{\Vec d}^r(C_0)
			={}&
			\bigcup_{\substack{x,y\geq0\\x+y=r+1}}
			\left(\OO_X(q)\otimes W_{d_X-1}^{x-1}(X)\right)
			\times
			\left(\OO_E(q)\otimes W_{d_E-1}^{y-1}(E)\right)
			\\
			&\quad\cup
			\bigcup_{\substack{x,y\geq0\\x+y=r+2}}
			W_{d_X}^{x-1}(X)\times W_{d_E}^{y-1}(E)
		\end{aligned}
		\]
		scheme-theoretically, where
		$W_a^{-1}(Z)=\textrm{Pic}^a(Z)$.
	\end{lemma}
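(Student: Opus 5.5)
The plan is to deduce the statement from \cite[Lemma 7.1]{Larson2Vogt}, applied to the rank condition on $\psi\oplus\eta$ obtained just above. We have already identified $W^r_{\Vec d}(C_0)$, scheme-theoretically, with the locus where $\rank(\psi\oplus\eta)\leq n+1$. Write $\psi=\psi_X\oplus\psi_E$, where $\psi_X$ is restriction to $D_X$ and $\psi_E$ is restriction to $D_E$. Then $\eta$ is the difference of the two evaluation maps $\mathrm{ev}_{X,q}\colon(\pi_X)_*\mathcal L_X(D_X)\to\mathcal L_X(D_X)|_q$ and $\mathrm{ev}_{E,q}$, so $\psi\oplus\eta$ has the block shape treated in that lemma. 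The lemma describes the rank locus of a map of the form
\[
A\oplus B\longrightarrow A'\oplus B'\oplus M,
\]
with $M$ a line bundle, in terms of the rank loci of $A\to A'$, of $B\to B'$, and of the augmented maps $A\to A'\oplus M$ and $B\to B'\oplus M$.

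Step one is to identify these four rank loci geometrically. Since $D_X$ is sufficiently positive, the locus $\{\dim\ker\psi_X\geq x\}$ is the scheme $W^{x-1}_{d_X}(X)$ with its standard determinantal structure. The locus $\{\dim\ker(\psi_X\oplus\mathrm{ev}_{X,q})\geq x\}$ is $\{h^0(L_X(-q))\geq x\}$. Twisting by $\OO_X(-q)$, which is an isomorphism of Picard varieties compatible with the universal bundles, identifies it with $\OO_X(q)\otimes W^{x-1}_{d_X-1}(X)$ scheme-theoretically. The same holds on $E$.

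Step two is the combinatorial bookkeeping. The kernel of $\psi\oplus\eta$ is $H^0(C_0,L)$, and a section is a pair $(s_X,s_E)$ with $s_X(q)=s_E(q)$. The condition $\dim\geq r+1$ can be met in two ways:
\begin{itemize}
\item[(a)] both kernels are large and the gluing condition imposes one genuine condition, giving $h^0(L_X)=x$, $h^0(L_E)=y$ with $x+y=r+2$;
\item[(b)] the gluing condition is vacuous on one piece, which amounts to the pieces $h^0(L_X(-q))\geq x$, $h^0(L_E(-q))\geq y$ with $x+y=r+1$.
\end{itemize}
Feeding the ranks $n_X=N_X+d_X+1-g_X$ and $n_E=N_E+d_E$ into \cite[Lemma 7.1]{Larson2Vogt} and checking that $n+1$ splits as $(n_X-x)+(n_E-y)+1$ (respectively $+0$) recovers exactly the two unions in the statement. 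One also has to allow $x=0$ or $y=0$, which is where the convention $W^{-1}_a=\Pic^a$ enters.

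The main obstacle is the scheme-theoretic identification of the union with the determinantal ideal, as opposed to merely the set-theoretic one. This is precisely the content of \cite[Lemma 7.1]{Larson2Vogt}, which says that the minors ideal of a block map with a rank-one coupling row equals the intersection of the product ideals described above. So I would check carefully that its hypotheses hold here: the blocks must be maps of vector bundles on a product $\Pic^{d_X}(X)\times\Pic^{d_E}(E)$ pulled back from the factors, and the coupling target must be a line bundle. Both hold because the universal bundles were chosen compatibly with $\pr_X$ and $\pr_E$, and $\mathcal L(D)|_q$ is a line bundle. Once this is verified, the lemma follows directly.
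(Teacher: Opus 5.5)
Your proposal is correct and follows the paper's route. You reduce $W^r_{\Vec d}(C_0)$ scheme-theoretically to the rank locus of $\psi\oplus\eta$ and invoke \cite[Lemma 7.1]{Larson2Vogt} for the decomposition into products; your identification of the block rank loci (using $D_X+q$ and the twist by $\OO_X(-q)$) and your rank bookkeeping supply details the paper leaves implicit.
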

	
	The preceding lemma expresses a rank condition on the nodal curve as a union of products. The following identities allow us to intersect such loci scheme-theoretically.
	
	\begin{lemma}\label{lemma:schematic-intersection}
		Let $S$ and $T$ be schemes over a field and let
		\[
		X_1\subseteq Z_1\subseteq S \ \textrm{and} \ X_2\subseteq Z_2\subseteq S 
		\]
		be closed subschemes.
		
		\begin{enumerate}
			\item If $Y\subseteq T$ is a closed subscheme, then we have the scheme-theoretic equality  
			\[
			\bigl[(X_1\times T)\cup(Z_1\times Y)\bigr]
			\cap
			\bigl[(X_2\times T)\cup(Z_2\times Y)\bigr] =
			\bigl[(X_1\cap X_2)\times T\bigr]
			\cup
			\bigl[(Z_1\cap Z_2)\times Y\bigr].
			\]

			\item If $Y_1,Y_2\subseteq T$ are disjoint closed subschemes, then we have the scheme-theoretical equality 
			\[
			\bigl[(X_1\times T)\cup(Z_1\times Y_1)\bigr]
			\cap
			\bigl[(X_2\times T)\cup(Z_2\times Y_2)\bigr] =
			\bigl[(X_1\cap X_2)\times T\bigr]
			\cup
			\bigl[(Z_1\cap X_2)\times Y_1\bigr]
			\cup
			\bigl[(X_1\cap Z_2)\times Y_2\bigr].
			\]
		\end{enumerate}
	\end{lemma}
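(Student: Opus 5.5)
The plan is to reduce both identities to statements about ideal sheaves on $S\times T$, working locally on affine opens. Take $S=\operatorname{Spec}A$, $T=\operatorname{Spec}B$ (field $k$), so $S\times T=\operatorname{Spec}(A\otimes_k B)$. Let $I_1\supseteq J_1$ and $I_2\supseteq J_2$ be the ideals of $X_1\subseteq Z_1$ and $X_2\subseteq Z_2$, and let $K$ (resp.\ $K_1,K_2$) be the ideal(s) of $Y$ (resp.\ $Y_1,Y_2$) in $B$. The conventions are: a scheme-theoretic union corresponds to the intersection of ideals, and a scheme-theoretic intersection to the sum. The ideal of $(X_1\times T)\cup(Z_1\times Y)$ is then $I_1B\cap(J_1B+AK)$, where we write $I_1B$ for $I_1\otimes B$, and so on.

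The key tool is flatness over a field. I would choose a $k$-basis of $B$ adapted to $K$: a basis of $K$ extended by a complement $B=K\oplus K'$ as vector spaces. In this basis every element of $A\otimes B$ has unique coefficients in $A$. An element lies in $I\otimes B+A\otimes K$ exactly when its $K'$-coefficients lie in $I$; it lies in $I\otimes B$ when all its coefficients lie in $I$. Thus every ideal appearing is "coordinatewise": it is the set of elements whose $K$-coordinates lie in one ideal of $A$ and whose $K'$-coordinates lie in another. For instance, $I_1B\cap(J_1B+AK)$ has $K$-coordinates in $I_1$ and $K'$-coordinates in $I_1\cap J_1=J_1$, using $J_1\subseteq I_1$.

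For (1), with both sides coordinatewise in the same basis:
\begin{itemize}
\item the LHS ideal is the sum, whose $K$-coordinates lie in $I_1+I_2$ and $K'$-coordinates in $J_1+J_2$;
\item the RHS ideal is $(I_1+I_2)B\cap((J_1+J_2)B+AK)$, with the same coordinate description, since $J_1+J_2\subseteq I_1+I_2$.
\end{itemize}
This gives equality, and it glues because everything is compatible with localization.

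For (2), $Y_1\cap Y_2=\emptyset$ means $K_1+K_2=B$. Choose a decomposition $B=(K_1\cap K_2)\oplus U_1\oplus U_2\oplus W$ with $K_1=(K_1\cap K_2)\oplus U_1$ and $K_2=(K_1\cap K_2)\oplus U_2$. Since $K_1+K_2=B$, we get $W=0$. The first ideal is then coordinatewise with entries $(I_1,I_1,J_1)$ on $(K_1\cap K_2,U_1,U_2)$; the $U_2$-block lies outside $K_1$, so it gets $J_1$. The second ideal has entries $(I_2,J_2,I_2)$. Their sum is coordinatewise $(I_1+I_2,\,I_1+J_2,\,J_1+I_2)$. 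Computing the RHS ideal as the intersection of $(I_1+I_2)B$, $(J_1+I_2)B+AK_1$, and $(I_1+J_2)B+AK_2$ gives entries
\[(I_1+I_2)\cap A\cap A,\quad (I_1+I_2)\cap A\cap(I_1+J_2),\quad (I_1+I_2)\cap(J_1+I_2)\cap A.\]
These simplify to the same triple, using $J_i\subseteq I_i$.

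The main obstacle is justifying that sums and intersections of these coordinatewise ideals are computed coordinatewise. Sums are immediate. Intersections need the uniqueness of coefficients, which holds because the basis decomposition of $A\otimes_k B$ is a direct sum of copies of $A$. One should also check that three subspaces need a compatible splitting, which holds over a field. A secondary point is that the three-piece splitting in (2) exists, which is elementary linear algebra.
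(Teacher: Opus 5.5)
Your proof is correct. For part (1) it is essentially the paper's argument in different packaging. In your notation, the paper computes the ideal of $(X_j\times T)\cup(Z_j\times Y)$ as $I_jB\cap(J_jB+AK)$. Since $J_jB\subseteq I_jB$, the modular law rewrites this as $J_jB+(I_jB\cap AK)$, and the identity $I_jB\cap AK=I_j\otimes K$ turns it into $J_jB+I_j\otimes K$. The paper then compares sums on both sides. Your basis of $B$ adapted to $K$ proves that identity and the modular-law step at once, by making every ideal in sight \emph{entrywise}.

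Part (2) is where the routes genuinely differ. The paper never splits $B$ compatibly with two subspaces. Instead it uses that $T\setminus|Y_1|$ and $T\setminus|Y_2|$ cover $T$, so over each piece only one $Y_i$ survives. There it is back to a one-$Y$ computation of the same shape as in (1), with $X_2\times V$ replacing the second union.

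You instead stay on a single affine chart and encode disjointness as $K_1+K_2=B$, i.e. the vanishing of the block $W$ in $B=(K_1\cap K_2)\oplus U_1\oplus U_2\oplus W$. Your triples $(I_1,I_1,J_1)$ and $(I_2,J_2,I_2)$, their sum, and the three right-hand ideals are all computed correctly. Your version shows exactly where disjointness enters. On a nonzero $W$-block the left side would have entries in $J_1+J_2$, while the right side would have entries in $(J_1+I_2)\cap(I_1+J_2)$. The latter is in general strictly larger: for $A=k[x,y]$, $J_1=J_2=0$, $I_1=(x)$, $I_2=(y)$ it contains $xy$. The paper's version is shorter and reuses the computation of (1) geometrically.

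One small correction to your last paragraph. It is not true over a field that any three subspaces admit a compatible splitting; three distinct lines in a plane do not. What you actually need, and what you explicitly construct, is a basis compatible with the two subspaces $K_1$ and $K_2$, and hence with $K_1\cap K_2$ and $K_1+K_2$. Such a basis always exists: take a basis of $K_1\cap K_2$, extend it separately to bases of $K_1$ and $K_2$, and note the union is a basis of $K_1+K_2=B$.
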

	
	\begin{proof}
		The assertions are local, and hence can be checked on products of affine open subsets. We may therefore assume that $S=\operatorname{Spec}A$ and $T=\operatorname{Spec}B$. Let $I_j,K_j\subseteq A$ be the ideals of $X_j,Z_j$,
		respectively, so that $X_j\subseteq Z_j$ implies $K_j\subseteq I_j$.

		Let $J\subseteq B$ be the ideal of $Y\subseteq T$.
		The ideals of the relevant products are
		\[
		I(X_j\times T)=I_j\otimes B \qquad \textrm{and} \qquad I(Z_j\times Y)
		=
		(K_j\otimes B)+(A\otimes J).
		\]
		Hence we have
		\begin{align*}
			I\bigl((X_j\times T)\cup(Z_j\times Y)\bigr) &= (I_j\otimes B) \cap \bigl((K_j\otimes B)+(A\otimes J)\bigr) \\
			&=(K_j\otimes B) + \bigl((I_j\otimes B)\cap(A\otimes J)\bigr).
		\end{align*}
		
		Using $(I_j\otimes B)\cap(A\otimes J) = I_j\otimes J$, we obtain
		\[
		I\bigl((X_j\times T)\cup(Z_j\times Y)\bigr)
		=
		(K_j\otimes B)+(I_j\otimes J).
		\]
		Consequently, the ideal of the left-hand side is
		\[
		\bigl((K_1\otimes B)+(I_1\otimes J)\bigr)
		+
		\bigl((K_2\otimes B)+(I_2\otimes J)\bigr)
		=
		\bigl((K_1+K_2)\otimes B\bigr)
		+
		\bigl((I_1+I_2)\otimes J\bigr).
		\]
		
		On the other hand, the ideal of $(X_1\cap X_2)\times T$ is $(I_1+I_2)\otimes B$, whereas the ideal of $(Z_1\cap Z_2)\times Y$ is $\bigl((K_1+K_2)\otimes B\bigr)
		+
		(A\otimes J)$. Hence the ideal of the right-hand side is
		\[
		\begin{aligned}
			&\bigl((I_1+I_2)\otimes B\bigr)
			\cap
			\left(
			\bigl((K_1+K_2)\otimes B\bigr)
			+
			(A\otimes J)
			\right).
		\end{aligned}
		\]
		Since $K_1+K_2\subseteq I_1+I_2$, we have the equality of ideals
		\[
		\begin{aligned}
			\bigl((I_1+I_2)\otimes B\bigr)
			\cap
			\left(
			\bigl((K_1+K_2)\otimes B\bigr)
			+
			(A\otimes J)
			\right)
			&=
			\bigl((K_1+K_2)\otimes B\bigr)
			+
			\left(
			\bigl((I_1+I_2)\otimes B\bigr)
			\cap(A\otimes J)
			\right)\\
			&=
			\bigl((K_1+K_2)\otimes B\bigr)
			+
			\bigl((I_1+I_2)\otimes J\bigr).
		\end{aligned}
		\]
		This is the ideal of the left-hand side, proving the first assertion.
		
		For the second assertion, the open subsets $T\setminus |Y_1|$ and $T\setminus |Y_2|$ cover $T$. It is therefore enough to verify the equality over these two
		open subsets.
		
		Let $V=\textrm{Spec}(C)\subseteq T\setminus |Y_2|$ be an affine open subset, and let $J_{1,V}\subseteq C$ be the ideal of
		$Y_1\cap V$. Since $Y_2\cap V=\emptyset$, over $S\times V$ the
		left-hand side becomes
		\[
		\bigl[(X_1\times V)\cup(Z_1\times(Y_1\cap V))\bigr]
		\cap(X_2\times V).
		\]
		Its ideal in $A\otimes C$ is
		\[
		\left(
		(K_1\otimes C)
		+
		(I_1\otimes J_{1,V})
		\right)
		+
		(I_2\otimes C) =
		\bigl((K_1+I_2)\otimes C\bigr)
		+
		(I_1\otimes J_{1,V}).
		\]
		
		Over $S\times V$, the right-hand side becomes
		\[
		\bigl[(X_1\cap X_2)\times V\bigr]
		\cup
		\bigl[(Z_1\cap X_2)\times(Y_1\cap V)\bigr].
		\]
		Its ideal is
		\[
		\bigl((I_1+I_2)\otimes C\bigr)
		\cap
		\left(
		\bigl((K_1+I_2)\otimes C\bigr)
		+
		(A\otimes J_{1,V})
		\right).
		\]
		Since $K_1+I_2\subseteq I_1+I_2$, the same ideal identity we used before gives
		\[
		\begin{aligned}
			\bigl((I_1+I_2)\otimes C\bigr)
			\cap
			\left(
			\bigl((K_1+I_2)\otimes C\bigr)
			+
			(A\otimes J_{1,V})
			\right)
			& =
			\bigl((K_1+I_2)\otimes C\bigr)
			+
			\bigl((I_1+I_2)\otimes J_{1,V}\bigr)\\
			& =
			\bigl((K_1+I_2)\otimes C\bigr)
			+
			(I_1\otimes J_{1,V}),
		\end{aligned}
		\]
		where the last equality follows from $I_2\otimes J_{1,V}
		\subseteq I_2\otimes C$. Thus the two sides agree over $T\setminus |Y_2|$. An analogue argument proves that the two sides agree over $T\setminus |Y_1|$. Since these open subsets cover $T$, the second assertion follows.
	\end{proof}
	
	The previous two lemmas will be used extensively in the next proposition, which proves reducedness of the pointed Hurwitz--Brill--Noether loci via reducedness of the central fiber (and hence scheme-theoretic regeneration). 
	
	\begin{prop} \label{prop: reduced-intersection-right-dim}
		Let $[\pi\colon C \rightarrow \PP^1, p]$ be a generic element of $\mathcal{H}^{\textrm{tot}}_{g,k}$ and let $\Vec{e}$ be a length $k$ vector with non-negative entries. Then the intersection of the rank conditions
		$$\{L\in \textrm{Pic}^d(C) \ \mid \ h^0(C, L-(mk+i)p) \geq h^0(\mathbb{P}^1, \OO(\sigma^i\Vec{e})(-m)) \ \text{ for } m\geq 0, \ \textrm{and} \ 0\leq i \leq k-1\}$$
		is reduced, with all components of dimension $g-u(\Vec{e})- \inv(\Vec{e})$.
	\end{prop}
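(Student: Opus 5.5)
We argue by induction on $g$, following the scheme-theoretic regeneration strategy of \cite[Section~7]{Larson2Vogt}, with the degeneration of Setup~\ref{setup: smoothing}. Let $\mathcal{Z}\subseteq\mathrm{Pic}^d(\mathcal{C}/B)$ be the relative determinantal scheme cut out by all the rank conditions $h^0(L(-(mk+i)p))\geq h^0(\OO(\sigma^i\vec e)(-m))$. On the generic fiber these are exactly the conditions in the statement. On the special fiber they are the corresponding rank conditions on $C_0=X\cup_qE$, taken over the relevant multidegrees $(d_X,d_E)$, where we twist so that the $E$-degree absorbs the vanishing at $p$.

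By Theorem~\ref{theorem: HBN regeneration} (or directly, since each determinantal locus has codimension at most its expected codimension), every component of $\mathcal{Z}$ has dimension at least $\dim\mathrm{Pic}(\mathcal{C}/B)-u(\vec e)-\inv(\vec e)$. The plan is to show that the special fiber $\mathcal{Z}_0$ is reduced and of pure dimension $g-u(\vec e)-\inv(\vec e)$. Since $\mathcal{Z}_0$ is a Cartier divisor $t=0$ in $\mathcal{Z}$, this gives the conclusion. Components of $\mathcal{Z}$ supported in the special fiber would have dimension at most $g-u-\inv<$ the lower bound, so they cannot exist, and $\mathcal{Z}$ is flat over $B$ near $t=0$. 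Reducedness of the special fiber then propagates to the generic fiber, because the nonreduced locus is closed and the reduced locus is open in a flat family. Purity of dimension of the generic fiber follows from the fiber dimension together with upper semicontinuity. Finally, genericity of $[\pi,p]$ lets us pass from the generic fiber to a general point of $\cHTot$.

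The core computation is the special fiber. First, apply Lemma~\ref{lemma: nodal rank decomposition} to each rank condition $h^0(L(-(mk+i)p))\geq N_{m,i}$. This writes each condition scheme-theoretically as a union of products $(\text{locus on }X)\times(\text{locus on }E)$. On $E$, the loci that occur are either all of $\mathrm{Pic}(E)$ or the reduced points $\OO_E(jp-jq)$. These points are pairwise distinct for $0\leq j\leq k-1$ because $p-q$ has order exactly $k$, and they are reduced because they are degree-$0$ or degree-$1$ Brill--Noether loci on an elliptic curve. On $X$, each condition gives either a pointed condition of the form $\OO_X(q)\otimes(\text{rank condition at }q)$ or a weaker condition.

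Second, intersect all these conditions using Lemma~\ref{lemma:schematic-intersection}. Part (1) handles the generic $\mathrm{Pic}(E)$ factor, and part (2) handles the disjoint points $\OO_E(jp-jq)$. The outcome should be exactly
\[\Bigl(\OO_X(q)\otimes \mathcal{Z}^X(\vec e)\Bigr)\times\mathrm{Pic}^0(E)\ \cup\ \bigcup_{t_j(\vec e)\neq\vec e}\Bigl(\OO_X(q)\otimes\mathcal{Z}^X(t_j(\vec e))\Bigr)\times\{\OO_E(jp-jq)\},\]
where $\mathcal{Z}^X(\cdot)$ denotes the determinantal scheme of the statement for the genus $g-1$ cover $[\pi_X,q]$. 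By the induction hypothesis, each $\mathcal{Z}^X$ is reduced of pure dimension $g-1-u-\inv$, and for $t_j(\vec e)$ of dimension $g-1-u-\inv+1$ by Proposition~\ref{prop: ramification game}. Hence each piece is reduced of dimension $g-u(\vec e)-\inv(\vec e)$. A union of reduced closed subschemes whose ideals intersect is reduced, so $\mathcal{Z}_0$ is reduced of the right pure dimension. The base case $g=1$ (or $g=0$) is a direct check on $\mathrm{Pic}(E)$.

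The main obstacle is the second step: verifying that the conditions on $X$ coming from the various $(m,i)$ assemble into exactly the rank conditions defining $\mathcal{Z}^X(\vec e)$ and $\mathcal{Z}^X(t_j(\vec e))$, with no extra nonreduced or embedded pieces. In particular, the conditions must pair correctly so that the $(Z_1\cap X_2)\times Y_1$ terms of Lemma~\ref{lemma:schematic-intersection}(2) become the $t_j$-loci. This is the scheme-theoretic version of the combinatorics in Lemma~\ref{lemma: ramifications}. To carry it out, we will track how the numbers $N_{m,i}$ shift under the twist by $\OO_E(jp-jq)$ (exactly the $\delta^{a,j}$ bookkeeping), and check that the superfluous conditions are implied scheme-theoretically on $X$. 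For this last point, we would use that $X$-conditions at different levels are nested determinantal conditions, so redundant ones contain the others' ideals.
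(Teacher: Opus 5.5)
Your overall strategy matches the paper's. You induct on $g$ through the elliptic-tail smoothing, use Lemma~\ref{lemma: nodal rank decomposition} and Lemma~\ref{lemma:schematic-intersection} to identify the special fiber $\mathcal Z_0$ with the reduced union of $\bigl(\OO_X(q)\otimes\overline W^{\vec e}(\pi_X,q)\bigr)\times\Pic^0(E)$ and the $t_i$-pieces, and then lift to the generic fiber. The gap is in the lifting step.

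You claim that every irreducible component of the determinantal scheme $\mathcal Z$ has dimension at least $\dim\Pic(\mathcal C/B)-u(\vec e)-\inv(\vec e)$. You use this both to exclude components lying over $t=0$ and to get purity, and neither justification works.
\begin{itemize}
\item Theorem~\ref{theorem: HBN regeneration} bounds only the components of $\mathcal W^{\vec e}(\mathfrak{\pi},p)$, the locus with vanishing \emph{exactly} $\mathbf a(\vec e)$. A priori $\mathcal Z$ has further components, for instance the extra components $\Delta$ of Proposition~\ref{prop: closure intersection}. These are loci with vanishing $\mathbf a(\vec f)>\mathbf a(\vec e)$ and have strictly smaller dimension. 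Ruling them out is part of the statement, so your purity step assumes what is to be proved.
\item The ``directly'' alternative also fails. Codimension is subadditive, but the expected codimensions of the individual rank conditions add up to far more than $u+\inv$. For $k=2$ and $\vec e=(3,0)$, the nontrivial conditions on $L(-4p)$, $L(-5p)$, $L(-6p)$ have expected codimensions $2,1,2$, while $u(\vec e)+\inv(\vec e)=2$. The bound $u+\inv$ comes only from the nested/flag structure of the exact-vanishing locus.
\end{itemize}

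The repair is what the paper does.
\begin{itemize}
\item Apply Theorem~\ref{theorem: HBN regeneration} only to the special fiber. Each piece of $\mathcal Z_0$ lies in the special fiber of $\mathcal W^{\vec e}(\mathfrak{\pi},p)$. Every component of that locus has dimension at least $g+1-u-\inv$, so none is contained in $\{t=0\}$. Hence every piece lies in the closure of the generic fiber $\mathcal Z_\eta$.
\item So $\mathcal Z_0$ and the special fiber of $\overline{\mathcal Z_\eta}$ agree on supports. Since $\mathcal Z_0$ is reduced, they agree as schemes. The ideal $T$ of $\overline{\mathcal Z_\eta}$ then satisfies $T=tT$, so $T=0$ by Nakayama. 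This gives flatness, and with it reducedness of the generic fiber.
\item Purity must then come from reducedness of $\mathcal Z_0$, not from a lower bound. Suppose a component $Y$ of $\mathcal Z$ had dimension at most $g-u-\inv$. At a generic point $\xi$ of $Y\cap\{t=0\}$, the ring $A=\OO_{\mathcal Z,\xi}$ has $t$ regular and $A/tA$ reduced of positive dimension. Hence $\operatorname{depth}A\geq 2>1=\dim A/\mathfrak p_Y$. This is impossible because $\mathfrak p_Y$ is an associated prime. The paper phrases this as a multiplicity argument along $\Delta_0$.
\end{itemize}

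Separately, your second step is only outlined, and the point that needs proof is specific. Over $\OO_E(i(p-q))$ only $d_E\in\{mk+i,mk+i+1\}$ matter. So the condition on $M=L_X(-q)$ at twist $mk+i$ is replaced by conditions at the neighbouring twists. You must show that the $t_i(\vec e)$-bound $\max\{r_{i-1,m}-1,\,r_{i+1,m}\}$ at that twist, where $r_{j,m}=h^0(\PP^1,\OO(\sigma^j\vec e)(-m))$, follows scheme-theoretically from those neighbouring conditions. This is not merely a matter of discarding superfluous conditions.
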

	
	\begin{proof} We will prove this by induction on $g$, with the case of genus $1$ proven in \cite[Lemma 7.3]{Larson2Vogt}. We assume the statement is true up to genus $g-1$ and every non-negative vector, and prove it for genus $g$. 
		
		Since reduceness is an open condition, we will consider the $1$-dimensional family of curves in Setup~\ref{setup: smoothing} and prove it for the central fiber. Let
		\[
		\rho\colon
		\mathcal{C}\times_B\operatorname{Pic}^d(\mathcal{C}/B)
		\longrightarrow
		\operatorname{Pic}^d(\mathcal{C}/B)
		\]
		be the projection, and let $\mathcal{L}$ be a universal line bundle. Choose a relative effective Cartier divisor
		$D\subseteq\mathcal{C}$, supported in the smooth locus, such that, on the central fiber $\mathcal{C}_0=X\cup_q E$, both $D|_X$ and $D|_E$ have sufficiently large degree. Since only finitely many values of $m$ impose nontrivial conditions, we may choose $D$ so that
		\[
		R^1\rho_*
		\left(
		\mathcal{L}(-(mk+i)p)(D)
		\right)
		=0
		\]
		for every relevant pair $(i,m)$, using $D$ and $p$ to also denote the pullbacks of $D$ and $p$ from $\mathcal{C}$ to $\mathcal{C}\times_B\operatorname{Pic}^d(\mathcal{C}/B)$.

		The restriction sequence induces a morphism of vector bundles
		\[
		\varphi_{i,m}\colon
		\rho_*
		\left(
		\mathcal{L}(-(mk+i)p)(D)
		\right)
		\longrightarrow
		\rho_*
		\left(
		\left.
		\mathcal{L}(-(mk+i)p)(D)
		\right|_D
		\right).
		\]
		For a point $[L]\in\operatorname{Pic}^d(\mathcal{C}/B)$ lying over $b\in B$, we have
		\[
		\ker\left(
		\left.\varphi_{i,m}\right|_{[L]}
		\right)
		\cong
		H^0\left(
		C_b,
		L(-(mk+i)p_b)
		\right).
		\]
		Consequently, the condition
		\[
		h^0\left(
		C_b,
		L(-(mk+i)p_b)
		\right)
		\geq
		h^0\left(
		\PP^1,
		\OO(\sigma^i\Vec{e})(-m)
		\right)
		\]
		is equivalent to the rank condition
		\[
		\operatorname{rk}\left(
		\left.\varphi_{i,m}\right|_{[L]}
		\right)
		\leq
		\operatorname{rk}
		\rho_*
		\left(
		\mathcal{L}(-(mk+i)p)(D)
		\right)
		-
		h^0\left(
		\PP^1,
		\OO(\sigma^i\Vec{e})(-m)
		\right).
		\]
		We consider the intersection of all rank conditions above, for each relevant pair $(i,m)$. Our goal is to prove that on the central fiber these loci cut out exactly the following locus, with the reduced scheme structure: 
		
		\[\left(\mathcal{O}_X(q)\otimes \overline{W}^{\Vec{e}}(\pi_{|X},q)\right)\times \mathrm{Pic}^0(E) \cup  \bigcup_{t_i(\Vec{e})\neq \Vec{e}} \left(\OO_X(q) \otimes \overline{W}^{t_i(\Vec{e})}(\pi_{|X},q) \right) \times \{ \mathcal{O}_E(i\cdot p -i\cdot q)\} \]
		where the union is over $0\leq i \leq k-1$ satisfying $t_i(\Vec{e}) \neq \Vec{e}$.

		For simplicity of notation, we define
		
		\[A(d_E, x) \coloneqq \{L_X \in \textrm{Pic}^{g+r}(X) \ \mid \ h^0(X, L_X(-d_E\cdot q))\geq x\},\]
		
		\[B(d_E,y) \coloneqq \{L_E \in \textrm{Pic}^{0}(E)  \ \mid \ h^0(E, L_E(d_E\cdot q-(mk+i)\cdot p))\geq y\},\]
		
		and moreover we denote by $r_{i,m} \coloneqq h^0(\PP^1, \OO(\sigma^i\Vec{e})(-m))$.
		Using Lemma~\ref{lemma: nodal rank decomposition}, as well as~\cite[Lemma 7.1]{Larson2Vogt}, the rank condition for a relevant pair $(i,m)$ cuts out the locus 
		
		\[ \bigcup_{x+y =r_{i,m}}\{L_X \ \mid \ h^0(X, L_X(-q))\geq x\} \times \{L_E \ \mid \ h^0(E, L_E(-(mk+i)\cdot p - q))\geq y\}\]
		\[ \bigcup\]
		\[ \bigcup_{x+y = r_{i,m}+1}\{L_X \ \mid \ h^0(X, L_X)\geq x\} \times \{L_E \ \mid \ h^0(E, L_E(-(mk+i)\cdot p))\geq y\}\]
		in the central fiber $\Pic^{d_X}(X) \times \Pic^{d_E}(E)$, and we need to take the intersection of these loci over all possible multidegrees $d_X + d_E = g+r$. By twisting the line bundles in the central fiber, we can view all these conditions in $\Pic^{g+r}(X) \times \Pic^{0}(E)$, where the locus above becomes
		\[ \bigcup_{x+y =r_{i,m}} A(d_E+1, x) \times B(d_E-1, y) \cup \bigcup_{x+y = r_{i,m}+1}A(d_E,x) \times B(d_E,y).\]
		Next, we remark that, as we vary $d_E$ in $\mathbb{Z}$, many conditions in this union are redundant. 
		
		First, if the degree $d_E -1-mk-i$ is negative, then scheme-theoretically we have the equality
		\[ \bigcup_{x+y =r_{i,m}} A(d_E+1, x)\times B(d_E-1,y) = A(d_E+1,r_{i,m}) \times \Pic^0(E).\]
		Indeed, if $y > 0$ then $B(d_E-1, y) = \emptyset$, while if $y<0$, we have the scheme-theoretic inclusion 
		\[ A(d_E+1, x+1)\times \Pic^0(E) \subseteq A(d_E+1, x)\times \Pic^0(E). \]
		Similarly, when $d_E-1-mk-i \geq 1$, we always have the equality 
		\[ h^0\bigl(E, L_E((d_E-1)\cdot q-(mk+i)\cdot p)\bigr) =  d_E-1-mk-i.\]
		A similar argument as above gives in this case:
		\[ \bigcup_{x+y = r_{i,m}} A(d_E+1, x)\times B(d_E-1,y) = A(d_E+1, r_{i,m} + 1 +mk+i - d_E ) \times \Pic^0(E).\]
		
		Lastly, when $d_E-1-mk-i = 0$ a degree $0$ line bundle on $E$ can have $h^0(E,-)$ equal to either $0$ or $1$. In particular,it is sufficient to consider the cases $y=0$ and $y = 1$ in the union, i.e. 
		\[ \bigcup_{x+y = r_{i,m}} A(d_E+1, x)\times B(d_E-1,y) = A(d_E+1, r_{i,m})\times \Pic^0(E) \cup A(d_E+1, r_{i,m}-1) \times \{\OO_E(i\cdot p - i\cdot q)\}. \]
		
		A similar argument works for the union 
		\[ \bigcup_{x+y = r_{i,m}+1} A(d_E, x)\times B(d_E,y),\]
		which is scheme theoretically equal to 
		\begin{align*}
			& A(d_E, r_{i,m}+1 ) \times \Pic^0(E), \ \text{ if }d_E - mk-i < 0; \ \\
			& A(d_E, r_{i,m}+1+mk+i -d_E) \times \Pic^0(E),
			\text{ if } d_E - mk-i \geq 1; \text{ or } \\
			& A(d_E, r_{i,m}+1) \times \Pic^0(E) \cup A(d_E, r_{i,m}) \times \{\OO_E(i\cdot p - i\cdot q)\}, \text{ if } d_E - mk-i = 0.
		\end{align*}
		
		Indeed, we compute the union as $d_E$ varies in $\mathbb{Z}$.

		If $d_E < mk+i$, the union is scheme-theoretically just 
		\[ A(d_E+1, r_{i,m}) \times \Pic^0(E) \cup A(d_E, r_{i,m}+1) \times \Pic^0(E) = A(d_E+1, r_{i,m}) \times \Pic^0(E). \]
		
		If $d_E = mk+i$, the scheme-theoretical union is 
		\[ A(d_E+1, r_{i,m}) \times \Pic^0(E) \cup A(d_E, r_{i,m}+1) \times \Pic^0(E) \cup A(d_E,r_{i,m})\times \{\OO_E(i\cdot p - i\cdot q)\}.\]
		As before, the second term is contained scheme theoretically in the first term, and hence the union is
		\[ A(d_E+1, r_{i,m}) \times \Pic^0(E) \cup A(d_E,r_{i,m})\times \{\OO_E(i\cdot p - i\cdot q)\}.\]
		
		If $d_E = mk+i+1$, the union is 
		\[ A(d_E+1, r_{i,m})\times \Pic^0(E) \cup A(d_E+1, r_{i,m}-1) \times \{\OO_E(i\cdot p - i\cdot q)\} \cup A(d_E, r_{i,m}) \times \Pic^0(E). \]
		The first term is contained in the third and hence the union is 
		\[A(d_E+1, r_{i,m}-1) \times \{\OO_E(i\cdot p - i\cdot q)\} \cup A(d_E, r_{i,m}) \times \Pic^0(E). \]
		
		If $d_E > mk+i+1$, the union is
		\[A(d_E+1, r_{i,m} + 1 +mk+i - d_E ) \times \Pic^0(E) \cup A(d_E, r_{i,m}+1+mk+i -d_E) \times \Pic^0(E).\]
		The first term is contained in the second and hence this is just 
		\[A(d_E, r_{i,m}+1+mk+i -d_E) \times \Pic^0(E).\]
		
		We are now ready to consider the intersection of these rank degeneracy condition as the degree $d_E$ varies over $\mathbb{Z}$. We first make two observations, that 
		\[\text{for all $d_E< mk+1$, we have } A(mk+i, r_{i,m}) \subseteq A(d_E+1, r_{i,m}), \ \textrm{and} \] 
		\[\text{for all $d_E>mk+i+1$, we have } A(mk+i+2, r_{i,m}-1) \subseteq A(d_E, r_{i,m}+1+mk+i-d_E). \]
		In particular, the intersection of these rank conditions as $d_E$ varies over $\mathbb{Z}$ gives the same scheme as the intersection over $d_E$ between $mk+i-1$ and $mk+i+2$. 
		
		Furthermore, we have the scheme-theoretic inclusions
		\[
		A(mk+i+1,r_{i,m})
		\subseteq
		A(mk+i,r_{i,m})
		\]
		and
		\[
		A(mk+i+1,r_{i,m})
		\subseteq
		A(mk+i+2,r_{i,m}-1).
		\]
		Consequently, the first and fourth loci are redundant and it is enough to consider \[d_E \in \{mk+i,mk+i+1\}\] in the intersection.
		
		Using the general fact of scheme-theoretic distributivity of
		intersection across unions in Lemma~\ref{lemma:schematic-intersection}, we obtain
		\begin{gather*}
			\bigcap_{d_E\in\mathbb Z}
			\Bigg[
			\bigcup_{x+y=r_{i,m}}
			A(d_E+1,x)\times B(d_E-1,y)
			\qquad\cup
			\bigcup_{x+y=r_{i,m}+1}
			A(d_E,x)\times B(d_E,y)
			\Bigg]   \\
			=
			\Big(A(mk+i+1,r_{i,m})\times\Pic^0(E) \Big)
			\cup
			\Big(
			A(mk+i,r_{i,m})
			\cap
			A(mk+i+2,r_{i,m}-1)
			\Big)\times
			\{\OO_E(i(p-q))\}.
		\end{gather*}
		
		We now intersect over all relevant pairs $(i,m)$. Since $p-q$ has
		exact order $k$, the points
		\[
		\OO_E(i(p-q)),
		\qquad 0\leq i\leq k-1,
		\]
		are pairwise distinct. Therefore, a repeated application of Lemma~\ref{lemma:schematic-intersection} gives
		\begin{align*}
			&
			\bigcap_{\substack{m\geq0\\0\leq i\leq k-1}}
			\bigcap_{d_E\in\mathbb Z}
			\Bigg[
			\bigcup_{x+y=r_{i,m}}
			A(d_E+1,x)\times B(d_E-1,y) \quad
			\cup \quad
			\bigcup_{x+y=r_{i,m}+1}
			A(d_E,x)\times B(d_E,y)
			\Bigg]\\
			&=
			\left(
			\bigcap_{\substack{m\geq0\\0\leq i\leq k-1}}
			A(mk+i+1,r_{i,m})
			\right)
			\times\Pic^0(E)\quad\cup\\
			&
			\scalebox{0.85}{$\displaystyle
				\bigcup_{i=0}^{k-1}
				\Bigg[
				\left(
				\bigcap_{m\geq0}
				\big(
				A(mk+i,r_{i,m})
				\cap
				A(mk+i+2,r_{i,m}-1)
				\big)
				\right)\cap
				\Bigg(
				\bigcap_{\substack{m\geq0\\0\leq j\leq k-1\\j\neq i}}
				A(mk+j+1,r_{j,m})
				\Bigg)
				\Bigg]
				\times
				\{\OO_E(i(p-q))\}.
				$}
		\end{align*}
		If we replace $L_X$ by $L_X(-q)$, the first intersection is exactly
		the genus $g-1$ locus defined by the rank conditions associated to
		$\Vec e$. Indeed,
		\[
		h^0\bigl(X,L_X-(mk+i+1)q\bigr)
		=
		h^0\bigl(X,L_X(-q)-(mk+i)q\bigr),
		\]
		and by the induction hypothesis
		\[ \bigcap_{\substack{m\geq0\\0\leq i\leq k-1}}
		A(mk+i+1,r_{i,m}) = \OO_X(q) \otimes \overline{W}^{\Vec{e}}(\pi_X\colon X \rightarrow P_X, q), \]
		with the reduced scheme structure. 
		
		For each fixed $i$, we claim that the intersection appearing in
		square brackets is
		\[
		\OO_X(q)\otimes
		\overline{W}^{t_i(\Vec e)}(\pi_X,q).
		\]
		To identify its defining conditions, put $M=L_X(-q)$. The displayed
		intersection imposes
		\[
		h^0\bigl(X,M(-(mk+j)q)\bigr)\geq r_{j,m} \text{ for } j\neq i,
		\]
		together with
		\[
		h^0\bigl(X,M(-(mk+i-1)q)\bigr)\geq r_{i,m},
		\]
		and
		\[
		h^0\bigl(X,M(-(mk+i+1)q)\bigr)\geq r_{i,m}-1.
		\]
		We first explain which conditions at the omitted twists $mk+i$
		are implicit in these inequalities. We will use the conventions
		\[
		r_{-1,m}=r_{k-1,m-1},
		\qquad
		r_{k,m}=r_{0,m+1}.
		\]
		Fix $m$ with $mk+i>0$. At the preceding twist, the intersection
		imposes both the unchanged condition with bound $r_{i-1,m}$
		and the additional condition with bound $r_{i,m}$, i.e.
		\[ h^0\bigl(X,M(-(mk+i-1)q)\bigr)\geq r_{i,m}
		\ \textrm{ and } \ h^0\bigl(X,M(-(mk+i-1)q)\bigr)\geq r_{i-1,m}. \]
		At the following twist, it imposes the conditions
		\[ h^0\bigl(X,M(-(mk+i+1)q)\bigr)\geq r_{i,m}-1 \ \textrm{ and } \ h^0\bigl(X,M(-(mk+i+1)q)\bigr)\geq r_{i+1,m}.\]
		Since
		\[
		r_{i-1,m}\geq r_{i,m},
		\qquad
		r_{i+1,m}\geq r_{i,m}-1,
		\]
		the two additional conditions displayed above are redundant.
		
		Moreover, removing one point decreases the dimension of sections
		by at most one, while adding one point cannot decrease it.
		Consequently, the conditions at the preceding and following
		twists imply
		\[
		h^0\bigl(X,M(-(mk+i)q)\bigr)
		\geq
		\max\{r_{i-1,m}-1,\ r_{i+1,m}\}.
		\]
		Equivalently, in the original notation, the implicit conditions are
		\[
		h^0\bigl(X,L_X-(mk+i+1)q\bigr)
		\geq
		\max\{r_{i-1,m}-1,\ r_{i+1,m}\}.
		\]
		These implications hold scheme-theoretically for the determinantal
		loci. Indeed, the restriction matrices for consecutive twists
		are obtained from one another by adding or deleting one column.
		The corresponding inclusions of determinantal ideals follow from
		containment of minors and expansion along this column.
		Thus adjoining the implicit conditions does not change the
		scheme structure of the intersection.
		
		We will show that the intersection 
		\[ \Bigg[
		\left(
		\bigcap_{m\geq0}
		\big(
		A(mk+i,r_{i,m})
		\cap
		A(mk+i+2,r_{i,m}-1)
		\big)
		\right)\cap
		\Bigg(
		\bigcap_{\substack{m\geq0\\0\leq j\leq k-1\\j\neq i}}
		A(mk+j+1,r_{j,m})
		\Bigg)
		\Bigg]\]
		cuts out the locus $ \OO_X(q) \otimes \overline{W}^{t_i(\Vec{e})}(\pi_X,q)$. For this, we will show that the inequalities on the global sections of $M$ are exactly those imposed by the vanishing sequence $\textbf{a}(t_i(\Vec e))$. Notice that we have 
		\[
		r_{j,m}
		=
		\#\{a\in\mathbf a(\Vec e)\mid a\geq mk+j\},
		\]
		and consequently we obtain
		$$     h^0\left(\PP^1, \OO(\sigma^jt_i(\Vec{e})(-m)\right) = \#\{a\in\mathbf a(t_i(\Vec e))\mid a\geq mk+j\}= r_{j,m} \ \textrm{when} \ j \neq i.$$ 
		
		Moreover, we have 
		\[
		r_{i-1,m}-r_{i,m}
		=
		\begin{cases}
			1,& mk+i-1\in\mathbf a(\Vec e),\\
			0,& mk+i-1\notin\mathbf a(\Vec e),
		\end{cases}
		\]
		and
		\[
		r_{i,m}-r_{i+1,m}
		=
		\begin{cases}
			1,& mk+i\in\mathbf a(\Vec e),\\
			0,& mk+i\notin\mathbf a(\Vec e).
		\end{cases}
		\]
		It follows that
		\[
		\max\{r_{i-1,m}-1,\ r_{i+1,m}\}
		=
		\begin{cases}
			r_{i,m}-1,
			&
			\begin{array}{l}
				mk+i\in\mathbf a(\Vec e)\ \textrm{and} \
				mk+i-1\notin\mathbf a(\Vec e),
			\end{array}
			\\[2mm]
			r_{i,m},&\text{otherwise}.
		\end{cases}
		\]
		Thus the bound at $mk+i$ decreases by one precisely when
		$mk+i$ is an entry of $\mathbf a(\Vec e)$ and its predecessor
		is not. In other words, we have the equality 
		$$     h^0\left(\PP^1, \OO(\sigma^it_i(\Vec{e})(-m)\right) = \#\{a\in\mathbf a(t_i(\Vec e))\mid a\geq mk+i\}= \max\{r_{i-1,m}-1,\ r_{i+1,m}\}.$$ 
		
		We have therefore obtained, scheme-theoretically, exactly the
		conditions
		\[
		h^0\bigl(X,M(-(mk+j)q)\bigr)
		\geq
		h^0\bigl(
		\PP^1,\OO(\sigma^j t_i(\Vec e))(-m)
		\bigr),
		\qquad m\geq0,\quad 0\leq j\leq k-1.
		\]
		
		In particular, using the induction hypothesis, the locus in $\Pic^{g+r}(X) \times \Pic^0(E)$ respecting all rank conditions is scheme-theoretically 
		\[ \OO_X(q) \otimes \overline{W}^{\Vec{e}}(\pi_X,q) \times \Pic^0(E) \cup \bigcup_{i=0}^{k-1} \OO_X(q) \otimes \overline{W}^{t_i(\Vec{e})}(\pi_X,q) \times \{\OO_E(i(p-q))\}  \]
		with the reduced scheme structure. This is scheme-theoretically the same as 
		\[ \OO_X(q) \otimes \overline{W}^{\Vec{e}}(\pi_X,q) \times \Pic^0(E) \cup \bigcup_{t_i(\Vec{e})\neq \Vec{e}} \OO_X(q) \otimes \overline{W}^{t_i(\Vec{e})}(\pi_X,q) \times \{\OO_E(i(p-q))\}  \]
		with the reduced scheme structure. Thus the central fiber is reduced and all its components have dimension $g-u(\Vec e)-\inv(\Vec e)$.
		
		In particular, the central fiber of $\mathcal{W}^{\Vec e}(\pi,p)$ is contained scheme-theoretically in
		\[ \OO_X(q) \otimes \overline{W}^{\Vec{e}}(\pi_X,q) \times \Pic^0(E) \cup \bigcup_{i=0}^{k-1} \OO_X(q) \otimes \overline{W}^{t_i(\Vec{e})}(\pi_X,q) \times \{\OO_E(i(p-q))\}  \]
		endowed with the reduced structure.
		By Theorem~\ref{theorem: HBN regeneration}, this inclusion is
		an equality on supports. Since the containing scheme is reduced,
		it is therefore an equality of schemes.
		
		Finally, assume that the rank conditions cut out an additional generic component $\Delta$ and consider its closure over the central fiber. In particular, its central fiber $\Delta_0$ is contained in the reduced locus outlined above. It follows that both $\Delta$ and $\mathcal{W}^{\Vec e}(\pi,p)$ contain $\Delta_0$ in their degeneration, so the special fiber must have multiplicity at least two along $\Delta_0$.  This contradicts the reducedness
		of the central fiber.
		
		Therefore, the generic fiber is reduced
		and has pure dimension $g-u(\Vec e)-\inv(\Vec e)$. This proves the proposition.
	\end{proof}
	
	The proof of Proposition~\ref{prop: reduced-intersection-right-dim} also establishes the following scheme-theoretic regeneration statement, which we state separately for its independent interest.
	
	\begin{prop}[Scheme-theoretic regeneration]
		\label{prop: scheme-theoretic regeneration}
		Let $\mathcal C\xrightarrow{\mathfrak\pi}\mathcal P\longrightarrow B$ be a smoothing family as in Setup~\ref{setup: smoothing} and denote $\mathcal C^*\rightarrow \mathcal P^*\longrightarrow B^*$ the family defined away from the central fiber. Let
		$\Vec e\in\mathbb Z_{\geq0}^k$ and take
		$r=k+|\Vec e|-1$ and $d=g+r$.
		
		We consider the schematic closure of
		$\overline{\mathcal{W}}^{\Vec e}(\mathcal C^*\rightarrow \mathcal P^*)$, endowed with its
		determinantal scheme structure and consider $\overline{\mathcal{W}}^{\Vec e}(\mathfrak{\pi},p)$ its closure in the relative Picard space $\textrm{Pic}^d(\mathcal{C}/B)$. Then $\overline{\mathcal{W}}^{\Vec e}(\mathfrak{\pi},p)$ is flat and proper over $B$ and its special fiber in $\textrm{Pic}^d(X)\times \textrm{Pic}^0(E)$ is scheme-theoretically equal to
		\[
		\bigl(\OO_X(q)\otimes
		\overline W^{\Vec e}(\pi_X,q)\bigr)\times Pic^0(E)
		\quad\cup
		\bigcup_{\substack{0\leq i<k\\t_i(\Vec e)\neq\Vec e}}
		\bigl(\OO_X(q)\otimes
		\overline W^{t_i(\Vec e)}(\pi_X,q)\bigr)
		\times\{\OO_E(i(p-q))\},
		\]
		endowed with the reduced structure.
	\end{prop}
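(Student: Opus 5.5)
The plan is to extract the statement from the argument of Proposition~\ref{prop: reduced-intersection-right-dim}, organized in three steps: flatness and properness, identification of the special fiber as a subscheme of the reduced union, and equality. Properness and flatness are formal. $\overline{\mathcal{W}}^{\Vec e}(\mathfrak{\pi},p)$ is defined as the schematic closure of the generic-fiber locus inside $\operatorname{Pic}^d(\mathcal{C}/B)$. Over the DVR $B=\operatorname{Spec}\mathbb{C}[[t]]$, a closed subscheme that is the schematic closure of its generic fiber has no $t$-torsion, hence is flat. The determinantal locus on the generic fiber is closed in the relative Picard scheme. The central fiber $C_0=X\cup_q E$ is of compact type, so each multidegree component of $\operatorname{Pic}^d(\mathcal{C}/B)$ is proper over $B$, and therefore the closure is proper over $B$.

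Next I bound the special fiber from above. The rank conditions $\operatorname{rk}(\varphi_{i,m})\leq \operatorname{rk}\rho_*(\cdots)-r_{i,m}$, for all relevant $(i,m)$ and all multidegrees $(d_X,d_E)$, define a closed subscheme $\mathcal{D}\subseteq\operatorname{Pic}^d(\mathcal{C}/B)$. Its generic fiber contains $\overline{\mathcal{W}}^{\Vec e}$ with its determinantal structure; by Proposition~\ref{prop: reduced-intersection-right-dim} the two coincide, so $\Delta=\emptyset$. Hence the schematic closure lies in $\mathcal{D}$, and its special fiber lies scheme-theoretically in $\mathcal{D}_0$. The computation in the proof of Proposition~\ref{prop: reduced-intersection-right-dim} (Lemma~\ref{lemma: nodal rank decomposition}, the reduction to $d_E\in\{mk+i,mk+i+1\}$, and the distributivity of Lemma~\ref{lemma:schematic-intersection} using that the points $\OO_E(i(p-q))$ are distinct) identifies $\mathcal{D}_0$, after twisting into $\operatorname{Pic}^d(X)\times\operatorname{Pic}^0(E)$, with the displayed union. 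By the genus $g-1$ induction hypothesis and the rewriting of the conditions at twist $mk+i$ as those of $\mathbf a(t_i(\Vec e))$, this union carries the reduced structure.

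Finally I show the inclusion is an equality. On supports, Theorem~\ref{theorem: HBN regeneration} shows that every component of the union lies in the limit of the generic locus. More precisely, every irreducible component of the family has dimension at least $\dim\operatorname{Pic}(\mathcal{C}/B)-u(\Vec e)-\inv(\Vec e)$, and points of each component of the union lift to nearby fibers. Flatness forces the special fiber to be equidimensional of dimension $g-u(\Vec e)-\inv(\Vec e)$. Each component of the union has exactly this dimension, using Theorem~\ref{trm: dimension} together with $u(t_i(\Vec e))+\inv(t_i(\Vec e))=u(\Vec e)+\inv(\Vec e)-1$ and the extra $\dim\operatorname{Pic}^0(E)=1$ in the first case. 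Thus no component can be missing from the special fiber.

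This gives a closed subscheme of a reduced scheme with the same support, so they are equal. The main obstacle I expect is this support equality, in particular ensuring that the components $\bigl(\OO_X(q)\otimes\overline W^{t_i(\Vec e)}(\pi_X,q)\bigr)\times\{\OO_E(i(p-q))\}$ genuinely appear in the limit and are not merely allowed by the rank conditions. For this I would use the nested regeneration of Theorem~\ref{theorem: nested regeneration}: a general point of each component carries the unique refined nested limit series of Lemma~\ref{lemma: nested description} or Lemma~\ref{lem: nested description}. The dimension lower bound then forces a component of $\mathcal{G}^{\Vec e,\mathrm{nc}}$ through that point that dominates $B$. The reason is that the special fiber has dimension only $g-u(\Vec e)-\inv(\Vec e)$, one less than any component of the total space, so no component can be contained in it.
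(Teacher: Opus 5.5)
Your proposal is correct and follows the paper's argument. The special fiber of the closure lies scheme-theoretically inside the rank-condition scheme $\mathcal D_0$, which the computation in the proof of Proposition~\ref{prop: reduced-intersection-right-dim} identifies with the reduced union. Equality of supports from Theorem~\ref{theorem: HBN regeneration} (via nested regeneration) then upgrades this inclusion to an equality of schemes. Your no-$t$-torsion and compact-type remarks also make explicit the flatness and properness that the paper leaves implicit.

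The only misstep is the sentence ``Thus no component can be missing''. Equidimensionality of the special fiber alone does not stop it from omitting entire components of the union. It is your final nested-regeneration paragraph, exactly as in the paper, that shows every component appears.
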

	
	As a consequence of Proposition~\ref{prop: reduced-intersection-right-dim}, we obtain the equality 
	\[\bigcap_{i=0}^{k-1} \overline{\{L \in \textrm{Pic}(C) \ \mid \ \pi_*(L-(i\cdot p))\cong \OO(\sigma^i\Vec{e}) \} } = \overline{W}^{\Vec{e}}(\pi,p). \]
	This gives a complete description of the closure and we obtain the following criterion.
	
	\begin{cor}
		Let $[\pi,p]$ a generic element of $\cHTot$ and let $\Vec{e}$, $\Vec{f}$ be two length $k$ vectors with nonnegative entries. Then we have the equivalence 
		\[ \textbf{a}(\Vec{e}) \leq \textbf{a}(\Vec{f}) \ \textrm{entrywise} \ \iff W^{\Vec{f}}(\pi,p) \subseteq  \overline{W}^{\Vec{e}}(\pi,p). \]
	\end{cor}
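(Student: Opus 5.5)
The equivalence splits into two directions, and I will handle each through the closure description just obtained from Proposition~\ref{prop: reduced-intersection-right-dim}, namely
\[\overline{W}^{\Vec{e}}(\pi,p)=\bigcap_{i=0}^{k-1}\overline{\{L \mid \pi_*(L(-ip))\cong \OO(\sigma^i\Vec{e})\}},\]
with this scheme cut out by the rank conditions $h^0(C,L(-(mk+i)p))\geq h^0(\PP^1,\OO(\sigma^i\Vec{e})(-m))$. The whole argument is a comparison of these rank conditions with vanishing sequences, via Remark~\ref{rmk: Hilbert}.

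For $(\Leftarrow)$, assume $\textbf{a}(\Vec{e})\leq\textbf{a}(\Vec{f})$ entrywise and take $L\in W^{\Vec{f}}(\pi,p)$. I first note that both vectors define $g^r_d$'s of the same degree, so $|\Vec{e}|=|\Vec{f}|$; I will make this a standing normalization. As observed in the proof of Proposition~\ref{prop: reduced-intersection-right-dim},
\[h^0(\PP^1,\OO(\sigma^i\Vec{e})(-m))=\#\{a\in\textbf{a}(\Vec{e})\mid a\geq mk+i\}.\]
Since $V=H^0(C,L)$ by the first proof of Proposition~\ref{prop: lowerbound}, we have $h^0(C,L(-n p))=\#\{a\in\textbf{a}(\Vec{f})\mid a\geq n\}$. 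The entrywise inequality gives, for every $n$,
\[\#\{a\in\textbf{a}(\Vec{f})\mid a\geq n\}\geq\#\{a\in\textbf{a}(\Vec{e})\mid a\geq n\},\]
because the $j$-th entry of $\textbf{a}(\Vec{f})$ dominates the $j$-th entry of $\textbf{a}(\Vec{e})$. Hence $L$ satisfies every rank condition defining $\overline{W}^{\Vec{e}}(\pi,p)$, and by the closure equality $L\in\overline{W}^{\Vec{e}}(\pi,p)$.

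For $(\Rightarrow)$, assume $W^{\Vec{f}}(\pi,p)\subseteq\overline{W}^{\Vec{e}}(\pi,p)$ and pick $L$ in the left side; if it is empty the statement needs a convention, which I would address separately. Degrees again force $|\Vec{e}|=|\Vec{f}|$. Since $L$ lies in the closure and $h^0(C,L(-np))$ is upper semicontinuous, we get $\#\{a\in\textbf{a}(\Vec{f})\mid a\geq n\}\geq\#\{a\in\textbf{a}(\Vec{e})\mid a\geq n\}$ for all $n$. Both sequences have $r+1$ entries, so this tail-count inequality for all $n$ is equivalent to the entrywise inequality $\textbf{a}(\Vec{e})\leq\textbf{a}(\Vec{f})$. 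This is the standard equivalence between dominance of counting functions and dominance of sorted sequences, and I will state it as a short combinatorial lemma.

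I expect the main obstacle to be the nonemptiness subtlety in $(\Rightarrow)$. When $g-u(\Vec{f})-\inv(\Vec{f})<0$, Proposition~\ref{prop: upperbound} makes $W^{\Vec{f}}(\pi,p)$ empty, so the inclusion holds vacuously even without the entrywise inequality. I will therefore state the corollary under the hypothesis that $W^{\Vec{f}}(\pi,p)\neq\emptyset$, which by the regeneration results of Section~\ref{sec: scheme-theoretic regeneration} amounts to $g\geq u(\Vec{f})+\inv(\Vec{f})$. The other potentially delicate point is that membership in the closure must be tested by the determinantal conditions alone. This is exactly what Proposition~\ref{prop: reduced-intersection-right-dim} provides, since it shows the extraneous locus $\Delta$ of Proposition~\ref{prop: closure intersection} is empty.
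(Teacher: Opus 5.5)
Your proposal is correct and is essentially the argument the paper intends. The paper gives no separate proof: it deduces the criterion from the determinantal description of $\overline{W}^{\Vec{e}}(\pi,p)$ furnished by Proposition~\ref{prop: reduced-intersection-right-dim}, whose rank conditions are exactly the tail-count inequalities $h^0(C,L(-np))\geq\#\{a\in\textbf{a}(\Vec{e})\mid a\geq n\}$, and tail-count dominance is equivalent to entrywise dominance. Your caveat is also right: the forward implication (which needs only semicontinuity) requires $W^{\Vec{f}}(\pi,p)\neq\emptyset$ and $|\Vec{e}|=|\Vec{f}|$, since otherwise the inclusion holds vacuously.
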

	
	We are now ready to show Cohen--Macaulayness by an induction on $g$. The argument follows immediately from Proposition~\ref{prop: reduced-intersection-right-dim}, Proposition~\ref{prop: scheme-theoretic regeneration} and induction, using the following criterion.
	
	\begin{lemma}[{\cite[Lemma 7.7]{Larson2Vogt} and \cite[Proposition 4.1]{Hoc16}}] Let $A, B$ be two Cohen--Macaulay schemes with $A\cap B$ having codimension $1$ in both of them. Then $A\cup B$ is Cohen--Macaulay if and only if $A\cap B$ is Cohen--Macaulay. 
	\end{lemma}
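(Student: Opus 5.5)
The plan is to work locally with the Mayer--Vietoris exact sequence of structure sheaves and apply the depth lemma. Here $A\cup B$ carries the scheme structure given by the intersection of ideals, and $A\cap B$ the one given by their sum. The sequence is
\[
0\longrightarrow \OO_{A\cup B}\longrightarrow \OO_A\oplus\OO_B\longrightarrow \OO_{A\cap B}\longrightarrow 0 .
\]
Exactness in the middle and injectivity on the left hold because the ideal of $A\cup B$ is $I_A\cap I_B$. Surjectivity on the right is clear. Cohen--Macaulayness is a statement about local rings, so I fix a point $x\in A\cup B$ and put $R=\OO_{A\cup B,x}$. I regard $R/I_A$, $R/I_B$ and $R/(I_A+I_B)$ as finitely generated $R$-modules. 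The depth of each of these as an $R$-module agrees with its depth as a ring, since depth can be computed with respect to the maximal ideal of $R$.

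First I dispose of points off the intersection. If $x\notin A\cap B$, then near $x$ the scheme $A\cup B$ coincides with $A$ or with $B$, so it is Cohen--Macaulay there. Also $A\cap B$ is vacuous there. So I may assume $x\in A\cap B$. Next I pin down the dimensions, reading the hypothesis as: locally at $x$, $A$ and $B$ are Cohen--Macaulay, hence equidimensional, of the same dimension $n$, and $A\cap B$ has dimension $n-1$. Then $\dim R=n$ and $\dim \OO_{A\cap B,x}=n-1$. The Cohen--Macaulay hypothesis also gives $\operatorname{depth}(R/I_A\oplus R/I_B)=n$.

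Then I run the depth lemma in both directions. Suppose $A\cup B$ is Cohen--Macaulay at $x$, so $\operatorname{depth}R=n$. The depth lemma applied to the sequence gives
\[
\operatorname{depth}\OO_{A\cap B,x}\geq \min\{\operatorname{depth}(R/I_A\oplus R/I_B),\ \operatorname{depth}R-1\}=n-1=\dim\OO_{A\cap B,x},
\]
so $A\cap B$ is Cohen--Macaulay at $x$. Conversely, suppose $\operatorname{depth}\OO_{A\cap B,x}=n-1$. Then the depth lemma gives
\[
\operatorname{depth}R\geq\min\{n,\ (n-1)+1\}=n=\dim R,
\]
so $A\cup B$ is Cohen--Macaulay at $x$.

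The main point to handle carefully is the dimension bookkeeping. I need that $A$ and $B$ are equidimensional of the same dimension at each point of $A\cap B$, and that $A\cap B$ has dimension exactly one less there. Without this, the numerical inequalities above would not match $\dim R$. In the applications (unions of components of pure dimension $g-u(\Vec e)-\inv(\Vec e)$, meeting in codimension one) this is exactly the situation. So I would state that hypothesis explicitly and otherwise cite \cite[Proposition 4.1]{Hoc16} for the general form.
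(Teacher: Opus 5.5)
Your argument is correct. The Mayer--Vietoris sequence $0\to\OO_{A\cup B}\to\OO_A\oplus\OO_B\to\OO_{A\cap B}\to 0$ together with the depth lemma, applied at points $x\in A\cap B$ where the codimension-one hypothesis forces $\dim\OO_{A,x}=\dim\OO_{B,x}=\dim\OO_{A\cap B,x}+1$, is the standard proof behind the cited results \cite[Lemma 7.7]{Larson2Vogt} and \cite[Proposition 4.1]{Hoc16}. The paper gives no proof of its own; it simply cites these references and uses the lemma only in the pure-dimensional setting you describe.
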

	
	\begin{theorem}
		\label{theorem: reduced and CM}
		Let $[\pi\colon C\longrightarrow\mathbb{P}^1,p]
		\in\mathcal{H}^{\mathrm{tot}}_{g,k}$ be general, and let $\Vec{e}\in\mathbb{Z}_{\geq 0}^k$. Then
		$\overline{W}^{\Vec{e}}(\pi,p)$ is Cohen--Macaulay.
	\end{theorem}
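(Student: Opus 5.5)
We argue by induction on $g$, with the same elliptic-tail degeneration of Setup~\ref{setup: smoothing} that gave reducedness. For $g=1$, the locus $\overline{W}^{\Vec{e}}(\pi,p)$ is either empty or a reduced determinantal subscheme of $\Pic^d(E)$ of expected dimension, by \cite[Lemma 7.3]{Larson2Vogt}. In the one-dimensional case it is a reduced curve or all of $\Pic^d(E)$, and in the zero-dimensional case it is a reduced finite set. Both are Cohen--Macaulay.

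For the inductive step, Proposition~\ref{prop: scheme-theoretic regeneration} shows that $\overline{\mathcal{W}}^{\Vec{e}}(\mathfrak{\pi},p)$ is flat and proper over $B=\operatorname{Spec}\mathbb{C}[[t]]$. Its special fiber $W_0$ is the reduced union
\[
\bigl(\OO_X(q)\otimes\overline W^{\Vec e}(\pi_X,q)\bigr)\times\Pic^0(E)
\ \cup
\bigcup_{t_i(\Vec e)\neq\Vec e}
\bigl(\OO_X(q)\otimes\overline W^{t_i(\Vec e)}(\pi_X,q)\bigr)\times\{\OO_E(i(p-q))\}.
\]
Cohen--Macaulayness of a flat family over a DVR is inherited from the special fiber. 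Indeed, if $W_0$ is CM then the total space is CM, since $t$ is a nonzerodivisor cutting out $W_0$. Being CM is then an open condition on the total space, and properness lets us spread it to the generic fiber. That fiber is $\overline{W}^{\Vec{e}}$ for a general element of $\cHTot$, which gives the result. So it suffices to show $W_0$ is CM.

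To prove $W_0$ is CM, write $A=(\OO_X(q)\otimes\overline W^{\Vec e}(\pi_X,q))\times\Pic^0(E)$ and $B_i=(\OO_X(q)\otimes\overline W^{t_i(\Vec e)}(\pi_X,q))\times\{\OO_E(i(p-q))\}$.

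\emph{Factors.} By induction on genus, with $X$ general of genus $g-1$, both $\overline W^{\Vec e}(\pi_X,q)$ and $\overline W^{t_i(\Vec e)}(\pi_X,q)$ are CM. Hence $A$ and every $B_i$ are CM, since a product with a smooth curve or a point preserves CM.

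\emph{Disjointness.} The points $\OO_E(i(p-q))$ are pairwise distinct because $p-q$ has exact order $k$. So the $B_i$ are pairwise disjoint, and $B\coloneqq\bigsqcup_i B_i$ is CM.

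\emph{Intersection.} Scheme-theoretically, $A\cap B_i\cong\OO_X(q)\otimes\bigl(\overline W^{\Vec e}(\pi_X,q)\cap\overline W^{t_i(\Vec e)}(\pi_X,q)\bigr)$. By Proposition~\ref{prop: containments}, applied to the pair $(t_i(\Vec e),\Vec e)$, together with Proposition~\ref{prop: closure intersection} and the fact that $\Delta=\emptyset$, we expect $\overline W^{\Vec e}(\pi_X,q)\subseteq\overline W^{t_i(\Vec e)}(\pi_X,q)$. Here $t_i(\Vec e)$ has expected dimension one larger, and $\mathbf a(t_i(\Vec e))\leq\mathbf a(\Vec e)$ entrywise. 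If so, the intersection is just $\overline W^{\Vec e}(\pi_X,q)$. This has codimension one in both $A$ and $B_i$: the dimension count gives $g-1-u-\inv$ versus $g-u-\inv$ on each side. It is CM by induction.

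\emph{Gluing.} The gluing lemma (\cite[Lemma 7.7]{Larson2Vogt}) then shows that $A\cup B$ is CM, and $A\cup B=W_0$.

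The main obstacle is the intersection step: identifying $A\cap B_i$ with $\overline W^{\Vec e}(\pi_X,q)$ \emph{scheme-theoretically}, not merely on supports. The ideal of $A\cap B_i$ is the sum of the two determinantal ideals. We must check that it equals the reduced determinantal ideal of $\overline W^{\Vec e}(\pi_X,q)$.

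My first approach is direct. The rank conditions for $t_i(\Vec e)$ are those for $\Vec e$ with the bound at twist $mk+i$ lowered by at most one, as computed in the proof of Proposition~\ref{prop: reduced-intersection-right-dim}. So each of them is implied by a rank condition for $\Vec e$, via the same minor-expansion argument used there. This gives containment of determinantal ideals, so the sum of the ideals is simply the ideal of $\overline W^{\Vec e}(\pi_X,q)$, which is reduced by Proposition~\ref{prop: reduced-intersection-right-dim}.

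If the case $i=0$, where $t_0$ shifts entries between the first and last positions, causes trouble, I would first twist by $\OO(-q)$ using $\sigma$, as in the proof of Proposition~\ref{prop: containments}, to reduce to an adjacent transposition.
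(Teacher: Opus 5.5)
Your proof is correct and follows the paper's argument: induction on $g$ through the elliptic-tail degeneration, with each piece and each codimension-one intersection $A\cap B_i\cong \overline{W}^{\Vec{e}}(\pi_X,q)\times\{\mathrm{pt}\}$ Cohen--Macaulay by induction, the $B_i$ pairwise disjoint, the gluing lemma \cite[Lemma 7.7]{Larson2Vogt}, and openness of the Cohen--Macaulay condition to pass to the general fiber. The paper only asserts the scheme-theoretic inclusion $\overline{W}^{\Vec{e}}(\pi_X,q)\subseteq\overline{W}^{t_i(\Vec{e})}(\pi_X,q)$; your justification via containment of determinantal ideals (every rank bound for $t_i(\Vec{e})$ is at most the corresponding bound for $\Vec{e}$) fills in that step.
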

	
	\begin{proof}
		We argue by induction on $g$. In genus $1$, every nonempty closed
		locus is either the Picard variety or a reduced zero-dimensional
		scheme, and is therefore Cohen--Macaulay.
		
		Assume the assertion holds in genus $g-1$. We will use the same degeneration of the $k\colon 1$ cover as in Setup~\ref{setup: smoothing}. By the scheme-theoretic
		description of the special fiber, it is the union of
		\[
		\left(
		\OO_X(q)\otimes\overline{W}^{\Vec e}(\pi_X,q)
		\right)\times\Pic^0(E)
		\]
		and
		\[
		\left(
		\OO_X(q)\otimes\overline{W}^{t_i(\Vec e)}(\pi_X,q)
		\right)\times\{\OO_E(i(p-q))\},
		\qquad t_i(\Vec e)\neq\Vec e.
		\]
		All these schemes are Cohen--Macaulay by the induction hypothesis.
		Moreover, the scheme-theoretic inclusion
		\[
		\overline{W}^{\Vec e}(\pi_X,q)
		\subseteq
		\overline{W}^{t_i(\Vec e)}(\pi_X,q)
		\]
		implies that the intersection of the first scheme with the piece
		indexed by $i$ is
		\[
		\left(
		\OO_X(q)\otimes\overline{W}^{\Vec e}(\pi_X,q)
		\right)\times\{\OO_E(i(p-q))\}.
		\]
		This intersection is Cohen--Macaulay by induction. When nonempty,
		it has pure dimension
		\[
		g-1-u(\Vec e)-\inv(\Vec e),
		\]
		whereas both schemes being intersected have pure dimension
		\[
		g-u(\Vec e)-\inv(\Vec e).
		\]
		Here we use
		\[
		u(t_i(\Vec e))+\inv(t_i(\Vec e))
		=
		u(\Vec e)+\inv(\Vec e)-1
		\]
		for $t_i(\Vec e)\neq\Vec e$. Thus the intersection has
		codimension one in both schemes.
		
		Since $p-q$ has exact order $k$, the pieces indexed by distinct
		$i$ are disjoint. Adjoining them one at a time, their intersections
		with the preceding union are therefore exactly the intersections
		displayed above. Repeated application of
		\cite[Lemma~7.7]{Larson2Vogt}
		shows that the special fiber is Cohen--Macaulay. Since being Cohen--Macaulay is an open condition on the base $B$, it follows that the generic fiber is Cohen--Macaulay. Hence
		$\overline{W}^{\Vec e}(\pi,p)$ is Cohen--Macaulay. 
	\end{proof}
	
	\subsection{Irreducibility and Monodromy of Pointed Hurwitz--Brill--Noether loci}
	
	We note that these results have already appeared in~\cite{Agg26} in terms of permutations in the $k$-affine symmetric group. Using notation from~\cite{Agg26}, we have 
	\[ \overline{W}^{\Vec{e}}(\pi,p) = W^{\tau}(C,p),  \]
	where $\tau$ is the element of the $k$-affine symmetric group defined by
	\[ \tau(i) = -ke_i - i \ \text{ for } \ 0 \leq i \leq k-1. \]
	
	Now, irreducibility and monodromy are simply reformulations of \cite[Theorem 1.1]{Agg26}.
	\begin{prop}
		\label{prop: irreducible}
		Let $[\pi\colon C\longrightarrow\mathbb{P}^1,p]
		\in\mathcal{H}^{\mathrm{tot}}_{g,k}$ be general and $\Vec{e}\in\mathbb{Z}_{\geq 0}^k$ with $g-\inv(\Vec{e})-u(\Vec{e}) > 0$. Then
		$\overline{W}^{\Vec{e}}(\pi,p)$ is irreducible.   
	\end{prop}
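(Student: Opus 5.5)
The plan is to argue by induction on $g$ through the elliptic-tail degeneration of Setup~\ref{setup: elliptic tail}. We work with the flat proper family $\overline{\mathcal{W}}^{\Vec e}(\mathfrak{\pi},p)\to B$ from Proposition~\ref{prop: scheme-theoretic regeneration}. Its special fiber is reduced and has the explicit form
\[
\bigl(\OO_X(q)\otimes\overline W^{\Vec e}(\pi_X,q)\bigr)\times\Pic^0(E)\ \cup\bigcup_{t_i(\Vec e)\neq\Vec e}\bigl(\OO_X(q)\otimes\overline W^{t_i(\Vec e)}(\pi_X,q)\bigr)\times\{\OO_E(i(p-q))\}.
\]
Since the family is flat and proper, with all fibers of pure dimension $g-u(\Vec e)-\inv(\Vec e)$, the number of irreducible components of the general fiber is at most the number of connected components of the central fiber. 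In fact every component of the generic fiber specializes to a union of components of the special fiber, and a disconnection of the general fiber would persist in the limit. So it suffices to show that the special fiber is connected, and that in the relevant cases one component of the special fiber meets all the others.

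For the base case $g=1$, positive expected dimension forces $u(\Vec e)+\inv(\Vec e)=0$. Then $\Vec e$ is weakly decreasing with consecutive entries differing by at most one, and $W^{\Vec e}$ is an open dense subset of $\Pic(E)$, which is irreducible.

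For the inductive step, set $n=g-u(\Vec e)-\inv(\Vec e)\geq 1$.

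\emph{Case $n\geq 2$.} The genus $g-1$ locus $\overline W^{\Vec e}(\pi_X,q)$ has dimension $n-1\geq 1$, so it is irreducible by induction. Hence the main piece $P_0=(\OO_X(q)\otimes\overline W^{\Vec e}(\pi_X,q))\times\Pic^0(E)$ is irreducible. Each $t_i(\Vec e)\neq\Vec e$ satisfies $u+\inv$ smaller by one, so $\overline W^{t_i(\Vec e)}(\pi_X,q)$ has dimension $n\geq 1$ and is also irreducible by induction. By Proposition~\ref{prop: containments} (applied in genus $g-1$ with the roles of $\Vec e$ and $t_i(\Vec e)$ arranged accordingly), we have $\overline W^{\Vec e}(\pi_X,q)\subseteq\overline W^{t_i(\Vec e)}(\pi_X,q)$. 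Therefore $P_0$ meets every other piece, in the nonempty set $(\OO_X(q)\otimes\overline W^{\Vec e}(\pi_X,q))\times\{\OO_E(i(p-q))\}$, and the special fiber is connected.

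The remaining obstacle is to upgrade connectedness to irreducibility of the generic fiber. For this I would use the Cohen--Macaulay property from Theorem~\ref{theorem: reduced and CM}, which rules out embedded components. I would also use that the central fiber is reduced and that the family is flat: any two components of the generic fiber would have limits forming a union of special-fiber components whose intersection has codimension one. Combining these via Hartshorne's connectedness theorem, applied to the total space (which is Cohen--Macaulay, hence connected in codimension one locally), shows that the general fiber is connected in codimension one. Normality then forces irreducibility. Normality follows from Cohen--Macaulayness together with smoothness in codimension one, i.e. Serre's criterion; smoothness in codimension one comes from the same degeneration, since the components meet transversally along divisors.

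\emph{Case $n=1$.} Here $\overline W^{\Vec e}(\pi_X,q)$ is finite, so $P_0$ is a disjoint union of elliptic curves, and the $t_i$-pieces are the curves $\overline W^{t_i(\Vec e)}(\pi_X,q)$, irreducible by induction. Connectedness now requires a combinatorial argument. Each point of $\overline W^{\Vec e}(\pi_X,q)$ lies on some $t_i$-piece, and the count recursion shows that the incidence graph between points and $t_i$-curves is connected. I expect this step to be the main obstacle. It is exactly where \cite{Agg26} uses reduced-word connectivity for affine permutations (via $\tau(i)=-ke_i-i$), and I would invoke that combinatorial input, namely that any two reduced words are connected by braid moves, rather than reprove it.
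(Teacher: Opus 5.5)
The paper does not prove this statement itself. It reads it off from \cite[Theorem~1.1]{Agg26} via the dictionary $\overline W^{\Vec e}(\pi,p)=W^{\tau}(C,p)$, $\tau(i)=-ke_i-i$. Your elliptic-tail induction is therefore an independent route, but it has a gap at the decisive step.

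First, your opening reduction is false. For a flat proper family with reduced special fibre, semicontinuity of $h^0(\mathcal{O})$ shows that a connected special fibre forces a connected general fibre. It does not bound the number of irreducible components of the general fibre by the number of connected components of the special fibre; the constant family of two crossing lines is a counterexample. You acknowledge this later, so everything rests on normality of the general fibre, and your argument for $R_1$ does not work.

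The special fibre is singular in codimension one precisely along the double loci $P_0\cap P_i$, so openness of smoothness gives nothing there. A transversal crossing in the special fibre also does not imply that nearby fibres smooth it. The product family $\{xy=0\}\times\mathbb{A}^{n-1}\times B$ has a reduced special fibre with a transversal crossing and a Cohen--Macaulay total space, so Hartshorne's connectedness theorem holds. Yet its general fibre is reducible and not normal.

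Establishing $R_1$ near the double locus already requires showing that $P_0$ and $P_i$ lie on a common irreducible component of $\overline{\mathcal W}^{\Vec e}(\mathfrak\pi,p)$. Otherwise Hartshorne's theorem forces the two total components to meet in codimension one, off the special fibre, and that gives two components of the general fibre meeting in a divisor. Since the special fibre is reduced, each of its components lies on a unique component of the total space. So ``$P_0$ and every $P_i$ on a common component'' is exactly the irreducibility you are trying to prove, and the argument is circular at this point. Nothing in the paper supplies this input: Theorem~\ref{theorem: reduced and CM} gives only Cohen--Macaulayness, and normality is attributed to \cite{Agg26}.

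If you import normality from \cite{Agg26}, as the paper's introduction does, your argument closes: connected and normal implies irreducible. That would borrow strictly less from Aggarwal than the paper's outright citation.

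Also, the case $n=1$ is not where the difficulty lies. Every point of $\overline W^{\Vec e}(\pi_X,q)$ lies on every curve $\overline W^{t_i(\Vec e)}(\pi_X,q)$, by the same containment you use for $n\geq 2$. These curves are irreducible by induction, so the special fibre is connected without any reduced-word or braid-move input. The missing ingredient is smoothness in codimension one, not connectedness.
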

	
	\begin{prop}
		\label{prop: monodromy}
		Let $\Vec{e}\in\mathbb{Z}_{\geq 0}^k$ such that $g-\inv(\Vec{e})-u(\Vec{e}) = 0$ and consider the universal pointed Hurwitz--Brill--Noether locus $\mathcal{W}_g^{\Vec{e}}$, parametrizing tuples .
		\[ [\pi\colon C\longrightarrow\mathbb{P}^1,p, L] \]
		with $[\pi\colon C\longrightarrow\mathbb{P}^1,p] \in \mathcal{H}^{\mathrm{tot}}_{g,k}$ and $[L]$ in $W^{\Vec{e}}(\pi,p)$. Then
		$\mathcal{W}^{\Vec{e}}_g$ has a unique irreducible component dominating $\mathcal{H}^{\mathrm{tot}}_{g,k}$.   
	\end{prop}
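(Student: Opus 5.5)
The plan is to deduce the statement from \cite[Theorem~1.1]{Agg26} via the dictionary $\overline{W}^{\Vec{e}}(\pi,p)=W^{\tau}(C,p)$ with $\tau(i)=-ke_i-i$. Alongside this, I would sketch an intrinsic inductive argument using the elliptic-tail degeneration, to make the reduction transparent.

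\emph{Step 1 (translation).} First I would check that, under $\tau(i)=-ke_i-i$, the hypothesis $g-u(\Vec e)-\inv(\Vec e)=0$ matches Aggarwal's condition that the length of the affine permutation $\tau$ equals $g$. This is a direct comparison of the displacement description in Remark~\ref{remark: southernomst description} with the inversion count of $\tau$. Next I would check that $\mathcal W^{\Vec e}_g\to\cHTot$ is the relative transmission locus considered in \cite{Agg26}. By Theorem~\ref{trm: dimension} and Proposition~\ref{prop: reduced-intersection-right-dim}, the general fibre is finite and reduced. Hence a unique component dominating $\cHTot$ is equivalent to transitivity of monodromy on $W^{\Vec e}(\pi,p)$ for general $[\pi,p]$, and that transitivity is the statement of \cite[Theorem~1.1]{Agg26}.

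\emph{Step 2 (inductive picture).} For an argument closer to our methods, I would apply Proposition~\ref{prop: scheme-theoretic regeneration} to the family of Setup~\ref{setup: smoothing}. Since $g-1-u(\Vec e)-\inv(\Vec e)<0$, the first piece of the special fibre is empty. The special fibre is then the reduced finite set
\[
\bigsqcup_{i\in I(\Vec e)}\bigl(\OO_X(q)\otimes W^{t_i(\Vec e)}(\pi_X,q)\bigr)\times\{\OO_E(i(p-q))\}.
\]
Flatness and reducedness let points of the general fibre specialize bijectively to these. By induction on $g$, with base case genus $1$ as in \cite{Larson2Vogt}, monodromy over $\mathcal H^{\mathrm{tot}}_{g-1,k}$ acts transitively on each block $W^{t_i(\Vec e)}(\pi_X,q)$. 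Varying $[\pi_X,q]$ inside the boundary divisor therefore shows that each block lies in a single orbit of the monodromy of $\mathcal W^{\Vec e}_g$.

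\emph{Step 3 (gluing blocks: the main obstacle).} It remains to show that different blocks $i\neq j$ lie in the same orbit. The point $\OO_E(i(p-q))$ is rigid on the tail, so the elliptic-tail degeneration alone cannot do this. I would first try a second degeneration, to a tail attached at a different point or to a chain of two elliptic curves. The aim is to exhibit one genus-$g$ element lying in the closures of both blocks, at which the two branches meet on a single irreducible component. This is precisely the combinatorial connectivity of reduced words handled in \cite{Agg26}. If that fails, I would fall back on citing \cite[Theorem~1.1]{Agg26} through Step~1.
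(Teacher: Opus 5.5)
Your Step 1 is the paper's route: the paper gives no independent argument and presents this proposition as a direct reformulation of \cite[Theorem~1.1]{Agg26}, via the dictionary $\overline{W}^{\Vec e}(\pi,p)=W^{\tau}(C,p)$ with $\tau(i)=-ke_i-i$, and your remark that finiteness and reducedness of the general fibre make a unique dominating component equivalent to transitive monodromy is exactly what makes that translation work. Your Steps 2--3 are an extra intrinsic sketch whose gluing step, as you acknowledge, is not carried out, so your proof rests on the citation, just as the paper's does.
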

	
	\section{The algebraic Euler characteristic of pointed Hurwitz--Brill--Noether loci} \label{sec: Euler-char}
	
	We now use the degeneration constructed in the previous sections to study the
	algebraic Euler characteristic of pointed Hurwitz--Brill--Noether loci.
	Throughout this section, for a proper scheme $X$ we write
	\[
	\chi(X):=\chi(X,\mathcal O_X)
	=
	\sum_{i\geq 0}(-1)^i h^i(X,\mathcal O_X),
	\]
	its algebraic Euler characteristic. The main advantage of this invariant in our setting is that it is preserved
	under flat proper degenerations. More precisely, let
	\[
	\mathcal C\longrightarrow \mathcal P\longrightarrow B
	\]
	be the one parameter degeneration in Setup~\ref{setup: smoothing}, whose generic
	fiber is a smooth degree $k$ cover totally ramified at the marked point and whose special fiber has source $C_0=X\cup_q E$.
	
	Let $\overline{\mathcal W}^{\Vec e}(\mathfrak{\pi}, p)\longrightarrow B$
	denote the corresponding relative pointed Hurwitz--Brill--Noether locus, which is proper and flat. Therefore the algebraic Euler
	characteristic of its fibers is constant, and hence for a smooth fiber
	$[\pi:C\to\PP^1,p]$ and the special fiber
	$W^{\Vec e}_0(\pi,p)$, we have
	\[	\chi\bigl(W^{\Vec e}(\pi,p)\bigr) = \chi\bigl(W^{\Vec e}_0(\pi,p)\bigr).
	\]
	
	This is particularly useful in dimensions zero and one. If
	$W^{\Vec e}(\pi,p)$ is reduced and zero-dimensional, then
	\[
	\chi\bigl(W^{\Vec e}(\pi,p)\bigr)
	=
	\# W^{\Vec e}(\pi,p),
	\]
	so the cardinality of the pointed Hurwitz--Brill--Noether locus may be
	computed on the special fiber. If $W^{\Vec e}(\pi,p)$ is a proper
	one-dimensional scheme, then
	\[
	\chi\bigl(W^{\Vec e}(\pi,p)\bigr)
	=
	1-p_a\bigl(W^{\Vec e}(\pi,p)\bigr),
	\]
	where $p_a$ denotes the arithmetic genus. In particular, because the general
	fiber is smooth (see for example~\cite[Theorem 1.1]{Agg26}), its genus is determined by the Euler characteristic of the special fiber. 
	
	It is well-known that the algebraic Euler characteristic is well-behaved with respect to intersections and unions. If $A$ and $B$ are two proper schemes, both contained in the same ambient space $X$, then we have the short exact sequence 
	\[ 0 \rightarrow \OO_{A\cup B} \rightarrow \OO_A \oplus \OO_B \rightarrow \OO_{A\cap B} \rightarrow 0\]
	and consequently the relation 
	\[ \chi(A\cup B) = \chi(A) + \chi(B)- \chi(A\cap B). \]
	We apply this formula to the equality $\chi\bigl(W^{\Vec e}(\pi,p)\bigr) = \chi\bigl(W^{\Vec e}_0(\pi,p)\bigr).$ Using the description of the special fiber $W^{\Vec e}_0(\pi,p)$ in Theorem~\ref{theorem: HBN regeneration} and Proposition~\ref{prop: scheme-theoretic regeneration} we obtain the following.
	
	\begin{theorem} \label{thm: hol-Euler-char}
		Let $\Vec{e}$ be a length $k$ vector with non-negative entries. We denote $\chi_n(\Vec{e}) \coloneqq \chi\bigl(W^{\Vec e}(\pi,p)\bigr)$ for $[\pi,p]$ a generic element of $\cHTot$ where $g = n + u(\Vec{e})+ \inv(\Vec{e})$; and denote 
		\[ I(\Vec{e}) \coloneqq \{i \ \mid \ 0\leq i \leq k-1 \ \textrm{and} \ t_i(\Vec{e})\neq \Vec{e}\}. \]
		Then we have the recursion
		\[ \chi_n(\Vec e)
		=
		\sum_{i\in I(\Vec e)}
		\chi_n\bigl(t_i(\Vec e)\bigr)
		-
		|I(\Vec e)|\,\chi_{n-1}(\Vec e).\]
	\end{theorem}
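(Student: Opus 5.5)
The plan is to compute $\chi$ of the special fiber of the flat proper family $\overline{\mathcal W}^{\Vec e}(\mathfrak\pi,p)\to B$ from Setup~\ref{setup: smoothing}. By Proposition~\ref{prop: scheme-theoretic regeneration} the family is flat and proper, so $\chi_n(\Vec e)=\chi(W_0^{\Vec e})$. Here $W_0^{\Vec e}$ is the reduced union
\[
A\cup\bigcup_{i\in I(\Vec e)} B_i,\qquad A=\bigl(\OO_X(q)\otimes\overline W^{\Vec e}(\pi_X,q)\bigr)\times\Pic^0(E),\quad B_i=\bigl(\OO_X(q)\otimes\overline W^{t_i(\Vec e)}(\pi_X,q)\bigr)\times\{\OO_E(i(p-q))\},
\]
with $X$ of genus $g-1$ general in $\mathcal H^{\mathrm{tot}}_{g-1,k}$.

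First compute the pieces. Since $\chi(\Pic^0(E),\OO)=0$, the Künneth formula gives $\chi(A)=\chi(\overline W^{\Vec e}(\pi_X,q))\cdot 0=0$. For $B_i$, note that $u(t_i(\Vec e))+\inv(t_i(\Vec e))=u(\Vec e)+\inv(\Vec e)-1$ (Proposition~\ref{prop: ramification game}). Hence $g-1=n+u(t_i(\Vec e))+\inv(t_i(\Vec e))$, and therefore $\chi(B_i)=\chi_n(t_i(\Vec e))$. Likewise $g-1=(n-1)+u(\Vec e)+\inv(\Vec e)$, so $\chi(\overline W^{\Vec e}(\pi_X,q))=\chi_{n-1}(\Vec e)$.

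Next, add the pieces one at a time using $\chi(S\cup T)=\chi(S)+\chi(T)-\chi(S\cap T)$. The $B_i$ are pairwise disjoint because $p-q$ has exact order $k$, so the points $\OO_E(i(p-q))$ are distinct. Thus when $B_i$ is adjoined to $A\cup\bigcup_{j<i}B_j$, the intersection is just $A\cap B_i$. For this intersection I will use the scheme-theoretic containment $\overline W^{\Vec e}(\pi_X,q)\subseteq\overline W^{t_i(\Vec e)}(\pi_X,q)$, exactly as in the proof of Theorem~\ref{theorem: reduced and CM}. It gives $A\cap B_i=\bigl(\OO_X(q)\otimes\overline W^{\Vec e}(\pi_X,q)\bigr)\times\{\mathrm{pt}\}$, whose Euler characteristic is $\chi_{n-1}(\Vec e)$. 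Summing up,
\[
\chi_n(\Vec e)=0+\sum_{i\in I(\Vec e)}\chi_n(t_i(\Vec e))-|I(\Vec e)|\,\chi_{n-1}(\Vec e).
\]

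The main obstacle is to justify that the intersections are scheme-theoretic, namely that $A\cap B_i$ is precisely the reduced product above. This amounts to the scheme-theoretic inclusion $\overline W^{\Vec e}\subseteq\overline W^{t_i(\Vec e)}$ for determinantal structures. I would get it from comparing rank conditions: the conditions defining $\Vec e$ imply those defining $t_i(\Vec e)$, since $\mathbf a(t_i(\Vec e))\le\mathbf a(\Vec e)$ entrywise means the bounds $r_{j,m}$ only decrease. Reducedness then follows from Proposition~\ref{prop: reduced-intersection-right-dim}, and the intersection computation follows from Lemma~\ref{lemma:schematic-intersection}(1). Finally, the degenerate cases need a convention check. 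When $n-1<0$ the locus is empty and $\chi_{n-1}=0$. When some $W^{t_i(\Vec e)}$ is empty, its term contributes $0$, consistent with the recursion.
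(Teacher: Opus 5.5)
Your proposal is correct and follows the paper's argument. Both degenerate to the reduced elliptic-tail special fiber from Proposition~\ref{prop: scheme-theoretic regeneration}, kill the $A$-term using $\chi(\Pic^0(E))=0$, use that the $B_i$ are pairwise disjoint, and apply inclusion--exclusion with $A\cap B_i\cong\overline W^{\Vec e}(\pi_X,q)$. The only difference is how you justify $\overline W^{\Vec e}(\pi_X,q)\subseteq\overline W^{t_i(\Vec e)}(\pi_X,q)$: you derive it scheme-theoretically from the entrywise inequality $\mathbf a(t_i(\Vec e))\le\mathbf a(\Vec e)$ on the determinantal bounds, together with Proposition~\ref{prop: reduced-intersection-right-dim}, whereas the paper just cites the set-theoretic Proposition~\ref{prop: containments}; your version is slightly more careful about the scheme structure of the intersections.
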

	\begin{proof}
		This formula is an immediate consequence of the above Euler characteristic formula for union and intersection, of Proposition~\ref{prop: containments}, Proposition~\ref{prop: scheme-theoretic regeneration} and of the equalities 
		\[ \chi \left(W^{\Vec{e}}(\pi_X, q) \times \textrm{Pic}^0(E)\right) = \chi_{n-1}(\Vec{e}) \cdot \chi(\textrm{Pic}^0(E)) \ \textrm{and} \ \chi(\textrm{Pic}^0(E))=0.\qedhere
		\]
	\end{proof}
	\subsection{Ducklings: a simple rule for studying maximal Hurwitz--Brill--Noether loci} 
	
	We consider a maximal Hurwitz--Brill--Noether locus whose associated vector has the form 
	\[ \Vec{e} = (\underbrace{a+1, \ldots, a+1}_{k_1 \ \textrm{times}}, \underbrace{a,\ldots a}_{k_2 \ \textrm{times}}, \underbrace{1, \ldots, 1}_{k_3 \ \textrm{times}}, \underbrace{0,\ldots, 0}_{k_4 \ \textrm{times}}), \]
	where $a\geq 2$, $k_1 >0 $, the integers $k_i$ are nonnegative, and $k_1+k_2+k_3+k_4=k$. Write $m=k_1+k_2$, and assume $0<m<k$, so that both balanced blocks are nonempty.
	Our goal is reinterpret, when $g = u(\Vec{e})$, the recursion
	\[ \chi_0(\Vec e) = \sum_{t_i(\Vec e) \neq \Vec{e}}
	\chi_0\bigl(t_i(\Vec e)\bigr) \]
	as an elementary counting problem. 
	
	For the vector $\Vec{e}$, arrange the numbers in $\textbf{a}(\Vec{e})$ in a table as in Proposition~\ref{prop: ramification game} and put a duckling in all southernmost squares having non-zero displacement (see Remark~\ref{remark: southernomst description}, i.e. the ones corresponding to the first $k_1 + k_2$ columns). We will call \textit{the leading duckling} to be the one whose associated position has the smallest number. We denote by $\Delta$ the displacement of the leading duckling. A move of a duckling will represent a move according to the transformations in Definition~\ref{def: t_i} and we can give a precise description of the final vector (i.e. the vector $\Vec{f}$ such that $\textbf{a}(\Vec{f})$ consists of consecutive numbers). Getting to vector $\Vec{f}$ via the transformations is the same as moving the ducklings until they reach the positions numbered $|\Vec{e}|+k - k_1-k_2 +1, \ldots |\Vec{e}|$.
	
	\newcommand{\duckboardgrid}{
		% Grid
		\foreach \x in {0,...,5}
		\draw[gray] (\x,0) -- (\x,7);
		\foreach \y in {0,...,7}
		\draw[gray] (0,\y) -- (5,\y);
		
		% Final positions 15,16,17
		\fill[
		pattern=north east lines,
		pattern color=red
		] (0,3) rectangle (1,4);
		
		\fill[
		pattern=north east lines,
		pattern color=red
		] (1,3) rectangle (2,4);
		
		\fill[
		pattern=north east lines,
		pattern color=red
		] (2,3) rectangle (3,4);
	}
	
	% Put the number n in its square
	\newcommand{\duckboardnumber}[1]{
		\pgfmathtruncatemacro{\duckcol}{mod(#1,5)}
		\pgfmathtruncatemacro{\duckrow}{floor(#1/5)}
		\node at
		(\duckcol+0.5,6.5-\duckrow)
		{\small #1};
	}
	
	% Put a duck in square n
	\newcommand{\duckboardsduck}[1]{
		\pgfmathtruncatemacro{\duckcol}{mod(#1,5)}
		\pgfmathtruncatemacro{\duckrow}{floor(#1/5)}
		\node[inner sep=0pt] at
		(\duckcol+0.5,6.45-\duckrow)
		{\duckicon};
	}

	\begin{figure}[H]
		\centering
		
		\begin{tikzpicture}[scale=0.65]
			
			% ============================================================
			% FIRST CONFIGURATION: e=(5,4,4,0,0)
			% ============================================================
			\begin{scope}[shift={(0,0)}]
				
				\duckboardgrid
				
				% Boundary
				\draw[very thick] (0,1) -- (1,1);
				\draw[very thick] (1,1) -- (1,2);
				\draw[very thick] (1,2) -- (3,2);
				\draw[very thick] (3,2) -- (3,6);
				\draw[very thick] (3,6) -- (5,6);
				\draw[very thick] (1,2) -- (3,2);
				% Occupied squares, except for the three ducklings
				\foreach \n in {
					0,1,2,3,4,
					5,6,7,
					10,11,12,
					15,16,17,
					20
				}{
					\duckboardnumber{\n}
				}
				
				% Ducklings at 21,22,25
				\duckboardsduck{21}
				\duckboardsduck{22}
				\duckboardsduck{25}
				
				\node at (2.5,-0.65)
				{$\Vec e=(5,4,4,0,0)$};
				
			\end{scope}

			% ------------------------------------------------------------
			% t_0
			% ------------------------------------------------------------
			\draw[->,very thick]
			(5.4,3.5)
			--
			node[above] {move $t_0$}
			(7.7,3.5);

			% ============================================================
			% SECOND CONFIGURATION: t_0(e)=(1,4,4,0,4)
			% ============================================================
			\begin{scope}[shift={(8,0)}]
				
				\duckboardgrid
				
				% Boundary for (1,4,4,0,4)
				\draw[very thick] (0,5) -- (1,5);
				\draw[very thick] (1,5) -- (1,2);
				\draw[very thick] (1,2) -- (3,2);
				\draw[very thick] (3,2) -- (3,6);
				\draw[very thick] (3,6) -- (4,6);
				\draw[very thick] (4,6) -- (4,2);
				\draw[very thick] (4,2) -- (5,2);
				
				% Occupied squares, except ducklings
				\foreach \n in {
					0,1,2,3,4,
					5,6,7,9,
					11,12,14,
					16,17,19
				}{
					\duckboardnumber{\n}
				}
				
				% t_0 moves the duckling 25 -> 24
				\duckboardsduck{21}
				\duckboardsduck{22}
				\duckboardsduck{24}
				
				\node at (2.5,-0.65)
				{$t_0(\Vec e)=(1,4,4,0,4)$};
				
			\end{scope}

			% ------------------------------------------------------------
			% t_1
			% ------------------------------------------------------------
			\draw[->,very thick]
			(13.4,3.5)
			--
			node[above] {\large move $t_1$}
			(15.6,3.5);

			% ============================================================
			% THIRD CONFIGURATION: t_1t_0(e)=(4,1,4,0,4)
			% ============================================================
			\begin{scope}[shift={(16,0)}]
				
				\duckboardgrid
				
				% Boundary for (4,1,4,0,4)
				\draw[very thick] (0,2) -- (1,2);
				\draw[very thick] (1,2) -- (1,5);
				\draw[very thick] (1,5) -- (2,5);
				\draw[very thick] (2,5) -- (2,2);
				\draw[very thick] (2,2) -- (3,2);
				\draw[very thick] (3,2) -- (3,6);
				\draw[very thick] (3,6) -- (4,6);
				\draw[very thick] (4,6) -- (4,2);
				\draw[very thick] (4,2) -- (5,2);
				
				% Occupied squares, except ducklings
				\foreach \n in {
					0,1,2,3,4,
					5,6,7,9,
					10,12,14,
					15,17,19
				}{
					\duckboardnumber{\n}
				}
				
				% t_1 moves the duckling 21 -> 20
				\duckboardsduck{20}
				\duckboardsduck{22}
				\duckboardsduck{24}
				
				\node at (2.5,-0.65)
				{$t_1t_0(\Vec e)=(4,1,4,0,4)$};
				
			\end{scope}
			
		\end{tikzpicture}
		
		\caption{Example of two moves in the duckling game. Under $t_0$,the duckling in square $25$ moves to $24$, while under $t_1$ the duckling in square $21$ moves to $20$. The shaded squares $15,16,17$ indicate the prescribed final positions.}
		
	\end{figure}
	The displacement  (cf. Remark~\ref{remark: southernomst description}) of the southernmost squares in $\textbf{a}(\Vec{f})$ is $0$. Therefore, a duckling stops when its corresponding displacement becomes zero. In this example, the leading duckling has displacement \(6\), so it moves from \(21\) to \(15\); since the order of the three ducklings is preserved, their final positions are \(15,16,17\). In fact, we can compute cardinalates of zero dimensional maximal Hurwitz--Brill--Noether loci in terms of a game of moving ducklings. The following is a consequence of Theorem~\ref{thm: hol-Euler-char} and Proposition~\ref{prop: ramification game}.
	
	\begin{lemma} \label{lemma: duckling}
		Let $\Vec{e}$ as above and consider the cardinality $m$ subset $S$ of $\{0, 1,\ldots, k-1\}$ defined as 
		\[
		S \coloneqq
		\begin{cases}
			\{0,\ldots,k_1-1\}\cup\{k-k_2,\ldots,k-1\},
			& \ \textrm{when} \ k_2>0,\\
			\{k-m,\ldots,k-1\},
			& \ \textrm{when} \ k_2=0.
		\end{cases}
		\]
		Moreover, we denote
		\[
		\Delta \coloneqq
		\begin{cases}
			(a-1)(k-m)-k_3, & \textrm{if } k_2>0,\\
			a(k-m)-k_3, & \textrm{if } k_2=0.
		\end{cases}
		\]
		We place ducklings on the line $\Z_{\geq 0}$ on the numbers in $S$, and move them to the $m$ consecutive positions 
		\[ \Delta + k - m, \ldots, \Delta + k - 1,\]
		allowing only moves according to the following rules: 
		\begin{enumerate}
			\item At each move, exactly one duckling advances by one position (moves from spot $i$ to $i+1$); 
			\item Two ducklings never occupy the same position;  
			\item The leading duckling (i.e. positioned in the highest number) never passes $\Delta + k-1$,; and 
			\item The distance between the leading and last duckling is always smaller than $k$. 
		\end{enumerate}
		
		Let $e(S, \Delta)$ be the number of ordered sequences of moves that reach the prescribed final configuration. Then we have the equality 
		\[ e(S, \Delta) = \chi_0(\Vec{e}).\]
	\end{lemma}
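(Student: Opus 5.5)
The plan is to iterate the recursion of Theorem~\ref{thm: hol-Euler-char} in the zero-dimensional case and match it step by step with the duckling game. When $n=0$ we have $\chi_{-1}=0$, because negative expected dimension forces emptiness by Proposition~\ref{prop: upperbound}. The recursion therefore reads $\chi_0(\Vec e)=\sum_{i\in I(\Vec e)}\chi_0(t_i(\Vec e))$, where every $t_i(\Vec e)$ lives in genus $g-1$ with $u+\inv$ lowered by one (Proposition~\ref{prop: ramification game}). Unwinding this, $\chi_0(\Vec e)$ is the number of sequences of nontrivial moves $t_{i_1},\dots,t_{i_g}$ that bring $\Vec e$ to the terminal vector $\Vec f$ with $u(\Vec f)+\inv(\Vec f)=0$. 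For that terminal vector $W^{\Vec f}$ is $\Pic^0$ of genus $0$, so $\chi_0(\Vec f)=1$. It thus suffices to build a bijection between such move sequences for $\Vec e$ and admissible duckling walks from $S$ to $\{\Delta+k-m,\dots,\Delta+k-1\}$.

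To build the bijection, I will encode any vector $\Vec e'$ reachable from $\Vec e$ by its southernmost squares $ke'_j+j$, as in Remark~\ref{remark: southernomst description}. The $k-m$ columns coming from the entries $1$ and $0$ already have displacement zero. I will first check that they stay that way, i.e.\ that reachable configurations always have the form \emph{block of consecutive filled numbers, then $m$ ducklings}. This is because the coin game only ever decreases positions, and a coin with zero displacement is blocked. A move $t_i$ is nontrivial exactly when the southernmost coin of column $i$ can slide down by one. Under the normalization below, this moves exactly one duckling one step and leaves everything else fixed; the case $t_0$ wraps column $0$ to column $k-1$, and that matches a single step. The figure example illustrates the mechanism.

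I will normalize positions by reflecting: a duckling at square $s$ corresponds to the point $N-s$ for a suitable constant $N$ (essentially the final target position of the leading duckling). This turns ``moving down'' into ``advancing by one''. I will choose $N$ so that the initial ducklings sit on $S$, splitting into the cases $k_2>0$ and $k_2=0$ as in the definition of $S$. I will then compute the leading duckling's displacement from Remark~\ref{remark: southernomst description} and check that it equals $\Delta$, using the formulas $(a-1)(k-m)-k_3$ or $a(k-m)-k_3$. This amounts to counting the gaps below the leading duckling, which come from the $k_3$ columns of height $1$ and the $k_4$ columns of height $0$.

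The remaining point, which I expect to be the main obstacle, is showing that the legality of a move in the coin game is exactly rules (2)–(4).
\begin{itemize}
\item Rule (2) is the requirement that the square numbered one less is free of another southernmost coin.
\item Rule (3) says a duckling cannot pass below the consecutive block, i.e.\ its displacement cannot go negative. Because the order of ducklings is preserved, only the leading duckling needs this check.
\item Rule (4) is the admissibility of $\mathbf a(\Vec e')$ as a vector of nonnegative entries. All ducklings lie in columns with entries $\geq a-1\geq 1$ relative to the block, so a duckling falling a full row below another would force some $e'_j<0$ or break the column structure; equivalently, the span of the ducklings must stay below $k$.
\end{itemize}
I will verify these conditions carefully using the explicit descriptions of $t_j$ in Definition~\ref{def: t_i}, especially the condition $e_0>e_{k-1}+1$ for $t_0$. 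With that in hand, the two counts agree term by term, and so $e(S,\Delta)=\chi_0(\Vec e)$.
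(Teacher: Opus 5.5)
Your overall route is the paper's. With $\chi_{-1}=0$, the recursion of Theorem~\ref{thm: hol-Euler-char} makes $\chi_0(\Vec e)$ the number of sequences of nontrivial $t_i$ ending at $\Vec f$. You then match these sequences with duckling walks through the southernmost squares $ke_j+j$ and their displacements. Your reflection $x\mapsto N-x$ and your gap count giving $\Delta$ are correct. Here $N=ka+k_1+k-1$ if $k_2>0$ and $N=k(a+1)+k-1$ if $k_2=0$. These two computations spell out what the paper only asserts.

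\textbf{The gap: rule (4).} Your justification of rule (4) fails as written, and the failure comes from an inaccurate picture of the configurations. They are not ``a consecutive block followed by $m$ ducklings''. Each duckling column also carries displaced non-southernmost coins; for $\Vec e=(5,4,4,0,0)$ these are $10,11,12,15,16,17,20$. It is precisely these coins that produce rule (4). Nonnegativity plays no role: $t_i$ for $i\geq1$ is a swap, and $t_0$ only acts when $e_0\geq e_{k-1}+2$, so entries never become negative.

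\textbf{The correct argument.} The southernmost coin at $u=ke_i+i$ can move iff the square $u-1$ is empty, i.e.\ iff $e_{i-1}<e_i$ (resp.\ $e_{k-1}<e_0-1$ for $i=0$). Let $v$ be the southernmost square of the neighbouring column $i-1$ (resp.\ $k-1$), so that $u-v\equiv 1\pmod k$.
\begin{itemize}
\item If $v$ lies in the zero-displacement block while $u$ has positive displacement, then $u-v\geq k+1$ and the move is allowed. A leading duckling of displacement zero is exactly rule (3).
\item If $v$ is a duckling and all ducklings lie in a window of width less than $k$, then either $v=u-1$, which is rule (2), or $v=u+k-1$.
\item The case $v=u+k-1$ means $e_{i-1}=e_i+1$ (resp.\ $e_{k-1}=e_0$). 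Then $u-1$ is occupied by a non-southernmost coin of that column. In the duckling line this is exactly the leading duckling trying to reach distance $k$ from the last one. For instance, in $(5,4,4,0,0)$ the duckling $21$ is blocked by the coin $20$.
\end{itemize}
Run inductively, this argument also proves the window invariant. You need that invariant to know that a nontrivial $t_i$ never pairs two ducklings.

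\textbf{A related inaccuracy.} A nontrivial $t_i$ does not ``leave everything else fixed''. It also moves the partner square $v$ to $v+1$; for example, $t_0$ sends the southernmost square $4$ of $(5,4,4,0,0)$ to $5$. So the southernmost squares do not only decrease. The non-duckling squares keep displacement zero because $v+1$ is an already occupied square adjacent to the block, not because positions only decrease. This does not change the count, but it should be stated correctly.
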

	\begin{proof}
		Let $\Vec{f}$ be the unique vector satisfying 
		\[ \textbf{a}(\Vec{f}) = (0, 1, \ldots, k-1 + \sum_{i=0}^{k-1}e_i).\]
		Using the formula 
		\[ \chi_0(\Vec e) = \sum_{t_i(\Vec e) \neq \Vec{e}}
		\chi_0\bigl(t_i(\Vec e)\bigr), \]
		together with $\chi_0\bigl(\Vec f\bigr) = 1$, we see inductively that $\chi_0(\Vec e)$ is the number of ordered sequences of moves $t_i$ that reach the prescribed final vector $\Vec{f}$. Thus we are left to identify the moves $t_i$ to the duckling moves. 
		
		We use the displacement description of Remark~\ref{remark: southernomst description}. We arrange the numbers in $\textbf{a}(\Vec{e})$ in a table as in Proposition~\ref{prop: ramification game} and look at the southernmost squares having non-zero displacement (see Remark~\ref{remark: southernomst description}, i.e. the ones corresponding to the first $m$ columns). We remark that the ducklings in the lemma have the same displacement numbers as the southernmost squares of $\Vec{e}$. We next compare the duckling moves to the moves induced by $t_i$ on southernmost squares. 
		
		In order for a transformation $t_i$ to move numbers $u, v$ to $u-1, v+1$ we need to have $u-v \geq k+1$. In particular, since the distance between the leading and last displaced southernmost square of $\Vec{e}$ is smaller than $k-1$, it follows that
		\begin{itemize}
			\item Every transformation $t_i$ is between a displaced southernmost square and a square with displacement $0$, and it reduces the displacement of the former by $1$.  
			\item The distance between first and last displaced southernmost squares can never be bigger than $k-1$; and consequently
			\item The order of the southernmost squares stays the same at each step and no two can be on the same position.  
		\end{itemize}
		
		Finally, the leading southernmost square cannot advance more than its displacement.
		
		In particular, the ducklings in the lemma and the displaced southernmost squares of $\Vec{e}$ have the same displacement and move according to the same rules, thus $e(S, \Delta) = \chi_0(\Vec{e})$, as claimed.
	\end{proof}
	
	\begin{figure}[H]
		\begin{tikzpicture}[x=1cm,y=1cm]
			
			% Squares
			\foreach \i in {0,...,10}{
				\draw (\i,0) rectangle (\i+1,1);
				\node[below] at (\i+0.5,0) {\(\i\)};
			}
			
			% Ducklings at 0,3,4
			\node[inner sep=0pt] at (0.5,0.52) {\duckiconTwo};
			\node[inner sep=0pt] at (3.5,0.52) {\duckiconTwo};
			\node[inner sep=0pt] at (4.5,0.52) {\duckiconTwo};
			
			% Displacement of the leading duckling
			\draw[<->,thick]
			(4.5,1.35) -- node[above] {\(\Delta=6\)} (10.5,1.35);
			
		\end{tikzpicture}
		\caption{Duckling game corresponding to Figure 4}
	\end{figure}
	Our next goal is to consider the case of gonality $k = 4$ and see how this duckling perspective can help us compute algebraic Euler characteristic. 
	
	\begin{lemma}
		Let $k = 4$. In the notation of Lemma~\ref{lemma: duckling} we have $e(\{2,3\}, n) = 2^{n-1}$. 
	\end{lemma}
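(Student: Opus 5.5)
The plan is to translate the duckling game of Lemma~\ref{lemma: duckling} for $k=4$, $S=\{2,3\}$ into a walk on a three-element state space and count walks directly. With $k=4$ and $m=2$, the two ducklings start at $2$ and $3$. They must end at the positions $\Delta+k-m=n+2$ and $\Delta+k-1=n+3$.

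\emph{Step 1: record the state by the gap.} Rule~(2) means the ducklings never coincide, and rule~(1) lets each move advance only one of them by one position. A duckling cannot jump over the other without first landing on its square, so their order is preserved. Call them the follower, at position $x$, and the leader, at position $y>x$.

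Each legal sequence of moves advances the leader exactly $n$ times and the follower exactly $n$ times, in some order, for $2n$ moves in total. Rule~(3) requires $y\le n+3$. This holds automatically, since the leader starts at $3$ and advances exactly $n$ times overall. So rule~(3) imposes nothing beyond the endpoint.

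The state is therefore determined by the number of moves made so far together with the gap $g=y-x$. Rules~(2) and~(4) say exactly that $1\le g\le 3$. A leader move sends $g\mapsto g+1$, and a follower move sends $g\mapsto g-1$.

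\emph{Step 2: count walks.} By Step~1, $e(\{2,3\},n)$ equals the number of $\pm1$ walks of length $2n$ on $\{1,2,3\}$ that start and end at $1$. Such a walk has exactly $n$ up-steps, which matches the requirement that each duckling advances $n$ times.

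Consider the parity of the gap. At odd times the walk is at $2$, and at even times it lies in $\{1,3\}$. From $1$ or from $3$ the next step is forced to $2$. From $2$ there are two choices, $1$ or $3$.

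The position at time $2n$ is required to be $1$, so the last choice is forced. The positions at the even times $2,4,\dots,2n-2$ are free in $\{1,3\}$, giving exactly $n-1$ independent binary choices. Hence $e(\{2,3\},n)=2^{n-1}$ for $n\ge1$.

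\emph{Main obstacle.} The only point needing care is Step~1: checking that rule~(3) and the final-position requirement add no further constraint beyond the gap lying in $\{1,2,3\}$. I would verify this by noting that the leader's position is $3$ plus the number of leader moves made so far. This number never exceeds $n$, so the leader never passes $n+3$, and it equals $n$ exactly at the end.
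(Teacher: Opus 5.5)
Your proof is correct and is essentially the paper's argument. The paper normalizes by translation to the three configurations $\{2,3\},\{1,3\},\{0,3\}$ (your gaps $1,2,3$), notes that the move is forced from the first and last and is a binary choice from the middle, and solves the recursion $e(\{1,3\},n)=2e(\{1,3\},n-1)$; you count the corresponding $\pm1$ walks on $\{1,2,3\}$ directly, and your check that rule~(3) is vacuous and the restriction to $n\geq 1$ are points the paper leaves implicit.
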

	\begin{proof}
		We will compute this recursively. Since the two ducklings cannot sit on the same number, the first move is forced: the leading duckling has to move. This implies that $e(\{2,3\}, n)$ is the number of ways in which two ducklings sitting in positions $2$ and $4$ can reach reach $n+2, n+3$. By translating backwards, this is the number of ways in which two ducklings sitting in positions $1$ and $3$ can reach reach $n+1, n+2$. Hence we have 
		$$e(\{2,3\}, n) = e(\{1,3\}, n-1).$$
		
		When the ducklings are in positions $1$ and $3$, both of them can move, and we obtain, after translating backwards, the recursion formula 
		$$e(\{1,3\}, n) = e(\{2,3\}, n) + e(\{0,3\}, n-1).$$
		But if the ducklings are in position $0$ and $3$, the move is forced: the leading duck cannot advance, since the distance between the ducks is  always smaller than $k = 4$. This implies $$e(\{0,3\}, n-1) = e(\{1,3\}, n-1).$$
		
		In particular, we obtain $e(\{1,3\}, n) =2e(\{1,3\}, n-1)$ and because $e(\{1,3\}, 0) = 1$, we obtain $e(\{1,3\}, n) = 2^n$, hence the conclusion.
	\end{proof}
	
	This has implications for the computations of cardinalities of Hurwitz--Brill--Noether loci. 
	\begin{cor}
		Let $[\pi\colon C\to\PP^1,p]$ be a generic element of
		$\mathcal H^{\mathrm{tot}}_{g,4}$. For each residue class of $g$ modulo $4$, let $\Vec e$ be the vector displayed in the second column of the table below. Then $W^{\Vec e}(\pi,p)$ is finite, and its cardinality is given in the third column:
		{\large
			\[
			\begin{array}{c|c|c}
				g & \Vec e & \textrm{Cardinality of}\ \,W^{\Vec e}(\pi,p) \\ \hline 
				4h & (h+1,h+1,0,0) & 2^{2h-1}\\[4pt]
				4h+1 & (h+1,0,h+1,0) & 2^{2h}\\[4pt]
				4h-2 & (h+1,h+1,1,0) & 2^{2h-2}\\[4pt]
				4h-1 & (h+1,1,h+1,0) & 2^{2h-1}
			\end{array}
			\]
		}
	\end{cor}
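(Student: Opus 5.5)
The plan is to deduce each row of the table from three ingredients: the dimension count of Theorem~\ref{trm: dimension}, the fact that in expected dimension zero the algebraic Euler characteristic counts points, and the duckling count $e(\{2,3\},n)=2^{n-1}$ together with its companion $e(\{1,3\},n)=2^n$, both established in the proof of the last lemma.

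First I will check that each $\Vec e$ in the table satisfies $g=u(\Vec e)+\inv(\Vec e)$.
\begin{itemize}
\item For $(h+1,h+1,0,0)$, the four pairs $(0,h+1)$ each contribute $h$ to $u$, so $u=4h$, and $\inv=0$.
\item For $(h+1,0,h+1,0)$, again $u=4h$, and the single inversion $e_1<e_2$ gives $\inv=1$.
\item For $(h+1,h+1,1,0)$, the pairs $(1,h+1)$ contribute $h-1$ each and the pairs $(0,h+1)$ contribute $h$ each, so $u=4h-2$ and $\inv=0$.
\item For $(h+1,1,h+1,0)$, the same computation gives $u=4h-2$, and $\inv=1$.
\end{itemize}
So in each row $W^{\Vec e}(\pi,p)$ has dimension $0$ by Theorem~\ref{trm: dimension}. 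By Proposition~\ref{prop: reduced-intersection-right-dim} it is reduced. Nonemptiness will come out of the count itself, which is positive. Hence the cardinality equals $\chi_0(\Vec e)$, and Theorem~\ref{thm: hol-Euler-char} with $n=0$ gives $\chi_0(\Vec e)=\sum_{i\in I(\Vec e)}\chi_0(t_i(\Vec e))$, since $\chi_{-1}=0$.

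Next I will reduce each $\chi_0(\Vec e)$ to a duckling count. The rows $(h+1,h+1,0,0)$ and $(h+1,h+1,1,0)$ are maximal vectors in the sense of Lemma~\ref{lemma: duckling}, with $k_1=2$, $k_2=0$, $k_3\in\{0,1\}$ and $a=h$. The lemma then gives $S=\{2,3\}$ with $\Delta=2h$ and $\Delta=2h-1$ respectively. The count $e(\{2,3\},n)=2^{n-1}$ immediately yields $2^{2h-1}$ and $2^{2h-2}$.

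The rows with one inversion are not of the block form required by Lemma~\ref{lemma: duckling}, so I will rerun its proof directly. That proof only used two facts: the displaced southernmost squares lie within distance less than $k$ of each other, and every $t_i$ reduces the displacement of exactly one of them by one (Remark~\ref{remark: southernomst description}). For $(h+1,0,h+1,0)$ the displaced southernmost squares are $4h+4$ and $4h+6$; for $(h+1,1,h+1,0)$ they are $4h+4$ and $4h+6$ with the rows shifted. In both cases the two ducklings sit at gap $2$, so the configuration has the shape $\{1,3\}$ up to translation. The moves $t_i$ correspond to the same duckling rules as in the lemma: no collisions, leading duckling capped, spread less than $k$.

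To calibrate the parameter $n$, I will count moves. Every terminating move sequence has length $u+\inv=g$ by Proposition~\ref{prop: ramification game}. A $\{2,3\}$-configuration with parameter $n$ uses $2n$ moves, and a $\{1,3\}$-configuration uses $2n+1$. This gives $n=2h$ in row two and $n=2h-1$ in row four, so $e(\{1,3\},n)=2^n$ yields $2^{2h}$ and $2^{2h-1}$. As a consistency check, the same move count confirms the $\{2,3\}$ parameters $n=2h$ and $n=2h-1$ found above.

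I expect the main obstacle to be matching the unnormalized configurations of the inverted vectors with the normalized states $\{1,3\}$ in the proof of the lemma. One has to get the target positions and the cap on the leading duckling exactly right, since an off-by-one error changes the power of $2$. I would first verify this identification by hand for $h=2$ and compare against the recursion of Theorem~\ref{thm: hol-Euler-char}.
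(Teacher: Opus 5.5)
Your proposal is correct and is essentially the argument the paper intends; the paper states the corollary without proof, right after the lemma $e(\{2,3\},n)=2^{n-1}$, whose proof also yields $e(\{1,3\},n)=2^n$. Reducedness in expected dimension zero identifies the cardinality with $\chi_0(\Vec e)$, and the $n=0$ recursion turns this into a duckling count. Your identifications all check out: $S=\{2,3\}$ with $\Delta=2h$ and $2h-1$ for the decreasing rows, and the $\{1,3\}$ shape with parameter $2h$ and $2h-1$ for the rows with one inversion, as confirmed by your move count $u+\inv=g$. Your direct rerun of the duckling argument also covers $h=1$ in the decreasing rows, where the lemma's hypothesis $a\geq 2$ fails.

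Equivalently, up to twisting by a multiple of $p$ (a power of $\sigma$), the two inverted rows are the images under $t_0$ of $(h+2,h+2,1,0)$ and $(h+1,h+1,0,0)$ in genus one higher, and $t_0$ is the forced unique move there. This is exactly the step $e(\{2,3\},n)=e(\{1,3\},n-1)$ in that lemma.
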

	
	In fact, keeping track of the position of the displaced southernmost square is sufficient also when we want to compute the algebraic Euler characteristic for higher dimensional Hurwitz--Brill--Noether loci. 
	
	Let $\Vec{e}$ be a vector whose non-zero southernmost displacements are given by $\Delta + k-1-s$ for $s \in S$. We denote by $e_n(S, \Delta)$ the algebraic Euler characteristic $\chi_n(\Vec{e})$. Note that $e_n(S, \Delta)$ is well defined, because the vector $\Vec{e}$ is unique up to applications of $\sigma$ in Definition~\ref{def: sigma}. We have $\chi_n(\Vec{e}) = \chi_n(\sigma\Vec{e})$, since an application of $\sigma$ correspond with a translation by $\OO_C(p)$ of the pointed Hurwitz--Brill--Noether locus. 
	
	We will outline again how to compute algebraic Euler characteristics in gonality $4$. 
	
	\begin{lemma}
		Let $k = 4$ and $n\geq 1$. In the notation above, we have $e_1(\{2,3\}, n) = -(3n-1)2^{n-1}$.
	\end{lemma}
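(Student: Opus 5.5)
We apply Theorem~\ref{thm: hol-Euler-char} with $n=1$ and reuse the zero-dimensional counts of the previous lemma. In the duckling language, the nontrivial transformations $t_i$ with $i\in I(\Vec e)$ are exactly the legal duckling moves. Rules (2)--(4) of Lemma~\ref{lemma: duckling} encode precisely when $t_i(\Vec e)=\Vec e$, because the proof of that lemma identifies the $t_i$ with advances of the displaced southernmost squares. Hence $|I(\Vec e)|$ equals the number of legal moves from the current configuration, and the recursion becomes
\[ e_1(S,\Delta)=\sum_{\text{legal moves } S\to S'} e_1(S',\Delta')\;-\;\#\{\text{legal moves}\}\cdot e_0(S,\Delta). \]
As in the previous lemma, we normalize by translating backwards, which is legitimate since $\chi_n(\Vec e)=\chi_n(\sigma\Vec e)$.

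We introduce $a_n=e_1(\{2,3\},n)$, $b_n=e_1(\{1,3\},n)$ and $c_n=e_1(\{0,3\},n)$. From the previous lemma and its proof, the zero-dimensional values are $e_0(\{2,3\},n)=2^{n-1}$ and $e_0(\{1,3\},n)=e_0(\{0,3\},n)=2^n$. The move analysis is the same as in that proof.
\begin{itemize}
\item From $\{2,3\}$ only the leading duckling may move, giving $a_n=b_{n-1}-2^{n-1}$.
\item From $\{1,3\}$ two moves are legal, giving $b_n=a_n+c_{n-1}-2\cdot 2^n$.
\item From $\{0,3\}$ only the trailing duckling may move, by rule (4), giving $c_n=b_n-2^n$.
\end{itemize}
Substituting the first and third relations into the second yields $b_n=2b_{n-1}-3\cdot 2^n$. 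Hence $b_n/2^n$ decreases by $3$ at each step, and $b_n=(b_0-3n)2^n$. Consequently $a_n=(b_0-3n+2)2^{n-1}$, so the claim reduces to showing $b_0=1$, or equivalently to checking the base case directly.

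The main obstacle is fixing the boundary values correctly, since the recursions at small $\Delta$ involve terminal configurations. The terminal vector $\Vec f$ (with $\mathbf a(\Vec f)$ consecutive) has $u+\inv=0$, so for $n=1$ the genus is $1$. Its locus is then all of $\Pic^d(E)$, and $\chi=0$. For $a_1$ the only move is the leading duckling reaching the final configuration. Therefore $a_1=0-1\cdot e_0(\{2,3\},1)=-1$; this disagrees with the target $-(3\cdot1-1)2^0=-2$, so my bookkeeping needs care here. I would recheck the meaning of $\Delta$ in the state $\{1,3\}$ after translation: the shift bookkeeping in $a_n=b_{n-1}-\cdots$ may need index $n$ instead of $n-1$, or $e_0(\{2,3\},1)$ may equal $2$ in the relevant convention. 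I would fix the indices using the explicit small cases $n=1,2$ computed directly from Theorem~\ref{thm: hol-Euler-char} on actual vectors in gonality $4$. Only then would I run the linear recursion above and verify the induction $a_n=-(3n-1)2^{n-1}$. Resolving this index alignment against the concrete vectors is where I expect the real work to lie.
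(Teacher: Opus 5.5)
\paragraph{Verdict.} Your three recursions are exactly the ones the paper derives from Theorem~\ref{thm: hol-Euler-char}, and your algebra leading to $b_n=(b_0-3n)2^n$ and $a_n=(b_0-3n+2)2^{n-1}$ is correct. The proof is still not finished, because you never determine the initial value $b_0$, and both things you say about it are wrong.

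\paragraph{The two errors.} First, matching $(b_0-3n+2)2^{n-1}$ with $-(3n-1)2^{n-1}=(1-3n)2^{n-1}$ requires $b_0=-1$, not $b_0=1$.

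Second, your direct computation $a_1=-1$ assumes that the single move of the leading duckling from $(\{2,3\},1)$ reaches the final configuration. It does not. For $\Delta=1$ the ducklings must end at positions $3,4$, and $u+\inv=2$, so two moves are needed. After the leading duckling moves, the ducklings sit at $2,4$. Translating back, this is the state $(\{1,3\},0)$, whose value is $b_0$, not the terminal value $0$. That is precisely your own recursion $a_1=b_0-e_0(\{2,3\},1)=b_0-1$. So there is no index misalignment to repair, and $e_0(\{2,3\},1)=1$ is correct.

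\paragraph{The missing step.} It is one more application of the recursion, now at $(\{1,3\},0)$. There the leading duckling is already at $\Delta+k-1=3$ and cannot move. The only legal move is therefore the trailing one, which lands in the terminal configuration $(\{2,3\},0)$. In that configuration $u+\inv=0$ and the genus is $1$. As you correctly observed, the closure of the locus is then the whole Picard variety of the elliptic curve, so its $\chi$ is $0$. Moreover $e_0(\{1,3\},0)=1$, since there is a single path from $(1,3)$ to $(2,3)$. Hence
\[
b_0=0-1\cdot 1=-1 .
\]
This gives $a_1=-2$, and in general
\[
a_n=(-1-3n+2)2^{n-1}=-(3n-1)2^{n-1},
\]
which is how the paper concludes, via the initial condition $e_1(\{1,3\},0)=-1$.
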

	\begin{proof}
		The formula in Theorem~\ref{thm: hol-Euler-char} implies 
		\[ e_1(\{2,3\}, n) = e_1(\{1,3\}, n-1) - e_0(\{2,3\}, n) = e_1(\{1,3\}, n-1) - 2^{n-1}. \]
		Furthermore, applying the same formula for $e_1(\{1,3\}, n)$, we obtain 
		\[ e_1(\{1,3\}, n) = e_1(\{2,3\}, n) + e_1(\{0,3\}, n-1) - 2  e_0(\{1,3\}, n) = e_1(\{2,3\}, n) + e_1(\{0,3\}, n-1) - 2^{n+1}. \]
		Similarly we obtain  
		\[ e_1(\{0,3\}, n) = e_1(\{1,3\}, n) - e_0(\{0,3\}, n) = e_1(\{1,3\}, n) - 2^n. \]
		
		In particular, out of the last two formulas, we obtain 
		\[ e_1(\{1,3\}, n) = e_1(\{2,3\}, n) + e_1(\{1,3\}, n-1) - 2^{n-1} - 2^{n+1}. \] 
		Therefore
		\[ e_1(\{1,3\}, n) = 2e_1(\{1,3\}, n-1) - 2^{n-1} - 2^{n-1} - 2^{n+1} = 2e_1(\{1,3\}, n-1) - 3 \cdot 2^n. \]
		Using the initial condition 
		\[ e_1(\{1,3\}, 0) = e_1(\{2,3\}, 0) - e_0(\{1,3\}, 0)  = 0 - 1=-1,\]
		we obtain 
		\[ e_1(\{1,3\}, n) = - (3n+1)2^n.\qedhere\]
	\end{proof}
	
	As a consequence of this result, we obtain the genus of maximal pointed Hurwitz--Brill--Noether loci, when their dimension is $1$ and the gonality is $4$. 
	
	\begin{cor}
		Let $[\pi\colon C\rightarrow \PP^1,p]$ be a generic element of
		$\mathcal H^{\mathrm{tot}}_{g,4}$. Depending on the residue of $g$
		modulo $4$, we have:
		\renewcommand{\arraystretch}{1.4}
		{\large
			\[
			\begin{array}{c|c|c}
				g & \Vec e & \text{genus of }W^{\Vec e}(\pi,p) \\ \hline
				4h+1
				&
				(h+1,h+1,0,0)
				&
				1+(6h-1)2^{2h-1}
				\\[4pt]
				
				4h+2
				&
				(h+1,0,h+1,0)
				&
				1+(6h+1)2^{2h}
				\\[4pt]
				
				4h-1
				&
				(h+1,h+1,1,0)
				&
				1+(6h-4)2^{2h-2}
				\\[4pt]
				
				4h
				&
				(h+1,1,h+1,0)
				&
				1+(6h-2)2^{2h-1}
			\end{array}
			\]
		}
	\end{cor}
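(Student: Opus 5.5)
The plan is to combine the identity $p_a = 1-\chi$ for the one-dimensional loci with the two gonality-$4$ lemmas above, where $\chi_1(\Vec e)=e_1(S,\Delta)$. The work is to identify, for each residue class of $g$, the pair $(S,\Delta)$ attached to the displayed vector $\Vec e$.

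First I check that each locus is a curve, i.e.\ that $g-u(\Vec e)-\inv(\Vec e)=1$. For $(h+1,h+1,0,0)$ there are four pairs with difference $h+1$, so $u=4h$ and $\inv=0$, which gives $g=4h+1$. For $(h+1,h+1,1,0)$ one computes $u=4(h-1)+1$ and $\inv=0$, so $u+\inv=4h-3$ and $g=4h-2$. For the vectors $(h+1,0,h+1,0)$ and $(h+1,1,h+1,0)$, the value $u$ is unchanged while $\inv$ increases by exactly $1$; this raises $g$ by one, giving $4h+2$ and $4h$ respectively. By Theorem~\ref{trm: dimension}, Proposition~\ref{prop: reduced-intersection-right-dim} and Theorem~\ref{theorem: reduced and CM}, $\overline W^{\Vec e}(\pi,p)$ is then a reduced, proper, Cohen--Macaulay curve. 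By \cite[Theorem 1.1]{Agg26} it is smooth and irreducible, so its genus equals $1-\chi_1(\Vec e)$.

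Next I compute the displaced southernmost squares as in Remark~\ref{remark: southernomst description}, and record them in the notation $e_n(S,\Delta)$, where the displacements are $\Delta+k-1-s$ for $s\in S$.
\begin{itemize}
\item For $(h+1,h+1,0,0)$ (so $k_1=2$, $k_2=k_3=0$, $a=h$), Lemma~\ref{lemma: duckling} gives $S=\{2,3\}$ and $\Delta=a(k-m)-k_3=2h$.
\item For $(h+1,h+1,1,0)$ it gives $S=\{2,3\}$ and $\Delta=2h-1$.
\item For $(h+1,0,h+1,0)$ and $(h+1,1,h+1,0)$, the two displaced southernmost entries $4(h+1)$ and $4(h+1)+2$ lie in columns $0$ and $2$, one position further apart. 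I expect this to give $S=\{1,3\}$ with $\Delta=2h$ and $\Delta=2h-1$ respectively.
\end{itemize}
I will verify the last two directly by counting the displacements, or more cleanly as follows: these vectors are obtained from the previous ones by undoing a single move $t_2$. That move turns the duckling configuration $\{1,3\}$ into $\{2,3\}$ at the same $\Delta$, exactly as in the recursion used in the proof of the lemma computing $e_1(\{2,3\},n)$. Since $e_n$ is invariant under $\sigma$, the choice of representative does not matter.

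Finally I substitute into the formulas. The lemma gives $e_1(\{2,3\},n)=-(3n-1)2^{n-1}$, and its proof gives $e_1(\{1,3\},n)=-(3n+1)2^{n}$. With $n=2h$ these yield genera $1+(6h-1)2^{2h-1}$ and $1+(6h+1)2^{2h}$. With $n=2h-1$ they yield $1+(6h-4)2^{2h-2}$ and $1+(6h-2)2^{2h-1}$. These match the four rows of the table.

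The step I expect to be the main obstacle is the correct identification of $(S,\Delta)$ for the non-decreasing vectors in the second and fourth rows. It is easy to be off by one in $\Delta$ there, because the extra inversion shifts the relative duckling positions. I would double-check this identification by running Theorem~\ref{thm: hol-Euler-char} by hand for $h=1$.
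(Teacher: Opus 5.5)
Your argument is the paper's own. The genus is $1-\chi_1(\Vec e)$, and $\chi_1(\Vec e)=e_1(S,\Delta)$ is read off from the two gonality-$4$ formulas $e_1(\{2,3\},n)=-(3n-1)2^{n-1}$ and $e_1(\{1,3\},n)=-(3n+1)2^n$. Your identifications $(\{2,3\},2h)$, $(\{1,3\},2h)$, $(\{2,3\},2h-1)$ and $(\{1,3\},2h-1)$ for the four rows are correct and reproduce the table.

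There is one arithmetic slip to fix. In fact
\[
u(h+1,h+1,1,0)=2(h-1)+2h=4h-2,
\]
not $4h-3$. So this locus is a curve for $g=4h-1$, as stated, not for $g=4h-2$. The value $g=4h-2$ is the zero-dimensional case of the preceding corollary, and it is also inconsistent with your own $g=4h$ for $(h+1,1,h+1,0)$.

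The slip does not affect the genus values, because $\chi_1(\Vec e)$ is indexed by $g-u(\Vec e)-\inv(\Vec e)=1$, and your $(S,\Delta)$ data are right. Also note that the phrase ``displacements $\Delta+k-1-s$'' is accurate only for the leading duckling; for $(h+1,h+1,0,0)$ both displacements equal $2h$. Reading $(S,\Delta)$ off the target positions in Lemma~\ref{lemma: duckling}, as you effectively do, is the safe way.
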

	\section{Low codimension existence} \label{sec: expected codimension}
	For a nonnegative $k$-vector $\Vec e$, let
	\[
	\mathcal W^{\Vec e}_g
	\coloneqq
	\left\{
	[\pi:C\longrightarrow\mathbb P^1,p,L]
	\ \middle|\
	[\pi,p]\in\mathcal H^{\mathrm{tot}}_{g,k},
	\quad L\in W^{\Vec e}(\pi,p)
	\right\}.
	\]
	This is a locally closed subscheme of the relative Picard space
	of degree $g+k+|\Vec e|-1$ over
	$\mathcal H^{\mathrm{tot}}_{g,k}$. Its natural projection
	\[
	\mathcal W^{\Vec e}_g
	\longrightarrow
	\mathcal H^{\mathrm{tot}}_{g,k}
	\]
	forgets $L$ and has fiber $W^{\Vec e}(\pi,p)$ over $[\pi,p]$.
	We denote its image by $\mathcal H(\Vec e)$.
	
	We start with the case of a degree $k$ map $\pi\colon C \rightarrow \mathbb{P}^1$, with source curve of genus $g$, totally ramified at a point $p$. We assume that $k\geq g+1$ in this situation. Then, the Hurwitz space $\mathcal{H}_{g,k}^{\textrm{tot}}$ dominates $\mathcal{M}_{g,1}$ via the forgetful map. We are interested in finding pointed Hurwitz--Brill--Noether components $\mathcal{H}(\Vec{e})$ whose codimension in $\mathcal{H}_{g,k}^{\textrm{tot}}$ is the expected number $u(\Vec{e})+ \inv(\Vec{e})-g$. We can find these by induction, and we will use pullbacks of Brill--Noether loci in $\M_g$ as starting cases.
	
	\begin{prop}\label{prop: BN and HBN}
		Consider integers $g,r,d$ and $k$ satisfying the inequalities
		\[
		k\geq g+1,\qquad
		r\geq1,\qquad
		2r\leq d\leq g+r-2,
		\qquad
		3-g\leq\rho(g,r,d)\leq0,
		\]
		and set 
		\[
		\Vec e=
		\left(
		\underbrace{2,\ldots,2}_{r+1 \ \textrm{times}},
		\underbrace{1,\ldots,1}_{\ k-g-1+d-2r \ \textrm{times}},
		\underbrace{0,\ldots,0}_{\ g-d+r \ \textrm{times}}
		\right).
		\]
		Then $\mathcal W^{\Vec e}_g$ has an irreducible component of
		dimension $2g+k-3+g-u(\Vec{e})-\inv(\Vec{e})$, generically finite over its image in
		$\mathcal H^{\mathrm{tot}}_{g,k}$.
		This image lies in the pullback of $\mathcal M^r_{g,d}$ and has
		codimension $u(\Vec{e})+\inv(\Vec{e})-g$ in $\cHTot$.
		
		For $r=1$, the same conclusion holds under the weaker bound
		$2-g\leq\rho(g,1,d)\leq0$.
	\end{prop}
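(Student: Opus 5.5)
The plan is to identify $W^{\Vec e}(\pi,p)$, for this particular $\Vec e$, with the set of complete $g^r_d$'s on $C$ that are unramified at $p$, and then to transport the known low-codimension existence results for $\M^r_{g,d}$.

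\textbf{Numerics.} Write $b=k-g-1+d-2r$ and $c=g-d+r$. The hypotheses give $b\geq0$ and $c\geq2$.

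Since $\Vec e$ is weakly decreasing, $\inv(\Vec e)=0$. The only pairs contributing to $u$ are those pairing an entry $2$ with an entry $0$, each contributing $1$. Hence $u(\Vec e)=(r+1)(g-d+r)$, and
\[
g-u(\Vec e)-\inv(\Vec e)=\rho(g,r,d).
\]
So the claimed dimension is $2g+k-3+\rho=\dim\cHTot+\rho$, and the claimed codimension is $-\rho$.

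Next, $|\Vec e|=k-g+d+1$, so a bundle $L\in W^{\Vec e}(\pi,p)$ has degree $g+|\Vec e|+k-1=d+2k$. The vanishing sequence $\mathbf a(\Vec e)$ has three rows:
\begin{itemize}
\item the first row is $0,\dots,k-1$;
\item the second row is $k,\dots,k+r+b$;
\item the third row is $2k,\dots,2k+r$.
\end{itemize}
Set $M\coloneqq L(-2kp)$. Then $M$ has degree $d$, $h^0(M)=r+1$, and the vanishing orders of $M$ at $p$ are exactly $0,\dots,r$.

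\textbf{Converse direction.} Conversely, start from $M\in W^r_d(C)\setminus W^{r+1}_d(C)$ with $p$ not a ramification point of $|M|$. I would show $L=M(2kp)\in W^{\Vec e}(\pi,p)$ via Remark~\ref{rmk: Hilbert} and Proposition~\ref{prop: intersection description}. By Proposition~\ref{prop: ramification behaviour}, only the Hilbert function $h^0(M(jp))$ for $0\leq j\leq 2k$ needs controlling. The key input is the vanishing
\[
h^1\bigl(M(jp)\bigr)=0 \quad\text{for } j\geq r+1+b \text{ or so,}
\]
equivalently $h^0(K_C-M-jp)=0$. From this, Riemann--Roch fills the first two rows exactly, and we get $h^0(M)=r+1$ with no further jumps. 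I would prove this vanishing for general $p$ on $C$ using that $\deg(K_C-M)=2g-2-d$. The condition $d\leq g+r-2$ (i.e.\ $c\geq2$) and $k\geq g+1$ should make the count work: a general point of $C$ imposes independent vanishing conditions on $|K_C-M|$ up to its dimension. This gives a bijection between $W^{\Vec e}(\pi,p)$ and the $g^r_d$'s unramified at $p$, compatible in families.

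\textbf{Dimension count.} By \cite{Pfl23} and \cite{Teixidor-BN}, for $3-g\leq\rho\leq0$ (and, for $r=1$, the gonality loci with $2-g\leq\rho$) there is a component $\mathcal Z\subseteq\M^r_{g,d}$ of dimension $3g-3+\rho$. A general $C\in\mathcal Z$ has finitely many $g^r_d$'s, all with $h^0=r+1$.

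The forgetful map $\cHTot\to\M_{g,1}$ is dominant with fibers of dimension
\[
(2g+k-3)-(3g-2)=k-g-1.
\]
Pulling back $\mathcal Z\times\{p\}$ therefore yields a locus of dimension
\[
3g-2+\rho+k-g-1=2g+k-3+\rho.
\]
By the previous paragraph, a general point of this locus carries finitely many $L\in W^{\Vec e}(\pi,p)$. This produces a subvariety of $\mathcal W^{\Vec e}_g$ of the stated dimension, generically finite over its image.

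To see it is a full irreducible component, I would invoke the lower bound: every component of $\mathcal W^{\Vec e}_g$ has dimension at least $\dim\cHTot+\rho$, by the relative version of Proposition~\ref{prop: lowerbound}. Since the locus has exactly that dimension, a component containing it cannot be larger unless $\mathcal Z$ were not a component of $\M^r_{g,d}$, or unless there were an excess family of $g^r_d$'s. The codimension statement then follows from $u+\inv-g=-\rho$.

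\textbf{Main obstacle.} The main obstacle is the $h^1$-vanishing step, which pins the splitting type to exactly $\Vec e$. It must hold at a general point of the pulled-back component, not only for general $C$. Since $\pi$ is determined by $|kp|$, this is really a statement about general $p$ on a Brill--Noether special curve. I would first try a direct argument bounding $h^0(K_C-M-jp)$ via non-Weierstrass-ness of a general $p$ for the complete series $|K_C-M|$. The numerical hypotheses $2r\leq d$ and $\rho\geq3-g$ are where I expect this to be used, together with $W^{r+1}_d(C)=\emptyset$ on $\mathcal Z$.
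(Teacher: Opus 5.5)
\textbf{Same route as the paper, one step not justified.} Your route is the paper's. You twist by $\OO_C(-2kp)$ to identify $W^{\Vec e}(\pi,p)$ with the complete $g^r_d$'s $M$ such that both $M$ and $\omega_C\otimes M^{-1}$ are unramified at $p$. You then pull back a component supplied by \cite{Pfl23} or \cite{Teixidor-BN} along $\cHTot\to\M_{g,1}\to\M_g$ and count dimensions.

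The step that does not go through as written is the fiber dimension of $\cHTot\to\M_{g,1}$. Subtracting dimensions of a dominant map only gives the \emph{generic} fiber dimension. For $\rho<0$, the pullback of $\mathcal Z$ lies over a proper closed subset of $\M_{g,1}$, and there semicontinuity only bounds the fibers from below. That is the wrong inequality both for the equality $\dim=2g+k-3+\rho$ and for excluding a larger locus through excess in the Hurwitz direction.

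The paper closes this directly. If $p$ is not a Weierstrass point and $k\geq g+1$, then $h^0(C,\OO_C(kp))=k-g+1$ and $h^0(C,\OO_C((k-1)p))=k-g$. So the fiber over $[C,p]$ is
\[
\mathbb P\bigl(H^0(C,\OO_C(kp))/\mathbb C\bigr)\setminus\mathbb P\bigl(H^0(C,\OO_C((k-1)p))/\mathbb C\bigr),
\]
of dimension exactly $k-g-1$ for every non-Weierstrass pointed curve, Brill--Noether special or not. A general point of any curve is non-Weierstrass, so this applies at the generic point of your pulled-back locus. You already note that $\pi$ is determined by $|kp|$; that observation has to be used here.

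\textbf{Two smaller corrections.} First, the step you call the main obstacle is routine, and your threshold is off. The required Hilbert function is $h^0(C,M(jp))=r+1+\max(0,j-c)$, where $c=g-d+r=h^0(C,\omega_C\otimes M^{-1})$. This $c$ is the number of gaps in the second row; $r+1+b$ is the number of filled squares there. Equivalently, $h^0(C,\omega_C\otimes M^{-1}(-jp))=\max(c-j,0)$ for all $j\geq 0$, i.e.\ $p$ is not a ramification point of $|\omega_C\otimes M^{-1}|$. This holds automatically:
\begin{itemize}
\item a general $C$ in $\mathcal Z$ carries finitely many such $M$;
\item $|M|$ and $|\omega_C\otimes M^{-1}|$ each have finitely many ramification points;
\item $p$ is general on $C$ at the generic point of the pulled-back locus.
\end{itemize}
The numerical hypotheses enter only through the cited existence theorems and the shape of $\Vec e$, not here.

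Second, the lower bound of Proposition~\ref{prop: lowerbound} plays no role in showing you have a component. It bounds components from below and cannot stop your locus from lying inside a bigger one. What works is your ``unless'' argument, which is the paper's. Twisting identifies $\mathcal W^{\Vec e}_g$ with an open subset of $\cHTot\times_{\M_g}\mathcal W^{r,\circ}_{g,d}$, where $\mathcal W^{r,\circ}_{g,d}$ parametrizes pairs $[C,M]$ with $h^0(C,M)=r+1$. An irreducible locus strictly containing yours would have to do one of three things:
\begin{itemize}
\item enlarge the image in $\M^r_{g,d}$;
\item enlarge $W^r_d(C)$;
\item enlarge the Hurwitz fiber.
\end{itemize}
The last is excluded only by the explicit fiber computation above.
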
 
	
	\begin{proof}  The entries of $\Vec e$ sum up to $|\Vec e|=k-g+d+1$. Consequently, its line bundles in $W^{\Vec e}_g$ have degree
		\[
		g+k+|\Vec e|-1=d+2k,
		\]
		and their prescribed vanishing sequence is
		\[\textbf{a}(\Vec e) = \bigl(0,1,\ldots,2k-g+d-r-1,\, 2k,2k+1,\ldots,2k+r \bigr). \]
		Moreover, since $\Vec{e}$ is decreasing, we have 
		\[ \inv(\Vec{e}) = 0 \ \textrm{and} \ u(\Vec{e}) = (r+1)(g-d+r), \]
		and hence $ g- \rho(g,r,d) = g - u(\Vec{e}) - \inv(\Vec{e})$. 
		
		In order to prove our result, we will use the Brill--Noether counterpart proved in \cite[Theorem A]{Pfl23} and pull it back to $\cHTot$. To do this, we will use the birational map 
		\[ W^{\Vec{e}}(\pi:C\to\mathbb P^1,p) \xrightarrow{\otimes \OO_C(-2kp)} W^{r,\circ}_d(C)\]
		where $W^{r,\circ}_d(C)$ parametrizes those line bundles $M$ satisfying $h^0(C,M) = r+1$. The line bundles in the image satisfy that $M$ and $\omega_C\otimes M^{-1}$ are both unramified at $p$, i.e. their vanishing sequences at $p$ are 
		\[
		(0,1,\ldots,r) \
		\text{and} \
		(0,1,\ldots,g-d+r-1), \ \textrm{respectively}.
		\]
		All these conditions ensure that the vanishing orders of $M(2kp)$ at $p$ are given by $\textbf{a}(\Vec{e})$.
		
		We consider the universal Brill--Noether space 
		\[ \mathcal W^{r, \circ}_{g,d} = \{[C, L] \ \mid \ C \in \M_g, \ L \in \textrm{Pic}^d(C), \ \textrm{and} \ h^0(C, L) = r+1\}.\]
		Thus twisting by $\mathcal O_C(2kp)$ identifies
		$\mathcal W^{\Vec e}_g$ with the open locus in
		\[
		\mathcal H^{\mathrm{tot}}_{g,k}
		\times_{\mathcal M_g}
		\mathcal W^{r,\circ}_{g,d}
		\]
		where $p$ is unramified
		for the line bundles in $W^{r,\circ}_{g,d}(C)$ and their duals.
		
		We now construct the required component. By \cite[Theorem A]{Pfl23},
		$\mathcal W^{r,\circ}_{g,d}$ has an irreducible component $Z$ of dimension $3g-3+\rho(g,r,d)$ generically finite over its image in $\mathcal M_g$. Next, we consider the Cartesian diagram 
		\[
		\begin{tikzcd}[column sep=large, row sep=large]
			\mathcal H^{\mathrm{tot, \circ}}_{g,k}
			\times_{\mathcal M_g} Z 
			\arrow[r]
			\arrow[d]
			\arrow[dr, phantom, "\ulcorner", very near start]
			&
			\mathcal M_{g,1} \times_{\mathcal M_g} Z 
			\arrow[r]
			\arrow[d]
			\arrow[dr, phantom, "\ulcorner", very near start]
			&
			Z
			\arrow[d]
			\\
			\mathcal H_{g,k}^{\mathrm{tot},\circ}
			\arrow[r]
			&
			\mathcal M_{g,1}^{\circ}
			\arrow[r]
			&
			\mathcal M_g
		\end{tikzcd}
		\]
		
		Here we denoted by $\M_{g,1}^{\circ}$ the open subspace of $\M_{g,1}$ parametrizing points $[C,p]$ where $p$ is not a Weierstrass point and is outside the ramification loci of
		$|M|$, $|\omega_C\otimes M^{-1}|$ for every $M$ in the fibre of $Z \rightarrow \M_g$ over $[C]$. Moreover, we denoted $\mathcal H_{g,k}^{\mathrm{tot},\circ}$ its preimage in the Hurwitz space.
		
		Let $p \in C$ outside the ramification locus of $\omega_C$ (i.e. not a Weierstrass point). Then, since $k\geq g+1$, the Riemann-Roch Theorem implies
		\[
		h^0(C,\mathcal O_C(kp))=k-g+1,
		\qquad
		h^0(C,\mathcal O_C((k-1)p))=k-g.
		\]
		The degree $k$ covering pencils totally ramified at $p$,
		up to automorphisms of the target, are parametrized by
		\[	\mathbb P\!\left(H^0(C,\mathcal O_C(kp))/\mathbb C \right) \setminus
		\mathbb P\!\left(H^0(C,\mathcal O_C((k-1)p))/\mathbb C\right).\]
		Indeed, such a pencil is spanned by the constant function
		and a function whose unique pole has order exactly $k$
		at $p$. This space is nonempty and has
		dimension $k-g-1$. Consequently, the forgetful morphism
		\[
		\mathcal H^{\mathrm{tot}, \circ}_{g,k}\longrightarrow\mathcal M_{g,1}^{\circ}
		\]
		has fibers of dimension $k-g-1$.
		
		In particular, we obtain 
		\[ \dim \mathcal H^{\mathrm{tot, \circ}}_{g,k}
		\times_{\mathcal M_g} Z  = 3g-2 + k - g- 1 + \rho(g,r,d) = 2g + k - 3 + g -u(\Vec{e}) - \inv(\Vec{e}). \]
		This is a component of $\mathcal{W}^{\Vec{e}}_g$ whose forgetful morphism to $\cHTot$ has generically finite fibers: for a generic point $[\pi\colon C \rightarrow \PP^1, p]$ in its image, the fiber consists of points 
		$[\pi\colon C \rightarrow \PP^1, p, L]\in \mathcal W^{\Vec e}_g$ for which we have $L(-2k\cdot p) \in W^{r,\circ}_d(C)$. By the choice of $Z$, this fiber consists of finitely many points. 
		
		Thus we obtained a component of $\mathcal{W}^{\Vec{e}}_g$ respecting the required conditions. 
		
		For the case $r=1$ the condition $2-g = \rho(g,1,d)$ implies $d=2$. It is well known that the hyperelliptic locus has codimension $g-2$ in $\M_g$, as expected. The remainder of the argument is unchanged.
	\end{proof}
	
	A similar result can be obtained for pointed Hurwitz--Brill--Noether subspaces of $\cHTot$ using \cite[Theorem 2.1]{Teixidor-BN} instead of \cite[Theorem A]{Pfl23}.
	
	\begin{prop}
		Consider integers $g,r,d$ and $k$ satisfying the inequalities
		\[
		k\geq g+1,\qquad
		r\geq1,\qquad
		2r\leq d\leq g+r-2,
		\qquad
		r+1-g\leq\rho(g,r,d)\leq0,
		\]
		and set 
		\[
		\Vec e=
		\left(
		\underbrace{2,\ldots,2}_{r+1 \ \textrm{times}},
		\underbrace{1,\ldots,1}_{\ k-g-1+d-2r \ \textrm{times}},
		\underbrace{0,\ldots,0}_{\ g-d+r \ \textrm{times}}
		\right).
		\]
		Then $\mathcal{H}_g(\Vec e)$ has an irreducible component of
		dimension $2g+k-3+g-u(\Vec{e})-\inv(\Vec{e})$, and for a generic element $[\pi,p]$, the locus $W^{\Vec{e}}(\pi,p)$ is $0$-dimensional. 
	\end{prop}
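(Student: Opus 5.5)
We follow the proof of Proposition~\ref{prop: BN and HBN} essentially verbatim, replacing the input \cite[Theorem A]{Pfl23} by \cite[Theorem 2.1]{Teixidor-BN}. The numerical setup is unchanged. Since $\Vec e$ is decreasing, $\inv(\Vec e)=0$ and $u(\Vec e)=(r+1)(g-d+r)$, so
\[
g-u(\Vec e)-\inv(\Vec e)=\rho(g,r,d).
\]
Moreover, $\mathbf a(\Vec e)=(0,1,\ldots,2k-g+d-r-1,2k,\ldots,2k+r)$. Twisting by $\OO_C(-2kp)$ identifies $W^{\Vec e}(\pi,p)$ with the open subset of $W^{r,\circ}_d(C)$ consisting of those $M$ for which $p$ is a ramification point of neither $M$ nor $\omega_C\otimes M^{-1}$.

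The first step is to invoke Teixidor's theorem in the range $r+1-g\leq\rho\leq 0$. It supplies a component of the Brill--Noether locus $\mathcal M^r_{g,d}$ of the expected codimension $-\rho(g,r,d)$ in $\mathcal M_g$. I would extract from it an irreducible component $Z$ of the universal space $\mathcal W^{r,\circ}_{g,d}$ that dominates this locus, with $\dim Z=3g-3+\rho$. The key additional point is that over a general curve $[C]$ of this component, the fiber of $Z$ should be $0$-dimensional, i.e.\ $W^r_d(C)$ is finite near these points. If Teixidor's statement only gives the codimension of the image, then the fiber dimension must be recovered separately. The plan is to use the lower bound $\dim Z\geq 3g-3+\rho$, valid on every component by the determinantal description, together with the observation that a component of $\mathcal W^{r,\circ}_{g,d}$ with positive-dimensional fibers over a codimension $-\rho$ locus would have dimension exceeding $3g-3+\rho$. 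To rule this out, I would use Teixidor's limit linear series construction on chains of elliptic curves, where the refined limit $g^r_d$'s are finite in number; upper semicontinuity of fiber dimension then shows that the general fiber is finite. This finiteness of the general fiber is the step I expect to be the main obstacle.

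Next, I form the same Cartesian diagram as before, with $\mathcal H^{\mathrm{tot},\circ}_{g,k}\to\mathcal M^\circ_{g,1}\to\mathcal M_g$. Here $\mathcal M^\circ_{g,1}$ excludes Weierstrass points and the ramification points of $|M|$ and $|\omega_C\otimes M^{-1}|$ for the finitely many $M$ in the fiber of $Z$. The fibers of $\mathcal H^{\mathrm{tot},\circ}_{g,k}\to\mathcal M^\circ_{g,1}$ have dimension $k-g-1$ because $k\geq g+1$, so
\[
\dim\bigl(\mathcal H^{\mathrm{tot},\circ}_{g,k}\times_{\mathcal M_g}Z\bigr)=2g+k-3+\rho.
\]
Projecting to $\cHTot$, the image is the pullback of the Teixidor component of $\mathcal M^r_{g,d}$, which has the same dimension $2g+k-3+\rho=2g+k-3+g-u(\Vec e)-\inv(\Vec e)$ since fibers are finite. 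This image is therefore an irreducible component of $\mathcal H_g(\Vec e)$ of the stated dimension. For a generic $[\pi,p]$ in it, $W^{\Vec e}(\pi,p)$ is a twist of an open subset of the finite fiber of $Z$, and is thus $0$-dimensional. Finally, maximality of this component inside $\mathcal H_g(\Vec e)$ follows from the upper bound of Proposition~\ref{prop: upperbound} applied fiberwise, which bounds every component of $\mathcal W^{\Vec e}_g$ over a locus of larger dimension.
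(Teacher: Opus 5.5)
Your setup matches the paper: you twist by $\OO_C(-2kp)$, pull the Brill--Noether locus back along $\mathcal H^{\mathrm{tot},\circ}_{g,k}\to\mathcal M^{\circ}_{g,1}\to\mathcal M_g$, and count $(3g-3+\rho)+(k-g)$. The step you flag as the main obstacle is not where the difficulty lies. The paper takes directly from \cite[Theorem 2.1]{Teixidor-BN} a component $Z_T$ of $\mathcal M^r_{g,d}$ of expected dimension whose general member $C$ has $\dim W^r_d(C)=0$ and $h^0(C,L)=r+1$ for all $L\in W^r_d(C)$. Note that you need finiteness of all of $W^r_d(C)$, not just of the fiber of your $Z$. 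This is because $W^{\Vec e}(\pi,p)$ is a twist of an open subset of the whole of $W^{r,\circ}_d(C)$.

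The genuine gap is your final sentence. Proposition~\ref{prop: upperbound} concerns only a generic point of $\cHTot$; for $\rho<0$ it just says $W^{\Vec e}(\pi,p)=\emptyset$ there. It therefore says nothing about $W^{\Vec e}(\pi,p)$ for $[\pi,p]$ in a hypothetical irreducible $H'\subseteq\mathcal H_g(\Vec e)$ strictly containing your image $H_Z$, since all such covers are non-generic. Nor is there any general upper bound on the dimension of components of $\mathcal W^{\Vec e}_g$: the last corollary of the paper exhibits one of dimension one more than expected.

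Maximality must instead come from the fact that $Z_T$ is an irreducible component of $\mathcal M^r_{g,d}$, not merely a locus of expected codimension. Every $[\pi,p]\in\mathcal H_g(\Vec e)$ has $[C]\in\mathcal M^r_{g,d}$, because $L(-2kp)$ is a $g^r_d$. Hence the image of $H'$ in $\mathcal M_g$ is an irreducible subset of $\mathcal M^r_{g,d}$ containing $Z_T$, so it lies in $\overline{Z_T}$. Restrict to the open set where $p$ is not a Weierstrass point, which $H'$ meets since $H_Z$ does. There the preimage of $\overline{Z_T}$ has dimension $\dim Z_T+k-g=\dim H_Z$, which forces $\overline{H'}=\overline{H_Z}$. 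This is the paper's argument, and with it in place of Proposition~\ref{prop: upperbound} your proof goes through.
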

	
	\begin{proof}
		There exists $Z$ an irreducible component of $\M^r_{g,d}$ of expected dimension,  such that a generic element $[C] \in Z $ satisfies 
		\[ \dim W^r_d(C) = 0 \ \textrm{and}\ h^0(C,L) = r+1 \ \text{ for all } \ L \in  W^r_d(C). \]
		A generic element in the preimage of $Z$ to $\cHTot$ is contained in $\mathcal{H}_g(\Vec{e})$. Moreover, if the preimage is not a component, but a proper sublocus of a component, then the image of that component in $\M_g$ is an irreducible component of $\M^r_{g,d}$, properly containing $Z$, contradicting the choice of $Z$.
	\end{proof}
	
	Our main focus will be on the (pullback of the) hyperelliptic locus. We will use an inductive argument to reduce the existence of a pointed Hurwitz--Brill--Noether component of the expected dimension to the known fact that the hyperelliptic locus has expected dimension.
	
	\begin{prop}\label[proposition]{prop:low-codimension-existence} Let
		$\Vec e=(e_0,\ldots,e_{k-1})\in\mathbb Z_{\geq0}^k$ satisfy
		\[
		1\leq c \coloneqq u(\Vec e)+\operatorname{inv}(\Vec e)-g\leq k-3,
		\qquad
		|\Vec e|\equiv1-c\pmod{k}.
		\]
		Write
		\[
		b_0(\Vec{e})<b_1(\Vec{e})<\cdots<b_{k-1}(\Vec{e})
		\]
		for the increasing rearrangement of the integers $ke_j+j$, for $0\leq j \leq k-1$.
		Assume moreover that $b_{k-2}-b_c>k$. Then $\mathcal W_g^{\Vec e}$ has an irreducible component of
		dimension $2g+k-3-c$, generically finite over its image in
		$\mathcal H^{\mathrm{tot}}_{g,k}$.
	\end{prop}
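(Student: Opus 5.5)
The plan is to induct on $g$ with $c$ held fixed, degenerating to the elliptic-tail curve of Setup~\ref{setup: elliptic tail}. The base case is the pullback of the hyperelliptic locus, i.e.\ the case $r=1$, $d=2$ of Proposition~\ref{prop: BN and HBN}.

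\textbf{Lower bound.} Every irreducible component of $\mathcal W^{\Vec e}_g$ has dimension at least $\dim\cHTot-c=2g+k-3-c$. This comes from running the flag-degeneracy construction of the first (or second) proof of Proposition~\ref{prop: lowerbound} over the universal relative Picard space of $\cHTot$ instead of over a single $\operatorname{Pic}(C)$; the codimension count $u(\Vec e)+\inv(\Vec e)$ is unchanged. It therefore suffices to produce one point $[\pi,p,L]$ at which $\mathcal W^{\Vec e}_g$ has local dimension at most $2g+k-3-c$, with $L$ isolated in its fibre. The component through that point then has exactly the expected dimension and is generically finite over its image.

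\textbf{Inductive step.} We work over a partial compactification of $\cHTot$ by admissible covers, near a cover $\pi_0\colon X\cup_q E\to P_X\cup P_E$ as in Setup~\ref{setup: smoothing}.
\begin{enumerate}
\item Choose an index $i$ with $t_i(\Vec e)\neq\Vec e$. The vector $t_i(\Vec e)$ has the same sum $|\Vec e|$ and satisfies $u+\inv$ decreased by one (Proposition~\ref{prop: ramification game}). So on $X$, of genus $g-1$, it has the same excess $c$ and still satisfies the congruence $|\Vec e|\equiv 1-c\pmod k$.
\item By induction, $\mathcal W^{t_i(\Vec e)}_{g-1}$ has a component $Z'$ of dimension $2(g-1)+k-3-c$ that is generically finite over $\mathcal H^{\mathrm{tot}}_{g-1,k}$.
\item Glue: take $[\pi_X,q,L']$ general in $Z'$, attach an elliptic tail $(E,p)$ with $p-q$ of order $k$, and use the limit $(L'(q),\OO_E(i(p-q)))$ from case (2) or (3) of Definition~\ref{def: nested description}. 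The tail contributes one modulus ($j$-invariant) plus the smoothing parameter. So the boundary locus of such limits has dimension $2g+k-4-c$, one less than expected, and its $L$-part is finite over the base.
\item By Theorem~\ref{theorem: HBN regeneration}, run relatively over the boundary divisor, each limit smooths to a component of dimension at least the expected one. Generic finiteness over the Hurwitz space persists, because it already holds on the special fibre: the $\operatorname{Pic}^0(E)$-direction is excluded, since $L_E$ is pinned to a torsion point.
\item The subtle point is that the first piece of the special fibre, $\left(\OO_X(q)\otimes\overline W^{\Vec e}(\pi_X,q)\right)\times\operatorname{Pic}^0(E)$, must not contain the chosen limit in the closure of a bigger family. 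Hence we need to choose $i$ so that the generic point of $Z'$ does not lie in the image of $\mathcal W^{\Vec e}_{g-1}$. On the universal level this is a dimension comparison, since $\mathcal W^{\Vec e}_{g-1}$ has excess $c+1$.
\end{enumerate}

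\textbf{Combinatorial step.} We iterate until $g=c+2$. The goal there is to reach, after applying $\sigma$-twists (which are isomorphisms, as in Proposition~\ref{prop: containments}), the vector
\[
\bigl(2,2,\underbrace{1,\ldots,1}_{k-g+1},\underbrace{0,\ldots,0}_{g-1}\bigr)
\]
of Proposition~\ref{prop: BN and HBN} with $r=1$, $d=2$. The congruence $|\Vec e|\equiv 1-c$ is precisely what makes the degree match $d+2k$ modulo $k$. The hypothesis $b_{k-2}-b_c>k$ should guarantee that at every stage a move $t_i$ exists which
\begin{itemize}
\item acts on a column that is not among the top $c$ essential vanishing orders, so the conditions $1\le c\le k-3$ and $b_{k-2}-b_c>k$ are inherited by $t_i(\Vec e)$, and
\item is compatible with the non-containment required in step (5).
\end{itemize}
Heuristically, the gap $b_{k-2}-b_c>k$ keeps the $c$ ``hyperelliptic'' essential orders separated from the bulk by more than one row, so moves that collapse the bulk never interfere with them.

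\textbf{Main obstacle.} I expect the main difficulty to be this combinatorial invariance argument: showing that the hypotheses are preserved under a suitable choice of $t_i$, and that the process terminates exactly at the hyperelliptic vector. My first attempt will be to always move the column realizing $b_{k-1}$, that is, to decrease the largest essential order by one. I would then check with the displacement description of Remark~\ref{remark: southernomst description} that $b_c$ and $b_{k-2}$ change in a way that keeps $b_{k-2}-b_c>k$ until $g=c+2$.
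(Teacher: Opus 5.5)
Your overall architecture matches the paper's. The base case is the pulled-back hyperelliptic locus (Proposition~\ref{prop: BN and HBN} with $r=1$, $d=2$). The induction is on $g$ at fixed $c$, through the elliptic-tail degeneration and relative regeneration. The combinatorial part reduces $\Vec e$ to $\Vec f=(s+2,s+2,s+1,\ldots,s+1,s,\ldots,s)$, with $k-c-3$ entries $s+1$ and $c+1$ entries $s$. (The hyperelliptic vector has $k-g-1$ ones, not $k-g+1$.)

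\textbf{The gap: the combinatorial step.} You leave this step open, and it carries real content. Without it the induction may stall at a genus $>c+2$, where there is no base case. Your tentative rule fails as stated. Take $k=5$, $\Vec e=(4,4,1,1,0)$, $g=13$. Then $u(\Vec e)=14$, $\inv(\Vec e)=0$, $c=1$, and $|\Vec e|=10\equiv 1-c$. The endpoints are $\{4,7,8,20,21\}$, so $b_3-b_1=13>5$. Yet the top endpoint $21$ lies in column $1$, and $t_1$ is trivial since $e_1=e_0$. The only nontrivial move is $t_0$, which lowers $b_{k-2}$. You also do not show that the process ends at the hyperelliptic vector.

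\textbf{The missing idea, as in the paper.} A nontrivial $t_j$ lowers one endpoint by one and raises another by one. So $b_{k-2}-b_c$ changes by at most $2$. It can never equal $k$, because endpoints have distinct residues mod $k$. Hence a move can destroy $b_{k-2}-b_c>k$ only when the gap is exactly $k+1$ and the move sends $b_{k-2}\mapsto b_{k-2}-1$, $b_c\mapsto b_c+1$. Residues determine this move uniquely.
\begin{enumerate}
\item Apply any gap-preserving nontrivial move while one exists.
\item A stuck vector still has a nontrivial move, since otherwise its endpoints would be $k$ consecutive integers. It therefore has exactly one, the gap-breaking one.
\item Normalize by $\sigma^j$ so that this move is $t_0$. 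The vector is then decreasing with $n_0-n_{k-1}=2$, and the ranks of $b_{k-2}$ and $b_c$ force it to be $(n_0,n_0,n_0-1,\ldots,n_0-1,n_0-2,\ldots,n_0-2)$.
\item The congruence $|\Vec e|\equiv 1-c \pmod k$ forces $j=0$. So the stuck vector is $\Vec f$, reached after $u(\Vec e)+\inv(\Vec e)-2(c+1)=g-c-2$ steps.
\end{enumerate}

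\textbf{Step (5) is not needed.} A general point of $Z'$ has vanishing sequence exactly $\mathbf a(t_i(\Vec e))$ at $q$, which is not entrywise $\geq\mathbf a(\Vec e)$. So the chosen limit does not lie in $\bigl(\OO_X(q)\otimes\overline W^{\Vec e}(\pi_X,q)\bigr)\times\Pic^0(E)$. The other pieces lie over the distinct points $\OO_E(i'(p-q))$. Hence any nontrivial, gap-preserving $t_i$ works, with no further compatibility condition.
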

	
	\begin{proof}
		Set $s=\frac{|\Vec e|+c-1-k}{k}$ and consider
		\[
		\Vec f=
		\bigl(s+2,s+2,
		\underbrace{s+1,\ldots,s+1}_{k-c-3 \ \textrm{times}},
		\underbrace{s,\ldots,s}_{c+1 \ \textrm{times}}\bigr).
		\]
		
		We will show that the locus $\mathcal{W}^{\vec{f}}_{c+2}$ has a component of the expected dimension generically finite over $\mathcal{H}^{\text{tot}}_{c+2,k}$. We then show that if $\mathcal{W}^{t_j(\vec{e})}_{g-1}$ has the required component, then so does $\mathcal{W}^{\vec{e}}_{g}$, where $t_j$ is an elementary shift (as defined in Definition~\ref{def: t_i}) such that $t_j(\vec{e})\neq\vec{e}$. And finally, we show that $\vec{e}$ can be reduced to $\vec{f}$ using $g-c-2$ elementary shifts.
		
		We begin with the base case, which is the pullback of the hyperelliptic locus.
		
		\begin{claim}\label{claim_basecase_pointed_HBN_comp_existence}
			The locus $\mathcal{W}^{\vec{f}}_{c+2}$ has a component of dimension $k+c+1$, as expected, and the forgetful map to $\mathcal H^{\mathrm{tot}}_{c+2,k}$ has generically finite fibers. 
		\end{claim}
		\begin{proof}[Proof of Claim~\ref{claim_basecase_pointed_HBN_comp_existence}]
			\renewcommand{\qedsymbol}{$\lozenge$}
			In the notations at the start of the proof, we consider the pointed Hurwitz--Brill--Noether space $\mathcal{W}^{\Vec{f}}_{c+2}$ in genus $c+2$. A direct computation gives
			\[
			|\Vec f|=|\Vec e|,
			\qquad
			u(\Vec f)=2(c+1),
			\qquad
			\operatorname{inv}(\Vec f)=0.
			\]
			Thus $\Vec f$ is the vector with which we shall start the
			induction, in genus $c+2$. Subtracting $s$ from every entry does not change either
			$u + \inv$ or the relevant exact incidence problem, up to twisting the
			line bundle by the pullback of a line bundle on $\mathbb P^1$.
			Using Proposition~\ref{prop: BN and HBN}, the locus $\mathcal W^{\Vec{f}}_{c+2}$ has a component of dimension $k+c+1$, as expected, and the forgetful map to $\mathcal H^{\mathrm{tot}}_{c+2,k}$ has generically finite fibers, proving the claim.
		\end{proof}
		
		We now explain the inductive strategy.
		
		\begin{claim}\label{claim_induct_pointed_HBN_comp_existence}
			Suppose that $t_j(\Vec e)\neq\Vec e$ and that, in genus $g-1$,
			the space $\mathcal W^{t_j(\Vec e)}_{g-1}$
			has an irreducible component $Z$ of dimension $2(g-1)+k-3 + \left(g-u(\Vec{e})-\inv(\Vec{e})\right)$ whose projection to $\mathcal H^{\mathrm{tot}}_{g-1,k}$ is
			generically finite onto its image. Then, the scheme $\mathcal{W}^{\vec{e}}_g$ has an irreducible component of dimension $3g-3-u(\vec{e})-\inv(\vec{e})$ whose projection to $\mathcal H^{\mathrm{tot}}_{g,k}$ is generically finite onto its image.
		\end{claim}
		\begin{proof}[Proof of Claim~\ref{claim_induct_pointed_HBN_comp_existence}]
			\renewcommand{\qedsymbol}{$\lozenge$}
			
			By Proposition~\ref{prop: ramification game}, we have
			\[
			u(t_j(\Vec e))+\inv(t_j(\Vec e))
			=
			u(\Vec e)+\inv(\Vec e)-1.
			\]
			Therefore
			\[
			u(t_j(\Vec e))+\inv(t_j(\Vec e))-(g-1)=c,
			\]
			so the value of the expected codimension is unchanged when we decrease the genus by
			one and apply a nontrivial elementary shift.
			
			We show that the conclusion of the theorem then holds for
			$\Vec e$ in genus $g$. Choose a general point $[\pi_X:X\longrightarrow\PP^1,q,L]\in Z$
			and attach an elliptic
			curve $E$ at $q$, with a marked point $p$ such that $p-q$ has order exactly $k$. In this way, we obtain an admissible cover
			\[
			\pi_0:X\cup_qE\longrightarrow \PP^1\cup\PP^1,
			\]
			totally ramified at point $p$, and we consider
			\[
			\mathcal C\longrightarrow\mathcal P\longrightarrow B,
			\]
			a versal deformation around a sufficiently small neighborhood $B$ of $[\pi_0,p]$ in
			$\overline{\mathcal H}^{\mathrm{tot}}_{g,k}$. Thus $B$ is smooth of dimension $2g+k-3$, and we denote by $\Delta\subset B$ the divisor where the curve of genus $g$ is a genus $g-1$ curve glued to an elliptic curve. 
			
			Theorem~\ref{theorem: nested regeneration} can be extended to arbitrary families of admissible $k:1$ covers with central fibers in the boundary divisor $\Delta$. In particular, we look at the loci of line bundles admitting $\textrm{ord}(\Vec{e})$-nested limit linear series with extra vanishing conditions given by the ordering of elements in $\Vec{e}$ (cf. Proposition~\ref{prop: lowerbound}, first proof)
			\[\left(\mathcal{O}_{\mathcal{X}}(q)\otimes \mathcal{W}_{g-1}^{\Vec{e}}\right)\times \mathrm{Pic}^0(\mathcal{E}\rightarrow \Delta), \] and
			\[ \left(\OO_\mathcal{X}(q) \otimes \mathcal{W}_{g-1}^{t_i(\Vec{e})} \right) \times \{ \mathcal{O}_{\mathcal{E}}(i\cdot p -i\cdot q)\} \text{ for $0\leq i \leq k-1$ satisfying $t_i(\Vec{e}) \neq \Vec{e}$.} \]
			
			By Lemma~\ref{lemma: nested description} and Lemma~\ref{lem: nested description}, all elements in these loci contain a unique $\textrm{ord}(\Vec{e})$-nested linear series. Since 
			\[\left(\OO_\mathcal{X}(q) \otimes Z \right) \times \{ \mathcal{O}_{\mathcal{E}}(j\cdot p -j\cdot q)\} \]
			has dimension 
			$$2(g-1) + k -3 + g -u(\Vec{e})-\inv(\Vec{e}) + 1 = 3g-4 + k -u(\Vec{e})-\inv(\Vec{e}),$$
			a similar Regeneration Theorem to Theorem~\ref{theorem: nested regeneration} implies it is contained in the closure of $\mathcal{W}^{\Vec{e}}_g$. We denote $Y$ the irreducible component of $\mathcal{W}^{\Vec{e}}_g$ containing this locus in its closure. We consider the forgetful map $Y \rightarrow \overline{\mathcal{H}}^{\textrm{tot}}_{g,k}$ and notice that the fiber over $[\pi_0,p]$ has a $0$-dimensional component (because of the choice of $Z$). It follows that the generic fiber has dimension $0$. On the other hand 
			\[ \left(\OO_\mathcal{X}(q) \otimes Z \right) \times \{ \mathcal{O}_{\mathcal{E}}(j\cdot p -j\cdot q)\} \]
			is cut out from $Y$ by the divisorial condition $t_1=0$, implying 
			\[ \dim Y = 1 + \dim \left(\OO_\mathcal{X}(q) \otimes Z \right) \times \{ \mathcal{O}_{\mathcal{E}}(j\cdot p -j\cdot q)\} = \dim Z + 2,\]
			as required. Consequently, $\mathcal{W}^{\Vec{e}}_g$ has an irreducible component of dimension $3g-3+k -u(\Vec{e})-\inv(\Vec{e})$ whose projection to $\cHTot$ is generically finite onto its image.  This proves the claim.
		\end{proof}

		To apply the inductive argument, it remains only to prove that $\Vec e$ can be transformed into $\Vec f$ by nontrivial elementary shifts as in Definition~\ref{def: t_i}. 
		
		Given a vector $\vec{d}=(d_0,\dots,d_{k-1})\in\Z_{\geq 0}^k$, we will denote by $b_0(\vec{d})<\cdots < b_{k-1}(\vec{d})$ the increasing rearrangement of the integers $kd_j+j$ for $0\leq j\leq k-1$, which are the bottom entries of the boxes given by $\vec{d}$, as in Figure~\ref{fig:54010_vanishing}.
		
		\begin{claim}\label{claim_transforms_pointed_HBN_comp_existence} There is a finite sequence of elementary shifts $t_{j_1}^{(1)},\dots,t_{j_{g-c-2}}^{(g-c-2)}$ such that \[\vec{f}= t_{j_{g-c-2}}^{(g-c-2)}\circ\cdots\circ t_{j_1}^{(1)} (\vec{e}),\] and at each intermediate step (for every $0\leq \ell \leq g-c-2$) denoting $\vec{d}=t_{j_\ell}^{(\ell)}\circ \cdots \circ t_{j_1}^{(1)} (\vec{e})$ and $t_{j_0}^{(0)}=\operatorname{id}$, we have $t_{j_{\ell+1}}^{(\ell+1)}(\vec{d})\neq \vec{d}$ and the conditions 
			\begin{enumerate}[label={\normalfont (\roman*)}]
				\item $b_{k-2}(\vec{d})-b_c(\vec{d})>k$,
				\item $1\leq c=u(\vec{d})+\inv(\vec{d})-(g-\ell)\leq k-3$, and
				\item $|\vec{d}|\equiv 1-c \pmod k$
			\end{enumerate} are satisfied.
		\end{claim}
		\begin{proof}[Proof of Claim~\ref{claim_transforms_pointed_HBN_comp_existence}]
			\renewcommand{\qedsymbol}{$\lozenge$}

			Starting from $\vec{d}$, applying any non-trivial $t_j$ decreases $u(\vec{d})+\operatorname{inv}(\vec{d})$ by $1$ as observed above. Thus, we see that conditions
			\[
			1\leq c \coloneqq u(\Vec d)+\operatorname{inv}(\Vec d)-(g-\ell)\leq k-3, \
			\textrm{and} \
			|\Vec d|\equiv1-c\pmod{k}
			\]
			are automatically satisfied, as the sum of the entries is unchanged after applying $t_j$ and since we decrease $g$ by one, we have \[u(\Vec d)+\operatorname{inv}(\Vec d)-(g-\ell)=u(t_j(\vec{d})+\inv(t_j(\vec{d}))-(g-\ell-1).\]
			
			We turn our focus to (i), namely that we can choose some $t_{j}$ to preserve the inequality
			\begin{equation}\label{eq:72-gap}
				b_{k-2}(\vec{d})-b_c(\vec{d})>k.
			\end{equation}

			For a vector $(d_0,\ldots,d_{k-1})$, the endpoint in column $j$ is numbered $kd_j+j$. If $1\leq j\leq k-1$ and $d_j>d_{j-1}$, the transformation $t_j$
			replaces the two endpoints numbered
			\[
			kd_{j-1}+j-1,\qquad kd_j+j
			\]
			with
			\[
			kd_{j-1}+j,\qquad kd_j+j-1.
			\]
			Thus the larger of these two numbers moves one square to the left
			and the smaller moves one square to the right.
			Similarly, when $t_0$ is nontrivial
			(i.e.\ $d_0>d_{k-1}+1$), the two numbers
			\[
			kd_{k-1}+k-1,\qquad kd_0
			\]
			are replaced by
			\[
			kd_{k-1}+k,\qquad kd_0-1.
			\]
			Also in this case, the larger of these two numbers moves one square to the left and the smaller moves one square to the right, cyclically.
			
			Note that at each step, at most one nontrivial transformation can fail to preserve
			\eqref{eq:72-gap}. Indeed, the difference in \eqref{eq:72-gap} changes by at most two after each application of a $t_j$ and can never
			equal $k$, because distinct endpoints have distinct residues modulo $k$.
			Hence \eqref{eq:72-gap} can fail only when
			\[
			b_{k-2}(\vec{d})-b_c(\vec{d})=k+1.
			\]
			and the transform replaces these two endpoints by
			$b_{k-2}(\vec{d})-1$ and $b_c(\vec{d})+1$. Their residues uniquely determine
			that transformation.
			
			We now show that the required $t_{j_\ell}$ exist.
			
			Starting from $\Vec e$, perform any nontrivial transformation
			preserving \eqref{eq:72-gap}, as long as one exists.
			Every move decreases $u+\operatorname{inv}$ by one, thus the
			procedure terminates eventually. At a stopping vector there is still a nontrivial transformation that can be applied. Indeed, if there are no non-trivial $t_j$, then the vector would be decreasing with its largest and
			smallest entries differing by at most one, and its endpoints
			would be $k$ consecutive integers, contradicting
			\eqref{eq:72-gap}. Thus at most one transformation can violate
			\eqref{eq:72-gap}, and therefore the stopping vector has exactly one
			nontrivial transformation. It is the transformation changing $b_{k-2}$, and $b_c$ into $b_{k-2}-1$, and $b_c+1$, respectively, as described above. 
			
			It remains to show that the number of $t_j$ is exactly $g-c-2$, and that the stopping vector is $\vec{f}$.
			
			Now let $\vec{d}$ be the stopping vector. Suppose the unique transform not preserving \eqref{eq:72-gap} is $t_j$, with $0\leq j<k$. Temporarily apply $\sigma^j$ (cf. Definition~\ref{def: sigma}) to the stopping vector. This translates all endpoints by $-j$, preserves their ordering,
			and makes its unique nontrivial transform $t_0$.
			This is only a normalization for the following numerical argument; no nonnegativity assumption is needed for the normalized entries.
			
			Since all transformations $t_i$ for $1\leq i \leq k-1$ are trivial, the normalized vector $\vec{n}=\sigma^j\vec{d}$ is decreasing, i.e. $n_0\geq n_1\geq\cdots\geq n_{k-1}$. For a decreasing vector, $t_0$ acts on the endpoint in the
			first column and that in the last column. Here they are the
			endpoints of ranks $k-2$ and $c$, respectively, and their
			difference is $k+1$, i.e. $b_c(\vec{n})=kn_{k-1}+k-1$, $b_{k-2}(\vec{n})=kn_{0}$, and $kn_{0}-(kn_{k-1}+k-1)=k+1$. Immediately we have \[n_0-n_{k-1}=2,\] whereby $n_0-n_i\leq 2$ for all $0\leq i \leq k-1$. We observe that $b_{k-2}(\vec{n})=kn_0$
			means that only $b_{k-1}(\vec{n})\geq b_{k-2}(\vec{n})$, hence there is exactly one $0\leq i \leq k-1$ such that $b_{k-1}(\vec{n})=kn_i+i>kn_0=b_{k-2}(\vec{n})$. If $i\geq 2$, then $k(n_0-n_i)<i\leq k-1$, so $n_0=n_i$. Since $\vec{n}$ is decreasing and there is only one such $i$, $i=1$ and we have $n_0=n_1>n_i\geq n_0-2$ for $2\leq i\leq k-1$.

			Since $b_c(\vec{n})=kn_{k-1}+k-1$, there are $c$ bottom entries $b_0(\vec{n})=kn_{i_{1}}+i_{1}<\cdots<b_{c-1}(\vec{n})=kn_{i_c}+i_c$ less than $b_c(\vec{n})=kn_{k-1}+k-1$. Suppose that $n_{i_j}\neq n_{k-1}$, then by the above, $n_{i_j}=n_{k-1}+1$, so \[kn_{i_j}+i_j=k(n_{k-1}+1)+i_j=kn_{k-1}+k+i_j<kn_{k-1}+k-1,\] which gives $i_j<0,$ a contradiction. Thus $n_{i_j}=n_{k-1}$, and we have \[\vec{n} = (n_0,n_0,\underbrace{n_0-1,\dots,n_0-1}_{k-c-3 \text{ times}},\underbrace{n_0-2,\dots,n_0-2}_{c+1 \text{ times}}).\] Directly computing, we see that $|\vec{n}|\equiv 1-c\pmod k$, and we also have $\vec{n}=\sigma^j\vec{d}$, and \[|\vec{n}|=|\sigma^j\vec{d}|=|\vec{d}|-j=|\vec{e}|-j\equiv 1-c-j\pmod k.\] 
			Consequently
			\[
			1-c-j\equiv1-c\pmod{k}.
			\]
			Since $0\leq j<k$, we have $j=0$.
			Thus the stopping vector is $\vec{n}$ itself. As \[k(n_0-2)+4+k-c-3=|\vec{n}|=|\vec{e}|=ks+k+1-c,\] we have $s=n_0-2$ and $\vec{n}=\vec{f}$, as desired.
			
			The number of nontrivial transforms in this reduction is
			therefore
			\[
			u(\Vec e)+\operatorname{inv}(\Vec e)
			-u(\Vec f)-\operatorname{inv}(\Vec f)
			=g+c-2(c+1)=g-c-2.\qedhere
			\]
		\end{proof}
		
		Finally, we are in a position to use the induction to conclude. Starting with the component constructed in Claim~\ref{claim_basecase_pointed_HBN_comp_existence} and using the induction of Claim~\ref{claim_induct_pointed_HBN_comp_existence} for each of the $g-c-2$ transforms of Claim~\ref{claim_transforms_pointed_HBN_comp_existence}, we obtain a component of
		$\mathcal W_g^{\Vec e}$ of dimension
		\[
		(k+c+1)+2(g-c-2)=2g+k-3-c,
		\]
		generically finite over its image in $\cHTot$, as desired.
	\end{proof}
	
	Notice that condition $|\Vec e|\equiv1-c\pmod{k}$ is redundant. Indeed, if we take as in Definition~\ref{def: sigma}
	\[ \sigma^{-1}\Vec{e} = (e_{k-1}+1, e_0, e_1,\ldots, e_{k-2}),\]
	we have $W^{\sigma^{-1}\Vec{e}}(\pi,p) = \OO_C(p) \otimes W^{\Vec{e}}(\pi,p)$. In other words, for any $[\pi,p] \in \cHTot$, these loci are the same up to twisting with $\OO_C(p)$. In particular, in Proposition~\ref{prop:low-codimension-existence} we can take $ c$ to be the expected codimension $u(\Vec{e})+ \inv(\Vec{e}) - g$ and then apply $\sigma^{-1}$ enough times until the new vector satisfies the congruence in the hypothesis of Proposition~\ref{prop:low-codimension-existence}. We thus immediately obtain the following.
	
	\begin{theorem}\label{thrm:low-codimension-existence}
		
		Let $g \geq 1$ and let $\Vec e=(e_0,\ldots,e_{k-1})\in\mathbb Z_{\geq0}^k$ satisfy
		\[
		1\leq c \coloneqq u(\Vec e)+\operatorname{inv}(\Vec e)-g\leq k-3.
		\]
		Write $b_0(\Vec{e})<b_1(\Vec{e})<\cdots<b_{k-1}(\Vec{e})$ for the increasing rearrangement of the integers $ke_j+j$, for $0\leq j \leq k-1$.
		Assume moreover that $b_{k-2}(\Vec{e})-b_c(\Vec{e})>k$. Then $\mathcal W_g^{\Vec e}$ has an irreducible component of
		dimension $2g+k-3 +(g-u(\Vec{e})-\inv(\Vec{e}))$, generically finite over its image in
		$\mathcal H^{\mathrm{tot}}_{g,k}$.
	\end{theorem}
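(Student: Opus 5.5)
The plan is to reduce Theorem~\ref{thrm:low-codimension-existence} to Proposition~\ref{prop:low-codimension-existence}. The only hypothesis of the proposition missing from the theorem is the congruence $|\Vec e|\equiv 1-c \pmod k$. We remove it by replacing $\Vec e$ with $\sigma^{-m}\Vec e$ for a suitable $m$, where $\sigma^{-1}\Vec e=(e_{k-1}+1,e_0,\ldots,e_{k-2})$.

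First, $\sigma^{-1}$ raises $|\Vec e|$ by exactly one, so the residues of $|\sigma^{-m}\Vec e|$ for $0\leq m\leq k-1$ run through all of $\Z/k$. Hence there is a unique $m$ in this range with $|\sigma^{-m}\Vec e|\equiv 1-c\pmod k$. Moreover, $\sigma^{-1}$ preserves nonnegativity of the entries, so $\sigma^{-m}\Vec e\in\Z^k_{\geq0}$.

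Second, I check that the other hypotheses are invariant under $\sigma^{-1}$. By Proposition~\ref{prop: intersection description}, $W^{\sigma^{-1}\Vec e}(\pi,p)=\OO_C(p)\otimes W^{\Vec e}(\pi,p)$. Geometrically, $\mathbf a(\sigma^{-1}\Vec e)$ is $\{0\}\cup(1+\mathbf a(\Vec e))$. By Remark~\ref{remark: southernomst description}, $u+\inv$ is the sum of the displacements of the southernmost entries. The argument in Claim~\ref{claim_1_vec_inequal} shows that adding a new entry $0$ and shifting all other entries up by one leaves every displacement unchanged. So $u+\inv$ is preserved, and therefore so is $c$.

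The endpoints $ke_j+j$ are sent to $ke_j+j+1$, except that the endpoint $k e_{k-1}+k-1$ becomes $k(e_{k-1}+1)+0$, which is again the old value plus one. So the whole set $B(\Vec e)$ is translated by $+1$. The ordering is preserved, and $b_{k-2}-b_c$ is unchanged. The condition $1\leq c\leq k-3$ is then also unchanged.

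Third, I apply Proposition~\ref{prop:low-codimension-existence} to $\sigma^{-m}\Vec e$. This gives a component of $\mathcal W^{\sigma^{-m}\Vec e}_g$ of dimension $2g+k-3-c$ that is generically finite over $\cHTot$. Twisting by $\OO_C(-mp)$ on the universal curve is an isomorphism of relative Picard schemes over $\cHTot$. It identifies $\mathcal W^{\sigma^{-m}\Vec e}_g$ with $\mathcal W^{\Vec e}_g$ and commutes with the projection to $\cHTot$. Transporting the component therefore gives the claim, since $2g+k-3-c=2g+k-3+(g-u(\Vec e)-\inv(\Vec e))$.

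The only point needing care is the invariance of $b_{k-2}-b_c$ and of $u+\inv$ under $\sigma^{-1}$, in particular the wrap-around column. The computation above shows that the wrap-around column also translates by exactly one, so I expect no real obstacle.
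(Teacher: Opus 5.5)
Your proposal is correct and follows the paper's route: the paper also drops the congruence hypothesis of Proposition~\ref{prop:low-codimension-existence} by applying $\sigma^{-1}$ repeatedly and using $W^{\sigma^{-1}\Vec e}(\pi,p)=\OO_C(p)\otimes W^{\Vec e}(\pi,p)$. Your explicit checks are also correct, and the paper leaves them implicit: that $\sigma^{-1}$ preserves nonnegativity, $u+\inv$ (hence $c$), and $b_{k-2}-b_c$, the latter because the endpoint set is translated by $+1$, including the wrap-around column.
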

	
	The expected codimension $k-3$ is optimal. We will next construct a locus of expected codimension $k-2$, that only has an irreducible component of codimension $k-3$. For this, we will consider the case $g = k-1$ and consider the dominant morphism $\cHTot \rightarrow \M_{g,1}$, with generically finite fibers. Inside $\M_{g,1}$ we consider the locus $\mathcal{H}_g(2g-2)$ parametrizing pointed curves $[C,p]$ with the point being subcanonical, i.e. $\omega_C \cong \OO_C\left((2g-2) p\right)$.
	
	This strata of differentials $\mathcal{H}_g(2g-2)$ has exactly three irreducible components: $\mathcal{H}^{\textrm{hyp}}_g(2g-2)$, $\mathcal{H}^{\textrm{odd}}_g(2g-2)$ and $\mathcal{H}^{\textrm{even}}_g(2g-2)$, see~\cite{Kon-Zor}. For a generic element $[C,p]$ in each of these components, the ramification orders of $\omega_C$ at $p$ are explicitly known, see~\cite{Bul13}, and are given as 
	\begin{itemize}
		\item $(0,2,4, \ldots, 2g-2)$ for the component $\mathcal{H}^{\textrm{hyp}}_g(2g-2)$; 
		\item $(0,1,\ldots, g-2, 2g-2)$ for the component $\mathcal{H}^{\textrm{odd}}_g(2g-2)$, and  
		\item $(0,1,\ldots, g-3, g-1, 2g-2)$ for the component $\mathcal{H}^{\textrm{even}}_g(2g-2)$. 
	\end{itemize}
	
	But knowing all the vanishing orders uniquely determines the vector of the corresponding Hurwitz--Brill--Noether locus, giving us the following results. 
	
	\begin{lemma}
		Let $[C,p]$ be a generic element in a component of $\mathcal{H}_g(2g-2)$ and let $\pi\colon C \rightarrow \PP^1$ be a degree $(g+1)$ cover, totally ramified at the point $p$. Then we have 
		\begin{enumerate}
			\item If $[C,p] \in \mathcal{H}^{\textrm{odd}}_g(2g-2)$, then $\omega_C(2(g+1)p) \in W^{\Vec{e}_{\textrm{odd}}}(\pi,p)$ where 
			\[ \Vec{e}_{\textrm{odd}} = (\underbrace{0,\ldots, 0}_{(g-3) \ \textrm{times}},1,0, -1, -2)+(2,2\dots,2)\]
			\item If $[C,p] \in \mathcal{H}^{\textrm{even}}_g(2g-2)$, then $\omega_C(2(g+1)p) \in W^{\Vec{e}_{\textrm{even}}}(\pi,p)$ where 
			\[ \Vec{e}_{\textrm{even}} = (\underbrace{0,\ldots, 0}_{(g-3) \ \textrm{times}},1,-1, 0, -2)+(2,2\dots,2)\]
		\end{enumerate}
	\end{lemma}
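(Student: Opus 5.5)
The plan is to compute the full vanishing sequence of $H^0(C,L)$ at $p$ for $L=\omega_C(2(g+1)p)$ directly from the known vanishing sequence of $\omega_C$ at $p$, and then read off the vector $\Vec e$ column by column as in Definition~\ref{def: sequence and pHBN}. Throughout, $k=g+1$ and $\deg L=2g-2+2k$. Two facts are used at the end:
\[
W^{\Vec e}(\pi,p)=W^{r,\circ}_{g+r,\mathbf a(\Vec e)}(C,p),
\]
and the observation from the first proof of Proposition~\ref{prop: lowerbound} that the series is complete. So it suffices to check two things: the vanishing orders of $H^0(C,L)$ at $p$ are exactly $\mathbf a(\Vec e)$, and $h^0(C,L)=r+1$ with $r=k+|\Vec e|-1$.

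First I split the vanishing orders of $L$ into two ranges.

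\emph{Orders $n\geq 2k$.} Here $L(-np)=\omega_C(-(n-2k)p)$, so the vanishing orders of $L$ in this range are exactly $2k+b$, where $b$ runs over the vanishing orders of $\omega_C$ at $p$.

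\emph{Orders $0\leq n<2k$.} Riemann--Roch gives $h^0(\omega_C(mp))=g+m-1$ for $m\geq 1$, and $h^0(\omega_C)=g$. Hence $h^0(L(-np))$ drops by exactly one at each step for $n=0,\ldots,2k-2$. On the other hand $h^0(\omega_C(p))=h^0(\omega_C)$, so $2k-1$ is not a vanishing order.

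Together this gives
\[
\mathbf a(L,p)=\{0,1,\ldots,2k-2\}\cup\{2k+b \mid b \text{ a vanishing order of } \omega_C \text{ at } p\},
\]
which has $2k-1+g$ elements. This agrees with $h^0(L)=g-1+2k$ from Riemann--Roch.

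Next I insert the known orders from~\cite{Bul13}.

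\emph{Odd component.} The orders $0,\ldots,g-2=k-3$ contribute $2k,\ldots,3k-3$. The order $2g-2=2k-4$ contributes $4k-4$. Arranging the set in the $k$-column table of Proposition~\ref{prop: ramification game}, column $j$ has bottom row $2$ for $j\leq k-3$, $j\neq k-4$. Column $k-4$ reaches row $3$, via $4k-4=3k+(k-4)$. Column $k-2$ stops at row $1$, since $2k-2=k+(k-2)$ is present but $3k-2$ is absent. Column $k-1$ stops at row $0$, since $2k-1$ is absent. This is precisely
\[
\Vec e_{\mathrm{odd}}=(2,\ldots,2,3,2,1,0).
\]

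\emph{Even component.} The ω-orders $0,\ldots,g-3$ together with $g-1$ put the row-$2$ entries in all residues $\leq k-2$ except $k-3$, and $2g-2$ again gives $4k-4$. Reading off columns in the same way yields $(2,\ldots,2,3,1,2,0)=\Vec e_{\mathrm{even}}$.

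In both cases the sequence obtained is $k$-admissible, as Proposition~\ref{prop: ramification behaviour} requires, and $|\Vec e|+k=2k-1+g$. Hence $r+1=h^0(L)$ and the degree $g+r$ matches $\deg L$, so $L\in W^{\Vec e}(\pi,p)$.

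The only delicate step is the low range $0\le n<2k$: one must make sure the single gap is exactly at $2k-1$, independent of the cover. This uses only that $p$ is a base-point-free point of $\omega_C$, i.e.\ $0$ is a vanishing order of $\omega_C$ at $p$, which holds in all three listed cases. The remaining work is bookkeeping of residues modulo $k=g+1$, which I would double-check on $g=4$.
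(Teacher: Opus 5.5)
Your proposal is correct and follows essentially the paper's own argument, which it compresses into the remark that the vanishing orders of $\omega_C(2(g+1)p)$ at $p$, read off from the sequences in~\cite{Bul13}, determine $\Vec e$; your column-by-column bookkeeping and the count $h^0(L)=g-1+2k=k+|\Vec e|$ fill this in correctly for both components. One small correction to your closing remark: the gap at $2k-1$ occurs for every $p$, because Riemann--Roch gives $h^0(\omega_C(p))=h^0(\omega_C)$, whereas base-point-freeness of $\omega_C$ is what makes $2k$ a vanishing order.
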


	\begin{cor}
		Let $g\geq 4$ and $k = g+1$. Then the locus $\mathcal{W}^{\Vec{e}_{\textrm{even}}}_g$ is irreducible of dimension $2g$. This is one more than the expected dimension.
	\end{cor}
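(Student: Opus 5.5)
The plan is to identify $\mathcal{W}^{\Vec{e}_{\textrm{even}}}_g$ with a family of pencils over an open piece of $\mathcal{H}^{\textrm{even}}_g(2g-2)$, and then read off irreducibility and dimension.

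\textbf{Step 1: the numerics.} Write $\Vec{e}_{\textrm{even}}=(2,\ldots,2,3,1,2,0)$, with $g-3$ initial twos. A direct count gives
\[
u(\Vec{e}_{\textrm{even}})=1+2+(g-2)=g+1,\qquad \inv(\Vec{e}_{\textrm{even}})=(g-3)+1=g-2 .
\]
So the expected codimension is $g-1=k-2$. The expected dimension of $\mathcal{W}^{\Vec{e}_{\textrm{even}}}_g$ is therefore $\dim\cHTot-(g-1)=2g-1$, and it suffices to prove that the locus is irreducible of dimension $2g$.

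\textbf{Step 2: rigidity of the line bundle.} Take $[\pi,p,L]\in\mathcal{W}^{\Vec{e}_{\textrm{even}}}_g$ and set $M\coloneqq L(-2kp)$. By Proposition~\ref{prop: intersection description}, the vanishing sequence of $L$ at $p$ is $\mathbf{a}(\Vec{e}_{\textrm{even}})$. Subtracting $2k$ and discarding negative entries, the vanishing sequence of $M$ at $p$ is $(0,1,\ldots,g-3,g-1,2g-2)$. Hence $M$ has degree $2g-2$ and $h^0(M)=g$, so Riemann--Roch forces $M\cong\omega_C$. Since $2g-2$ is a vanishing order of $\omega_C$, we get $\omega_C\cong\OO_C((2g-2)p)$. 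Consequently $[C,p]\in\mathcal{H}_g(2g-2)$ and $L\cong\omega_C(2kp)$ is uniquely determined. The vanishing sequence excludes the hyperelliptic component, where the sequence is always $(0,2,\ldots,2g-2)$. It also excludes the odd component: in the odd spin parity $h^0(\omega_C(-(g-1)p))$ would be even, whereas here it equals $1$, so the point is not a degeneration of the odd sequence. Conversely, the Lemma preceding this corollary shows that every $[C,p]$ in the open subset $U\subseteq\mathcal{H}^{\textrm{even}}_g(2g-2)$ with exactly this canonical vanishing sequence, equipped with any degree $k$ cover totally ramified at $p$, gives a point of $\mathcal{W}^{\Vec{e}_{\textrm{even}}}_g$. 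This set $U$ is nonempty by \cite{Bul13}. Therefore the forgetful map $\mathcal{W}^{\Vec{e}_{\textrm{even}}}_g\to\M_{g,1}$ is an isomorphism onto the preimage of $U$ in $\cHTot$.

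\textbf{Step 3: fibers of $\cHTot\to\M_{g,1}$ over $U$.} From the vanishing orders of $\omega_C$ at $p$ we read off $h^0(\omega_C(-(g+1)p))=1$ and $h^0(\omega_C(-gp))=1$. By Riemann--Roch,
\[
h^0(\OO_C(kp))=3,\qquad h^0(\OO_C((k-1)p))=2 .
\]
As in the proof of Proposition~\ref{prop: BN and HBN}, the covers totally ramified at $p$ are then parametrized by $\PP\bigl(H^0(\OO_C(kp))/\mathbb C\bigr)\setminus\PP\bigl(H^0(\OO_C((k-1)p))/\mathbb C\bigr)$. This is an affine line: irreducible, of dimension $1$ instead of the generic $k-g-1=0$. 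This jump, caused by $p$ being a Weierstrass point, is the source of the excess dimension. The preimage of $U$ is a bundle with irreducible $\mathbb{A}^1$ fibers over the irreducible base $U$, which is irreducible by \cite{Kon-Zor}. Hence it is irreducible, of dimension $\dim U+1$.

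\textbf{Main obstacle.} The delicate point is the dimension bookkeeping, namely pinning down $\dim U$ as a locus of pointed curves so that the total comes out to $2g$. I would compute $\dim U$ from the period coordinates of the stratum of differentials, taking care about whether the scaling of the differential is quotiented out. I would then check the total against the expected value $2g-1$ from Step~1, with the fiber of Step~3 contributing exactly the one extra dimension. A secondary issue is making Step~2 hold scheme-theoretically, or at least on the level of reduced structures, which is all that dimension and irreducibility require.
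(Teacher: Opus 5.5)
Your route is the paper's: the numerics $u=g+1$, $\inv=g-2$, then the one-dimensional fiber $\PP\bigl(H^0(\OO_C(kp))/\mathbb{C}\bigr)\setminus\PP\bigl(H^0(\OO_C((k-1)p))/\mathbb{C}\bigr)\cong\mathbb{A}^1$ over a general point of $\mathcal{H}^{\mathrm{even}}_g(2g-2)$, and finally $\dim\mathcal{W}^{\Vec{e}_{\mathrm{even}}}_g=\dim\mathcal{H}^{\mathrm{even}}_g(2g-2)+1$. Your Step 2 makes explicit what the paper leaves implicit. It forces $M\cong\omega_C$ and $L\cong\omega_C(2kp)$, so $\mathcal{W}^{\Vec{e}_{\mathrm{even}}}_g$ is the preimage of $U$ in $\cHTot$, and this is what gives irreducibility.

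\textbf{The gap.} The proposal stops short of the dimension itself. You never determine $\dim U$, and you leave it as the ``main obstacle'', so nothing in your argument pins the answer to $2g$. You also cannot obtain $\dim U$ by checking against the expected value $2g-1$ from Step 1. The whole point of the corollary is that the actual and expected dimensions differ, so that would be circular. The missing input is the standard dimension count for strata of differentials:
\begin{itemize}
\item Period coordinates identify the stratum of abelian differentials $(C,\omega)$ with $\operatorname{div}\omega=(2g-2)p$ locally with $H^1(C,\{p\};\mathbb{C})\cong\mathbb{C}^{2g}$. So each of its components has dimension $2g$.
\item Since $h^0(\omega_C(-(2g-2)p))=1$, the differential is determined by $[C,p]$ up to $\mathbb{C}^*$. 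Hence the corresponding locus in $\M_{g,1}$ has dimension $2g-1$, and so do $\mathcal{H}^{\mathrm{even}}_g(2g-2)$ and its open dense subset $U$.
\end{itemize}
This gives $\dim\mathcal{W}^{\Vec{e}_{\mathrm{even}}}_g=2g$, one more than $\dim\cHTot-(g-1)=2g-1$. This is exactly the count the paper uses.

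\textbf{A slip in the parity check.} Your spin-parity argument in Step 2 contains two errors that cancel. From the sequence $(0,1,\ldots,g-3,g-1,2g-2)$, the vanishing orders at least $g-1$ are $g-1$ and $2g-2$. So $h^0(\omega_C(-(g-1)p))=h^0(\OO_C((g-1)p))=2$, not $1$. Also, on the odd component this number is odd, not even: it equals $1$ there, from $(0,1,\ldots,g-2,2g-2)$. The correct statement is that $h^0(\OO_C((g-1)p))=2$ is even. So the theta characteristic $\OO_C((g-1)p)$ has even parity, and $[C,p]\notin\mathcal{H}^{\mathrm{odd}}_g(2g-2)$. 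Together with your exclusion of the hyperelliptic component, this places $U$ inside $\mathcal{H}^{\mathrm{even}}_g(2g-2)$, as you claim. One further small fix: in the last sentence of Step 2, the forgetful map should go to $\cHTot$, not to $\M_{g,1}$.
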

	
	\begin{proof}
		A simple calculation gives 
		\[ u(\Vec{e}_{\textrm{even}}) = g+1, \ \textrm{and} \ \inv(\Vec{e}_{\textrm{even}}) = g-2. \] 
		
		In particular, the expected codimension of $\mathcal{W}^{\Vec{e}_{\textrm{even}}}_g$ in $\cHTot$ is $g-1$. But the fiber of the morphism $\cHTot \rightarrow \M_{g,1}$ above a point $[C,p]$ can be identified with  \[	\mathbb P\!\left(H^0(C,\mathcal O_C((g+1)p))/\mathbb C \right) \setminus
		\mathbb P\!\left(H^0(C,\mathcal O_C(gp))/\mathbb C\right)\]
		and this is one dimensional for $[C,p]$ a generic element of $\mathcal{H}^{\textrm{even}}_g(2g-2)$. So the generic fiber of $\cHTot \rightarrow \M_{g,1}$ over $\mathcal{H}^{\textrm{even}}_g(2g-2)$ is one dimensional. This implies 
		\[   \dim \mathcal{W}^{\Vec{e}_{\textrm{even}}}_g = \dim \mathcal{H}^{\textrm{even}}_g(2g-2) + 1 = 2g. \]
		But, when $k = g+1$ we have that $\dim \cHTot = 3g-2$, so the pointed Hurwitz-Brill-Noether space has dimension one bigger than expected. 
	\end{proof}
	\bibliographystyle{alpha}
	\bibliography{biblico}

@article{AbramovichCV,
  author    = {Abramovich, D. and Corti, A. and Vistoli, A.},
  title     = {Twisted bundles and admissible covers},
  journal   = {Comm. Algebra},
  volume    = {31},
  number    = {8},
  pages     = {3547--3618},
  year      = {2003},
  doi       = {10.1081/AGB-120022434},
}

@article{Agg26,
  author    = {Aggarwal, D.},
  title     = {{Brill--Noether} theory for totally ramified covers of the projective line},
  journal   = {Preprint, arXiv:2604.26193},
  year      = {2026},
}

@article{Bul13,
  author    = {Bullock, E. M.},
  title     = {Subcanonical points on algebraic curves},
  journal   = {Trans. Amer. Math. Soc.},
  volume    = {365},
  number    = {1},
  pages     = {99--122},
  year      = {2013},
  doi       = {10.1090/S0002-9947-2012-05506-1},
}

@article{Larson2Vogt,
  author    = {Larson, E. and Larson, H. and Vogt, I.},
  title     = {Global {Brill--Noether} theory over the {Hurwitz} space},
  journal   = {Geom. Topol.},
  volume    = {29},
  number    = {1},
  pages     = {193--257},
  year      = {2025},
  doi       = {10.2140/gt.2025.29.193},
}

@article{basiclimitlinear,
  author    = {Eisenbud, D. and Harris, J.},
  title     = {Limit linear series: Basic theory},
  journal   = {Invent. Math.},
  volume    = {85},
  number    = {2},
  pages     = {337--371},
  year      = {1986},
  doi       = {10.1007/BF01389094},
}

@incollection{Hoc16,
  author    = {Hochster, M.},
  title     = {{Cohen--Macaulay} varieties, geometric complexes, and combinatorics},
  booktitle = {The mathematical legacy of {Richard P. Stanley}},
  pages     = {219--229},
  publisher = {Amer. Math. Soc.},
  address   = {Providence, RI},
  year      = {2016},
  doi       = {10.1090/mbk/100/13},
}

@article{Kon-Zor,
  author    = {Kontsevich, M. and Zorich, A.},
  title     = {Connected components of the moduli spaces of {Abelian} differentials with prescribed singularities},
  journal   = {Invent. Math.},
  volume    = {153},
  number    = {3},
  pages     = {631--678},
  year      = {2003},
  doi       = {10.1007/s00222-003-0303-x},
}

@article{Pfl23,
  author    = {Pflueger, N.},
  title     = {Linear series with {$\rho<0$} via thrifty {Lego} building},
  journal   = {J. Reine Angew. Math.},
  volume    = {797},
  pages     = {193--228},
  year      = {2023},
  doi       = {10.1515/crelle-2023-0001},
}

@article{harris_mumford,
  author    = {Harris, J. and Mumford, D.},
  title     = {On the {K}odaira dimension of the moduli space of curves},
  journal   = {Invent. Math.},
  volume    = {67},
  number    = {1},
  pages     = {23--86},
  year      = {1982},
  doi       = {10.1007/BF01393371},
}

@article{cook-powell_jensen,
  author    = {Cook-Powell, K. and Jensen, D.},
  title     = {Components of {Brill--Noether} loci for curves with fixed gonality},
  journal   = {Michigan Math. J.},
  volume    = {71},
  number    = {1},
  pages     = {19--45},
  year      = {2022},
  doi       = {10.1307/mmj/1600329610},
}

@article{Teixidor-BN,
  author    = {Teixidor i Bigas, M.},
  title     = {{Brill--Noether} loci},
  journal   = {Manuscripta Math.},
  volume    = {176},
  number    = {1},
  pages     = {Paper No. 14, 17 pp.},
  year      = {2025},
  doi       = {10.1007/s00229-025-01616-z},
}

@article{Osserman1,
  author    = {Osserman, B.},
  title     = {Dimension counts for limit linear series on curves not of compact type},
  journal   = {Math. Z.},
  volume    = {284},
  number    = {1--2},
  pages     = {69--93},
  year      = {2016},
  doi       = {10.1007/s00209-016-1646-5},
}

@article{Osserman2,
  author    = {Osserman, B.},
  title     = {Limit linear series for curves not of compact type},
  journal   = {J. Reine Angew. Math.},
  volume    = {753},
  pages     = {57--88},
  year      = {2019},
  doi       = {10.1515/crelle-2017-0003},
}

@article{Eisenbud_Schreyer_2008,
  author    = {Eisenbud, D. and Schreyer, F.-O.},
  title     = {Relative {Beilinson} monad and direct image for families of coherent sheaves},
  journal   = {Trans. Amer. Math. Soc.},
  volume    = {360},
  number    = {10},
  pages     = {5367--5396},
  year      = {2008},
  doi       = {10.1090/S0002-9947-08-04454-1},
}

@article{GriffithsHarris1980,
  author    = {Griffiths, P. A. and Harris, J.},
  title     = {On the variety of special linear systems on a general algebraic curve},
  journal   = {Duke Math. J.},
  volume    = {47},
  number    = {1},
  pages     = {233--272},
  year      = {1980},
  doi       = {10.1215/S0012-7094-80-04717-1},
}

@article{Gieseker1982,
  author    = {Gieseker, D.},
  title     = {Stable curves and special divisors: {P}etri's conjecture},
  journal   = {Invent. Math.},
  volume    = {66},
  number    = {2},
  pages     = {251--275},
  year      = {1982},
  doi       = {10.1007/BF01389394},
}

@article{FultonLazarsfeld1981,
  author    = {Fulton, W. and Lazarsfeld, R.},
  title     = {On the connectedness of degeneracy loci and special divisors},
  journal   = {Acta Math.},
  volume    = {146},
  pages     = {271--283},
  year      = {1981},
  doi       = {10.1007/BF02392466},
}

@article{Larson2021,
  author    = {Larson, H. K.},
  title     = {A refined {Brill--Noether} theory over {Hurwitz} spaces},
  journal   = {Invent. Math.},
  volume    = {224},
  number    = {3},
  pages     = {767--790},
  year      = {2021},
  doi       = {10.1007/s00222-020-01023-z},
}

@article{CookPowellJensen2022Tropical,
  author    = {Cook-Powell, K. and Jensen, D.},
  title     = {Tropical methods in {Hurwitz--Brill--Noether} theory},
  journal   = {Adv. Math.},
  volume    = {398},
  pages     = {Paper No. 108199, 42 pp.},
  year      = {2022},
  doi       = {10.1016/j.aim.2022.108199},
}

@article{Pflueger2026Transmission,
  author    = {Pflueger, N.},
  title     = {Transmission permutations and {Demazure} products in {Hurwitz--Brill--Noether} theory},
  journal   = {Preprint, arXiv:2604.03895},
  year      = {2026},
}
	
\end{document}